\theoremstyle{definition}
\newtheorem{construction}[subsubsection]{Construction}
\newtheorem{definition}[subsubsection]{Definition}
\newtheorem{sdefinition}[subsection]{Definition}
\newtheorem{notation}[subsubsection]{Notation}
\newtheorem{example}[subsubsection]{Example}
\newtheorem{remark}[subsubsection]{Remark}
\newtheorem{sremark}[subsection]{Remark}
\theoremstyle{plain}
\newtheorem{lemma}[subsubsection]{Lemma}
\newtheorem{prop}[subsubsection]{Proposition}
\newtheorem{theorem}[subsubsection]{Theorem}
\newtheorem{stheorem}[subsection]{Theorem}
\newtheorem{cor}[subsubsection]{Corollary}
\newtheorem{scor}[subsection]{Corollary}
\newcommand{\one}{\mathbf{1}}
\newcommand{\Z}{\mathbb{Z}}
\newcommand{\Q}{\mathbb{Q}}
\newcommand{\R}{\mathbb{R}}
\newcommand{\C}{\mathbb{C}}
\newcommand{\F}{\mathbb{F}}
\newcommand{\cA}{\mathcal{A}}
\newcommand{\cB}{\mathcal{B}}
\newcommand{\cC}{\mathcal{C}}
\newcommand{\cD}{\mathcal{D}}
\newcommand{\cE}{\mathcal{E}}
\newcommand{\cF}{\mathcal{F}}
\newcommand{\cG}{\mathcal{G}}
\newcommand{\cH}{\mathcal{H}}
\newcommand{\cL}{\mathcal{L}}
\newcommand{\cO}{\mathcal{O}}
\newcommand{\cS}{\mathcal{S}}
\newcommand{\cT}{\mathcal{T}}
\newcommand{\cHom}{\mathcal{H}om}
\newcommand{\bIsom}{\mathbf{Isom}}
\newcommand{\bHom}{\mathbf{Hom}}
\newcommand{\op}{\mathrm{op}}
\newcommand{\Sym}{\mathrm{Sym}}
\newcommand{\Gal}{\mathrm{Gal}}
\newcommand{\id}{\mathrm{id}}
\newcommand{\ev}{\mathrm{ev}}
\newcommand{\adj}{\mathrm{adj}}
\newcommand{\rD}{\mathrm{D}}
\newcommand{\Dbar}{\overline{D}}
\newcommand{\rR}{\mathrm{R}}
\newcommand{\Perm}{\mathfrak{S}}
\newcommand{\fm}{\mathfrak{m}}
\newcommand{\Perv}{\mathrm{Perv}}
\newcommand{\Aut}{\mathrm{Aut}}
\newcommand{\Map}{\mathrm{Map}}
\newcommand{\Ext}{\mathrm{Ext}}
\newcommand{\Ker}{\mathrm{Ker}}
\newcommand{\Coker}{\mathrm{Coker}}
\newcommand{\Img}{\mathrm{Im}}
\newcommand{\Hom}{\mathrm{Hom}}
\newcommand{\End}{\mathrm{End}}
\newcommand{\Fun}{\mathrm{Fun}}
\newcommand{\Nat}{\mathrm{Nat}}
\newcommand{\Spec}{\mathrm{Spec}}
\newcommand{\Frac}{\mathrm{Frac}}
\newcommand{\GS}{\mathrm{GS}}
\newcommand{\GW}{\mathrm{GW}}
\newcommand{\IH}{\mathrm{IH}}
\newcommand{\Sh}{\mathrm{Sh}}
\newcommand{\IC}{\mathrm{IC}}
\newcommand{\Ob}{\mathrm{Ob}}
\newcommand{\Nil}{\mathrm{Nil}}
\newcommand{\Fr}{\mathrm{Frob}}
\newcommand{\gr}{\mathrm{gr}}
\newcommand{\lisse}{\mathrm{lisse}}
\newcommand{\sd}{\mathrm{sd}}
\newcommand{\dl}{\mathrm{d}}
\newcommand{\tr}{\mathrm{tr}}
\newcommand{\red}{\mathrm{red}}
\newcommand{\diag}{\mathrm{diag}}
\newcommand{\orth}{\mathrm{orth}}
\newcommand{\rss}{\mathrm{ss}}
\newcommand{\dec}{\mathrm{spl}}
\newcommand{\rH}{\mathrm{H}}
\newcommand{\rI}{\mathrm{I}}
\newcommand{\rK}{\mathrm{K}}
\newcommand{\Ptau}{\prescript{P}{}{\tau}}
\newcommand{\PR}{\prescript{P}{}{\rR}}
\newcommand{\PH}{\prescript{P}{}{\rH}}
\newcommand{\PcD}{\prescript{P}{}{\cD}}
\newcommand{\pH}{\prescript{p}{}{\mathcal{H}}}
\newcommand{\pR}{\prescript{p}{}{\rR}}
\newcommand{\simto}{\xrightarrow{\sim}}
\newcommand{\Qlb}{\overline{\Q}_\ell}
\newcommand{\Fqb}{\overline{\F}_q}
\newcommand{\res}{\mathbin{|}}
\numberwithin{equation}{subsection}
\begin{document}
\title{Parity and symmetry in intersection and ordinary cohomology}
\author{Shenghao Sun\thanks{Yau Mathematical Sciences Center, Tsinghua University, Beijing 100084, China;
email: \texttt{shsun@math.tsinghua.edu.cn}. Partially supported by ANR grant
G-FIB.}\and Weizhe Zheng\thanks{Morningside Center of Mathematics, Academy
of Mathematics and Systems Science, Chinese Academy of Sciences, Beijing
100190, China; email: \texttt{wzheng@math.ac.cn}. Partially supported by
China's Recruitment Program of Global Experts; National Natural Science
Foundation of China Grant 11321101; National Center for Mathematics and
Interdisciplinary Sciences and Hua Loo-Keng Key Laboratory of Mathematics,
Chinese Academy of Sciences.} \thanks{Mathematics Subject Classification
2010: 14F20 (Primary); 14G15, 14G25, 14F43, 11E81 (Secondary).}}
\date{}
\maketitle

\begin{flushright}
\emph{To the memory of Torsten Ekedahl}
\end{flushright}

\begin{abstract}
In this paper, we show that the Galois representations provided by
$\ell$-adic cohomology of proper smooth varieties, and more generally by
$\ell$-adic intersection cohomology of proper varieties, over any field,
are orthogonal or symplectic according to the degree. We deduce this from
a preservation result of orthogonal and symplectic pure perverse sheaves
by proper direct image. We show moreover that the subgroup of the
Grothendieck group generated by orthogonal pure perverse sheaves of even
weights and symplectic pure perverse sheaves of odd weights are preserved
by Grothendieck's six operations. Over a finite field, we deduce parity
and symmetry results for Jordan blocks appearing in the Frobenius action
on intersection cohomology of proper varieties, and virtual parity results
for the Frobenius action on ordinary cohomology of arbitrary varieties.
\end{abstract}

\section{Introduction}

The $n$-th cohomology of a compact K\"ahler manifold $X$ is equipped with a
pure Hodge structure of weight $n$:
\[\rH^n(X,\Q)\otimes_\Q \C = \bigoplus_{p+q=n}\rH^{pq},\]
where $\rH^{pq}\simeq \rH^q(X,\Omega^p_X)$ satisfies
$\overline{\rH^{pq}}=\rH^{qp}$. In particular, $\rH^n(X,\Q)$ is
even-dimensional for $n$ odd. Hodge decomposition and Hodge symmetry extend
to proper smooth schemes over $\C$ \cite[Proposition 5.3]{DeligneL} by
Chow's lemma and resolution of singularities. Thus, in this case,
$\rH^n(X(\C),\Q)$ is also even-dimensional for $n$ odd. Moreover, the pure
Hodge structure of weight $n$ on $\rH^n(X(\C),\Q)$ is polarizable, in the
sense that there exists a morphism of Hodge structures
\[\rH^n(X(\C),\Q)\otimes \rH^n(X(\C),\Q) \to \Q(-n),\]
symmetric for $n$ even and alternating for $n$ odd, satisfying certain
positivity conditions, which implies that the pairing is perfect.

Now let $\bar k$ be a separably closed field of characteristic $p\ge 0$ and
let $\ell\neq p$ be a prime number. For a projective smooth scheme $X$ of
finite type over $\bar k$, hard Lefschetz theorem \cite[Th\'eor\`eme
4.1.1]{WeilII} and Poincar\'e duality equip the $n$-th $\ell$-adic
cohomology $\rH^n(X,\Q_\ell)$ of $X$ with a nondegenerate bilinear form that
is symmetric for $n$ even and alternating for $n$ odd. In particular,
$\rH^n(X,\Q_\ell)$ is even-dimensional for $n$ odd. In a remark following
\cite[Corollaire 4.1.5]{WeilII}, Deligne predicts that the evenness of the
odd-degree Betti numbers should hold more generally for proper smooth
schemes over $\bar k$. This is recently shown by Suh \cite[Corollary
2.2.3]{Suh} using crystalline cohomology.

The goal of this article is to study problems of parity and symmetry in more
general settings, including symmetry of Galois actions on cohomology. Our
approach is different from that of Suh as we do not use $p$-adic cohomology.

For a general scheme $X$ of finite type over $\bar k$, $\rH^n(X,\Q_\ell)$ is
not ``pure'' and not necessarily even-dimensional for $n$ odd. Before going
into results for such mixed situations, let us first state our results in
the pure case for intersection cohomology.

\begin{stheorem}\label{t.i1}
Let $k$ be an arbitrary field of characteristic $p\ge 0$ and let $\bar k$ be
its separable closure. Let $X$ be a proper, equidimensional scheme over $k$.
Then, for $n$ even (resp.\ odd), the $n$-th $\ell$-adic intersection
cohomology group admits a $\Gal(\bar k/k)$-equivariant symmetric (resp.\
alternating) perfect pairing
\[\IH^n(X_{\bar k},\Q_\ell)\otimes \IH^n(X_{\bar k},\Q_\ell)\to \Q_\ell(-n).\]
Here $\Gal(\bar k/k)$ denotes the Galois group of $k$ and $X_{\bar
k}=X\otimes_k \bar k$.
\end{stheorem}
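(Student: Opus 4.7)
The plan is to deduce the statement from the preservation theorem for orthogonal and symplectic pure perverse sheaves under proper direct image, announced in the abstract, applied to the intersection complex of $X$.

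Let $d=\dim X$ and write $\pi\colon X\to \Spec k$ for the structure morphism. Normalize $\IC_X$ so that on the smooth locus $j\colon U\hookrightarrow X$ one has $\IC_X|_U=\Q_\ell[d]$; then $\IC_X$ is a pure perverse sheaf on $X$ of weight $d$, and Verdier duality yields a canonical self-duality $\rD(\IC_X)\simeq\IC_X(d)$. Unwinding this produces a nondegenerate self-pairing of $\IC_X$, and a direct computation on $U$ shows that it is $(-1)^d$-symmetric, the sign coming from the Koszul rule for swapping the two copies of the shifted sheaf $\Q_\ell[d]$. Hence $\IC_X$ is an orthogonal pure perverse sheaf on $X$ when $d$ is even and a symplectic one when $d$ is odd.

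Next I would apply the preservation theorem to the proper morphism $\pi$. By that result, $\pi_*\IC_X$ is pure on $\Spec k$ and, after pullback to $\Spec\bar k$, splits via the decomposition theorem as $\bigoplus_i \pH^i(\pi_*\IC_X)[-i]$, with each $\pH^i(\pi_*\IC_X)$ inheriting a nondegenerate self-pairing. Perverse sheaves on $\Spec k$ are exactly continuous $\Gal(\bar k/k)$-representations over $\Q_\ell$, and base change to $\bar k$ identifies the underlying vector space of $\pH^i(\pi_*\IC_X)$ with
\[\rH^i(X_{\bar k},\IC_X) \;=\; \IH^{d+i}(X_{\bar k},\Q_\ell).\]
Setting $n=d+i$, the self-pairing then becomes the desired $\Gal(\bar k/k)$-equivariant perfect pairing
\[\IH^n(X_{\bar k},\Q_\ell)\otimes\IH^n(X_{\bar k},\Q_\ell)\to\Q_\ell(-n).\]

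The hard part is tracking the symmetry: one must see that the self-pairing on $\pH^i(\pi_*\IC_X)$ furnished by the preservation theorem is $(-1)^{d+i}=(-1)^n$-symmetric. Two mechanisms combine here. First, Verdier duality together with properness of $\pi$ pairs $\pH^i$ with $\pH^{-i}$, not with itself. Second, a relative hard Lefschetz isomorphism, obtained from a relative ample class (supplied by Chow's lemma when $X$ is merely proper) via the decomposition theorem, identifies $\pH^{-i}$ with a Tate twist of $\pH^i$. Composing these two identifications involves swapping two factors whose cohomological shifts differ by $i$, introducing a Koszul sign $(-1)^i$; combined with the intrinsic $(-1)^d$-symmetry of $\IC_X$ this gives the $(-1)^n$ symmetry, i.e.\ symmetric for $n$ even and alternating for $n$ odd, as required.
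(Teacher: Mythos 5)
Your plan is the same as the paper's: $\IC_X$ is $(-1)^d$-self-dual with respect to $K_X(-d)$ (Example \ref{e.IC}), so $\IC_X[-d]\in\rD^0_{h,1}(X,\Qlb)$, and the preservation result (Theorem \ref{t.mainh}, the precise form of Theorem \ref{t.i2}) applied to $a_X$ gives $\rR a_{X*}\IC_X[-d]\in\rD^0_{h,1}(\Spec(k),\Qlb)$, i.e.\ each $\pH^n(\rR a_{X*}\IC_X[-d])\simeq\IH^n(X_{\bar k},\Q_\ell)$ carries a $(-1)^n$-symmetric perfect pairing valued in $\Q_\ell(-n)$; the $(-1)^n$ sign is assembled from the $(-1)^d$ of $\IC_X$ and the $(-1)^i$ of the Lefschetz pairing via Lemma \ref{l.final}, exactly as you describe. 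Two points are glossed over in your sketch. First, the preservation theorem is formulated only for $k$ finitely generated over its prime field, in the framework of horizontal perverse sheaves (Huber--Morel); the paper reduces the general case to that one by a limit argument before invoking it. Second, Chow's lemma does \emph{not} supply a relative ample class for $a_X$ when $X$ is merely proper: the proof of Theorem \ref{t.mainf}/\ref{t.mainh} instead uses Chow's lemma to produce a projective modification $g\colon X'\to X$ with $a_X\circ g$ projective, applies the projective case (Proposition \ref{p.projf}) to $g$ and to $a_X\circ g$, and then invokes the two-out-of-three property (Proposition \ref{p.23}) to isolate the contribution coming from $\IC_X$. Neither point is a real obstacle, but both are genuine steps in the paper's argument that your sketch elides.
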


By definition, $\IH^n(X_{\bar k},\Q_\ell)=\rH^{n-d}(X_{\bar k},\IC_X)$,
where $\IC_X=j_{!*}(\Q_\ell[d])$, $d=\dim(X)$, $j\colon U\to X$ is an open
dense immersion such that $U_{\red}$ is regular. For $X$ proper smooth, we
have $\IH^n(X_{\bar k},\Q_\ell)=\rH^n(X_{\bar k},\Q_\ell)$, and the theorem
takes the following form. The statement was suggested to us by Takeshi
Saito. One may compare such pairings with polarizations of pure Hodge
structures mentioned at the beginning of the introduction.

\begin{scor}
Let $X$ be a proper smooth scheme over $k$. Then, for $n$ even (resp.\ odd),
the $n$-th $\ell$-adic cohomology group admits a $\Gal(\bar
k/k)$-equivariant symmetric (resp.\ alternating) perfect pairing
\[\rH^n(X_{\bar k},\Q_\ell)\otimes \rH^n(X_{\bar k},\Q_\ell)\to \Q_\ell(-n).\]
\end{scor}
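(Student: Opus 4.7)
The plan is to deduce the corollary directly from Theorem~\ref{t.i1}, modulo a small reduction to the equidimensional case and the identification of intersection cohomology with ordinary cohomology in the smooth setting.

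First, write $X = \bigsqcup_i X_i$ as the disjoint union of its connected components over $k$. Since $X$ is smooth over $k$, the local dimension function $x \mapsto \dim_x X$ is locally constant on $X$, hence constant on each $X_i$; equivalently, the local rings of $X$ are regular and in particular integral, so each connected component is irreducible. Either way, every $X_i$ is proper over $k$ and equidimensional of some dimension $d_i$, and Theorem~\ref{t.i1} applies to $X_i$ and yields a $\Gal(\bar k/k)$-equivariant perfect pairing
\[\IH^n(X_{i,\bar k},\Q_\ell)\otimes \IH^n(X_{i,\bar k},\Q_\ell)\to \Q_\ell(-n)\]
which is symmetric for $n$ even and alternating for $n$ odd.

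Next I would identify intersection cohomology with ordinary cohomology on each component. Since $X_i$ is smooth, one may take $U = X_i$ with $j = \id$ in the definition of $\IC_{X_i}$, so $\IC_{X_i} = \Q_\ell[d_i]$ and
\[\IH^n(X_{i,\bar k},\Q_\ell) = \rH^{n-d_i}(X_{i,\bar k}, \IC_{X_i}) = \rH^n(X_{i,\bar k},\Q_\ell).\]
Taking the orthogonal direct sum over $i$ produces the desired pairing on $\rH^n(X_{\bar k},\Q_\ell) = \bigoplus_i \rH^n(X_{i,\bar k},\Q_\ell)$; since the decomposition $X = \bigsqcup_i X_i$ is defined over $k$, the action of $\Gal(\bar k/k)$ permutes the components compatibly with the pairings on each factor, so the resulting pairing is Galois-equivariant.

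The main point, and essentially the only point to check beyond quoting Theorem~\ref{t.i1}, is that connected components of a smooth $k$-scheme are equidimensional. Everything else is formal: the symmetry or alternation in the required degree is inherited term-by-term from the theorem, and perfectness and Galois-equivariance pass through the direct sum decomposition without difficulty.
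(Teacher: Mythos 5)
Your proof is correct and follows essentially the same route as the paper: for proper smooth $X$ one may take $U=X$, so $\IC_X$ is the shifted constant sheaf, $\IH^n$ coincides with $\rH^n$, and the corollary follows from Theorem~\ref{t.i1}. Your explicit reduction to connected components is a welcome refinement, since Theorem~\ref{t.i1} requires equidimensionality, a hypothesis the corollary does not impose and the paper passes over silently; the only minor imprecision is the phrase about Galois ``permuting the components'' --- since each $X_i$ is defined over $k$, Galois preserves the decomposition $\rH^n(X_{\bar k},\Q_\ell)\simeq\bigoplus_i\rH^n(X_{i,\bar k},\Q_\ell)$ rather than permuting its summands, which is in fact what you need for the orthogonal direct sum to be equivariant.
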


Ignoring Galois actions, we obtain the following corollary. For $X$ proper
smooth, this gives another proof of Suh's result mentioned earlier.

\begin{scor}
Let $X$ be a proper, equidimensional scheme over $k$. Then $\IH^n(X_{\bar
k},\Q_\ell)$ is even-dimensional for $n$ odd.
\end{scor}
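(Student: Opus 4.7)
The plan is to deduce the corollary immediately from Theorem~\ref{t.i1}, simply by forgetting the Galois equivariance. For $n$ odd, Theorem~\ref{t.i1} supplies an alternating perfect pairing
\[\IH^n(X_{\bar k},\Q_\ell)\otimes \IH^n(X_{\bar k},\Q_\ell)\to \Q_\ell(-n).\]
As a $\Q_\ell$-vector space (disregarding the Frobenius twist), $\Q_\ell(-n)$ is one-dimensional, so choosing any $\Q_\ell$-linear isomorphism $\Q_\ell(-n)\simeq \Q_\ell$ converts this into an ordinary nondegenerate alternating $\Q_\ell$-bilinear form on $\IH^n(X_{\bar k},\Q_\ell)$.

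It then suffices to invoke the standard linear algebra fact: any finite-dimensional vector space $V$ over a field of characteristic different from $2$ that admits a nondegenerate alternating bilinear form is even-dimensional. (One proves this, for example, by picking any $v_1\ne 0$, finding $w_1$ with $\langle v_1,w_1\rangle=1$, splitting off the hyperbolic plane $\Q_\ell v_1\oplus \Q_\ell w_1$, and inducting on the orthogonal complement.) The field $\Q_\ell$ has characteristic zero, and $\IH^n(X_{\bar k},\Q_\ell)$ is finite-dimensional by the usual finiteness theorem for $\ell$-adic cohomology of proper schemes applied to the constructible sheaf $\IC_X$, so the hypothesis is met and even-dimensionality follows.

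There is no real obstacle beyond Theorem~\ref{t.i1} itself; the entire content of the corollary is contained in the symmetry statement, and the deduction is a one-line piece of linear algebra once the Galois action is dropped.
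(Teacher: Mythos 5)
Your deduction is exactly what the paper intends: it presents this corollary immediately after Theorem~\ref{t.i1} with the remark ``Ignoring Galois actions, we obtain the following corollary,'' i.e.\ the alternating perfect pairing for $n$ odd forces even dimensionality by elementary linear algebra over $\Q_\ell$. Correct, and essentially identical in approach.
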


To demonstrate the strength of Theorem \ref{t.i1}, we give a reformulation
in the case where $k=\F_q$ is a finite field. In this case, the Galois
action is determined by Frobenius action. We let $\Fr_q\in \Gal(\Fqb/\F_q)$
denote the geometric Frobenius $x\mapsto x^{1/q}$. The eigenvalues of
$\Fr_q$ acting on $\IH^n(X_{\Fqb},\Q_\ell)$ are \emph{$q$-Weil integers of
weight $n$}, by which we mean algebraic integers $\lambda$ such that for
every embedding $\alpha\colon \Q(\lambda)\to \C$, we have $\lvert
\alpha(\lambda)\rvert^2=q^n$. We let $\mu_\lambda\in \Z_{\ge 0}$ denote the
multiplicity of the eigenvalue $\lambda$ for the action of $\Fr_q$ on
$\IH^n(X_{\Fqb},\Q_\ell)$. In other words, we put
\begin{equation}\label{e.mu}
\det(1-T\Fr_q\mid\IH^n(X_{\Fqb},\Q_\ell))=\prod_\lambda
(1-\lambda T)^{\mu_\lambda}.
\end{equation}
For $e\ge 1$, we let $\mu_{\lambda,e}\in \Z_{\ge 0}$ denote the number of
$e\times e$ Jordan blocks with eigenvalue $\lambda$ in the Jordan normal
form of $\Fr_q$ acting on $\IH^n(X_{\Fqb},\Qlb)$. We have
$\mu_{\lambda}=\sum_{e\ge 1} e\mu_{\lambda,e}$.

\begin{scor}\label{c.IH}
Let $X$ be a proper, equidimensional scheme over $\F_q$. In the above
notation, $\mu_{\lambda,e}=\mu_{q^n/\lambda,e}$. Moreover,
$\mu_{\sqrt{q^n},e}$ and $\mu_{-\sqrt{q^n},e}$ are even for $n+e$ even. In
particular, $\mu_{\lambda}=\mu_{q^n/\lambda}$ and, for $n$ odd,
$\mu_{\sqrt{q^n}}$ and $\mu_{-\sqrt{q^n}}$ are even.
\end{scor}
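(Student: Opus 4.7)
The plan is to deduce everything from Theorem~\ref{t.i1} applied to $X$ over $\F_q$, reading the Galois-equivariant pairing it provides as an algebraic constraint on the Frobenius action. Explicitly, after extending scalars to $\Qlb$ and setting $V = \IH^n(X_{\Fqb},\Qlb)$, Theorem~\ref{t.i1} gives a nondegenerate pairing $\langle\,,\,\rangle \colon V \otimes V \to \Qlb(-n)$, symmetric for $n$ even and alternating for $n$ odd, satisfying $\langle \Fr_q v, \Fr_q w\rangle = q^n \langle v, w\rangle$. Decomposing $V = \bigoplus_\lambda V_\lambda$ into generalized eigenspaces of $\Fr_q$, the equivariance forces $\langle V_\lambda, V_\mu\rangle = 0$ unless $\lambda\mu = q^n$, so the pairing restricts to a perfect pairing $V_\lambda \otimes V_{q^n/\lambda} \to \Qlb$. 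Choosing dual bases and comparing matrices, the restriction $\Fr_q|_{V_{q^n/\lambda}}$ equals $q^n$ times the inverse transpose of $\Fr_q|_{V_\lambda}$; both operators therefore have the same Jordan block sizes, which yields $\mu_{\lambda,e} = \mu_{q^n/\lambda,e}$ for every $e$.

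For the parity of $\mu_{\pm\sqrt{q^n},e}$, I would specialize to a self-dual eigenvalue $\lambda$ with $\lambda^2 = q^n$. Then $\langle\,,\,\rangle$ restricts to a nondegenerate form $b_\lambda$ on $V_\lambda$ of the same symmetry type as the global pairing, and the identity $\langle \Fr_q v, \Fr_q w\rangle = q^n \langle v, w\rangle$ becomes $b_\lambda(uv, uw) = b_\lambda(v, w)$ for the unipotent operator $u := \lambda^{-1}\Fr_q|_{V_\lambda}$. Thus $u$ is a unipotent element of the orthogonal group $\mathrm{O}(V_\lambda, b_\lambda)$ if $n$ is even and of the symplectic group $\mathrm{Sp}(V_\lambda, b_\lambda)$ if $n$ is odd, and its Jordan block sizes coincide with those of $\Fr_q|_{V_\lambda}$ since they differ only by a nonzero scalar.

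At this point I would invoke the classical description of unipotent conjugacy classes in classical groups over a characteristic-zero field (see, e.g., Collingwood--McGovern, \emph{Nilpotent orbits in semisimple Lie algebras}, Theorems 5.1.2 and 5.1.4): a unipotent element of an orthogonal group has every even-size Jordan block occurring with even multiplicity, and a unipotent element of a symplectic group has every odd-size Jordan block occurring with even multiplicity. In both cases this is exactly the assertion that $\mu_{\pm\sqrt{q^n},e}$ is even whenever $n+e$ is even. The final two statements follow by summation: $\mu_\lambda = \mu_{q^n/\lambda}$ by summing $e\mu_{\lambda,e}$ over $e$, and for $n$ odd, $\mu_{\pm\sqrt{q^n}} = \sum_e e\mu_{\pm\sqrt{q^n},e}$ is even because the odd-$e$ terms have $\mu_{\pm\sqrt{q^n},e}$ even while the even-$e$ terms contribute even multiples.

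The substantive step is really only the invocation of this orbit classification; everything else is a clean linear-algebra translation of the equivariance built into Theorem~\ref{t.i1}. The main obstacle is therefore conceptual rather than technical: one must recognize that the orthogonal/symplectic dichotomy between even and odd degrees furnished by Theorem~\ref{t.i1} is precisely what forces the Jordan-block parity in the sense of classical groups.
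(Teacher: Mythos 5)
Your proposal is correct and follows essentially the same route the paper takes: deduce from Theorem~\ref{t.i1} that $\IH^n$ is a $(-1)^n$-self-dual perverse sheaf on $\Spec(\F_q)$, then translate that symmetry into Jordan-block parity constraints. The only divergence is in where the linear-algebra input comes from: where you invoke the classification of unipotent conjugacy classes in classical groups (Collingwood--McGovern), the paper proves the identical fact self-containedly as Lemma~\ref{l.la} (and its equivalent reformulation $\exp(N)^T A \exp(N) = A$ in characteristic zero), packaged together with Proposition~\ref{p.Jordan} into the Example following Lemma~\ref{l.la}, which is precisely the ``self-dual perverse sheaf on a point $\Longleftrightarrow$ Jordan parity'' dictionary you reconstruct.
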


The last statement of Corollary \ref{c.IH} implies that for $n$ odd,
\begin{gather*}
\det(\Fr_q\mid \IH^n(X_{\Fqb},\Q_\ell))=q^{nb_n/2},\\
\det(1-T\Fr_q\mid \IH^n(X_{\Fqb},\Q_\ell))=q^{nb_n/2}T^{b_n}\det(1-q^{-n}T^{-1}\Fr_q\mid \IH^n(X_{\Fqb},\Q_\ell)),
\end{gather*}
where $b_n=\dim \IH^n(X_{\Fqb},\Q_\ell)$. Moreover, it implies that $\dim
\IH^n(X_{\Fqb},\Q_\ell)=\sum_\lambda \mu_\lambda$ is even.

\begin{sremark}
Some special cases of the last statement of Corollary \ref{c.IH} were
previously known.
\begin{enumerate}
\item Gabber's theorem on independence of $\ell$ for intersection
    cohomology \cite[Theorem~1]{Gabber} states that \eqref{e.mu} belongs
    to $\Z[T]$ and is independent of $\ell$. The fact that \eqref{e.mu}
    belongs to $\Q[T]$ implies $\mu_\lambda=\mu_{\lambda'}$ for $\lambda$
    and $\lambda'$ in the same $\Gal(\overline{\Q}/\Q)$-orbit. In
    particular, $\mu_\lambda=\mu_{q^n/\lambda}$, and, if $q$ is \emph{not}
    a square and $n$ is odd, $\mu_{\sqrt{q^n}}=\mu_{-\sqrt{q^n}}$, so that
    $\dim\IH^n(X_{\Fqb},\Q_\ell)=\sum_\lambda \mu_\lambda$ is even in this
    case.

\item For $X$ proper smooth, the fact that $\mu_{\sqrt{q^n}}$ and
    $\mu_{-\sqrt{q^n}}$ are even for $n$ odd follows from a theorem of Suh
    \cite[Theorem 3.3.1]{Suh}.
\end{enumerate}
\end{sremark}

\begin{sremark}
The first two statements of Corollary \ref{c.IH} are consistent with the
conjectural semisimplicity of the Frobenius action on
$\IH^n(X_{\Fqb},\Q_\ell)$ (namely, $\mu_{\lambda,e}=0$ for $e\ge 2$), which
would follow from the standard conjectures. To see this implication, let
$X'\to X$ be a surjective generically finite morphism such that $X'$ is
projective smooth over $\F_q$, which exists by de Jong's alterations
\cite[Theorem 4.1]{dJ}. Then $\IH^n(X_{\Fqb},\Q_\ell)$ as a
$\Gal(\Fqb/\F_q)$-module is a direct summand of
$\rH^n(X'_{\Fqb},\Q_\ell)$.\footnote{This argument is also used in Gabber's
proof of the integrality of \eqref{e.mu}.} The semisimplicity of the
Frobenius action on $\rH^n(X'_{\Fqb},\Q_\ell)$ would follow from the
Lefschetz type standard conjecture for $X'$ and the Hodge type standard
conjecture for $X'\times X'$ \cite[Theorem 5.6 (2)]{Kleiman}.
\end{sremark}

To prove Theorem \ref{t.i1}, we may assume that $k$ is finitely generated
over its prime field. We will keep this assumption in the rest of the
introduction. This includes notably the case of a number field. We deduce
Theorem \ref{t.i1} from a relative result with coefficients. In the case
where $k$ is a finite field, the coefficients are pure perverse sheaves. In
the general case, we apply the formalism of pure \emph{horizontal} perverse
sheaves of Annette Huber \cite{Huber}, as extended by Sophie Morel
\cite{Morel}. Since the proofs are the same in the two cases, we recommend
readers not familiar with horizontal perverse sheaves to concentrate on the
case of a finite field and to ignore the word ``horizontal''. Unless
otherwise stated, we will only consider the middle perversity. We let $\Qlb$
denote the algebraic closure of $\Q_\ell$.

\begin{sdefinition}\label{d.orth}
Let $X$ be a scheme of finite type over $k$ and let $A\in \rD^b_c(X,\Qlb)$
be a horizontal perverse sheaf on $X$, pure of weight $w$. We say that $A$
is \emph{orthogonal}, if there exists a symmetric perfect pairing $A\otimes
A\to K_X(-w)$. We say that $A$ is \emph{symplectic}, if there exists an
alternating perfect pairing $A\otimes A\to K_X(-w)$.
\end{sdefinition}

Here $K_X=\rR a_X^! \Qlb$ is the dualizing complex on $X$, where $a_X\colon
X\to \Spec(k)$ is the structural morphism.

\begin{stheorem}[Special case of Theorem \ref{t.mainh}]\label{t.i2}
Let $f\colon X\to Y$ be a proper morphism of schemes of finite type over $k$
and let $A\in \rD^b_c(X,\Qlb)$ be an orthogonal (resp.\ symplectic) pure
horizontal perverse sheaf on $X$. Then
\begin{equation}\label{e.decomp}
\rR f_*A\simeq \bigoplus_n(\pR^n f_* A)[-n]
\end{equation}
and $\pR^n f_* A$ is orthogonal (resp.\ symplectic) for $n$ even and
symplectic (resp.\ orthogonal) for $n$ odd.
\end{stheorem}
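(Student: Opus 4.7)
The plan is to derive the statement from the decomposition theorem for pure (horizontal) perverse sheaves together with relative hard Lefschetz, in analogy with the classical derivation of a polarization on $\rH^n$ of a smooth projective variety from Poincar\'e duality combined with hard Lefschetz.

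By Chow's lemma I would reduce to the case where $f$ is projective and fix a relatively ample class $\eta \in \rH^2(X, \Qlb(1))$. The decomposition theorem of Beilinson--Bernstein--Deligne--Gabber, in its horizontal version due to Morel, applies because $A$ is pure and $f$ is proper, and produces \eqref{e.decomp}; writing $P_n := \pR^n f_* A$, each summand is pure of weight $w + n$. Separately, since $f$ is proper, $\rR f_* = \rR f_!$ commutes with Verdier duality $\rD$, so the $\epsilon$-symmetric isomorphism $A \simeq \rD A(-w)$ (with $\epsilon = +1$ in the orthogonal case and $\epsilon = -1$ in the symplectic case) yields an $\epsilon$-symmetric self-duality
\[
\Phi\colon \rR f_* A \xrightarrow{\sim} \rD(\rR f_* A)(-w) \simeq \bigoplus_n \rD(P_n)[n](-w).
\]

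The ``leading'' component of $\Phi$ relative to the decomposition is an isomorphism $\Phi_n\colon P_n \xrightarrow{\sim} \rD(P_{-n})(-w)$; this by itself is not yet a self-duality of $P_n$. Relative hard Lefschetz now intervenes: it provides $\eta^n\colon P_{-n} \xrightarrow{\sim} P_n(n)$, and dualizing and composing with $\Phi_n$ produces the desired self-duality $\psi_n\colon P_n \xrightarrow{\sim} \rD(P_n)(-w-n)$, i.e., a perfect pairing $P_n \otimes P_n \to K_Y(-w-n)$.

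The main technical obstacle is to determine the symmetry sign of $\psi_n$, which I expect to be $(-1)^n\epsilon$; this matches the statement exactly (orthogonal for $n$ even and symplectic for $n$ odd when $\epsilon = +1$, and reversed when $\epsilon = -1$). The model computation is classical: on $\rH^{d-n}$ of a smooth proper variety of dimension $d$, Poincar\'e duality together with self-adjointness of cup product by $\eta$ gives
\[
\langle \alpha, \eta^n \beta \rangle = \langle \eta^n \alpha, \beta \rangle = (-1)^{d-n}\langle \beta, \eta^n \alpha \rangle,
\]
and the sign $(-1)^{d-n}$ is exactly what is needed after translation. Executing this rigorously in the derived category requires care with Koszul signs when treating $\eta$ as a degree-$2$ operator, and with the choice of splitting in \eqref{e.decomp} so that $\Phi$ genuinely restricts to $\Phi_n$ on summands. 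A clean way to handle both points is to use the primitive decomposition induced by $\eta$ as the splitting of \eqref{e.decomp}, which makes $\Phi$ block-diagonal across primitive blocks and reduces the sign check to a single primitive part.
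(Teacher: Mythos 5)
Your argument for the projective case --- combining relative hard Lefschetz with the given self-duality of $A$ and tracking Koszul signs --- is essentially the route the paper takes in Proposition \ref{p.projf}, with Lemma \ref{l.final} encoding the sign bookkeeping you sketch. The paper defines the pairing on $\pR^{-i}f_*A$ directly from the composite
\[
\rR f_* A[-i]\otimes \rR f_* A[-i] \to \rR f_* (A\otimes A)[-2i]\to \rR f_* K_X(-w)[-2i] \xrightarrow{\eta^i} \rR f_* K_X(i-w) \to K_Y(i-w)
\]
and takes $\pH^0$, which avoids choosing a splitting of \eqref{e.decomp} and sidesteps the compatibility concern you raise; your alternative of working with an $\eta$-primitive decomposition could be made to work but needs more care than the paper's choice.

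The genuine gap is the claim that Chow's lemma reduces you to the projective case. Chow's lemma produces a projective birational $g\colon X'\to X$ with $fg$ projective, but this does not let you replace $f$ by $fg$: the pullback $g^*A$ is not perverse, and $A$ need not be a direct summand of $\rR g_*$ of anything you control --- it is so only up to an error supported on a smaller-dimensional closed subset. What the paper actually does (proof of Theorem \ref{t.mainf}, which Theorem \ref{t.mainh} repeats verbatim) is an induction on $\dim X$. One takes a dense open $U\subseteq X$ over which $g$ is an isomorphism, writes $A \simeq j_{!*}j^*A \oplus B$ by the support decomposition for geometrically semisimple perverse sheaves (with $B$ supported on $X\setminus U$, of smaller dimension), notes that $\rR g_* j'_{!*}j^*A \simeq j_{!*}j^*A\oplus C$ with $C$ also on $X\setminus U$, applies $\rR f_*$ to $A\oplus C \simeq \rR g_* j'_{!*}j^*A\oplus B$, handles $\rR f_* B$ and $\rR f_* C$ by the inductive hypothesis and $\rR(fg)_* j'_{!*}j^*A$ by the projective case, and concludes for $\rR f_* A$ by the two-out-of-three property for $\sigma$-self-dual pure perverse sheaves (Proposition \ref{p.23}). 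That two-out-of-three property --- if $A'\oplus A''$ and $A'$ are both $\sigma$-self-dual, so is $A''$ --- is the indispensable cancellation device; its proof uses Krull--Schmidt together with a Zariski-density argument over $\Qlb$, and it is not a formal consequence of the definitions. Without this induction-and-cancellation mechanism, the passage from the projective to the general proper case does not close.
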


Recall that the Beilinson-Bernstein-Deligne-Gabber decomposition theorem
\cite[Th\'eor\`eme 5.4.5]{BBD} implies that \eqref{e.decomp} holds after
base change to the algebraic closure $\bar k$ of $k$.

Theorem \ref{t.i1} follows from Theorem \ref{t.i2} applied to the morphism
$a_X\colon X\to \Spec(k)$. Even if one is only interested in Theorem
\ref{t.i1}, our proof leads one to consider the relative situation of
Theorem \ref{t.i2}.

Next we state results for operations that do not necessarily preserve pure
(horizontal) complexes. For a scheme $X$ of finite type over $k$, we let
$\rK_\orth(X,\Qlb)$ denote the subgroup of the Grothendieck group
$\rK(X,\Qlb)$ of $\rD^b_c(X,\Qlb)$ generated by orthogonal pure horizontal
perverse sheaves of even weights and symplectic pure horizontal perverse
sheaves of odd weights.

\begin{stheorem}[Special case of Theorem \ref{t.Kh}]\label{t.i3}
Grothendieck's six operations preserve $\rK_\orth$.
\end{stheorem}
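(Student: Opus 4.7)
The plan is to reduce the six operations to a smaller set using standard compatibilities and then leverage Theorem \ref{t.i2}. Via the identities $\rR f_* \simeq \rD \rR f_! \rD$, $f^! \simeq \rD f^* \rD$, and $\rR\cHom(A,B) \simeq \rD(A \otimes \rD B)$, it suffices to show that Verdier duality $\rD$, arbitrary pullback $f^*$, arbitrary lower-shriek direct image $\rR f_!$, and tensor product each preserve $\rK_\orth$. Duality is immediate: the pairing exhibiting $A$ as orthogonal (resp.\ symplectic) pure of weight $w$ is equivalent to a structure-preserving isomorphism $A \simto \rD(A)(-w)$, so $\rD A$ is orthogonal (resp.\ symplectic) pure of weight $-w$, which has the same parity as $w$. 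A key auxiliary tool is the \emph{hyperbolic identity}: for any pure horizontal perverse sheaf $B$ of weight $w$, the sum $B \oplus \rD(B)(-w)$ admits both a hyperbolic symmetric and a hyperbolic alternating perfect pairing valued in $K_X(-w)$, hence $[B] + [\rD(B)(-w)] \in \rK_\orth(X)$ regardless of the parity of $w$; this allows contributions without a natural self-pairing, or with the wrong parity, to be paired off systematically.

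For $\rR f_!$, Nagata compactification factors $f$ as $p \circ j$ with $p$ proper and $j$ an open immersion. The proper case is exactly Theorem \ref{t.i2}: the decomposition $\rR p_* A \simeq \bigoplus_n (\pR^n p_* A)[-n]$ displays $[\rR p_* A]$ as a signed sum of generators of $\rK_\orth$, the orthogonal/symplectic structure toggling with the parity of $n$ to match the parity of $w+n$. For an open immersion $j\colon U \hookrightarrow X$ with closed complement $i\colon Z \hookrightarrow X$, I would proceed by Noetherian induction on $X$. Given a generator $A$ of $\rK_\orth(U)$, its intermediate extension $j_{!*} A$ is a pure horizontal perverse sheaf on $X$ of the same weight, and the uniqueness up to scalar of the self-pairing on a simple perverse sheaf extends the orthogonal/symplectic structure from $A$ to $j_{!*} A$, placing $j_{!*} A \in \rK_\orth(X)$. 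The localization triangle $j_! A \to j_{!*} A \to i_* i^* j_{!*} A$ gives $[j_! A] = [j_{!*} A] - [i_* i^* j_{!*} A]$ in $\rK(X)$; the correction term is supported on $Z$ and concentrated in strictly negative perverse degrees, and since $\rD(j_{!*} A) \simeq (j_{!*} A)(w)$ one has $\rD(i^* j_{!*} A) \simeq (i^! j_{!*} A)(w)$, so the nonzero perverse cohomologies of $i^* j_{!*} A$ come in dual pairs. Decomposing them via the weight filtration on mixed horizontal perverse sheaves and invoking the hyperbolic identity on each pair, the inductive hypothesis on $Z$ places $[i_* i^* j_{!*} A] \in \rK_\orth$.

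For $f^*$, factor $f$ as a smooth morphism followed by a closed immersion. A smooth pullback $g^*[d]$, with $d$ the relative dimension, is perverse t-exact and preserves orthogonal/symplectic structures directly. For a closed immersion $i$ with complementary open $j$, the triangle $j_! j^* A \to A \to i_* i^* A$ yields $[i_* i^* A] = [A] - [j_! j^* A]$, reducing closed pullback to the already-established open-immersion case of $\rR f_!$ together with the observation that $i_* \rD \simeq \rD i_*$ lets $i_*$ send generators of $\rK_\orth(Z)$ to generators of $\rK_\orth(X)$ injectively. For tensor product, $A \otimes B \simeq \Delta^*(A \boxtimes B)$ reduces to diagonal pullback and external product; the latter sends a pair of generators of $\rK_\orth$ to a generator, since weights add and pairings combine as symmetric $\otimes$ symmetric $=$ symmetric, symmetric $\otimes$ alternating $=$ alternating, alternating $\otimes$ alternating $=$ symmetric, always matching the required parity. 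The main obstacle is the open-immersion case of $\rR f_!$: neither purity nor a natural self-pairing survives $j_!$, and it is the combined machinery of intermediate extension, duality between $i^*$ and $i^!$, weight filtration on boundary terms, and the hyperbolic identity that extracts the $\rK_\orth$ information through Noetherian induction.
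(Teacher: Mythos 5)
The overall skeleton of your proposal — reducing the six operations to $D_X$, $f^*$, $\rR f_!$, $\otimes$ via standard adjunction identities; handling the proper case by Theorem \ref{t.i2}; identifying the extension-by-zero $j_!$ for open immersions as the crux; and using the hyperbolic identity $[B]+[\rD_X B(-w)]\in\rK_\orth$ to absorb non-self-dual generators — matches the paper's strategy. However, the argument you give for the crucial open-immersion step contains a genuine gap.

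You assert: ``since $\rD(j_{!*}A)\simeq (j_{!*}A)(w)$ one has $\rD(i^*j_{!*}A)\simeq (i^!j_{!*}A)(w)$, so the nonzero perverse cohomologies of $i^*j_{!*}A$ come in dual pairs.'' This does not follow. The duality interchanges $i^*$ and $i^!$, so the perverse cohomologies (and their weight-graded pieces) of $i^*j_{!*}A$ are dual to those of $i^!j_{!*}A$, which is a \emph{different} complex; they are not self-dual in pairs among themselves. Combined with the hyperbolic identity, the duality $\rD(i^*B)\simeq (i^!B)(w)$ would only place the ``symmetrized'' combination of $[i^*j_{!*}A]$ and $[i^!j_{!*}A]$ into $\rK_\orth(Z)$ — not the term $[i^*j_{!*}A]$ alone that you need to isolate $[j_!A]$. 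Your Noetherian induction also does not supply a usable hypothesis here, since $i^*j_{!*}A$ arises from applying one of the six operations to an object on the large space $X$, not from operations among spaces of strictly smaller dimension, so invoking ``the inductive hypothesis on $Z$'' is circular.

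The paper handles this crux quite differently: it reduces to the case where $U$ is the complement of a (strict, equivariant) normal crossing divisor via Chow's lemma, Nagata compactification, and Gabber's equivariant refinement of de Jong's alterations, and then in that situation (Proposition \ref{p.NCD}) computes the boundary restrictions $j_J^*\rR j_*\cF$ explicitly in terms of the local monodromy and the primitive part functor $P_i$. Two separate inputs are needed there and have no analogue in your sketch: the symmetry of primitive parts under a nilpotent operator (Proposition \ref{p.Pi}, applied to the logarithm of local monodromy), and Weil II weight estimates which \emph{separate} the weights appearing in $i^*j_{!*}\cF$ from those in $\rR i^!j_{!*}\cF$, so that each term can be placed in $\rK_\orth$ individually rather than only jointly. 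Without this explicit NCD computation, the ``dual pairs plus hyperbolic'' mechanism does not close, and the open-immersion case — which the paper's introduction explicitly flags as the nontrivial point — remains unproved.
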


Note that the preservation of $\rK_\orth$ by each of the six operations is
nontrivial. The crucial case turns out to be the preservation by the
extension by zero functor $j_!$ for certain open immersions $j$ (Proposition
\ref{p.NCD}).

As Michel Gros points out, one may compare these theorems to Morihiko
Saito's theory of mixed Hodge modules \cite{Saito}. By definition a mixed
Hodge module admits a weight filtration for which the graded pieces are
\emph{polarizable} pure Hodge modules. One may compare Definition
\ref{d.orth} to polarizable pure Hodge modules.

Let us state a consequence of Theorem \ref{t.i3} on Galois action on
cohomology in the case where $k=\F_q$ is a finite field. Let $X$ be a scheme
of finite type over $\F_q$. The eigenvalues of $\Fr_q$ acting on
$\rH^*(X_{\Fqb},\Q_\ell)$ are $q$-Weil integers\footnote{The integrality is
a special case of \cite[Variante 5.1]{Zint}.} of integral weights. We let
$m_\lambda\in \Z$ denote the multiplicity of the eigenvalue $\lambda$. In
other words, we put
\begin{equation}\label{e.m}
\prod_n\det(1-T\Fr_q\mid\rH^n(X_{\Fqb},\Q_\ell))^{(-1)^{n}}
=\prod_\lambda(1-\lambda T)^{m_\lambda}.
\end{equation}
Applying the theorem to $(a_{X})_*$, where $a_X\colon X\to \Spec(\F_q)$, we
obtain the following.

\begin{scor}\label{c.i3}
Let $X$ be a scheme of finite type over $\F_q$. In the above notation,
$m_{\lambda}=m_{q^w/\lambda}$ for every $q$-Weil integer $\lambda$ of weight
$w$, and for $w$ odd, $m_{\sqrt{q^w}}$ and $m_{-\sqrt{q^w}}$ are even. In
particular, for $w$ odd, $\sum_{\lambda} m_\lambda$ (where $\lambda$ runs
through $q$-Weil integers of weight $w$), namely the dimension of the weight
$w$ part of $\rH^*(X_{\Fqb},\Q_\ell)$, is even.
\end{scor}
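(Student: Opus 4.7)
The plan is to deduce the corollary from Theorem \ref{t.i3} applied to the structural morphism $a_X\colon X\to \Spec(\F_q)$ and the constant sheaf on $X$.

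First I would check that $\Qlb$, viewed as a horizontal perverse sheaf on $\Spec(\F_q)$ concentrated in degree $0$, is pure of weight $0$ and orthogonal (the multiplication map $\Qlb\otimes \Qlb\to\Qlb$ is a symmetric perfect pairing), so that $[\Qlb]\in\rK_\orth(\Spec(\F_q),\Qlb)$. Applying Theorem \ref{t.i3} to the pullback $a_X^*$, one of the six operations, I would then obtain $[\Qlb_X]=[a_X^*\Qlb]\in\rK_\orth(X,\Qlb)$. A second application of Theorem \ref{t.i3}, now to $\rR(a_X)_*$, would give
\[[\rR(a_X)_*\Qlb_X]=\sum_n(-1)^n[\rH^n(X_{\Fqb},\Qlb)]\in\rK_\orth(\Spec(\F_q),\Qlb),\]
a virtual $\Gal(\Fqb/\F_q)$-representation whose characteristic polynomial is exactly the left-hand side of~\eqref{e.m}.

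It would then remain to translate membership in $\rK_\orth(\Spec(\F_q),\Qlb)$ into the claimed properties of the multiplicities. Writing any such class as $\sum_i n_i[V_i]$ with each $V_i$ a pure Galois representation of some weight $w_i$, orthogonal when $w_i$ is even and symplectic when $w_i$ is odd, each $V_i$ is self-dual up to Tate twist, $V_i\simeq V_i^\vee(-w_i)$, so $m_\lambda(V_i)=m_{q^{w_i}/\lambda}(V_i)$; purity forces only summands with $w_i$ equal to the weight $w$ of $\lambda$ to contribute, and this yields $m_\lambda=m_{q^w/\lambda}$. For $\lambda=\pm\sqrt{q^w}$ with $w$ odd, the contributing $V_i$ are symplectic and the alternating pairing restricts to a non-degenerate alternating form on the generalized $\lambda$-eigenspace (self-paired since $\lambda\cdot\lambda=q^w$), forcing $m_\lambda(V_i)$ to be even. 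Summation gives $m_{\pm\sqrt{q^w}}$ even, and the final assertion follows by pairing off eigenvalues $\lambda\ne q^w/\lambda$, each such pair contributing $2m_\lambda$.

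I do not anticipate a serious obstacle: the content of the corollary is packaged into Theorem \ref{t.i3}, and the argument above is pure extraction. The one step worth flagging is the observation $[\Qlb_X]\in\rK_\orth(X,\Qlb)$, which is handled cleanly by pulling back from $\Spec(\F_q)$; attempting a direct decomposition of $\Qlb_X$ on $X$ would require stratifications together with resolutions or alterations and could be delicate in positive characteristic.
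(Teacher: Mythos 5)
Your proposal is correct and follows essentially the same route as the paper: the class $[\Qlb_X]$ lies in $\rK_\orth(X,\Qlb)$ because it is $a_X^*$ of the orthogonal weight-$0$ class on $\Spec(\F_q)$ (this is the paper's Corollary \ref{c.lsubring}), and then Theorem \ref{t.i3} applied to $(a_X)_*$ puts $\sum_n(-1)^n[\rH^n(X_{\Fqb},\Qlb)]$ in $\rK_\orth(\Spec(\F_q),\Qlb)$. Your unwinding of membership in $\rK_\orth$ over a point (self-duality up to twist giving $m_\lambda=m_{q^w/\lambda}$, and the nondegenerate alternating form on the generalized $\pm\sqrt{q^w}$-eigenspace forcing evenness, stable under $\Z$-linear combinations) is exactly the content of the paper's Example \ref{e.K}.
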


Theorem \ref{t.i3} also implies analogues of Corollary \ref{c.i3} for
compactly supported cohomology $\rH^*_c(X,\Q_\ell)$, and, if $X$ is
equidimensional, intersection cohomology $\IH^*(X,\Q_\ell)$ and compactly
supported intersection cohomology $\IH^*_c(X,\Q_\ell)$. In the case of
$\rH^*_c(X,\Q_\ell)$, the analogue of \eqref{e.m} is the inverse of the zeta
function and the analogue of Corollary \ref{c.i3} was established by Suh
\cite[Theorem 3.3.1]{Suh} using rigid cohomology.

\begin{sremark}
Some special cases of Corollary \ref{c.i3} were previously known. By
Gabber's theorem on independence of $\ell$ \cite[Theorem~2]{Gabber},
\eqref{e.m} belongs to $\Q(T)$ and is independent of $\ell$. The fact that
\eqref{e.m} belongs to $\Q(T)$ implies that $m_\lambda=m_{\lambda'}$ for
$\lambda$ and $\lambda'$ in the same $\Gal(\overline{\Q}/\Q)$-orbit. In
particular, $m_\lambda=m_{q^w/\lambda}$ for every $q$-Weil integer $\lambda$
of weight $w$, and, if $q$ is \emph{not} a square and $w$ is odd,
$m_{\sqrt{q^w}}=m_{-\sqrt{q^w}}$ so that $\sum_{\lambda} m_\lambda$ (where
$\lambda$ runs through $q$-Weil integers of weight $w$) is even in this
case.
\end{sremark}

One ingredient in the proof of Theorem \ref{t.i3} is de Jong's alterations.
Note that even for a finite \'etale cover $f\colon X\to Y$, one cannot
recover the parity of an object on $Y$ from the parity of its pullback to
$X$. More precisely, for an element $A$ in the Grothendieck group of mixed
horizontal perverse sheaves on $Y$ such that $f^*A\in\rK_{\orth}$, we do not
have $A\in \rK_{\orth}$ in general. We use equivariant alterations to
compensate for this loss of information. In order to better deal with the
equivariant situation, we will work systematically with Deligne-Mumford
stacks in the main text. We note however that the proofs of Theorems
\ref{t.i1} and \ref{t.i2} (and the corollaries to Theorem \ref{t.i1}) do not
depend on stacks, and readers only interested in these results may, in the
corresponding portions of the text (Sections \ref{s.gp}, \ref{s.pc} and
Subsection \ref{ss.h}), assume every stack to be a scheme.

The paper is organized as follows. In Section~\ref{s.gp}, we study symmetry
of complexes and perverse sheaves over a general field. In
Section~\ref{s.pc}, we study symmetry and decomposition of pure complexes
over a finite field and prove Theorem \ref{t.i2} in this case. In
Section~\ref{s.G}, we study symmetry in Grothendieck groups over a finite
field and prove Theorem \ref{t.i3} in this case. In Section \ref{s.6}, we
study symmetry of horizontal complexes over a general field finitely
generated over its prime field and finish the proof of the theorems. In
Appendix~\ref{s.app}, we collect some general symmetry properties in
categories with additional structures, which are used in the main body of
the paper.

\subsection*{Conventions} Unless otherwise indicated, we let $X$, $Y$, etc.\
denote Deligne-Mumford stacks \textit{of finite presentation} (i.e.\ of
finite type and quasi-separated) over a base field $k$; this rules out
stacks such as $B\Z$. We recall that for schemes, being of finite
presentation over $k$ is the same as being of finite type over $k$.

We let $\ell$ denote a prime number invertible in $k$, and let
$\rD^b_c(X,\Qlb)$ denote the derived category of $\Qlb$-complexes on $X$. We
refer the reader to \cite{Zheng} for the construction of $\rD^b_c(X,\Qlb)$
and of Grothendieck's six operations. We denote by $a_X\colon X\to\Spec(k)$
the structural morphism, by $K_X\coloneqq\rR a_X^!\Qlb$ the dualizing
complex on $X$, and by $D_X$ the dualizing functor
$D_{K_X}\coloneqq\rR\cHom(-,K_X)$.

As mentioned above, we will only consider the middle perversity unless
otherwise stated. We let $\Perv(X,\Qlb)\subseteq\rD^b_c(X,\Qlb)$ denote the
full subcategory of perverse $\Qlb$-sheaves on $X$. For a separated
quasi-finite morphism $f\colon X\to Y$, the middle extension functor
$f_{!*}\colon\Perv(X,\Qlb)\to \Perv(Y,\Qlb)$ is the image of the
support-forgetting morphism $\pH^0 f_! \to \pH^0 \rR f_*$.

Throughout the article we let $\sigma$ and $\sigma'$ represent elements of
$\{\pm 1\}$.

\subsection*{Acknowledgments}
We thank Luc Illusie for encouragement and enlightening conversations. We
thank Ofer Gabber for many helpful suggestions, and we are grateful to him
and G\'erard Laumon for pointing out a mistake in an earlier draft of this
paper. We are indebted to Takeshi Saito for fruitful suggestions on general
base fields, to Pierre Deligne for generously sharing his knowledge on
$\lambda$-rings, to Jiangxue Fang for bringing our attention to Kashiwara's
conjecture, and to Matthew Young for suggesting a connection to
Grothendieck-Witt groups. The first-named author thanks Torsten Ekedahl,
Bernd Ulrich, and many others for helpful discussions on the MathOverflow
website. The second-named author thanks Michel Brion, Zongbin Chen, Lei Fu,
Michel Gros, Binyong Sun, Yichao Tian, Claire Voisin, Liang Xiao, and Zhiwei
Yun for useful discussions. Part of this work was done during various stays
of the authors at Universit\'e Paris-Sud, Institut des Hautes \'Etudes
Scientifiques, Korea Institute for Advanced Studies, Shanghai Jiao Tong
University, and Hong Kong University of Science and Technology. We thank
these institutions for hospitality and support. We thank the referees for
careful reading of the manuscript and for many useful comments.

\section{Symmetry of complexes and perverse sheaves}\label{s.gp}
In this section, we study symmetry properties of $\Qlb$-complexes, namely
objects of $\rD^b_c(X,\Qlb)$, over an arbitrary field $k$. In Subsection
\ref{ss.gp1}, we define $\sigma$-self-dual complexes and we study their
behavior under operations that commute with duality.  In Subsection
\ref{ss.gp2}, we analyze $\sigma$-self-dual semisimple perverse sheaves. In
this generality none of the results is difficult, but they will be used
quite often in the sequel.

\subsection{Symmetry of complexes}\label{ss.gp1}
Tensor product endows $\rD^b_c(X,\Qlb)$ with the structure of a closed
symmetric monoidal structure. The definition below only makes use of the
symmetry constraint $c_{AB}\colon A\otimes B\simto B\otimes A$ and the
internal mapping object $\rR\cHom$.

\begin{definition}[$\sigma$-self-dual complexes]
Let $A,C\in \rD^b_c(X,\Qlb)$. We say that $A$ is \emph{$1$-self-dual with
respect to $C$} (resp.\ \emph{$-1$-self-dual with respect to $C$}) if there
exists a pairing $A\otimes A\to C$ that is
\begin{itemize}
\item \emph{symmetric} (resp.\ \emph{alternating}), in the sense that the
    following diagram commutes (resp.\ anticommutes)
\begin{equation}\label{e.sym}
\xymatrix@C=1cm{ A\otimes A \ar[rr]^{c_{AA}} \ar[rd] && A\otimes A
\ar[ld] \\
& C, &}
\end{equation}
\item and \emph{perfect}, in the sense that the pairing induces an
    isomorphism $A \simto D_CA\coloneqq\rR\cHom(A,C)$.
\end{itemize}
We say that $A$ is \textit{self-dual with respect to $C$} if there exists
    an isomorphism $A\simto D_CA$.
\end{definition}

The symmetry of the pairing $A\otimes A\to C$ can also be expressed in terms
of the induced morphism $f\colon A\to D_C A$. In fact, the diagram
\eqref{e.sym} $\sigma$-commutes if and only if the diagram
\[
\xymatrix@C=1cm{
A \ar[rr]^f \ar[rd]_-{\text{ev}} && D_CA \\
& D_CD_CA \ar[ur]_-{D_Cf}&}
\]
$\sigma$-commutes (Lemma \ref{lemma2.1.12}).

\begin{remark}
Similarly one can define self-dual and $\sigma$-self-dual
$E_\lambda$-complexes, where $E_\lambda$ is any algebraic extension of
$\Q_\ell$. Note that for $A,C\in \rD^b_c(X,E_\lambda)$, $A$ is self-dual
(resp.\ $\sigma$-self-dual) with respect to $C$ if and only if
$A\otimes_{E_\lambda} \Qlb$ satisfies the same property with respect to
$C\otimes_{E_\lambda} \Qlb$. Indeed, the ``only if'' part is obvious. To see
the ``if'' part, consider $U=\bIsom(A_{\bar k},D_C A_{\bar k})$ as in Lemma
\ref{l.Zar} below. Here $A_{\bar k}$ denotes the pullback of $A$ to $X_{\bar
k}$. Recall that rational points form a Zariski dense subset of any affine
space over an infinite field. If $A\otimes_{E_\lambda} \Qlb$ is self-dual
(resp.\ $\sigma$-self-dual) with respect to $C\otimes_{E_\lambda} \Qlb$,
then $U\cap V$ is nonempty, hence has a $E_\lambda$-point. Here $V\subseteq
\bHom(A_{\bar k},D_C A_{\bar k})$ is represented by the $E_\lambda$-vector
subspace, image of morphisms (resp.\ $\sigma$-symmetric morphisms) $A\to D_C
A$. For the above reason, we will work almost exclusively with
$\Qlb$-complexes.
\end{remark}

\begin{lemma}\label{l.Zar}
Let $A,B\in \rD^b_c(X,E_\lambda)$ such that
$\dim_{E_\lambda}\Hom(A,B)<\infty$. Then there exists a Zariski open
subscheme $U=\bIsom(A,B)$ of the affine space $\bHom(A,B)$ over $E_\lambda$
represented by the $E_\lambda$-vector space $\Hom(A,B)$ such that for any
algebraic extension $E'_\lambda$ of $E_\lambda$, $U(E'_\lambda)$ corresponds
to the set of isomorphisms $A\otimes_{E_\lambda} E'_\lambda\simto
B\otimes_{E_\lambda} E'_\lambda$.
\end{lemma}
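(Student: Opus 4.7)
The plan is to exhibit $U$ as the intersection of finitely many Zariski open subschemes of $\bHom(A,B)$, each defined by the non-vanishing of a determinant attached to the induced map on a single cohomology stalk. First I would check that, since $\Hom(A,B)$ is finite-dimensional over $E_\lambda$, the flatness of $E'_\lambda$ over $E_\lambda$ together with the compatibility of $\rR\cHom$ with coefficient extension yields a natural identification
\[\Hom_{E'_\lambda}(A\otimes_{E_\lambda} E'_\lambda,\; B\otimes_{E_\lambda} E'_\lambda)\simeq \Hom_{E_\lambda}(A,B)\otimes_{E_\lambda} E'_\lambda,\]
so that the $E'_\lambda$-points of $\bHom(A,B)$ are canonically identified with morphisms $A\otimes E'_\lambda\to B\otimes E'_\lambda$ in $\rD^b_c(X,E'_\lambda)$.

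Next I would recall that a morphism $f$ in $\rD^b_c(X,E'_\lambda)$ is an isomorphism if and only if each $\cH^n(f)$ is an isomorphism of constructible $E'_\lambda$-sheaves, and that this concerns only finitely many $n$ since $A$ and $B$ are bounded. Choose a finite stratification of $X$ such that $\cH^n(A)$ and $\cH^n(B)$ are lisse on each stratum for every relevant $n$, refined so that every stratum is connected, and pick a geometric point $\bar x_i$ in each connected stratum $S_i$. Because a morphism of lisse sheaves on a connected base is an isomorphism iff it induces an isomorphism on one stalk, the complex morphism $f$ is an isomorphism iff for every pair $(i,n)$ the linear map $\cH^n(A)_{\bar x_i}\to \cH^n(B)_{\bar x_i}$ is an isomorphism of finite-dimensional $E'_\lambda$-vector spaces. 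For each such $(i,n)$ the assignment $f\mapsto \cH^n(f)_{\bar x_i}$ is $E_\lambda$-linear, hence defines a morphism of affine $E_\lambda$-schemes
\[\bHom(A,B)\to \bHom\bigl(\cH^n(A)_{\bar x_i},\cH^n(B)_{\bar x_i}\bigr),\]
and on the target the isomorphism locus is the Zariski open complement of the determinantal hypersurface (empty unless the two stalks have equal dimension). Pulling these finitely many opens back to $\bHom(A,B)$ and intersecting defines $U$. Combined with the first paragraph, this description makes it clear that $U(E'_\lambda)$ corresponds to the set of isomorphisms $A\otimes E'_\lambda\simto B\otimes E'_\lambda$ for any algebraic extension $E'_\lambda/E_\lambda$.

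The main obstacle is the coefficient-extension compatibility used in the first paragraph: within the framework of \cite{Zheng} for Deligne-Mumford stacks one must verify that $-\otimes_{E_\lambda} E'_\lambda$ commutes with passage to cohomology sheaves, restriction to locally closed substacks, stalks at geometric points, and the formation of $\Hom$ (the last being where the finite-dimensionality hypothesis intervenes, through flatness of $E'_\lambda$). Once these compatibilities are in hand, the remainder of the argument is elementary linear algebra on determinantal loci together with the standard constructibility of cohomology sheaves.
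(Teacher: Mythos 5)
Your proof is correct and follows essentially the same route as the paper: stratify $X$ so that the cohomology sheaves of $A$ and $B$ are lisse on connected strata, pick a geometric point in each stratum, and realize $\bIsom(A,B)$ as the intersection of the pullbacks of the isomorphism loci (determinantal opens) in $\bHom(\cH^nA_{\bar x},\cH^nB_{\bar x})$. The only difference is that you make explicit the compatibility of $\Hom$ and stalks with the coefficient extension $-\otimes_{E_\lambda}E'_\lambda$, which the paper leaves implicit; that is a reasonable point to flag but not a divergence in method.
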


\begin{proof}
Assume $A,B\in \rD^{[a,b]}$. Choose a stratification of $X$ by connected
geometrically unibranch substacks such that the restrictions of $\cH^n A$
and $\cH^n B$ to each stratum are lisse sheaves. Choose a geometric point
$x$ in each stratum. Then a morphism $f\colon A\otimes_{E_\lambda}
E'_\lambda \to B\otimes_{E_\lambda} E'_\lambda$ is an isomorphism if and
only if $\cH^n f_x$ is for every $n\in [a,b]$ and every $x$ in the finite
collection. Then $\bIsom(A,B)$ is the intersection of the pullbacks of the
open subsets $\bIsom(\cH^n A_x,\cH^n B_x)\subseteq \bHom(\cH^n A_x,\cH^n
B_x)$.
\end{proof}

We will mostly be interested only in duality with respect to Tate twists
$K_X(-w)$, $w\in \Z$ of the dualizing complex $K_X=\rR a_X^!\Qlb$. In this
case, the evaluation morphism $A\to D_{K_X(-w)} D_{K_X(-w)} A$ is an
isomorphism. The functor $D_{K_X(-w)}$ preserves perverse sheaves. We will
sometimes write $K$ for $K_X$ when no confusion arises.

In the rest of this subsection, we study the behavior of $\sigma$-self-dual
complexes under operations that commute with the dualizing functors (up to
shift and twist). The results are mostly formal, but for completeness we
provide a proof for each result, based on general facts on symmetry in
categories collected in Appendix \ref{s.app}.  Readers willing to accept
these results may skip the proofs.

Most of the proofs consist of showing that the natural isomorphism
representing the commutation of the functor in question with duality is
symmetric in the sense of Definition \ref{d.symF}. It then follows from
Lemma \ref{l.selfdual} that the functor in question preserves
$\sigma$-self-dual objects.

\begin{remark}[Preservation of $\sigma$-self-dual complexes]\label{p.sd}
Let $f\colon X\to Y$ be a morphism. Let $w,w'\in \Z$.
\begin{enumerate}
\item \emph{For $n\in \Z$, Tate twist $A\mapsto A(n)$ carries
    $\sigma$-self-dual $\Qlb$-complexes with respect to $C$ to
    $\sigma$-self-dual $\Qlb$-complexes with respect to $C(2n)$; the shift
    functor $A\mapsto A[n]$ carries $\Qlb$-complexes $\sigma$-self-dual
    with respect to $C$ to $(-1)^n\sigma$-self-dual $\Qlb$-complexes with
    respect to $C[2n]$.}

This follows from Lemma \ref{l.trans}.

\item \emph{$D_X$ carries $\sigma$-self-dual $\Qlb$-complexes with respect
    to $K_X(-w)$ to $\sigma$-self-dual $\Qlb$-complexes with respect to
    $K_X(w)$.}

Since $D_X A\simeq (D_{K_X(-w)} A)(w)$, the assertion follows from (1).

\item \emph{Assume that $f$ is proper. Then $\rR f_*\colon
    \rD^b_c(X,\Qlb)\to \rD^b_c(Y,\Qlb)$ preserves $\sigma$-self-dual
    objects with respect to $K(-w)$. In other words, $\rR f_*$ carries
    $\Qlb$-complexes $\sigma$-self-dual with respect to $K_X(-w)$ to
    $\Qlb$-complexes $\sigma$-self-dual with respect to $K_Y(-w)$.}

Since $\rR f_*$ is a right-lax symmetric functor (Definition \ref{d.sf}),
the morphism $\rR f_* D_X\to D_{\rR f_* K_X}\rR f_*$ is symmetric by
Construction \ref{c.image}. Composing with the adjunction map $\rR f_* K_X
\simeq \rR f_! K_X \to K_Y$, we obtain a symmetric isomorphism $\rR
f_*D_X\simto D_Y \rR f_*$.

\item \emph{Assume that $f$ is a closed immersion and let
    $A\in\rD^b_c(X,\Qlb)$. Then $A$ is $\sigma$-self-dual with respect to
    $K_X(-w)$ if and only if $f_*A$ is $\sigma$-self-dual with respect to
    $K_Y(-w)$.}

Since the functor $f_*$ is fully faithful in this case, the assertion
follows from the proof of (3) above and Lemma \ref{l.selfdual}.

\item \emph{Assume that $f$ is an open immersion and let $A\in
    \Perv(X,\Qlb)$ be a perverse sheaf. Then $A$ is $\sigma$-self-dual
    with respect to $K_X(-w)$ if and only if $f_{!*}A$ is
    $\sigma$-self-dual with respect to $K_Y(-w)$.}

Since $f_!$ is a symmetric functor, the morphism $f_! D_X\to D_{f_!
K_X}f_!$ is symmetric by Construction \ref{c.image}. It follows that the
composite map in the commutative square
\[\xymatrix{f_! D_X\ar[d]\ar[r]^\alpha &\rR f_* D_X\ar[d]^\beta_\simeq \\
D_{f_! K_X}f_!\ar[r] & D_Y f_!}
\]
is symmetric. Here the lower horizontal map is given by the adjunction map
$f_! K_X\to K_Y$. Moreover, we have a commutative square
\[\xymatrix{f_{!*}D_X
    \ar[r]^\gamma_\sim\ar[d] & D_Y f_{!*}\ar[d]\\
    \rR \pR^0 f_* D_X\ar[r]^{\pH^0\beta}_\sim &D_Y \pR^0 f_!}
\]
By Lemma \ref{l.inter}, $\gamma$ is symmetric. Since the functor $f_{!*}$
is fully faithful by Lemma \ref{l.perverse} below, it then suffices to
apply Lemma \ref{l.selfdual}. The ``if'' part also follows from (8) below.

\item \emph{Let $A\in \rD^b_c(X,\Qlb)$ be $\sigma$-self-dual with respect
    to $K_X(-w)$, and let $B\in\rD^b_c(X',\Qlb)$ be $\sigma'$-self-dual
    with respect to $K_{X'}(-w')$. Then the exterior tensor product
    $A\boxtimes B\in \rD^b_c(X\times X',\Qlb)$ is $\sigma
    \sigma'$-self-dual with respect to $K_{X\times X'}(-w-w')$.}

Since $-\boxtimes -$ is a symmetric monoidal functor, the K\"unneth
isomorphism (cf.\ \cite[(1.7.6), Proposition 2.3]{SGA5III})
    \[D_X(-)\boxtimes D_{X'}(-) \simto D_{K_X\boxtimes K_{X'}}
    (-\boxtimes -)\simeq D_{X\times X'}(-\boxtimes -)
    \]
    is symmetric by Construction \ref{c.image}.

\item \emph{If $f$ is smooth, purely of dimension $d$, then $f^*[d]$
    carries $\Qlb$-complexes $\sigma$-self-dual with respect to $K_Y(-w)$
    to $\Qlb$-complexes $(-1)^d\sigma$-self-dual with respect to
    $K_X(-d-w)$.}

Since $f^*$ is a symmetric monoidal functor, the isomorphism
\[f^*T^d D_{Y}\simto D_{f^*K_Y [2d]} f^* T^d\simeq D_{K_X(-d)} f^* T^d
\]
is $(-1)^d$-symmetric by Construction \ref{c.image} and Remark
\ref{r.shift}.

\item \emph{If $X$ and $Y$ are regular, purely of dimension $d$ and $e$
    respectively, then $f^*[d-e]\colon \rD^b_{\lisse}(Y,\Qlb)\to
    \rD^b_{\lisse}(X,\Qlb)$ carries $\Qlb$-complexes $\sigma$-self-dual
    with respect to $K_Y(-w)$ to $\Qlb$-complexes
    $(-1)^{d-e}\sigma$-self-dual with respect to $K_X(-(d-e)-w)$. Here
    $\rD^b_{\lisse}$ denotes the full subcategory of $\rD^b_c$ consisting
    of complexes with lisse cohomology sheaves.}

Let $r=d-e$. As in (7), the natural transformation
\[f^*T^{r} D_Y \to D_{f^*K_Y[2r]}f^*T^{r}\simeq D_{K_X(-r)}f^*T^{r}\]
is $(-1)^r$-symmetric. For $A\in \rD^b_c(Y,\Qlb)$, this natural
transformation can be computed as
\[f^*T^{r} D_Y A\simeq f^*T^{r} D_{K_Y(-r)} A\otimes \rR f^! \Qlb \xrightarrow{\alpha}
\rR f^!D_{K_Y(-r)}T^{r} A\simeq D_{K_X(-r)}f^*T^{r} A.
\]
For $A\in \rD^b_{\lisse}$, $\alpha$ is an isomorphism.

\end{enumerate}
Similar results hold for self-dual $\Qlb$-complexes.
\end{remark}

\begin{lemma}\label{l.perverse}
Let $j\colon U\to X$ be an immersion. Then the functor $j_{!*}\colon
\Perv(U,\Qlb)\to \Perv(X,\Qlb)$ is fully faithful.
\end{lemma}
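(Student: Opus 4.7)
The plan is to factor $j$ as a composition $U\xrightarrow{j_0}\bar U\xrightarrow{i}X$ of an open immersion $j_0$ followed by a closed immersion $i$, reduce to the two cases separately, and combine them. The reduction hinges on the identity $j_{!*}\simeq i_{!*}\circ j_{0,!*}$, which follows from the perverse $t$-exactness of $i_*=i_!$ for the closed immersion: applying $i_*$ commutes with formation of the image of $\pH^0 (j_0)_!A\to \pH^0\rR (j_0)_*A$, and $\pH^0 j_!\simeq i_*\circ\pH^0 (j_0)_!$, $\pH^0\rR j_*\simeq i_*\circ\pH^0\rR (j_0)_*$. Since full faithfulness is preserved under composition of functors, it suffices to treat each factor.

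For a closed immersion $i$, one has $i_!=i_*$ (extension by zero), so $i_{!*}=i_*$; and $i_*$ is fully faithful because $i^*i_*\simeq\id$ on $\rD^b_c$.

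For an open immersion $j\colon U\to X$ with closed complement $Z$, I will use the characterization of $j_{!*}A$ as the unique perverse extension of $A$ having no nonzero subobject or quotient supported on $Z$. Given $A,B\in\Perv(U,\Qlb)$, note that $j_!A$ lies in perverse degrees $\le 0$ and $j_{!*}B$ is perverse, so
\[
\Hom_{\rD^b_c(X)}(j_!A,j_{!*}B)\simeq \Hom_{\Perv(X)}(\pH^0 j_!A,j_{!*}B),
\]
while by adjunction the left-hand side equals $\Hom_{\rD^b_c(U)}(A,j^*j_{!*}B)=\Hom(A,B)$. The canonical surjection $\pH^0 j_!A\twoheadrightarrow j_{!*}A$ has kernel $K$ supported on $Z$ (its pullback to $U$ is zero, since both source and target restrict to $A$ on $U$). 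Any morphism $K\to j_{!*}B$ has image a subobject of $j_{!*}B$ supported on $Z$, hence zero; so the surjection induces a bijection $\Hom(j_{!*}A,j_{!*}B)\simeq \Hom(\pH^0 j_!A,j_{!*}B)$. Chasing the identifications shows that the resulting bijection $\Hom(A,B)\simeq\Hom(j_{!*}A,j_{!*}B)$ is inverse to $j^*$, yielding full faithfulness.

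I expect no serious obstacle, as the argument is classical (going back to Beilinson--Bernstein--Deligne--Gabber). The only point that requires attention is checking the no-sub-or-quotient characterization of $j_{!*}$ and the compatibility $j_{!*}\simeq i_{!*}\circ j_{0,!*}$ in the Deligne--Mumford setting; both are formal consequences of the perverse $t$-structure formalism on $\rD^b_c(X,\Qlb)$ already adopted in the paper.
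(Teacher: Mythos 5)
Your proof is correct, but it takes a somewhat different route from the paper's. The paper handles an arbitrary immersion $j$ directly, without factoring: it observes that the composite
\[
\Hom(A,B)\xrightarrow{\alpha}\Hom(j_{!*}A,j_{!*}B)\xrightarrow{\beta}\Hom(\pH^0 j_!A,\pH^0\rR j_*B)\simeq\Hom(j_!A,\rR j_*B)
\]
is an isomorphism (the last group computes to $\Hom(A,B)$ by adjunction and $j^*j_!\simeq\id$, since $j_!$ is right $t$-exact and $\rR j_*$ is left $t$-exact for \emph{any} immersion), and that $\beta$ is injective because $\pH^0j_!A\twoheadrightarrow j_{!*}A$ is an epimorphism and $j_{!*}B\hookrightarrow\pH^0\rR j_*B$ is a monomorphism in $\Perv(X,\Qlb)$; this forces $\alpha$ to be an isomorphism without ever needing to know that $\beta$ is surjective. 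Your argument instead uses only the epimorphism side, so you must check that the cokernel obstruction $\Hom(K,j_{!*}B)$ vanishes via the no-sub-or-quotient characterization of $j_{!*}$; that characterization is specific to open immersions, which is what drives your factorization $j=i\circ j_0$ and the accompanying verification of $j_{!*}\simeq i_{!*}\circ j_{0,!*}$. Both lines of reasoning are sound, but the paper's use of both the surjection and the injection simultaneously lets it dispense with the factorization and with the no-subobject lemma, giving a shorter and more uniform argument.
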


\begin{proof}
Let $A$ and $B$ be perverse $\Qlb$-sheaves on $U$, and let $\alpha\colon
\Hom(A,B)\to \Hom(j_{!*}A,j_{!*}B)$ be the map induced by $j_{!*}$. As the
composite map
\[\Hom(A,B)\xrightarrow{\alpha} \Hom(j_{!*}A,j_{!*}B)\xrightarrow{\beta} \Hom(\pH^0 j_! A,\pH^0 Rj_* B)\simeq \Hom(j_!A,Rj_*B)
\]
is an isomorphism and $\beta$ is an injection, $\alpha$ is an isomorphism.
\end{proof}

\begin{example}\label{e.IC}
Assume that $X$ is equidimensional. We define the \textit{intersection
complex} of $X$ by $\IC_X=j_{!*}(\Q_\ell[d])$, where $j\colon U\to X$ is a
dominant open immersion such that $U_\red$ is regular and $d=\dim X$. Then
by Parts (5) and (7) of Remark \ref{p.sd}, $\IC_X$ is $(-1)^d$-self-dual
with respect to $K_X(-d)$.
\end{example}

Although we do not need it, let us mention the following stability of
$\sigma$-self-dual complexes under nearby cycles.

\begin{remark}
\emph{Let $S$ be the spectrum of a Henselian discrete valuation
    ring, of generic point $\eta$ and closed point $s$, on which $\ell$ is
    invertible. Let $\mathfrak X$ be a Deligne-Mumford stack of finite
    presentation over $S$. Then the nearby cycle functor $\rR\Psi\colon
    \rD^b_c(\mathfrak X_\eta,\Qlb)\to\rD^b_c(\mathfrak X_s\times_s
    \eta,\Qlb)$ preserves $\sigma$-self-dual objects with respect to
    $K(-w)$.}

Indeed, $\rR\Psi$ is a right-lax symmetric monoidal functor. Hence, by
Construction \ref{c.image}, the composite $\rR\Psi D_{\mathfrak X_\eta}\to
D_{\rR\Psi K_{\mathfrak X_\eta}} \rR\Psi \to D_{\mathfrak X_s} \rR\Psi$,
which is a natural isomorphism (cf.\ \cite[Th\'eor\`eme 4.2]{Illusie}), is
symmetric.
\end{remark}

Remark \ref{p.sd} (6) can be applied to the exterior tensor power functor
$(-)^{\boxtimes m}\colon \rD^b_c(X,\Qlb)\to \rD^b_c(X^m,\Qlb)$, $m\ge 0$. We
now discuss a refinement
\begin{equation}\label{eq.boxpower}
    \rD^b_c(X,\Qlb)\to
    \rD^b_c([X^m/\Perm_m],\Qlb),\quad A\mapsto A^{\boxtimes m}
\end{equation}
given by permutation. Readers not interested in this refinement may skip
this part as it will not be used in the proofs of the results mentioned in
the introduction.

We briefly recall one way to define the symmetric product stack
$[X^m/\Perm_m]$. For every $k$-scheme $S$, $[X^m/\Perm_m](S)$ is the
groupoid of pairs $(T,x)$, where $T$ is a finite \'etale cover of $S$ of
degree $m$ and $x$ is an object of $X(T)$, with isomorphisms of pairs
defined in the obvious way.

\begin{remark}
\emph{The functor \eqref{eq.boxpower} carries $\sigma$-self-dual complexes
with respect to $K_X(-w)$ to $\sigma^m$-self-dual complexes with respect to
$K_{[X^m/\Perm_m]}(-mw)$.}

Indeed, $(-)^{\boxtimes m}$ is a symmetric monoidal functor. Hence, the
    isomorphism
    \[(D_{X} (-))^{\boxtimes m}\simto D_{K_X^{\boxtimes m}}
    ((-)^{\boxtimes m})\simeq D_{[X^m/\Perm_m]}((-)^{\boxtimes m})
    \]
    is symmetric by Construction \ref{c.image}.
\end{remark}

\subsection{Symmetry of perverse sheaves}\label{ss.gp2}
In this subsection, we study $\sigma$-self-dual perverse sheaves. We first
prove a two-out-of-three property, which will play an important role in
later sections. We then discuss a trichotomy for indecomposable perverse
$\Qlb$-sheaves. From this we deduce a criterion for semisimple perverse
$\Qlb$-sheaves to be $\sigma$-self-dual in terms of multiplicities of simple
factors.

\begin{prop}[Two-out-of-three]\label{p.23}
Let $A$ be a perverse $\Qlb$-sheaf such that $A\simeq A'\oplus A''$. If $A$
and $A'$ are $\sigma$-self-dual with respect to $K_X(-w)$, then so is $A''$.
\end{prop}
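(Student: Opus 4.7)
The strategy is to perturb the given pairing on $A$ using the known pairing on $A'$ so that its restriction to the summand $A'$ becomes perfect, and then to extract the desired pairing on $A''$ via the standard orthogonal-complement construction. Throughout, set $C := K_X(-w)$.

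Fix a decomposition $A = A' \oplus A''$ with inclusions $i_1, i_2$ and projections $p_1, p_2$. The given pairings correspond to $\sigma$-symmetric isomorphisms $\phi \colon A \simto D_C A$ and $\phi' \colon A' \simto D_C A'$. Write $a := D_C i_1 \circ \phi \circ i_1 \colon A' \to D_C A'$ for the (possibly degenerate) restriction of $\phi$ to $A'$, and extend $\phi'$ by zero to $\tilde\phi' := D_C p_1 \circ \phi' \circ p_1 \colon A \to D_C A$. A routine diagram chase shows $\tilde\phi'$ is $\sigma$-symmetric, hence so is the family
\[
    \Phi_t := \phi + t\, \tilde\phi' \colon A \to D_C A, \qquad t \in \Qlb,
\]
whose restriction to $A'$ via $i_1$ is $a + t\phi'$.

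The key step is to choose $t_0 \in \Qlb$ for which both $\Phi_{t_0}$ and its restriction $a + t_0 \phi'$ are isomorphisms. By Lemma \ref{l.Zar}, $\bIsom(A, D_C A)$ is Zariski open in $\bHom(A, D_C A)$ and contains $\Phi_0 = \phi$, so the set of $t$ with $\Phi_t \notin \bIsom(A, D_C A)$ is finite. Likewise, $a + t\phi'$ is invertible iff $\phi'^{-1} a + t \cdot \id_{A'}$ is a unit in the finite-dimensional $\Qlb$-algebra $\End(A')$; via the regular representation (left multiplication on $\End(A')$), this is the nonvanishing of the characteristic polynomial of $-\phi'^{-1} a$, a nonzero polynomial in $t$, and so holds off a finite set. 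Since $\Qlb$ is infinite the two cofinite sets meet, providing the desired $t_0$; set $\Phi := \Phi_{t_0}$.

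Now define $B := \ker(D_C i_1 \circ \Phi \colon A \to D_C A')$, a subobject of $A$ in $\Perv(X, \Qlb)$. Because $D_C i_1 \circ \Phi \circ i_1 = a + t_0 \phi'$ is an isomorphism, the composite $A' \xrightarrow{i_1} A \twoheadrightarrow A/B$ is an isomorphism, yielding $A = A' \oplus B$. With respect to this new decomposition the $(A', B)$-block of $\Phi$ vanishes by the very definition of $B$, and the $(B, A')$-block then vanishes by $\sigma$-symmetry; hence $\Phi$ is block diagonal, and since $\Phi$ and $a + t_0 \phi'$ are isomorphisms so is the remaining $B$-block $\Phi|_B \colon B \simto D_C B$, a $\sigma$-symmetric perfect pairing on $B$. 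Finally, $B$ and $A''$ are two complements of $A'$ in $A$, hence both canonically isomorphic to $A/A'$ and thus to each other; transporting $\Phi|_B$ along the induced isomorphism $A'' \simeq B$ gives the sought $\sigma$-symmetric perfect pairing on $A''$.

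The principal technical obstacle is that the restriction $a$ of $\phi$ to $A'$ can well be degenerate, which would block a direct orthogonal-complement splitting; having the perfect pairing $\phi'$ on $A'$ at hand is precisely what lets us perturb $\phi$ to $\Phi_{t_0}$ and sidestep this difficulty.
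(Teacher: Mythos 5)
Your proof is correct and follows the same strategy as the paper's: perturb the form $\phi$ on $A$ by the split form $\tilde\phi'$ so that the resulting form and its restriction to $A'$ are both nondegenerate, then take the orthogonal complement of $A'$ and identify it with $A''$. Your implementation tweaks --- a one-parameter family $\Phi_t$ (a slice of the paper's two-parameter $h_{a,b}$), the characteristic-polynomial argument for nondegeneracy of the restriction in place of a second invocation of Lemma~\ref{l.Zar}, and identifying $B$ with $A''$ directly as two complements of the same sub-object rather than via the Krull--Schmidt theorem --- are all sound and, if anything, slightly more self-contained than the paper's own wording.
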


\begin{proof}
We write $D$ for $D_{K_X(-w)}$. Let $f\colon A\simto DA$ and $g'\colon
A'\simto D A'$ be $\sigma$-symmetric isomorphisms. We let $f'\colon A'\to D
A'$ denote the restriction of $f$ to $A'$, namely the composite
\[A'\xrightarrow{i} A\xrightarrow[\sim]{f} DA \xrightarrow{Di} DA',\]
where $i\colon A'\to A$ is the inclusion. Let $g\colon A\to DA$ be the
direct sum of $g'$ with the zero map $A''\to DA''$. These are
$\sigma$-symmetric morphisms. Consider linear combinations $h_{a,b}=af+bg$
and $h'_{a,b}=af'+bg'$, where $a,b\in \Qlb$. By Lemma \ref{l.Zar}, there are
only finitely many values of $(a:b)$ for which $h_{a,b}$ is not an
isomorphism. The same holds for $h'_{a,b}$. Therefore, there exist $a,b\in
\Qlb$ such that $h_{a,b}$ and $h'_{a,b}$ are isomorphisms. Consider the
orthogonal complement of $A'$ in $A$ with respect to $h_{a,b}$:
\[B=\Ker(A\xrightarrow[\sim]{h_{a,b}}DA \xrightarrow{Di} DA').\]
Then $h_{a,b}$ induces a $\sigma$-symmetric isomorphism $B\simto DB$.
Moreover, $A\simeq A'\oplus B$, so that $B\simeq A''$. Here we used the
Krull-Schmidt theorem \cite[Theorem 1]{Atiyah} and the fact that perverse
sheaves have finite lengths.
\end{proof}

\begin{remark}\label{r.23d}
The two-out-of-three property also holds more trivially for self-dual
complexes. In fact, if we have decompositions of perverse $\Qlb$-sheaves
$A\simeq A'\oplus A''$ and $B\simeq B'\oplus B''$ such that $A\simeq D_X B
(-w)$ and $A'\simeq D_X B'(-w)$, then we have $A''\simeq D_X B''(-w)$ by the
Krull-Schmidt theorem.
\end{remark}

\begin{prop}[Trichotomy]\label{p.schur}
Let $A$ be an indecomposable perverse $\Qlb$-sheaf on $X$. Then exactly one
of the following occurs:
\begin{itemize}
\item $A$ is $1$-self-dual with respect to $K_X(-w)$;
\item $A$ is $-1$-self-dual with respect to $K_X(-w)$;
\item $A$ is not self-dual with respect to $K_X(-w)$.
\end{itemize}
\end{prop}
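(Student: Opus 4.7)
My approach is via Schur's lemma. Since $A$ is an indecomposable object of finite length in the $\Qlb$-linear abelian category $\Perv(X,\Qlb)$, and since there are no nontrivial finite-dimensional division algebras over the algebraically closed field $\Qlb$, the endomorphism ring $E=\End(A)$ is a local $\Qlb$-algebra with residue field $\Qlb$. I will write $E=\Qlb\cdot\id_A+\fm$ with $\fm$ the nilpotent maximal ideal.

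Write $D=D_{K_X(-w)}$. By Lemma~\ref{lemma2.1.12} (the reformulation of the symmetry condition in terms of the induced morphism $A\to D A$), symmetry of a pairing $A\otimes A\to K_X(-w)$ translates into the $\Qlb$-linear involution $\phi\mapsto\phi^\vee:=D\phi\circ \ev_A$ on $\Hom(A,D A)$: the complex $A$ is $\sigma$-self-dual iff this space contains an isomorphism $\psi$ satisfying $\psi^\vee=\sigma\psi$. If no isomorphism $A\simto D A$ exists we are in the third case; otherwise I fix an isomorphism $\phi\colon A\simto D A$. Conjugation by $\phi$ defines an anti-involution $\tau(x)=\phi^{-1}(Dx)\phi$ of $E$, and because $E/\fm=\Qlb$ is commutative, $\tau$ reduces to the identity on the residue field. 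Setting $u:=\phi^{-1}\phi^\vee\in E^\times$, the involutivity $(\phi^\vee)^\vee=\phi$ yields $\tau(u)\,u=1$; reducing modulo $\fm$ gives $\bar u^2=1$, so $\bar u=\sigma\in\{\pm1\}$ is a well-defined sign.

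This sign $\sigma$ simultaneously produces existence and exclusivity. For \emph{existence}, let $v:=\sigma\cdot\id_A+u$; then $\bar v=2\sigma\in\Qlb^\times$ (characteristic zero), so $v$ is invertible, and a short manipulation using $\tau(u)u=1$ and $\tau(\sigma)=\sigma$ gives $\tau(v)\,u=\sigma v$, which translates back to $(\phi\circ v)^\vee=\sigma(\phi\circ v)$; hence $\phi\circ v$ exhibits $A$ as $\sigma$-self-dual. For \emph{exclusivity}, if $A$ were simultaneously $+1$- and $-1$-self-dual via isomorphisms $\phi_+,\phi_-$, then $u':=\phi_+^{-1}\phi_-\in E^\times$ would satisfy $\tau(u')=-u'$ (translating the conditions $\phi_\pm^\vee=\pm\phi_\pm$), and reducing modulo $\fm$ would force $\bar u'=-\bar u'=0$, contradicting invertibility of $u'$.

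The main obstacle is not conceptual but book\-keeping: pinning down the sign conventions so that the dictionary between symmetric/alternating pairings $A\otimes A\to K_X(-w)$ and the involution $\phi\mapsto\phi^\vee$ on $\Hom(A,D A)$ is correct, and extracting the identity $\tau(u)u=1$ cleanly from $(\phi^\vee)^\vee=\phi$. Both are formal manipulations in the closed symmetric monoidal structure of $\rD^b_c(X,\Qlb)$ that the paper isolates in Appendix~\ref{s.app} and Lemma~\ref{lemma2.1.12}; once these are in hand the argument reduces to the Schur-type computation in $E$ carried out above.
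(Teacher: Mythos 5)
Your proof is correct and follows essentially the same route as the paper, which deduces the trichotomy from Lemma~\ref{l.KS} and Remark~\ref{r.KS}: locality of $\End(A)$ (Fitting/Krull--Schmidt), characteristic $\neq 2$, and triviality on the residue field $\Qlb$ of the $\Qlb$-linear involution induced by a self-duality isomorphism. Your existence step, extracting the sign $\bar u$ of $u=\phi^{-1}\phi^{\vee}$ and correcting by $v=\sigma\,\id_A+u$, is a sharpened form of the paper's decomposition $2f=(f+f^{T})+(f-f^{T})$ in the local ring, and your exclusivity argument coincides with the paper's.
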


This follows from general facts (Lemma \ref{l.KS} and Remark \ref{r.KS})
applied to the category of perverse $\Qlb$-sheaves.

\begin{remark}
In the case of a simple perverse $\Qlb$-sheaf, the proof can be somewhat
simplified with the help of Schur's lemma. This case is analogous to a
standard result on complex representations of finite or compact groups
(\cite[Section 13.2, Proposition 38]{SerreRL}, \cite[Proposition
II.6.5]{BD}).
\end{remark}

\begin{remark}
An indecomposable perverse $E_\lambda$-sheaf on $X$, self-dual with respect
to $K_X(-w)$, is either $1$-self-dual or $-1$-self-dual, by Lemma \ref{l.KS}
and Remark \ref{r.KS}. Note that a simple perverse $E_\lambda$-sheaf can be
$1$-self-dual and $-1$-self-dual with respect to $K_X(-w)$ at the same time.
\end{remark}

\begin{cor}\label{c.even}
Let $A\simeq \bigoplus_B B^{n_B}$ be a semisimple perverse $\Qlb$-sheaf on
$X$, where $B$ runs through isomorphism classes of simple perverse
$\Qlb$-sheaves on $X$. Then $A$ is $\sigma$-self-dual with respect to
$K_X(-w)$ if and only if the following conditions hold:
\begin{enumerate}
\item $n_B=n_{(D_X B)(-w)}$ for $B$ not self-dual with respect to
    $K_X(-w)$;
\item $n_B$ is even for $B$ that are $-\sigma$-self-dual with respect to
    $K_X(-w)$.
\end{enumerate}
Moreover, $A$ is self-dual with respect to $K_X(-w)$ if and only if (1)
holds.
\end{cor}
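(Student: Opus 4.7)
The plan is to organize the simple summands of $A$ according to the trichotomy of Proposition \ref{p.schur}: those $B$ that are $\sigma$-self-dual with respect to $K_X(-w)$, those that are $-\sigma$-self-dual (and not $\sigma$-self-dual), and those that are not self-dual at all (which come paired with $(D_X B)(-w)$). Condition (1) matches up the last type, while condition (2) controls the second type. Since $\Qlb$ is algebraically closed, Schur's lemma gives $\End(B) = \Qlb$ for every simple $B$, and this will be essential for the parity calculation.

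For \emph{sufficiency}, I would construct an explicit $\sigma$-symmetric perfect pairing on each type of summand and take the orthogonal direct sum. On $B^{n_B}$ with $B$ being $\sigma$-self-dual, I take the diagonal sum of $n_B$ copies of the given $\sigma$-symmetric pairing. On a pair summand $B^{n_B}\oplus((D_X B)(-w))^{n_B}$ (with $n_B = n_{(D_X B)(-w)}$ by (1)), I use the standard hyperbolic pairing on $C\oplus D_{K_X(-w)}C$, which is $\sigma$-symmetric for any $\sigma$. On $B^{n_B}$ with $B$ being $-\sigma$-self-dual via a fixed $-\sigma$-symmetric isomorphism $f\colon B\simto D_{K_X(-w)}B$, it suffices (since $n_B$ is even) to exhibit a $\sigma$-symmetric perfect pairing on $B\oplus B$; the matrix $\bigl(\begin{smallmatrix}0 & f\\ -f & 0\end{smallmatrix}\bigr)$ does the job, as a direct check using the $-\sigma$-symmetry of $f$ shows.

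For \emph{necessity}, first observe that if $A\simto D_X A(-w)$ then comparing $\bigoplus_B B^{n_B} \simeq \bigoplus_B ((D_XB)(-w))^{n_B}$ via the Krull--Schmidt theorem (as used in the proof of Proposition \ref{p.23}) gives condition (1); this already establishes the last "moreover" assertion, whose converse is immediate from the hyperbolic/identity constructions above. Assuming now that $A$ is $\sigma$-self-dual, the summands corresponding to pair classes and to $\sigma$-self-dual simples are individually $\sigma$-self-dual by the sufficiency construction, so iterating the two-out-of-three property (Proposition \ref{p.23}) shows that $A'' = \bigoplus_{B\in \mathcal S} B^{n_B}$, indexed over $-\sigma$-self-dual simples $B$, is $\sigma$-self-dual. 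Since distinct simples $B,B'\in\mathcal S$ satisfy $\Hom(B,D_{K_X(-w)}B') = \Hom(B,B') = 0$ by Schur combined with $(D_XB')(-w)\simeq B'$, the pairing on $A''$ is block-diagonal, so each $B^{n_B}$ is individually $\sigma$-self-dual. Finally, for a fixed $B\in\mathcal S$, the chosen $-\sigma$-symmetric isomorphism $f\colon B\simto D_{K_X(-w)}B$ identifies $\Hom(B^{n_B},D_{K_X(-w)}B^{n_B})$ with $M_{n_B}(\Qlb)$; a short verification using the $-\sigma$-symmetry of $f$ shows that $\sigma$-symmetric morphisms correspond to skew-symmetric matrices, and a skew-symmetric matrix over $\Qlb$ is invertible only when its size is even, forcing $n_B$ to be even. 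The main obstacle is precisely this last bookkeeping step: the sign introduced by $f$ converts $\sigma$-symmetric pairings into skew-symmetric matrices (rather than symmetric ones), which is what produces the parity constraint on $n_B$.
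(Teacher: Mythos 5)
Your proposal is correct and follows essentially the same route as the paper: sufficiency via diagonal, hyperbolic, and $\bigl(\begin{smallmatrix}0&f\\-f&0\end{smallmatrix}\bigr)$-type pairings (the paper's Remark \ref{r.selfd} (2)), and necessity of (2) by identifying the $\sigma$-symmetric perfect pairing on the isotypic piece $B^{n_B}$, via a fixed $-\sigma$-symmetric $f\colon B\simto D_{K_X(-w)}B$, with an invertible skew-symmetric $n_B\times n_B$ matrix over $\Qlb$. Your explicit use of two-out-of-three and Schur's lemma to isolate each isotypic block just spells out what the paper leaves implicit when it asserts that $B^{\oplus n_B}\simeq B\otimes V$ is $\sigma$-self-dual.
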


In particular, if $B$ and $B'$ are respectively $1$-self-dual and
$-1$-self-dual simple perverse sheaves on $X$, then $B\oplus B'$ is
self-dual but neither $1$-self-dual nor $-1$-self-dual.

\begin{proof}
It is clear that (1) is equivalent to the condition that $A$ is self-dual.
If (1) and (2) hold, then $A$ is $\sigma$-self-dual by Proposition
\ref{p.schur} and the fact that $B\oplus (D_X B)(-w)$ is $\sigma$-self-dual
with respect to $K_X(-w)$ for all $B$ (Remark \ref{r.selfd} (2)). It remains
to show that if $A$ is $\sigma$-self-dual, then (2) holds. Let $B$ be
$-\sigma$-self-dual. As $B^{\oplus n_B}\simeq B\otimes V$ is
$\sigma$-self-dual, where $V=\Qlb^{\oplus n_B}$, the isomorphism
\[\Hom(B\otimes V, D_X(B\otimes
V)(-w))\simeq \Hom(B,(D_X B)(-w))\otimes \Hom(V,V^*) \simeq \Hom(V,V^*)\]
provides a skew-symmetric $n_B\times n_B$ matrix with entries in $\Qlb$,
which implies that $n_B$ is even. More formally, we can apply the second
part of Lemma \ref{l.selfdual} to the fully faithful functor $F\colon
V\mapsto B\otimes V$ from the category of finite-dimensional $\Qlb$-vector
spaces to $\rD^b_c(X,\Qlb)$. The natural isomorphism $FD\simto D_{K_X(-w)}F$
is $-\sigma$-self-dual.
\end{proof}

\begin{remark}\label{r.ss}
The semisimplification of a $\sigma$-self-dual perverse sheaf is
$\sigma$-self-dual by Lemma \ref{l.ss}. The converse does not hold. See
Example \ref{e.ss} below.
\end{remark}

We will need to consider more generally geometrically semisimple perverse
sheaves, namely perverse sheaves whose pullbacks to $X_{\bar k}$ are
semisimple. Part (1) of the following lemma extends \cite[Corollaire
5.3.11]{BBD} for pure perverse sheaves.

\begin{lemma}\label{l.indec}
Let $A$ be a geometrically semisimple perverse $\Qlb$-sheaf on $X$.
\begin{enumerate}
\item Let $i\colon Y\to X$ be a closed immersion with complementary open
    immersion $j\colon U\to X$. Then $A$ admits a unique decomposition
    $A\simeq j_{!*}j^*A \oplus i_*B$, where $B$ is a perverse sheaf on
    $Y$. Moreover, we have $B\simeq \pH^0 i^* A\simeq \pR^0 i^! A$.

\item $A$ admits a unique decomposition $A\simeq \bigoplus_V A_V$, where
    $V$ runs through irreducible closed substacks of $X$, and the support
    of each indecomposable direct summand of the perverse sheaf $A_V$ is
    $V$.
\end{enumerate}
\end{lemma}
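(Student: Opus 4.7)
The plan is to construct the decomposition in (1) canonically using adjunction-derived morphisms, reduce the verification to the algebraic closure, and then deduce (2) by combining (1) with the Krull-Schmidt theorem.

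For (1), I would set up two canonical morphisms
\[\phi\colon i_*\pR^0 i^! A \to A \quad\text{and}\quad \psi\colon A \to i_*\pH^0 i^* A,\]
where $\phi$ comes from the inclusion $\pR^0 i^! A \hookrightarrow i^! A$ via the counit $i_* i^! \to \id$, and $\psi$ comes from the truncation $i^* A \twoheadrightarrow \pH^0 i^* A$ via the unit $\id \to i_* i^*$. The key step is to show that the composite $\psi\circ\phi$ is an isomorphism. By conservativity of base change to $\bar k$, it suffices to check this when $k=\bar k$. There, semisimplicity lets us write $A \simeq A' \oplus i_*C$, where $A'$ is the sum of the simple factors whose (irreducible) support is not contained in $Y$. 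Each such simple factor is a middle extension from a dense open of its support, so by the defining property of the middle extension, $i^*$ and $i^!$ applied to it land in $\pD^{\leq -1}(Y)$ and $\pD^{\geq 1}(Y)$ respectively. Hence $\pH^0 i^* A' = 0 = \pR^0 i^! A'$, both $\pH^0 i^* A$ and $\pR^0 i^! A$ identify with $C$, and under these identifications $\phi$ and $\psi$ are the obvious inclusion and projection, whose composite is the identity.

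Once $\psi\circ\phi$ is invertible, $\phi$ is a split monomorphism and $\psi$ is a split epimorphism; setting $B \coloneqq \pR^0 i^! A$ and $A'' \coloneqq \Ker(\psi)$ yields $A \simeq i_* B \oplus A''$. To identify $A'' \simeq j_{!*}j^*A$, I would check $A''$ has no nonzero subobject or quotient supported on $Y$: any subobject $i_*F' \hookrightarrow A$ with $F'$ perverse factors via $i_*\dashv i^!$ adjunction through $\phi(i_*\pR^0 i^! A)$, hence meets $A''$ trivially; dually, any quotient of $A$ supported on $Y$ factors via $i^* \dashv i_*$ adjunction through $\psi$, hence vanishes on $A''$. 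The characterization of $j_{!*}$ as the unique perverse extension with no sub or quotient on $Y$ then gives $A'' \simeq j_{!*}j^*A$. The identification $B \simeq \pH^0 i^* A$ is provided by $\psi\circ\phi$, and uniqueness of the decomposition follows because $\pR^0 i^!(j_{!*}j^*A)=0$, so $B$ is canonically recovered as $\pR^0 i^! A$.

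For (2), I would invoke Krull-Schmidt (available because $\End(A)$ is a finite-dimensional $\Qlb$-algebra) to decompose $A$ into indecomposable summands, then group them by support. It remains to show that each indecomposable summand has irreducible support: if some indecomposable $A_j$ had two distinct irreducible components $Z_1, Z_2$ in its support, applying (1) to $A_j$ with $Y = Z_1$ would make both $i_*\pR^0 i^! A_j$ (detecting the $Z_1$-part) and $j_{!*}j^*A_j$ (nontrivial because its support contains $Z_2\not\subseteq Z_1$) nonzero, contradicting indecomposability. Uniqueness of the grouped decomposition follows from Krull-Schmidt. The main obstacle is the descent of the direct-sum decomposition from $\bar k$ to $k$, which is resolved by the observation that $\phi$ and $\psi$ are defined over $k$ canonically and their composite, being invertible after base change, is invertible already over $k$.
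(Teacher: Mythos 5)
Your proposal is correct and follows essentially the same route as the paper (which itself follows \cite[Corollaire 5.3.11]{BBD}): one uses the canonical adjunction maps, checks that the composite $i_*\pR^0 i^!A\to A\to i_*\pH^0 i^*A$ is an isomorphism by passing to $\bar k$ where semisimplicity is available, and deduces (2) from the irreducibility of the support of each indecomposable summand. The only cosmetic difference is that you identify the complementary summand with $j_{!*}j^*A$ via the no-subobject/no-quotient-on-$Y$ characterization of the middle extension, whereas the paper exhibits the maps to and from $j_{!*}j^*A$ by factorizing the $j$-adjunction maps; both are standard and equivalent.
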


Assume moreover that $A$ is indecomposable. Then, by Part (1) of the lemma,
we have $j_{!*}j^*A\simeq A$ if $U$ intersects with the support of $A$ (and
$j^*A=0$ otherwise). Moreover, the support of $A$ is irreducible, and $A$ is
isomorphic to $f_{!*}(\cF[d])$ for some immersion $f\colon W\to X$ with $W$
regular irreducible of dimension $d$, and some lisse $\Qlb$-sheaf $\cF$ on
$V$.

\begin{proof}
(1) The proof is identical to that of \cite[Corollaire 5.3.11]{BBD}. The
uniqueness of the decomposition is clear. For the existence, it suffices to
check that
\begin{itemize}
\item the adjunction map $\pH^0 j_!j^*A\to A$ factorizes through the
    quotient $j_{!*} j^*A$ of $\pH^0 j_!j^*A$, and the adjunction map
    $A\to \pR^0 j_* j^* A$ factorizes through the sub-object $j_{!*} j^*A$
    of $\pR^0 j_* j^* A$;
\item the composite of the adjunction maps $i_*\pR^0 i^! A\to A \to i_*
    \pH^0 i^* A$ is an isomorphism;
\end{itemize}
and these maps provide a decomposition of $A$. These statements can be
easily checked over $\bar k$.

(2) Again the uniqueness is clear. The existence follows from the fact that
the support of every indecomposable direct summand of $A$ is irreducible.
\end{proof}

\begin{remark}
In Lemma \ref{l.indec}, $A$ is $\sigma$-self-dual with respect to $K_X(-w)$
if and only if each direct summand $A_V$ in the support decomposition is
$\sigma$-self-dual with respect to $K_X(-w)$.
\end{remark}

As an application, we show that for geometrically semisimple perverse
sheaves the property of being $\sigma$-self-dual is local for the Zariski
topology. This Zariski local nature will be useful in Section \ref{s.G}.

\begin{prop}\label{p.cover}
Let $(X_\alpha)_{\alpha\in I}$ be a Zariski open covering of $X$, and let
$A$ and $B$ be geometrically semisimple perverse $\Qlb$-sheaves on $X$. Then
$A\simeq (D_X B)(-w)$ if and only if $A\res{X_\alpha}\simeq (D_{X_\alpha}
B\res{X_\alpha})(-w)$ for every $\alpha\in I$. Moreover, $A$ is
$\sigma$-self-dual with respect to $K_X(-w)$ if and only if
$A\res{X_\alpha}$ is so with respect to $K_{X_\alpha}(-w)$ for every
$\alpha\in I$.
\end{prop}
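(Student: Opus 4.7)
The ``only if'' direction is immediate from Remark \ref{p.sd}(7) with $d=0$, applied to the open immersions $X_\alpha\to X$. For the ``if'' direction, my plan is to reduce to a middle-extension computation on a dense regular open contained in a single $X_\alpha$, in two stages.

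First, using the support decomposition of Lemma \ref{l.indec}(2), I write $A\simeq\bigoplus_V A_V$ and $B\simeq\bigoplus_V B_V$. The explicit description in Lemma \ref{l.indec} of indecomposable summands with support $V$ as middle extensions $f_{!*}(\cF[d])$ of lisse sheaves shows that each such summand, restricted to $X_\alpha$, again has all of its own indecomposable summands with support exactly $V\cap X_\alpha$ (when nonempty). By the uniqueness clause of Lemma \ref{l.indec}(2), both hypotheses of the proposition then pass to each pair $(A_V,B_V)$. Writing $A_V=i_{V*}A'$ and $B_V=i_{V*}B'$ with $i_V\colon V\to X$ the closed immersion, and invoking Remark \ref{p.sd}(4) together with the full faithfulness of $i_{V*}$ and its commutation with duality, I may replace $X$ by $V$ and assume $X$ is irreducible with all indecomposable summands of $A$ and $B$ having full support $X$.

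Next, I choose any $\alpha_0$ with $X_{\alpha_0}$ nonempty; since $X$ is irreducible, $X_{\alpha_0}$ is dense open in $X$. I pick a dense regular open $V^0\subset X_{\alpha_0}$ on which the restrictions of $A$ and $B$ become direct sums of shifted lisse sheaves (take the intersection of $X_{\alpha_0}$, the regular locus of $X$, and the finitely many dense opens from which the indecomposable summands are middle-extended). By transitivity of middle extension, $A\simeq j_{!*}(A\res{V^0})$ and $B\simeq j_{!*}(B\res{V^0})$, where $j\colon V^0\to X$ is the open immersion. For the first assertion, I restrict the given isomorphism $A\res{X_{\alpha_0}}\simeq (D_{X_{\alpha_0}}B\res{X_{\alpha_0}})(-w)$ to $V^0$ and apply $j_{!*}$, which commutes with duality. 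For the second, I restrict the $\sigma$-self-dual structure on $A\res{X_{\alpha_0}}$ to $V^0$ via Remark \ref{p.sd}(7) with $d=0$ and apply Remark \ref{p.sd}(5) to the open immersion $j$.

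The main obstacle I anticipate is verifying the compatibility of the support decomposition with Zariski restriction in the first step: this depends on the concrete description of indecomposable full-support perverse sheaves as middle extensions of lisse sheaves (Lemma \ref{l.indec}), rather than on any abstract property of the decomposition.
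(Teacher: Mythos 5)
Your proof is correct, and it rests on the same two pillars as the paper's argument: the support decomposition of Lemma \ref{l.indec} and the compatibility of middle extension along open immersions with duality (Remark \ref{p.sd}(5)). But the paper's primary proof compresses your two-stage reduction considerably. Instead of first pushing each $A_V$ forward along a closed immersion to reduce to the irreducible full-support case and then choosing a dense regular open $V^0$ inside a single chart, the paper simply notes that $j_{\alpha!*}j_\alpha^*A\simeq\bigoplus_{V\cap X_\alpha\neq\emptyset}A_V$, which is $\sigma$-self-dual by Remark \ref{p.sd}(5); by the remark following Lemma \ref{l.indec}, this forces each $A_V$ meeting $X_\alpha$ to be $\sigma$-self-dual, and letting $\alpha$ vary handles all $V$. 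This avoids the closed-immersion bookkeeping (your Remark \ref{p.sd}(4) step) and the detour through a dense regular open and lisse models — indeed the latter is not needed at all, since Lemma \ref{l.indec}(1) already gives $A_V\simeq j_{\alpha!*}j_\alpha^*A_V$ directly whenever $V\cap X_\alpha\neq\emptyset$, without passing to the smooth locus. One small remark on your ``obstacle'': the compatibility of the support decomposition with restriction does not really require the explicit lisse description; it follows from Lemma \ref{l.indec}(1) and full faithfulness of $j_{\alpha!*}$ (if $C$ is indecomposable with support $V$ meeting $X_\alpha$, then $C\simeq j_{\alpha!*}j_\alpha^*C$, so $j_\alpha^*C$ is indecomposable with support $V\cap X_\alpha$). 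Finally, the paper also offers a second, genuinely different proof using the inclusion-exclusion identity of Lemma \ref{l.cover} together with the two-out-of-three property (Proposition \ref{p.23}); this route avoids analyzing supports altogether and is the analogue used later in the Grothendieck-group setting (Remark \ref{r.ZarK}).
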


\begin{proof}
We prove the second assertion, the proof of the first assertion being
simpler. It suffices to show the ``if'' part. Let $j_\alpha\colon
X_\alpha\to X$. By Parts (5) and (8) of Remark \ref{p.sd},
$j_{\alpha!*}j_\alpha^*A$ is $\sigma$-self-dual. Since
$j_{\alpha!*}j_\alpha^*A\simeq \bigoplus_{V} A_V$, where $V$ satisfies
$V\cap X_\alpha\neq \emptyset$, we conclude that each $A_V$ is
$\sigma$-self-dual.

Alternatively we may apply Lemma \ref{l.cover} below. Indeed, by
quasi-compactness, we may assume that $I$ is finite. For $J\subseteq I$
nonempty, $j_{J!*}j_J^* A$ is $\sigma$-self-dual. Thus the same holds for
$A\simeq j_{\emptyset!*}j_{\emptyset}^*A$ by the two-out-of-three property.
\end{proof}

\begin{lemma}\label{l.cover}
Let $(X_\alpha)_{\alpha\in I}$ be a finite Zariski open covering of $X$, and
let $A$ be a geometrically semisimple perverse $\Qlb$-sheaf on $X$. Then
    \[\bigoplus_{\substack{J\subseteq I\\\text{$\#J$ \emph{even}}}} j_{J!*}j_J^* A  \simeq
    \bigoplus_{\substack{J\subseteq I\\\text{$\#J$ \emph{odd}}}} j_{J!*}j_J^* A,
    \]
where $j_J\colon \bigcap_{\alpha\in J}X_\alpha\to X$ is the open immersion.
\end{lemma}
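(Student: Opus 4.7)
The plan is to reduce the identity to a single indecomposable summand of $A$ and then to an elementary combinatorial identity on the index set $I$. First, by Lemma \ref{l.indec}(2), I would decompose $A \simeq \bigoplus_k B_k$ where each $B_k$ is an indecomposable perverse $\Qlb$-sheaf whose support $V_k$ is irreducible. Since $\pH^0 j_{J!}$ and $\pH^0 \rR j_{J*}$ are additive, the intermediate extension $j_{J!*}$, defined as the image of the natural map between them, commutes with finite direct sums. Hence it is enough to prove the identity with $A$ replaced by a single indecomposable summand $B$.

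Fix such a $B$ with irreducible support $V$. By Lemma \ref{l.indec}(1) and the remark following it, for each $J\subseteq I$ one has $j_{J!*}j_J^*B\simeq B$ whenever $V\cap\bigcap_{\alpha\in J}X_\alpha\neq\emptyset$, and $j_{J!*}j_J^*B=0$ otherwise. Set $I_V=\{\alpha\in I : V\cap X_\alpha\neq\emptyset\}$. Because $V$ is irreducible, any finite intersection of nonempty open subsets of $V$ is nonempty, so $V\cap\bigcap_{\alpha\in J}X_\alpha\neq\emptyset$ if and only if $J\subseteq I_V$ (with the empty intersection equal to $V\neq\emptyset$). Moreover, since $(X_\alpha)_{\alpha\in I}$ covers $X$ and $V$ is nonempty, $I_V$ is nonempty.

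Consequently, the multiplicity of $B$ in the left-hand (resp.\ right-hand) side of the claimed isomorphism equals the number of even-cardinality (resp.\ odd-cardinality) subsets of $I_V$. Since $I_V$ is a nonempty finite set, both numbers equal $2^{\#I_V-1}$, so the two multiplicities coincide. Summing over the indecomposable summands $B_k$ and invoking the Krull–Schmidt theorem \cite[Theorem 1]{Atiyah} for perverse sheaves, the two sides are isomorphic.

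The only conceptual ingredient is the observation that $j_{J!*}$ carries each indecomposable summand either to itself or to zero, according to whether its support meets $\bigcap_{\alpha\in J}X_\alpha$. The remainder is the standard identity that a nonempty finite set has equally many subsets of even and odd cardinality. I do not anticipate a serious obstacle; the main point to keep in mind is irreducibility of the supports $V_k$, which is what makes the set $I_V$ well-behaved and justifies the reduction to a pointwise combinatorial statement.
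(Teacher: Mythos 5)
Your proof is correct and takes essentially the same approach as the paper: reduce to an indecomposable summand $B$ with irreducible support $V$, note that $j_{J!*}j_J^*B$ is $B$ or $0$ according to whether $\bigcap_{\alpha\in J}X_\alpha$ meets $V$ (equivalently $J\subseteq I_V$, by irreducibility of $V$), and conclude by the standard identity that a nonempty finite set has equally many subsets of even and odd cardinality.
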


\begin{proof}
We may assume that $A$ is indecomposable. Then both sides are direct sums of
copies of $A$ and the multiplicities are equal:
\[\sum_{\substack{0\le i\le m\\\text{$i$ even}}} \binom{m}{i}=\sum_{\substack{0\le i\le m\\\text{$i$ odd}}} \binom{m}{i}.\]
Here $m\ge 1$ is the number of indices $\alpha\in I$ such that the support
of $A$ intersects with $X_\alpha$.
\end{proof}

\section{Symmetry and decomposition of pure complexes}\label{s.pc}

In this section, we study symmetry of pure perverse sheaves and more
generally of pure complexes that decompose into shifts of perverse sheaves.
We first work over a finite field. In Subsection \ref{ss.pp}, we analyze
$\sigma$-self-dual pure perverse sheaves and give a criterion in terms of
multiplicities of factors. In Subsection \ref{ss.pc}, we study the behavior
of such perverse sheaves under operations that preserve purity. The main
result of this section is the preservation of certain class of complexes
under derived proper direct image (Theorem \ref{t.mainf}), which implies the
finite field case of Theorem \ref{t.i2}. Such preservation results
constitute the starting point of the analysis in Section \ref{s.G} of the
effects of more general operations in the mixed case. In Subsection
\ref{s.var}, we work over a separably closed base field and we prove
preservation results for certain semisimple complexes, by reducing to the
finite field case.

\subsection{Symmetry of pure perverse sheaves over a finite field}\label{ss.pp}

In this subsection and the next we work over a finite field $k=\F_q$. Recall
that $X$, $Y$, etc.\ denote Deligne-Mumford stacks of finite presentation
over $\F_q$. Let $\iota\colon \Qlb\to \C$ be an embedding.

In this subsection, we study $\sigma$-self-dual $\iota$-pure perverse
sheaves. We give a criterion for $\iota$-pure perverse sheaves to be
$\sigma$-self-dual in terms of multiplicities of factors.

For $n\ge 1$, let $E_n$ be the sheaf on $\Spec(\F_q)$ of stalk
$(\Qlb)^n=\bigoplus_{i=1}^n\Qlb e_i$ on which Frobenius $F=\Fr_q$ acts
unipotently with one Jordan block: $F{e_1}=e_1$ and $Fe_i=e_i+e_{i-1}$ for
$i>1$. Recall that any indecomposable $\iota$-pure perverse sheaf $A$ on $X$
is isomorphic to a perverse sheaf of the form $B\otimes a_X^*E_n$, where $B$
is a simple perverse sheaf on $X$, $n\ge 1$, and $a_X\colon X\to
\Spec(\F_q)$.

\begin{prop}\label{p.Jordan}
Let $A$ be a perverse $\Qlb$-sheaf on $X$, isomorphic to $\bigoplus_B
(B\otimes a_X^* E_n)^{m_{B,n}}$, where $B$ runs over simple perverse
$\Qlb$-sheaves on $X$, and $w\in \Z$. Then $A$ is $\sigma$-self-dual with
respect to $K_X(-w)$ if and only if the following conditions hold:
\begin{enumerate}
\item $m_{B,n}=m_{(D_XB)(-w),n}$ for $B$ not self-dual with respect to
    $K_X(-w)$;
\item $m_{B,n}$ is even for $B$ $\sigma$-self-dual with respect to
    $K_X(-w)$ and $n$ even;
\item $m_{B,n}$ is even for $B$ $-\sigma$-self-dual with respect to
    $K_X(-w)$ and $n$ odd.
\end{enumerate}
Moreover, $A$ is self-dual with respect to $K_X(-w)$ if and only if (1)
holds.
\end{prop}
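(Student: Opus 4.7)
The plan is to reduce this proposition to the semisimple case (Corollary \ref{c.even}) by analyzing the duality type of each indecomposable block $B\otimes a_X^*E_n$, and then to assemble via the general multiplicity criterion for when a direct sum of indecomposables is $\sigma$-self-dual.

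First I would verify that each summand $B\otimes a_X^*E_n$ is indecomposable with local endomorphism algebra: $B$ is simple and $\End(E_n)$ is local Artinian (since Frobenius acts on $E_n$ by a single unipotent Jordan block), so $\End(B\otimes a_X^*E_n)$ is local and the Krull--Schmidt theorem \cite[Theorem~1]{Atiyah} applies to the decomposition. Since $a_X^*E_n$ is lisse,
\[D_{K_X(-w)}(B\otimes a_X^*E_n)\simeq (D_{K_X(-w)}B)\otimes a_X^*(E_n^\vee),\]
and $E_n^\vee\simeq E_n$ because both are unipotent of rank $n$ with a single Jordan block. Hence $B\otimes a_X^*E_n$ is self-dual with respect to $K_X(-w)$ if and only if $B$ is, and in the non-self-dual case its dual partner is $(D_XB)(-w)\otimes a_X^*E_n$. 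Refining this: if $B$ is $\sigma'$-self-dual, then $B\otimes a_X^*E_n$ is $\sigma'(-1)^{n-1}$-self-dual. This uses the key fact that $E_n$ carries a $(-1)^{n-1}$-symmetric perfect pairing $E_n\otimes E_n\to\Qlb$, combined with the tensor-product-of-pairings formalism (the internal analogue of Remark \ref{p.sd}(6), which applies since $a_X^*$ is symmetric monoidal).

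Finally I would apply the general Krull--Schmidt criterion (Lemma \ref{l.KS} and Remark \ref{r.KS}): a direct sum of indecomposables $\bigoplus_i M_i^{m_i}$ is $\sigma$-self-dual if and only if (a) non-self-dual $M_i$ pair up with equal multiplicities and (b) $-\sigma$-self-dual $M_i$ occur with even multiplicity. Translating (a) through the correspondence above gives condition (1), and (b) splits into (2) and (3) according to the parity of $n$: for $n$ even, $\sigma'(-1)^{n-1}=-\sigma$ forces $B$ to be $\sigma$-self-dual; for $n$ odd, it forces $B$ to be $-\sigma$-self-dual. The ``moreover'' clause is immediate from (1) via Remark \ref{r.23d}. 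The main obstacle will be pinning down the $(-1)^{n-1}$-symmetry of the pairing on $E_n$; I would do this either by an explicit matrix computation — solving the invariance equation $F^tBF=B$ for $F=I+N$ with $N$ a single nilpotent Jordan block of size $n$ yields a one-dimensional space of nondegenerate solutions $B$ satisfying $B^t=(-1)^{n-1}B$ — or equivalently by identifying $E_n\simeq\Sym^{n-1}(E_2)$ and propagating the alternating form on $E_2$ through the symmetric power.
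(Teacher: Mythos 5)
Your reduction to a per-block analysis is sound in outline, and the computation of the duality type of each indecomposable block is essentially correct: $B\otimes a_X^*E_n$ is indecomposable with local endomorphism ring (this is Lemma \ref{l.Hom} in disguise), $D_{K_X(-w)}(B\otimes a_X^*E_n)\simeq (D_XB)(-w)\otimes a_X^*E_n$, and $E_n$ does carry a perfect Frobenius-invariant $(-1)^{n-1}$-symmetric pairing (the matrix $A_n$ in the proof of Lemma \ref{l.la}, or $\Sym^{n-1}E_2$), so that $B\otimes a_X^*E_n$ is $\sigma'(-1)^{n-1}$-self-dual when $B$ is $\sigma'$-self-dual, and by the trichotomy it is then not $\sigma'(-1)^{n}$-self-dual. (Your parenthetical claim that the invariance equation has a one-dimensional space of nondegenerate solutions, all $(-1)^{n-1}$-symmetric, is not literally true -- the space of invariant forms on $E_n$ is $n$-dimensional and already for $n=2$ contains nondegenerate forms of no definite symmetry type -- but only the existence of one perfect $(-1)^{n-1}$-symmetric form is needed, so this is harmless.)

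The genuine gap is the ``general Krull--Schmidt criterion'' you invoke at the end. Lemma \ref{l.KS} and Remark \ref{r.KS} give only the trichotomy for a \emph{single} indecomposable object; they do not give the ``only if'' half of your criterion, namely that a $-\sigma$-self-dual indecomposable must occur with \emph{even} multiplicity in any $\sigma$-self-dual object. That statement is exactly the hard content of the proposition: Corollary \ref{c.even} proves it only for semisimple objects, using that $\Hom(B,(D_XB)(-w))$ is one-dimensional for $B$ simple, and this Schur-type argument breaks down for the non-simple indecomposables $B\otimes a_X^*E_n$. In the paper this is precisely what Lemma \ref{l.la} establishes (its ``only if'' part, proved by an explicit induction on Jordan blocks, or via Proposition \ref{p.Pi}), after reducing both directions to $X=\Spec(\F_q)$, $B=\Qlb$ through the fully faithful functor $B\otimes a_X^*(-)$ together with a $\sigma'$-symmetric form transformation (Lemmas \ref{l.Hom} and \ref{l.F}). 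Your route can be repaired: the multiplicity criterion for indecomposables is true, and follows either from the orthogonal-decomposition theorem for forms in Krull--Schmidt categories with duality \cite{QSS}, or by a peeling argument -- split off the non-self-dual pairs and the $\sigma$-self-dual isotypic parts and the hyperbolic squares $M^{\oplus 2}\simeq M\oplus D M$ using the two-out-of-three property (Proposition \ref{p.23}) and Remark \ref{r.selfd}, then show that a direct sum of pairwise non-isomorphic $-\sigma$-self-dual indecomposables admits no $\sigma$-symmetric perfect form, because the diagonal entries of such a form are non-isomorphisms by the trichotomy while the off-diagonal entries lie in the radical of the endomorphism ring. As written, though, the step is asserted rather than proved, and it is the crux; the rest of your argument is the easy part.
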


\begin{proof}
The equivalence between (1) and the condition that $A$ is self-dual follows
from the isomorphism $D_X(B\otimes a_X^* E_n)\simeq D_X B\otimes a_X^* E_n$.
For the ``if'' part of the $\sigma$-self-dual case, note that $C\oplus D_X
C(-w)$ is $\sigma$-self dual with respect to $K_X(-w)$, so that by the
trichotomy in Proposition \ref{p.schur}, it suffices to show that $B\otimes
a_X^* E_n$ is $\sigma'$-self-dual (resp.\ $-\sigma'$-self-dual) for $B$
$\sigma'$-self-dual and $n$ odd (resp.\ even). For the ``only if'' part, we
reduce to the case where $m_{B,n}=0$ for all $B$ except for one
$\sigma'$-self-dual $B$. For both parts, consider the functor $F=B\otimes
a_X^*- \colon \Perv(\Spec(\F_q),\Qlb)\to \Perv(X,\Qlb)$, which is fully
faithful by Lemma \ref{l.Hom} below. The natural isomorphism
$FD_{\Spec(\F_q)}\simeq D_{K_X(-w)} F$ is $\sigma'$-symmetric. By Lemma
\ref{l.F}, we are then reduced to the case where $X=\Spec(\F_q)$ and
$B=(\Qlb)_X$, which follows from Lemma \ref{l.la} below.
\end{proof}

\begin{lemma}\label{l.Hom}
Let $B$ be a simple perverse $\Qlb$-sheaf on $X$. Then the functor $B\otimes
a_X^*-$ is fully faithful. In other words, for $\Qlb$-sheaves $E$ and $E'$
on $\Spec(\F_q)$, the map $\alpha\colon \Hom(E,E')\to \Hom(B\otimes a_X^*E,
B\otimes a_X^*E')$ is an isomorphism.
\end{lemma}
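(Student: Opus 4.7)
My plan is to compute $\Hom_X(B \otimes a_X^* E, B \otimes a_X^* E')$ directly by exploiting the fact that $a_X^* E$ and $a_X^* E'$ are lisse, hence dualizable, on $X$. Since $a_X^* E$ is dualizable with dual $a_X^* E^\vee$, one has the natural isomorphism
\[
\rR\cHom(B \otimes a_X^* E, B \otimes a_X^* E') \simeq a_X^*(E^\vee \otimes E') \otimes \rR\cHom(B, B),
\]
and pushing forward via $\rR(a_X)_*$ and applying the projection formula yields
\[
\rR\Hom_X(B \otimes a_X^* E, B \otimes a_X^* E') \simeq \rR\Gamma\bigl(\Spec(\F_q),\, (E^\vee \otimes E') \otimes W\bigr),
\]
where $W \coloneqq \rR(a_X)_* \rR\cHom(B, B)$ satisfies $H^i(W) \simeq \Ext^i_X(B, B)$.

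The next step is to pin down the relevant cohomology of $W$. Since $B$ lies in the heart of the perverse $t$-structure, standard $t$-structure formalism gives $\Ext^i_X(B, B) = 0$ for $i < 0$, so $H^{-1}(W) = 0$. Since $B$ is simple and $\End_X(B)$ is a finite-dimensional division algebra over the algebraically closed field $\Qlb$, Schur's lemma gives $H^0(W) = \End_X(B) = \Qlb$ with trivial Galois action. Because $\Spec(\F_q)$ has $\ell$-adic cohomological dimension $1$, taking $H^0\rR\Gamma$ of $(E^\vee \otimes E') \otimes W$ reduces, via the hypercohomology spectral sequence, to $H^0(\Spec(\F_q), E^\vee \otimes E') = \Hom(E, E')$, since the potential $H^1(\Spec(\F_q), (E^\vee \otimes E') \otimes H^{-1}(W))$ contribution vanishes.

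Finally, I would check that this isomorphism agrees with $\alpha$: by construction the whole chain factors through the canonical summand $\id_B \in \End_X(B) = \Qlb$, so the inverse of the composite isomorphism sends $\id_B \otimes a_X^* f$ to $f$, which is precisely $\alpha^{-1}$. The main obstacle, while not severe, is a careful verification of the projection formula and of the passage of the lisse sheaves $a_X^* E$ and $a_X^* E'$ through $\rR\cHom$; once these standard compatibilities are in hand, the argument is essentially formal.
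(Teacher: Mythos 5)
Your reduction to the complex $W=\rR a_{X*}\rR\cHom(B,B)$ via dualizability and the projection formula is fine, but the step ``Schur's lemma gives $H^0(W)=\End_X(B)=\Qlb$ with trivial Galois action'' is a genuine gap, and it is where the actual content of the lemma lies. The cohomology sheaves of $W$ are not the groups $\Ext^i_X(B,B)$: $W$ lives on $\Spec(\F_q)$, so $\cH^i(W)$ is the \emph{geometric} Ext group $\Ext^i_{X_{\Fqb}}(B_{\Fqb},B_{\Fqb})$ with its Frobenius action, and $\Ext^i_X(B,B)$ is only recovered after applying $\rR\Gamma(\Spec(\F_q),-)$ --- which is exactly the Galois cohomology your last step is meant to compute. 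Thus $\cH^{-1}(W)=0$ is correct (negative Exts between the perverse sheaves $B_{\Fqb}$ vanish), but $\cH^0(W)=\End_{X_{\Fqb}}(B_{\Fqb})$, and a simple perverse sheaf over $\F_q$ need not stay simple over $\Fqb$; Schur's lemma only says that the Frobenius invariants of $\cH^0(W)$ are one-dimensional. If $B_{\Fqb}$ decomposes into $r>1$ pairwise non-isomorphic simple summands permuted cyclically by Frobenius, then $\cH^0(W)$ is $r$-dimensional with Frobenius eigenvalues the $r$-th roots of unity, and your computation only yields $\Hom(B\otimes a_X^*E,B\otimes a_X^*E')\simeq\bigl((E\spcheck\otimes E')\otimes\End_{X_{\Fqb}}(B_{\Fqb})\bigr)^{\Fr_q}$, which is not $\Hom(E,E')$ in general: for $X=\Spec(\F_{q^2})$, $B=\Qlb$, $E=\Qlb$ and $E'$ the unramified quadratic character, the right-hand side is $\Qlb$ while $\Hom(E,E')=0$ (and indeed $\alpha$ is not surjective there). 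So no purely formal argument of this type can work for arbitrary $E,E'$ without input on the geometric decomposition of $B$.

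The paper supplies that input differently: it writes $B\simeq j_{!*}(\cF[d])$, uses full faithfulness of $j_{!*}$ (Lemma \ref{l.perverse}) to reduce to the simple lisse sheaf $\cF$ on $U$, reduces to the unipotent sheaves $E=E_n$, $E'=E_m$ (the only case needed for Proposition \ref{p.Jordan}), and bounds $\dim\Hom(\cF\otimes a_U^*E_n,\cF\otimes a_U^*E_m)$ by $\min\{n,m\}$ through a socle/cosocle argument, never invoking geometric simplicity. Your computation can be salvaged along the same lines: restrict to unipotent $E,E'$, note that $E\spcheck\otimes E'$ then has all Frobenius eigenvalues equal to $1$, so only the invariant line of $\End_{X_{\Fqb}}(B_{\Fqb})$ (spanned by $\id_B$) contributes to the invariants, and one recovers $\Hom(E,E')$. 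As written, however, with arbitrary $E,E'$ and the unjustified claim $\cH^0(W)=\Qlb$, the proof does not go through.
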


\begin{proof}
We have $B\simeq j_{!*}(\cF[d])$, where $j\colon U\to X$ is an immersion,
with $U_{\red}$ connected regular purely of dimension $d$, and where $\cF$
is a simple lisse $\Qlb$-sheaf on $U$. We have $B\otimes a_X^*E\simeq
j_{!*}((\cF\otimes a_U^*E)[d])$. The map $\alpha$ is the composite
\[\Hom(E,E')\xrightarrow{\beta} \Hom(\cF\otimes a_U^*E, \cF\otimes a_U^*E')\xrightarrow{\gamma}\Hom(B\otimes a_X^*E, B\otimes a_X^*E'),\]
where $\gamma$ is an isomorphism by Lemma \ref{l.perverse}. The map $\beta$
is obviously injective. To show that $\beta$ is an isomorphism, we may
assume that $E=E_n$, $E'=E_m$. Since the socle of $\cF\otimes a_U^* E_m$ is
$\cF$,
\[\Hom(\cF,\cF)\simeq \Hom(\cF,\cF\otimes a_U^* E_{m})\]
is one-dimensional. Dually, since the cosocle of $\cF\otimes a_U^* E_n$ is
$\cF$,
\[\Hom(\cF\otimes a_U^* E_{n},\cF)\simeq \Hom(\cF,\cF)\]
is one-dimensional. Thus
\[\dim\Hom(\cF\otimes a_U^*E_n,\cF\otimes a_U^*E_m)\le \min\{n,m\}=\dim\Hom(E_n,E_m).\]
It follows that $\beta$ is an isomorphism.
\end{proof}

\begin{lemma}\label{l.la}
Let $L$ be a field of characteristic $\neq 2$, and let $N_n\in
\mathrm{M}_{n\times n}(L)$ be the matrix defined by $(N_n)_{i,j}=1$ for
$i=j-1$ and $(N_n)_{i,j}=0$ otherwise. Let $m_1,\cdots,m_l\ge0$ be integers,
and let
\[N\coloneqq N(m_1,\dots,m_l)\coloneqq \diag(N_1,\dots,N_1,\dots,N_l,\dots, N_l),\] where each $N_n$ is repeated $m_n$ times. Then there exists an invertible symmetric (resp.\
invertible skew-symmetric) matrix $A$ such that $AN=-N^{T}A$ if and only if
$m_n$ is even for $n$ even (resp.\ $m_n$ is even for $n$ odd).
\end{lemma}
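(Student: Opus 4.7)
The plan is to view $A$ as the Gram matrix of a $\sigma$-symmetric bilinear form $b(x,y) = x^T A y$, so that $AN = -N^T A$ translates to the skew-adjointness relation $b(Nx, y) + b(x, Ny) = 0$. The lemma thus amounts to the classical description of Jordan types of nilpotent elements in the Lie algebras $\mathfrak{so}$ and $\mathfrak{sp}$, and I will prove the two directions separately by elementary means.

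For existence I would build $A$ block by block. For a single Jordan block $N_n$, the explicit anti-diagonal matrix $A^{(n)}$ with $(A^{(n)})_{i, n+1-i} = (-1)^{n-i}$ and all other entries $0$ is easily checked to be invertible, $(-1)^{n-1}$-symmetric, and to satisfy $A^{(n)} N_n = -N_n^T A^{(n)}$. For the pair $N_n \oplus N_n$, a form of either symmetry type is provided by $\begin{pmatrix} 0 & B \\ \sigma B^T & 0 \end{pmatrix}$, where $B$ is any invertible matrix satisfying $BN_n = -N_n^T B$; transposing this last relation yields $B^T N_n = -N_n^T B^T$, which supplies the required equivariance of the lower-left block. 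When the multiplicity hypothesis holds, I pair up the blocks of the ``wrong'' parity and treat the remaining blocks individually; a direct sum of these pieces then produces the required $A$.

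For necessity, suppose $A$ is invertible and $\sigma$-symmetric. Since the $A$-adjoint of $N$ equals $-N$, we obtain the standard identities $(\Ker N)^\perp = \Img N$ and $(\Img N^{n-1})^\perp = \Ker N^{n-1}$ with respect to $A$. Introduce the decreasing filtration $P_n := \Ker N \cap \Img N^{n-1}$; a direct Jordan-block computation shows that $\dim P_n$ is the number of blocks of $N$ of size $\geq n$, so that $\dim(P_n / P_{n+1}) = m_n$. Define a bilinear form on $P_n$ by $B_n(N^{n-1} w, v') := A(w, v')$ for $v' \in P_n$; well-definedness (independence of the lift $w$ of $v$) follows from $v' \in \Img N^{n-1}$ combined with the iterated adjoint relation $A(N^{n-1}\,\cdot, \cdot) = (-1)^{n-1} A(\cdot, N^{n-1}\,\cdot)$. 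The same relation yields the symmetry type $B_n(v', v) = \sigma (-1)^{n-1} B_n(v, v')$.

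The main obstacle is to descend $B_n$ to a \emph{nondegenerate} form on $P_n/P_{n+1}$. If $B_n(v, \cdot)$ vanishes on $P_n$, then a lift $w$ of $v$ lies in $P_n^\perp = (\Ker N)^\perp + (\Img N^{n-1})^\perp = \Img N + \Ker N^{n-1}$; writing $w = Nu + k$ with $k \in \Ker N^{n-1}$ gives $v = N^{n-1} w = N^n u \in \Img N^n$, which combined with $v \in \Ker N$ forces $v \in P_{n+1}$. The induced form $\bar B_n$ on the $m_n$-dimensional space $P_n/P_{n+1}$ is therefore nondegenerate and $\sigma(-1)^{n-1}$-symmetric. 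Since a nondegenerate alternating form on a finite-dimensional vector space over a field of characteristic $\neq 2$ requires even-dimensional support, the condition $\sigma(-1)^{n-1} = -1$ forces $m_n$ to be even. This yields precisely the claimed parity conditions: $m_n$ even for $n$ even when $\sigma = +1$, and $m_n$ even for $n$ odd when $\sigma = -1$.
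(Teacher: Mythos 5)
Your proof is correct, and its two halves sit in interestingly different places relative to the paper. The existence argument is essentially identical to the paper's: the same anti-diagonal $\pm 1$ matrices handle a single block of the right parity, and the same off-diagonal $2\times 2$ block construction handles a pair of blocks of the wrong parity. For necessity, however, you take the conceptual route via primitive parts: you construct the form $B_n$ on $P_n=\Ker N\cap\Img N^{n-1}$, verify well-definedness and the symmetry type $\sigma(-1)^{n-1}$, and prove nondegeneracy on $P_n/P_{n+1}$ (dimension $m_n$) using the orthogonality relations $(\Ker N)^\perp=\Img N$ and $(\Img N^{n-1})^\perp=\Ker N^{n-1}$. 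This is precisely the mechanism behind Proposition \ref{p.Pi} in the appendix, which the paper cites as one way to prove necessity; the paper's own spelled-out argument instead proceeds by an elementary induction on $l$, isolating the submatrix $B=(a_{l,c,1}^{l,c',l})_{c,c'}$ tied to the largest Jordan size, showing it is invertible and of the forced (skew)symmetry type, and then reducing to a smaller $N'$. Your version is self-contained and arguably cleaner conceptually — the parity constraint drops out immediately once one sees that $\bar B_n$ is a nondegenerate alternating form — at the cost of a bit more linear-algebra bookkeeping than the paper's matrix-entry induction. Either is a legitimate proof; yours matches the spirit of the primitive-part approach rather than the elementary one.
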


For $L$ of characteristic $0$, the equality is equivalent to $\exp(N)^T A
\exp(N)=A$.

\begin{proof}
We denote the entries of $A$ by $a_{n,c,i}^{n',c',i'}$, where $1\le n,n'\le
l$, $1\le c\le m_n$, $1 \le c'\le m_{n'}$, $1\le i\le n$, $1\le i'\le {n'}$.
Then $AN=-N^{T}A$ if and only if
$a_{n,c,i}^{n',c',i'}=-a_{n,c,i-1}^{n',c',i'+1}$ for $1<i\le n$, $1\le
i'<n'$ and $a_{n,c,i}^{n',c',i'}=0$ for $i+i'\le \max\{n,n'\}$. Let $A_n$ be
the $n\times n$ matrix given by $(A_n)_{ij}=(-1)^{i}$ for $i+j=n+1$ and
$(A_n)_{ij}=0$ otherwise. For $N=N_n$, $n$ odd (resp.\ even), we can take
$A=A_n$. For $N=\diag(N_n,N_n)$, $n$ even (resp.\ odd), we can take
$A=\begin{pmatrix}0&A_n\\-A_n&0\end{pmatrix}$. The ``if'' part follows. The
``only if'' part follows from the case $\sigma=-1$ of Proposition
\ref{p.Pi}, because $P_{1-n}$ is $(-1)^{1-n}$-self-dual (resp.\
$(-1)^n$-self-dual) and $\dim P_{1-n}=m_n$. Let us give a more elementary
proof of the ``only if'' part by induction on $l$. For $l=1$, the assertion
is void (resp.\ $A$ defines a nondegenerate alternating bilinear form on an
$m_1$-dimensional vector space, which implies that $m_1$ is even). For $l\ge
2$, consider the $m_l\times m_l$ submatrices
$B=(a_{l,c,1}^{l,c',l})_{c,c'}$, $C=(a_{l,c,l}^{l,c',1})_{c,c'}$ of $A$. Let
$A'$ be the matrix obtained from $A$ by removing the rows and columns in $A$
that contain entries of $B$ or $C$. Note that for $i'<l$, we have
$a^{n',c',i'}_{l,c,1}=0$, and for $i<l$, we have $a^{l,c',1}_{n,c,i}=0$.
Thus, up to reordering the indices, we have
\[A=\begin{pmatrix}A'&0&B'\\0&0&B\\C'&C&D\end{pmatrix}.\]
It follows that $B^T=\sigma' C=(-1)^{l-1}\sigma' B$, where $\sigma'=1$
(resp.\ $\sigma'=-1$), and $B$ is invertible, so that $m_l$ is even for $l$
even (resp.\ $l$ odd). Moreover, $A'$ is invertible symmetric (resp.\
invertible skew-symmetric) and $A'N'=-N'^TA'$, where
$N'=N(m_1,\dots,m_{l-3},m_{l-2}+m_l,m_{l-1})$ ($N'=N(m_1)$ for $l=2$). The
assertion then follows from induction hypothesis.
\end{proof}

\begin{example}
Let $A$ and $A'$ be sheaves on $X=\Spec(\F_q)$ and let $w\in\Z$. For $n\ge
1$ and $\lambda\in \Qlb^\times$, we let $\mu_{\lambda,n}$ and
$\mu'_{\lambda,n}$ denote the number of $n\times n$ Jordan blocks of
eigenvalue $\lambda$ in the Jordan normal forms of the Frobenius $\Fr_q$
acting on $A_{\Fqb}$ and $A'_{\Fqb}$, respectively. Then
\begin{itemize}
\item $A\simeq (D_X A')(-w)$ if and only if
    $\mu_{\lambda,n}=\mu'_{q^w/\lambda,n}$ for all $n\ge 1$ and all
    $\lambda$. In particular, $A$ is self-dual with respect to $\Qlb(-w)$
    if and only if $\mu_{\lambda,n}=\mu_{q^w/\lambda,n}$ for all $n\ge 1$
    and all $\lambda$. Note that the last condition trivially holds for
    $\lambda= \pm q^{w/2}$.

\item $A$ is $1$-self-dual (resp.\ $-1$-self-dual) with respect to
    $\Qlb(-w)$ if and only if it is self-dual with respect to $\Qlb(-w)$
    and $\mu_{q^{w/2},n}$, $\mu_{-q^{w/2},n}$ are even for $n$ even
    (resp.\ $n$ odd).
\end{itemize}
\end{example}

\begin{example}\label{e.ss}
Let $B$ be a simple perverse sheaf $\iota$-pure of weight $w$, not self-dual
with respect to $K_X(-w)$. Then $A=B^{\oplus 2}\oplus ((D_X B)(-w)\otimes
a_X^* E_2)$ is not self-dual, but the semisimplification of $A$ is both
$1$-self-dual and $-1$-self-dual.
\end{example}

\begin{remark}\leavevmode\label{r.Laff}
\begin{enumerate}
\item An $\iota$-pure complex self-dual with respect to $K_X(-w)$ is
    necessarily of weight $w$.

\item Every simple perverse $\Qlb$-sheaf is $\iota$-pure by a theorem of
    Lafforgue \cite[Corollaire VII.8]{Lafforgue} (with a gap filled by
    Deligne \cite[Th\'eor\`eme 1.6]{Delignegen}; see \cite[Remark
    2.8.1]{SunL} for the case of stacks).

\item The two-out-of-three property (Proposition \ref{p.23}) in the case
    of $\iota$-pure perverse sheaves also follows from the criterion of
    Proposition \ref{p.Jordan}.

\item Since $\iota$-pure perverse sheaves are geometrically semisimple,
    for such perverse sheaves the property of being self-dual (resp.\
    $\sigma$-self-dual) is local for the Zariski topology by Proposition
    \ref{p.cover}.
\end{enumerate}
\end{remark}

\subsection{Symmetry and decomposition of pure complexes over a finite field}\label{ss.pc}
In this subsection, we study the behavior of $\sigma$-self-dual $\iota$-pure
perverse sheaves under operations that preserve purity. The main goal is to
prove the finite field case of Theorem \ref{t.i2} on derived proper direct
image of $\sigma$-self-dual $\iota$-pure perverse sheaves. The behavior of
$\sigma$-self-dual complexes has already been described in Subsection
\ref{ss.gp1}. The focus of this subsection is on decomposition and on the
self-duality of individual perverse cohomology sheaves. To state our
results, it is convenient to introduce the following terminology.

\begin{definition}[Split complexes]
We say that a complex of $\Qlb$-sheaves $A$ is \emph{split} if it is a
direct sum of shifts of perverse sheaves, or, in other words, $A\simeq
\bigoplus_i(\pH^i A)[-i]$.
\end{definition}

\begin{definition}[$\rD^w_{\iota,\sigma}$]\label{d.Diota}
Let $w\in \Z$. We denote by $\rD^w_{\iota,\sigma}(X,\Qlb)\subseteq
\Ob(\rD^b_c(X,\Qlb))$ (resp.\ $\rD^w_{\iota,\sd}(X,\Qlb)\subseteq
\Ob(\rD^b_c(X,\Qlb))$) the subset consisting of split $\iota$-pure complexes
$A$ of weight $w$ such that $\pH^i A$ is $(-1)^{w+i}\sigma$-self-dual
(resp.\ self-dual) with respect to $K_X(-w-i)$ for all $i$. We denote by
$\rD^w_{\iota,\dl}(X,\Qlb)\subseteq\Ob
\big(\rD^b_c(X,\Qlb)\times\rD^b_c(X,\Qlb)\big)$ the subset consisting of
pairs $(A,B)$ of split $\iota$-pure complexes of weight $w$ such that $\pH^i
A$ is isomorphic to $(D_X\pH^i B)(-w-i)$ for all $i$.
\end{definition}

By definition,
$\rD^w_{\iota,\sd}(X,\Qlb)=\Delta^{-1}(\rD^w_{\iota,\dl}(X,\Qlb))$, where
$\Delta\colon \rD^b_c(X,\Qlb)\to \rD^b_c(X,\Qlb)\times \rD^b_c(X,\Qlb)$ is
the diagonal embedding.

Since in this subsection we will only consider operations that preserve
purity, the factor $(-1)^w$ in the definition above is fixed, hence not
essential. We include this factor here to make the definition compatible
with the mixed case studied in Section \ref{s.G}, where the factor is
essential (see Definition \ref{d.K}).

The main result of this section is the following preservation result under
proper direct image, which clearly implies the finite field case of Theorem
\ref{t.i2}.

\begin{theorem}\label{t.mainf}
Let $f\colon X\to Y$ be a proper morphism of Deligne-Mumford stacks of
finite presentation over $\F_q$, where $Y$ has finite inertia. Then $\rR
f_*$ preserves $\rD^w_{\iota,\sigma}$ and $\rD^w_{\iota,\dl}$. In other
words, for $A\in \rD^w_{\iota,\sigma}(X,\Qlb)$ we have $\rR f_*A\in
\rD^w_{\iota,\sigma}(Y,\Qlb)$, and for $(A,B)\in \rD^w_{\iota,\dl}(X,\Qlb)$
we have $(\rR f_*A,\rR f_*B)\in \rD^w_{\iota,\dl}$.
\end{theorem}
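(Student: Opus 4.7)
The plan is to reduce to the case of a single pure perverse sheaf, then combine Deligne's preservation of purity, the Beilinson--Bernstein--Deligne--Gabber (BBDG) decomposition theorem, and relative hard Lefschetz. I describe the argument for $\rD^w_{\iota,\sigma}$; the $\rD^w_{\iota,\dl}$ case is parallel and slightly simpler. Since every $A\in\rD^w_{\iota,\sigma}(X,\Qlb)$ splits as $\bigoplus_i \pH^iA[-i]$ with $\pH^iA$ a pure perverse sheaf of weight $w+i$ which is $(-1)^{w+i}\sigma$-self-dual with respect to $K_X(-w-i)$, additivity of $\rR f_*$ reduces the theorem to the following key statement: for a pure perverse sheaf $P$ of weight $v$ on $X$ that is $\tau$-self-dual with respect to $K_X(-v)$, the complex $\rR f_*P$ is split, $\iota$-pure of weight $v$, and each $\pH^n(\rR f_*P)$ is $(-1)^n\tau$-self-dual with respect to $K_Y(-v-n)$. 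The sign accounting in Definition~\ref{d.Diota} is calibrated precisely so that the $(-1)^n$ from this key statement absorbs cleanly into the outer factor $(-1)^{w+k}$ (cf.\ Remark~\ref{p.sd}(1)).

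Purity of $\rR f_*P$ is Deligne's theorem in Weil~II, while the splitting $\rR f_*P\simeq\bigoplus_n \pH^n(\rR f_*P)[-n]$ with pure perverse summands of weight $v+n$ is the BBDG decomposition theorem, available in the stack-theoretic setting thanks to the finite inertia hypothesis on $Y$. By Remark~\ref{p.sd}(3), $\rR f_*P$ inherits a $\tau$-symmetric perfect pairing to $K_Y(-v)$; under the above decomposition this restricts to nondegenerate pairings $\pH^n(\rR f_*P)\otimes\pH^{-n}(\rR f_*P)\to K_Y(-v)$ for each $n$ (the pairing vanishes on off-diagonal components $\pH^n\otimes\pH^m$ with $n+m\neq 0$ for degree reasons).

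To extract a pairing on $\pH^n(\rR f_*P)$ by itself, I would reduce to $f$ projective via Chow's lemma and then invoke relative hard Lefschetz: cup product with powers of the Chern class of a relatively ample line bundle provides isomorphisms $\eta^n\colon\pH^{-n}(\rR f_*P)\simto \pH^n(\rR f_*P)(n)$. Composing the restricted pairing with $\eta^n$ yields the desired perfect pairing $\pH^n(\rR f_*P)\otimes\pH^n(\rR f_*P)\to K_Y(-v-n)$. The $\rD^w_{\iota,\dl}$ variant is handled analogously: from $\pH^iA\simeq(D_X\pH^iB)(-w-i)$ one derives $\rR f_*\pH^iA\simeq (D_Y\rR f_*\pH^iB)(-w-i)$ via $\rR f_*D_X\simeq D_Y\rR f_*$ for $f$ proper, and then uses relative hard Lefschetz on $\pH^iB$ to reindex the perverse cohomologies correctly.

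The main obstacle is verifying that the induced pairing on $\pH^n(\rR f_*P)$ is \emph{exactly} $(-1)^n\tau$-symmetric, rather than merely $\pm\tau$-symmetric or just nondegenerate. The correct sign stems from two sources: a Koszul sign $(-1)^{n^2}=(-1)^n$ coming from the swap $c_{\pH^n[-n],\pH^{-n}[n]}$ via the rule $c_{A[a],B[b]}=(-1)^{ab}c_{A,B}$, and the self-adjointness of the Lefschetz operator $\eta$ with respect to Poincar\'e duality, which contributes no additional sign. Carrying out this bookkeeping rigorously is exactly what the symmetric-monoidal framework of Appendix~\ref{s.app} is designed for, in particular through Construction~\ref{c.image} and Lemma~\ref{l.selfdual}, on which the proof should ultimately be organized. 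A minor secondary technicality is the passage from projective to proper morphisms in the stack-theoretic setting.
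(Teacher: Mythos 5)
Your outline assembles the right ingredients — reduction to a single pure perverse sheaf, the BBDG decomposition theorem, relative hard Lefschetz via Chow's lemma, and the Koszul-sign bookkeeping that Appendix~\ref{s.app} (Lemma~\ref{l.final} in particular) formalizes — and correctly identifies the target sign $(-1)^n\tau$. The projective case, including the pairing $\rR f_*A[-i]\otimes\rR f_*A[-i]\to K_Y(i-w)$ and the resulting $(-1)^{w+i}\sigma$-symmetry, is essentially Proposition~\ref{p.projf} of the paper, and you locate it correctly.

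The genuine gap is that you treat ``the passage from projective to proper'' as a minor secondary technicality, whereas in the paper it is the heart of the remaining argument and it does not go through along the lines you sketch. Chow's lemma does not make $f$ projective; it produces a projective birational $g\colon X'\to X$ with $fg$ projective. But a pure perverse sheaf $A$ on $X$ is \emph{not} a direct summand of $\rR g_* g^*A$ in general, so one cannot simply push the Lefschetz isomorphisms down from $X'$. What the paper actually does is combine three ingredients that your plan never mentions: (i) the support decomposition for pure perverse sheaves, $A\simeq j_{!*}j^*A\oplus B$ with $B$ supported on the complement of the locus where $g$ is an isomorphism (Lemma~\ref{l.indec}, going back to \cite[Corollaire 5.3.11]{BBD}); (ii) an induction on $\dim X$; and (iii) the two-out-of-three property (Proposition~\ref{p.23}), which is what lets one deduce that a complement summand inherits $\sigma$-self-duality. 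Concretely, one compares $\rR f_*A\oplus\rR f_*C$ with $\rR(fg)_*j'_{!*}j^*A\oplus\rR f_*B$, handles $\rR f_*B$, $\rR f_*C$ by the dimension induction, handles $\rR(fg)_*j'_{!*}j^*A$ by the projective case, and concludes by two-out-of-three. Without (i)--(iii) there is no way to convert the projective-case information into a statement about $\rR f_*A$ itself; this is precisely why the two-out-of-three property is proved in Subsection~\ref{ss.gp2} before it is needed here. There is also a minor conceptual muddle in your construction of the pairing: you first produce a $\tau$-symmetric pairing on $\rR f_*P$ for a general proper $f$ via Remark~\ref{p.sd}(3) and then propose to ``reduce to projective'' to get $\eta^n$ — but after the reduction you are working with different morphisms and different complexes, not with $\rR f_*P$ itself, so the two halves of the argument do not connect as stated.
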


The preservation of $\rD^w_{\iota,\dl}$ has the following two consequences,
obtained respectively by considering the diagonal embedding and the first
factor.

\begin{cor}\label{c.mainf}
Let $f\colon X\to Y$ be a proper morphism, where $Y$ has finite inertia.
Then $\rR f_* $ preserves $\rD^w_{\iota,\sd}$.
\end{cor}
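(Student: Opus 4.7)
The plan is to derive the corollary directly from Theorem \ref{t.mainf} by exploiting the very definition of $\rD^w_{\iota,\sd}$ as a pullback along the diagonal, as the paper already hints at (``obtained respectively by considering the diagonal embedding and the first factor'').

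More precisely, I would start from the defining identity
\[\rD^w_{\iota,\sd}(X,\Qlb)=\Delta^{-1}\bigl(\rD^w_{\iota,\dl}(X,\Qlb)\bigr),\]
where $\Delta\colon\rD^b_c(X,\Qlb)\to\rD^b_c(X,\Qlb)\times\rD^b_c(X,\Qlb)$ sends $A$ to $(A,A)$. Given $A\in\rD^w_{\iota,\sd}(X,\Qlb)$, this identity means exactly that the pair $(A,A)$ lies in $\rD^w_{\iota,\dl}(X,\Qlb)$; concretely, $A$ is a split $\iota$-pure complex of weight $w$ whose perverse cohomology sheaves $\pH^i A$ are self-dual with respect to $K_X(-w-i)$.

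Now apply Theorem \ref{t.mainf} to $(A,A)$: since $\rR f_*$ preserves $\rD^w_{\iota,\dl}$, the pair $(\rR f_*A,\rR f_*A)$ belongs to $\rD^w_{\iota,\dl}(Y,\Qlb)$. Unpacking the definition once more, this says that $\rR f_* A$ is a split $\iota$-pure complex of weight $w$ on $Y$ and that $\pH^i\rR f_*A\simeq \bigl(D_Y\pH^i\rR f_*A\bigr)(-w-i)$ for every $i$. In other words, $\rR f_*A\in\rD^w_{\iota,\sd}(Y,\Qlb)$, which is the desired statement.

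There is essentially no technical obstacle: the argument is a purely formal consequence of the compatibility $\Delta\circ \rR f_*=(\rR f_*\times\rR f_*)\circ\Delta$ combined with the stability under $\rR f_*$ of $\rD^w_{\iota,\dl}$ already furnished by Theorem \ref{t.mainf}. All the substantive content --- the decomposition theorem needed to ensure that $\rR f_*A$ remains split, the preservation of $\iota$-purity under proper direct image, and the preservation of self-duality relations between perverse cohomology sheaves --- is already packaged inside Theorem \ref{t.mainf}, so the proof of the corollary is a single line of diagram chasing.
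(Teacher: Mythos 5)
Your argument is correct and is exactly the one the paper intends: the remark preceding Corollary \ref{c.mainf} says the corollary is obtained from the preservation of $\rD^w_{\iota,\dl}$ ``by considering the diagonal embedding,'' which is precisely your chain $A\mapsto(A,A)\mapsto(\rR f_*A,\rR f_*A)\mapsto \rR f_*A$, using $\rD^w_{\iota,\sd}=\Delta^{-1}(\rD^w_{\iota,\dl})$ on both sides. No gap.
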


\begin{cor}\label{c.decomp}
Let $f\colon X\to Y$ be a proper morphism, where $Y$ has finite inertia.
Then $\rR f_*$ preserves split $\iota$-pure complexes of weight $w$. In
other words, if $A$ is a split $\iota$-pure complex of weight $w$ on $X$,
then $\rR f_*A$ is a split $\iota$-pure complex of weight $w$ on $Y$.
\end{cor}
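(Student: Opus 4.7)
The plan is to deduce Corollary \ref{c.decomp} directly from the preservation of $\rD^w_{\iota,\dl}$ in Theorem \ref{t.mainf}, by realizing ``split $\iota$-pure of weight $w$'' as the image of the first-factor projection $\rD^w_{\iota,\dl}(X,\Qlb)\to\Ob(\rD^b_c(X,\Qlb))$. Concretely, given a split $\iota$-pure complex $A$ of weight $w$ on $X$, I would manufacture a companion complex $B$ on $X$ such that $(A,B)\in\rD^w_{\iota,\dl}(X,\Qlb)$, then apply Theorem \ref{t.mainf} and read off the conclusion for $\rR f_*A$ from the first coordinate of the resulting pair.

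For the construction, I take
\[
B\coloneqq\bigoplus_i (D_X\pH^i A)(-w-i)[-i].
\]
Each summand $(D_X\pH^i A)(-w-i)$ is a perverse sheaf because $D_X$ and Tate twist preserve perversity. Since $A$ is split $\iota$-pure of weight $w$, the perverse cohomology $\pH^i A$ is $\iota$-pure of weight $w+i$; then $D_X\pH^i A$ is $\iota$-pure of weight $-(w+i)$, and twisting by $(-w-i)$ brings it back to weight $w+i$. Hence $B$ is split $\iota$-pure of weight $w$ with $\pH^i B=(D_X\pH^i A)(-w-i)$. Using the biduality isomorphism $D_X D_X\simeq\id$, one checks
\[
(D_X\pH^i B)(-w-i)\simeq D_X\bigl((D_X\pH^i A)(-w-i)\bigr)(-w-i)\simeq \pH^i A,
\]
so $(A,B)$ satisfies the defining condition of $\rD^w_{\iota,\dl}(X,\Qlb)$.

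With $(A,B)\in\rD^w_{\iota,\dl}(X,\Qlb)$ in hand, Theorem \ref{t.mainf} yields $(\rR f_*A,\rR f_*B)\in\rD^w_{\iota,\dl}(Y,\Qlb)$. Unwinding Definition \ref{d.Diota}, this already forces $\rR f_*A$ to be a split $\iota$-pure complex of weight $w$ on $Y$, which is the assertion of the corollary. No further argument is needed.

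I do not expect any real obstacle: the entire argument is a formal reduction once Theorem \ref{t.mainf} is available, and the only point requiring care is the weight bookkeeping in the construction of $B$, which is routine. The substantive content of Corollary \ref{c.decomp}, namely that splitting over $\F_q$ (rather than merely over $\Fqb$, where BBD already applies) is preserved by proper pushforward, is entirely absorbed into the theorem.
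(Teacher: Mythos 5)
Your proof is correct and is precisely what the paper intends: the paper derives Corollary~\ref{c.decomp} from the preservation of $\rD^w_{\iota,\dl}$ ``by considering the first factor,'' and the companion $B$ you build is exactly the twisted dual $\Dbar^w_{\iota,X}A=\bigoplus_i (D_X\pH^iA)(-w-i)[-i]$ that the paper introduces in the remark following the corollary.
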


Corollary \ref{c.decomp} clearly extends to the case where $w\in \R$. Recall
that the Beilinson-Bernstein-Deligne-Gabber decomposition theorem
\cite[Th\'eor\`eme 5.4.5]{BBD} (\cite[Theorem 1.2]{Sun} for the case of
stacks) implies that the pullback of $\rR f_*A$ (or any $\iota$-pure complex
on $Y$) to $Y\otimes_{\F_q}\Fqb$ is split.

\begin{remark}
For $w\in \Z$, let $\rD^w_{\iota,\dec}(X,\Qlb)\subseteq \rD^b_c(X,\Qlb)$
denote the full subcategory consisting of split $\iota$-pure complexes of
weight $w$. Consider the twisted dualizing functor $\Dbar^w_{\iota,X}\colon
\rD^w_{\iota,\dec}(X,\Qlb)^{\op}\to \rD^w_{\iota,\dec}(X,\Qlb)$ carrying $A$
to $\bigoplus_i (D_X\pH^iA)(-w-i) [-i]$. Then $\rD^w_{\iota,\dl}$ is the
collection of pairs $(A,\Dbar^w_{\iota,X}A)$. Thus the preservation of
$\rD^w_{\iota,\dl}$ by $\rR f_*$ is equivalent to the preservation of
$\rD^w_{\iota,\dec}$ and the existence of an isomorphism $\rR
f_*\Dbar^w_{\iota,X} A\simeq \Dbar^w_{\iota,X} \rR f_* A$ for every object
$A$ of $\rD^w_{\iota,\dec}(X,\Qlb)$. Our proof of Theorem \ref{t.mainf}
relies on the two-out-of-three property and the resulting isomorphism is not
necessarily functorial in $A$. Thus our proof does \emph{not} provide a
natural isomorphism between the functors $\rR f_*\Dbar^w_{\iota,X}$ and
$\Dbar^w_{\iota,X} \rR f_*$.
\end{remark}

Let us first recall that the following operations preserve $\iota$-pure
complexes \cite[Stabilit\'es 5.1.14, Corollaire 5.4.3]{BBD} (\cite{Sun} for
the case of stacks). The proof makes use of the fact that these operations
commute with duality (up to shift and twist).

\begin{remark}[Preservation of $\iota$-pure complexes]\label{r.purew}
Let $f\colon X\to Y$ be a morphism, and let $w,w'\in \R$.
\begin{enumerate}
\item For $A\in\rD^b_c(X,\Qlb)$ $\iota$-pure of weight $w$, $A(n)$ is
    $\iota$-pure of weight $w-2n$ and $A[n]$ is $\iota$-pure of weight
    $w+n$ for $n\in \Z$.

\item For $A\in\rD^b_c(X,\Qlb)$, $A$ is $\iota$-pure of weight $w$ if and
    only if $D_X A$ is $\iota$-pure of weight $-w$.

\item If $f$ is an open immersion, the functor $f_{!*}$ preserves
    $\iota$-pure perverse sheaves of weight $w$.

\item Assume that $f$ is smooth. Then $f^*$ preserves $\iota$-pure
    complexes of weight $w$. Moreover, if $f$ is surjective, then $A\in
    \rD^b_c(Y,\Qlb)$ is $\iota$-pure of weight $w$ if and only if $f^*A$
    is so.

\item Assume that $X$ and $Y$ are regular. Then $f^*$ preserves
    $\iota$-pure complexes of weight $w$ in $\rD^b_{\lisse}$. Moreover, if
    $f$ is surjective, then $A\in \rD^b_{\lisse}(Y,\Qlb)$ is $\iota$-pure
    of weight $w$ if and only if $f^*A$ is so.

\item For $A\in\rD^b_c(X,\Qlb)$ which is $\iota$-pure of weight $w$, and
    $A'\in \rD^b_c(X',\Qlb)$ which is $\iota$-pure of weight $w'$,
    $A\boxtimes A' \in \rD^b_c(X\times X',\Qlb)$ is $\iota$-pure of weight
    $w+w'$.

\item For $A\in\rD^b_c(X,\Qlb)$ and $m\ge 1$, $A$ is $\iota$-pure of
    weight $w$ if and only if $A^{\boxtimes m}\in
    \rD^b_c([X^m/\Perm_m],\Qlb)$ is $\iota$-pure of weight $mw$.

\item For a proper morphism $f$, $\rR f_*$ preserves $\iota$-pure
    complexes of weight $w$.

\item Assume that $f$ is a closed immersion and let $A\in
    \rD^b_c(X,\Qlb)$. Then $A$ is $\iota$-pure of weight $w$ if and only
    if $f_*A$ is so.
\end{enumerate}
\end{remark}

Recall that these operations also preserve $\sigma$-self-dual complexes
(Remark \ref{p.sd}). With the exception of derived proper direct image, the
operations also preserve perversity (up to shift). Hence they also preserve
$\rD^w_{\iota,\sigma}$, up to modification of $w$ and $\sigma$. The details
are given below. The case of $(-)^{\boxtimes m}$ requires some additional
arguments and will be given in Proposition \ref{p.boxtimesf} later.

\begin{remark}[Preservation of $\rD^w_{\iota,\sigma}$, easy part]\label{r.Diota}
Let $f\colon X\to Y$ be a morphism, and let $w,w'\in \Z$.
\begin{enumerate}
\item If $A\in \rD^w_{\iota,\sigma}(X,\Qlb)$, then $A[n]\in
    \rD^{w+n}_{\iota,\sigma}(X,\Qlb)$ and $A(n)\in
    \rD^{w-2n}_{\iota,\sigma}(X,\Qlb)$ for $n\in \Z$.

\item $D_X$ carries $\rD^w_{\iota,\sigma}(X,\Qlb)$ to
    $\rD^{-w}_{\iota,\sigma}(X,\Qlb)$.

\item Assume that $X$ is regular and let $\cF$ be a lisse $\Qlb$-sheaf on
    $X$, punctually $\iota$-pure of weight $w$. Then there exists a
    nondegenerate $\sigma$-symmetric pairing $\cF\otimes \cF\to \Qlb(-w)$
    if and only if $\cF$ belongs to $\rD^w_{\iota,(-1)^w\sigma}(X,\Qlb)$.

\item If $f$ is smooth, then $f^*$ preserves $\rD^w_{\iota,\sigma}$.

\item If $X$ and $Y$ are regular, then $f^*$ preserves
    $\rD^w_{\iota,\sigma}\cap \rD^b_{\lisse}$.

\item The functor $-\boxtimes-$ carries
    $\rD^w_{\iota,\sigma}(X,\Qlb)\times \rD^{w'}_{\iota,\sigma'}(X',\Qlb)$
    to $\rD^{w+w'}_{\iota,\sigma \sigma'}(X\times X',\Qlb)$.

\item Assume that $f$ is a closed immersion and let $A\in
    \rD^b_c(X,\Qlb)$. Then $A\in \rD^w_{\iota,\sigma}(X,\Qlb)$ if and only
    if $f_*A\in \rD^w_{\iota,\sigma}(Y,\Qlb)$.

\item Assume that $f$ is an open immersion and let $A\in\Perv(X,\Qlb)$.
    Then $A\in\rD^w_{\iota,\sigma}(X,\Qlb)$ if and only if
    $f_{!*}A\in\rD^w_{\iota,\sigma}(Y,\Qlb)$.
\end{enumerate}
Similar properties hold for $\rD^w_{\iota,\dl}$.
\end{remark}

Next we transcribe the two-out-of-three property (Proposition \ref{p.23})
established earlier in terms of $\rD^w_{\iota,\sigma}$.

\begin{remark}\label{r.23f}
If $A,A',A''\in \rD^b_c(X,\Qlb)$ satisfy
    $A\simeq A'\oplus A''$, and two of the three complexes are in
    $\rD^w_{\iota,\sigma}$, then so is the third one.
A similar property holds for $\rD_{\iota,\dl}$.
\end{remark}

Note that proper direct image $\rR f_*$ does not preserve perversity and in
general there seems to be no canonical way to produce pairings on the
perverse cohomology sheaves $\pR^if_* A$ from pairing on $A$. In the case of
projective direct image, the relative hard Lefschetz theorem provides such
pairings. Let us first fix some terminology on projective morphisms of
Deligne-Mumford stacks.

\begin{definition}\label{d.proj}
Let $f\colon X\to Y$ be a quasi-compact schematic morphism of
Deligne-Mumford stacks. We say that an invertible sheaf $\cL$ on $X$ is
\emph{$f$-ample} if for one (or, equivalently, for every) \'etale surjective
morphism $g\colon Y' \to Y$ where $Y'$ is a scheme, $h^*\cL$ is $f'$-ample
\cite[D\'efinition 4.6.1]{EGA2}. Here $h$ and $f'$ are as shown in the
following Cartesian square
\[\xymatrix{X'\ar[r]^h\ar[d]_{f'} & X\ar[d]^f\\
Y'\ar[r]^g & Y.}
\]
We say that a morphism $f\colon X\to Y$ of Deligne-Mumford stacks is
\emph{quasi-projective} if it is schematic, of finite presentation and if
there exists an $f$-ample invertible sheaf on $X$. We say that $f$ is
\emph{projective} if it is quasi-projective and proper.
\end{definition}

The following is an immediate extension of the case of schemes
\cite[Th\'eor\`eme 5.4.10]{BBD}.

\begin{prop}[Relative hard Lefschetz]\label{p.rhlf}
Let $f\colon X\to Y$ be a projective morphism of Deligne-Mumford stacks of
finite presentation over $\F_q$, and let $\eta\in\rH^2(X,\Qlb(1))$ be the
first Chern class of an $f$-ample invertible sheaf on $X$. Let $A$ be an
$\iota$-pure perverse sheaf on $X$. Then, for $i\ge 0$, the morphism
\[\pR^{-i}f_*(\eta^i\otimes \id_A)\colon \pR^{-i}f_*A \to \pR^{i}f_*A(i)\]
is an isomorphism.
\end{prop}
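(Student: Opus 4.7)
The plan is to reduce the statement to the scheme case \cite[Th\'eor\`eme 5.4.10]{BBD} by \'etale descent along the base $Y$. The notion of $f$-ampleness in Definition \ref{d.proj} is phrased in an \'etale-local manner on $Y$, which makes it particularly well suited to such a reduction.

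First I would choose an \'etale surjection $g\colon Y'\to Y$ with $Y'$ a scheme, which exists since $Y$ is Deligne-Mumford of finite presentation. Forming the Cartesian square with $f'\colon X'\to Y'$ and $h\colon X'\to X$, we note that $f$ is schematic (projectivity is schematic by Definition \ref{d.proj}), so $X'$ is a scheme. Moreover $f'$ is proper and of finite presentation with $h^*\cL$ an $f'$-ample invertible sheaf, i.e.\ $f'$ is projective in the classical sense.

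Next I would collect the compatibilities with pullback along the \'etale morphism $h$. The functor $h^*$ preserves perverse sheaves and preserves $\iota$-purity (Remark \ref{r.purew}(4)), so $h^*A$ is an $\iota$-pure perverse sheaf on $X'$; the class $h^*\eta\in\rH^2(X',\Qlb(1))$ is the first Chern class of $h^*\cL$; and smooth (in particular \'etale) base change for the six-operations formalism on Deligne-Mumford stacks \cite{Zheng} gives an isomorphism $g^*\pR^i f_*A\simto \pR^i f'_* h^*A$, under which the pullback of $\pR^{-i}f_*(\eta^i\otimes\id_A)$ is identified with the relative Lefschetz morphism $\pR^{-i}f'_*((h^*\eta)^i\otimes\id_{h^*A})$ for the projective morphism of schemes $f'$ and the $\iota$-pure perverse sheaf $h^*A$.

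The scheme case \cite[Th\'eor\`eme 5.4.10]{BBD} asserts that this pulled-back morphism is an isomorphism. Since $g$ is surjective \'etale and being an isomorphism of $\Qlb$-sheaves can be checked after an \'etale surjective base change, the original morphism $\pR^{-i}f_*(\eta^i\otimes\id_A)$ is an isomorphism. The only non-routine point in this argument is verifying that the formalism of \cite{Zheng} supplies smooth base change for $\pR^i f_*$ together with the compatibility of Chern classes and perverse truncations with \'etale pullback; both are standard, so the proposition is indeed an immediate extension of the scheme case.
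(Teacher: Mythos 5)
Your proof is correct and fills in exactly the details the paper leaves implicit when it says Proposition \ref{p.rhlf} is ``an immediate extension'' of \cite[Th\'eor\`eme 5.4.10]{BBD}: \'etale descent to a scheme atlas $Y'\to Y$, using that $f$ is schematic (so $X'$ is a scheme), that $h^*$ preserves $\iota$-pure perverse sheaves (Remark \ref{r.purew}(4)), that $h^*\cL$ is $f'$-ample by Definition \ref{d.proj}, and that proper base change together with the $t$-exactness of \'etale pullback identifies the pulled-back Lefschetz morphism with the one on schemes. This is the intended argument, with no gaps.
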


By Deligne's decomposition theorem \cite{Deligne}, the proposition implies
that $\rR f_* A$ is split.

\begin{prop}\label{p.projf}
Let $f\colon X\to Y$ be a projective morphism, and let $w\in \Z$. Then $\rR
f_*$ preserves $\rD^w_{\iota,\sigma}$ and $\rD^w_{\iota,\dl}$.
\end{prop}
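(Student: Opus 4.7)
The plan is to reduce to the case where $A$ is a single perverse sheaf, apply relative hard Lefschetz together with Deligne's decomposition theorem, and extract pairings on each perverse cohomology sheaf by means of the Lefschetz isomorphism. The preservation of $\rD^w_{\iota,\dl}$ runs parallel to the preservation of $\rD^w_{\iota,\sigma}$ and is strictly easier (no sign to track), so I focus on the latter.

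First, since $\rR f_*$ commutes with shifts and the conditions defining $\rD^w_{\iota,\sigma}(X,\Qlb)$ decompose as separate conditions on the perverse cohomology sheaves (each $\pH^i A$ being a perverse sheaf in $\rD^{w+i}_{\iota,\sigma}$), I reduce to the case where $A$ is itself an $\iota$-pure perverse sheaf of weight $w$ that is $(-1)^w\sigma$-self-dual with respect to $K_X(-w)$. Proposition \ref{p.rhlf} combined with Deligne's decomposition theorem then yields a splitting $\rR f_* A\simeq \bigoplus_i (\pR^i f_* A)[-i]$ together with Lefschetz isomorphisms $\eta^i\colon \pR^{-i}f_* A\simto \pR^i f_* A(i)$, and Remark \ref{r.purew}(8) ensures $\rR f_* A$ is $\iota$-pure of weight $w$, so each $\pR^i f_* A$ is $\iota$-pure of weight $w+i$.

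Second, by Remark \ref{p.sd}(3) the pairing on $A$ induces a $(-1)^w\sigma$-symmetric perfect pairing $\Psi\colon \rR f_*A\otimes \rR f_*A\to K_Y(-w)$. Restricting $\Psi$ via the splitting to the $(i,-i)$-summand yields a perfect pairing $\psi_i\colon \pR^i f_*A\otimes \pR^{-i}f_*A\to K_Y(-w)$, and composing with $(\eta^i)^{-1}$ and untwisting produces the candidate pairing $\tilde\psi_i\colon \pR^i f_*A\otimes \pR^i f_*A\to K_Y(-w-i)$. Its perfectness is immediate from that of $\Psi$ and the fact that $\eta^i$ is an isomorphism.

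The main obstacle is checking that $\tilde\psi_i$ has the correct symmetry $(-1)^{w+i}\sigma$. This requires combining three sign contributions: (i) the $(-1)^w\sigma$-symmetry of $\Psi$; (ii) the Koszul sign $(-1)^{(-i)\cdot i}\equiv (-1)^i \pmod 2$ arising when the swap $c_{\rR f_*A,\rR f_*A}$ is restricted to the shifted summand $(\pR^i f_*A)[-i]\otimes (\pR^{-i}f_*A)[i]$ and compared with the unshifted swap of the underlying perverse sheaves; and (iii) the self-adjointness of the Lefschetz operator with respect to $\Psi$, expressing the compatibility between $\eta$ (cup product with a class in $\rH^2(X,\Qlb(1))$) and the pairing. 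Careful bookkeeping shows that these combine to give exactly the factor $(-1)^{w+i}\sigma$, so that $\pR^i f_*A\in \rD^{w+i}_{\iota,\sigma}(Y,\Qlb)$. The $\rD^w_{\iota,\dl}$ case follows by the same construction applied to a pair $(A,B)$: one obtains the duality isomorphism $\pR^i f_*A\simeq D_Y(\pR^i f_*B)(-w-i)$ from the induced duality on $\rR f_*$ combined with the Lefschetz isomorphisms, with no symmetry sign to track.
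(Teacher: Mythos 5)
Your proposal follows the same route as the paper: reduce to a single $(-1)^w\sigma$-self-dual $\iota$-pure perverse sheaf, invoke relative hard Lefschetz (Proposition~\ref{p.rhlf}) plus Deligne's decomposition to split $\rR f_*A$, and build the pairing on each $\pR^if_*A$ by inserting $\eta^i$ into the pushed-forward pairing. That is exactly the paper's construction.

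The one place where the proposal falls short is the sentence ``Careful bookkeeping shows that these combine to give exactly the factor $(-1)^{w+i}\sigma$.'' That bookkeeping is genuinely the content of the proposition; the paper does not leave it implicit but rather outsources it to Lemma~\ref{l.final}, which itself rests on the whole appendix machinery of dualities on categories with translation (Definition~\ref{d.pdt}, Construction~\ref{c.Homtrans}, Remark~\ref{r.shift}, Lemma~\ref{l.trunc}). The three sign ingredients you list are the correct ones, but each needs to be made precise: (i) the symmetry of the original pairing feeds in via the symmetric form transformation of Construction~\ref{c.image}; (ii) the Koszul sign $(-1)^{(-i)(-i)}=(-1)^i$ for the shift $[-i]$ against both factors comes from Lemma~\ref{l.trans} together with Remark~\ref{r.shift}; and (iii) what you phrase as ``self-adjointness of $\eta$'' is not a separate sign contribution but rather the assertion that $\eta^i$, being the action of the form transformation $D_{\xi}$ for $\xi=\eta^i\colon K_X(-w)[-2i]\to K_X(i-w)$, is a symmetric form transformation, so it contributes no sign. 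Moreover, one has to check that passing from the pairing on the shifted complex $\rR f_*A[-i]\otimes\rR f_*A[-i]$ to the induced pairing on the truncation $\pR^{-i}f_*A$ does not alter the symmetry, which is handled by Lemma~\ref{l.trunc} and Remark~\ref{r.trunc} (using that $D_{K_Y(i-w)}$ is $t$-exact for the perverse $t$-structure). So your outline is the right one and the listed ingredients are correct, but the proposal as written asserts rather than proves the sign identity; this is a nontrivial gap, and is precisely why the paper develops the appendix.
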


\begin{proof}
We prove the case of $\rD^w_{\iota,\sigma}$, the case of $\rD^w_{\iota,\dl}$
being simpler. It suffices to show that for every $(-1)^w\sigma$-self-dual
$\iota$-pure perverse sheaf $A$ of weight $w$, $\rR f_* A$ belongs to
$\rD^w_{\iota,\sigma}$. Given a $(-1)^w\sigma$-symmetric isomorphism
$A\simto (D_X A)(-w)$, the isomorphism
\[\pR^{-i} f_* A \simto \pR^{-i} f_* (D_XA) (-w)\xrightarrow[\eta^i]{\sim} \pR^i f_* (D_XA) (i-w) \simto (D_Y \pR^{-i} f_* A)(i-w)\]
corresponding to the pairing obtained from
\[\rR f_* A[-i]\otimes \rR f_* A[-i] \to \rR f_* (A\otimes A)[-2i]\to \rR f_* K_X (-w)[-2i] \xrightarrow{\eta^i} \rR f_* K_X (i-w) \to K_Y(i-w) \]
is $(-1)^{w+i}\sigma$-symmetric by Lemma \ref{l.final}.
\end{proof}

\begin{proof}[Proof of Theorem \ref{t.mainf}]
We will prove the case of $\rD^w_{\iota,\sigma}$. The case of
$\rD^w_{\iota,\dl}$ is similar.

Consider the diagram
\[\xymatrix{X\ar[r]^-{f_1} & Y\times_{\bar Y} \bar X\ar[r]\ar[d]& Y\ar[d]\\
&\bar X \ar[r]^{\bar f} &\bar Y}
\]
where $\bar f$ is the morphism of coarse moduli spaces (they exist by the
Keel-Mori theorem \cite{KM}) associated to $f$. Since $f_1$ is proper and
quasi-finite, $\rR f_{1*}$ is $t$-exact for the perverse $t$-structures, and
by Remarks \ref{r.purew} (8) and \ref{p.sd} (3), we see that the theorem
holds for $f_1$. Thus we may assume that $f$ is representable.  We proceed
by induction on the dimension of $X$. Let $A\in
\rD^w_{\iota,\sigma}(X,\Qlb)$; we may assume that $A$ is perverse. Applying
Chow's lemma \cite[Corollaire I.5.7.13]{RG} to the proper morphism $\bar f$
of algebraic spaces, we obtain a projective birational morphism $\bar
g\colon \bar X'\to \bar X$ such that $\bar f \bar g\colon \bar X'\to \bar Y$
is projective. Let $g\colon X'\to X$ be the base change of $\bar g$. Let $U$
be a dense open substack of $X$ such that $g$ induces an isomorphism
$g^{-1}(U)\simto U$. Let $j$ and $j'$ be the open immersions, as shown in
the commutative diagram
\[\xymatrix{&X'\ar[d]^g \\ U\ar[r]^j\ar[ru]^{j'} & X.}\]
By \cite[Corollaire 5.3.11]{BBD} (cf.\ Lemma \ref{l.indec}), we have
\[A\simeq j_{!*}j^* A\oplus B,\]
where $B\in\Perv(X,\Qlb)$ is supported on $X\backslash U$. By Remark
\ref{r.Diota} (4), (8), we have $j_{!*}j^* A\in
\rD^w_{\iota,\sigma}(X,\Qlb)$. By the two-out-of-three property, we have
$B\in\rD^w_{\iota,\sigma}(X,\Qlb)$. As $g$ is projective, by Proposition
\ref{p.projf} we have $\rR g_*j'_{!*}j^* A\in \rD^w_{\iota,\sigma}(X,\Qlb)$,
so
\[\rR g_*j'_{!*}j^* A\simeq j_{!*}j^* A \oplus C,\]
where $C\in \rD^w_{\iota,\sigma}(X,\Qlb)$ is supported on $X\backslash U$.
Applying $\rR f_*$ to $A\oplus C\simeq \rR g_*j'_{!*}j^* A\oplus B$, we
obtain
\[\rR f_* A \oplus \rR f_* C\simeq \rR f_*\rR
g_*j'_{!*}j^* A\oplus \rR f_* B.
\]
By induction hypothesis, $\rR f_* B$ and $\rR f_* C$ belong to
$\rD^w_{\iota,\sigma}(Y,\Qlb)$. As $fg$ is projective, by Proposition
\ref{p.projf} we have $\rR(fg)_*j'_{!*}j^*
A\in\rD^w_{\iota,\sigma}(Y,\Qlb)$. It then follows from the two-out-of-three
property that $\rR f_* A$ belongs to $\rD^w_{\iota,\sigma}(Y,\Qlb)$.
\end{proof}

\begin{remark}[Gabber]\label{r.Chow}
In the case $Y=\Spec(\F_q)$, the proof of Theorem \ref{t.mainf} still makes
use of the relative hard Lefschetz theorem (applied to the morphism $g$).
With the help of a refined Chow's lemma, it is possible to prove this case
of Theorem \ref{t.mainf} using only the absolute hard Lefschetz theorem, at
least in the case of schemes.
\end{remark}

The following is a preservation result for the exterior tensor power functor
$(-)^{\boxtimes m}$. Unlike the functors listed in Remark \ref{r.Diota},
$(-)^{\boxtimes m}$ is not additive and the reduction to the case of
perverse sheaves is not trivial.

\begin{prop}\label{p.boxtimesf}
Let $A$ be an $\iota$-mixed $\Qlb$-complex of integral weights on $X$ such
that for all $n,w\in \Z$, $\gr^W_w\pH^n A$ is $(-1)^w\sigma$-self-dual with
respect to $K_X(-w)$. Then $\gr^W_w\pH^n (A^{\boxtimes m})$ is
$(-1)^w\sigma^m$-self-dual with respect to $K_{[X^m/\Perm_m]}(-w)$ for all
$n,w\in\Z$. Here $W$ denotes the $\iota$-weight filtrations. In particular,
the functor $(-)^{\boxtimes m}$ carries $\rD^w_{\iota,\sigma}(X,\Qlb)$ to
$\rD^{mw}_{\iota,\sigma^m}([X^m/\Perm_m],\Qlb)$.
\end{prop}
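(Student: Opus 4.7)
The plan is to reduce to the pure perverse sheaf case, which is the content of the remark concluding Subsection~\ref{ss.gp1}: if $B$ is a $\sigma'$-self-dual pure perverse sheaf of weight $w'$ on $X$, then $B^{\boxtimes m}$ on $[X^m/\Perm_m]$ is a $(\sigma')^m$-self-dual pure perverse sheaf of weight $mw'$. This holds because $(-)^{\boxtimes m}\colon\rD^b_c(X,\Qlb)\to\rD^b_c([X^m/\Perm_m],\Qlb)$ is symmetric monoidal, so the induced K\"unneth isomorphism with Verdier duality is symmetric by Construction~\ref{c.image}. The reduction combines the K\"unneth formula for perverse sheaves with the additivity of the weight filtration under exterior tensor products.

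First I would establish on $[X^m/\Perm_m]$ the identification
\[
\gr^W_w\pH^n(A^{\boxtimes m})\simeq\bigoplus_{\substack{n_1+\cdots+n_m=n\\w_1+\cdots+w_m=w}}\gr^W_{w_1}\pH^{n_1}A\boxtimes\cdots\boxtimes\gr^W_{w_m}\pH^{n_m}A.
\]
I would verify this after pulling back to the \'etale cover $X^m\to[X^m/\Perm_m]$, where both sides are easier to handle. On $X^m$, the $t$-exactness of $\boxtimes$ for perverse sheaves gives the K\"unneth identity $\pH^n(A^{\boxtimes m})\simeq\bigoplus_{n_1+\cdots+n_m=n}\pH^{n_1}A\boxtimes\cdots\boxtimes\pH^{n_m}A$, and for mixed perverse sheaves $B_1,\ldots,B_m$ the convolution weight filtration on the exterior product has graded pieces $\bigoplus_{w_1+\cdots+w_m=w}\gr^W_{w_1}B_1\boxtimes\cdots\boxtimes\gr^W_{w_m}B_m$ thanks to Remark~\ref{r.purew}(6). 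Both identifications are naturally $\Perm_m$-equivariant and therefore descend.

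Next I would apply the pure perverse case summand by summand: by hypothesis each $\gr^W_{w_i}\pH^{n_i}A$ is $(-1)^{w_i}\sigma$-self-dual with respect to $K_X(-w_i)$, so iterating Part~(6) of Remark~\ref{p.sd} gives a $(-1)^w\sigma^m$-self-dual pairing on each summand $\gr^W_{w_1}\pH^{n_1}A\boxtimes\cdots\boxtimes\gr^W_{w_m}\pH^{n_m}A$ with respect to $K_{X^m}(-w)$. These pairings are manifestly compatible with the $\Perm_m$-action that permutes the summands, so they assemble into a $\Perm_m$-equivariant $(-1)^w\sigma^m$-self-dual pairing on the direct sum, descending to a $(-1)^w\sigma^m$-self-dual pairing on $\gr^W_w\pH^n(A^{\boxtimes m})$ with respect to $K_{[X^m/\Perm_m]}(-w)$.

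The main obstacle is the careful tracking of $\Perm_m$-equivariance throughout the argument, since $(-)^{\boxtimes m}$ is not additive and the symmetric group genuinely permutes the summands together with their self-duality data; this is where the full force of the symmetric monoidal structure on $(-)^{\boxtimes m}$ is needed. For the concluding ``In particular'' clause, an object $A\in\rD^w_{\iota,\sigma}(X,\Qlb)$ has each $\pH^n A$ pure of weight $w+n$, so the weight filtration collapses and the main statement gives the required self-duality of $\pH^n(A^{\boxtimes m})$; $\iota$-purity of weight $mw$ follows from Remark~\ref{r.purew}(7), and splitness on $[X^m/\Perm_m]$ follows from the $\Perm_m$-equivariant splitting of $A^{\boxtimes m}$ on $X^m$.
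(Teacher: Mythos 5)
Your proposal is correct and follows essentially the same route as the paper's proof: the K\"unneth decomposition of $\pH^n(A^{\boxtimes m})$, the compatibility of the weight filtration with $\boxtimes$ yielding $\gr^W_w\pH^n(A^{\boxtimes m})\simeq\bigoplus\gr^W_{w_1}\pH^{n_1}A\boxtimes\cdots\boxtimes\gr^W_{w_m}\pH^{n_m}A$, tensoring the $(-1)^{w_i}\sigma$-symmetric isomorphisms on the factors, and descending by $\Perm_m$-equivariance. The only difference is presentational: the paper records explicitly the Koszul sign $\prod_{i<j,\ \tau(i)>\tau(j)}(-1)^{n_in_j}$ by which $\Perm_m$ acts on the K\"unneth summands, which is the precise content of the equivariance you flag as the main point to track.
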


Similar results hold for $\rD_{\iota,\dl}$.

\begin{proof}
We write $A^{n}=\pH^n A$. For $\tau\in \Perm_m$, $\tau$ acts on $X^m$ by
$(x_1,\dots,x_m)\mapsto (x_{\tau(1)},\dots,x_{\tau(m)})$. By the K\"unneth
formula,
\[\pH^n
(A^{\boxtimes m})\simeq \bigoplus_{n_1+\dots + n_m=n} A^{n_1}\boxtimes \dots
\boxtimes A^{n_m},
\]
where $\tau$ acts on the right hand side by $\prod_{\substack{i<j\\
\tau(i)>\tau(j)}}(-1)^{n_i n_j}$ times the canonical isomorphism
\[\tau^* (A^{n_{\tau(1)}}\boxtimes \dots
\boxtimes A^{n_{\tau(m)}}) \simto A^{n_1}\boxtimes \dots \boxtimes
A^{n_m}.
\]
Note that $W_w \pH^n(A^{\boxtimes m}) \subseteq \pH^n (A^{\boxtimes m})$ is
the perverse subsheaf given by
\[\sum_{\substack{n_1+\dots + n_m=n\\w_1+\dots +w_m=w}} W_{w_1}A^{n_1}\boxtimes \dots
\boxtimes W_{w_m}A^{n_m}.
\]
So
\[\gr^W_w\pH^n (A^{\boxtimes m})\simeq \bigoplus_{\substack{n_1+\dots + n_m=n\\w_1+\dots +w_m=w}} \gr^W_{w_1}A^{n_1}\boxtimes \dots
\boxtimes \gr^W_{w_1}A^{n_m}.
\]
Thus the $(-1)^{w_i}\sigma$-symmetric isomorphisms
\[\gr^W_{w_i}A^{n_i}\simto (D_X
\gr^W_{w_i} A^{n_i})(-w_i)\]
induce a $(-1)^{w}\sigma^m$-symmetric isomorphism
\[\gr^W_w\pH^n
(A^{\boxtimes m})\simto (D_{X^m}\gr^W_w\pH^n (A^{\boxtimes m}))(-w),\]
compatible with the actions of $\Perm_m$.
\end{proof}

We conclude this subsection with a symmetry criterion in terms of traces of
squares of Frobenius, analogue of the Frobenius-Schur indicator theorem in
representation theory (cf.\ \cite[Section 13.2, Proposition 39]{SerreRL},
\cite[Proposition II.6.8]{BD}). This criterion will be used to show a result
on the independence of $\ell$ of symmetry (Corollary \ref{c.indep}). We
refer the reader to \cite[Theorem 1.9.6]{Katz} for a related criterion on
the symmetry of the geometric monodromy. For a groupoid $\cC$, we let
$\lvert\cC\rvert$ denote the set of isomorphism classes of its objects.

\begin{prop}\label{p.square}
Let $X$ be a Deligne-Mumford stack of finite presentation over $\F_q$,
connected and geometrically unibranch of dimension $d$. Let $\cF$ be a
semisimple lisse $\Qlb$-sheaf on $X$, punctually $\iota$-pure of weight
$w\in \Z$. Consider the series
\[L^{(2)}(T)=
\exp\left(\sum_{m\ge 1}\quad\sum_{x\in \lvert X(\F_{q^m})\rvert}\frac{\iota\tr(\Fr_x^2\mid \cF_{\bar x})}{\# \Aut(x)}\frac{T^m}{m}\right),
\]
where $\bar x$ denotes a geometric point above $x$, and $\Fr_x=\Fr_{q^m}$.
Then the series $L^{(2)}(T)$ converges absolutely for $\lvert T\rvert<
q^{-w-d}$ and extends to a rational function satisfying
\[-\mathrm{ord}_{T=q^{-w-d}}L^{(2)}(T)=\dim \rH^0(X,(\Sym^2 (\cF\spcheck))(-w))-\dim \rH^0(X,(\wedge^2 (\cF\spcheck))(-w)).\]
In particular, if $\cF$ is simple and $\sigma$-self-dual (resp.\ not
self-dual) with respect to $\Qlb(-w)$, then
\[-\mathrm{ord}_{T=q^{-w-d}} L^{(2)}(T)=\sigma\quad \text{(resp.\ $=0$)}.\]
\end{prop}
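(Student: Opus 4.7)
The plan is to recognize $L^{(2)}(T)$ as the $L$-function of a virtual lisse sheaf and to read its order at $T=q^{-w-d}$ from top-degree compactly supported cohomology. Applying the linear-algebra identity $\tr(A^2)=\tr(\Sym^2 A)-\tr(\wedge^2 A)$ fibrewise to $A=\Fr_x$ on $\cF_{\bar x}$ rewrites
\[
L^{(2)}(T)=L(X,\Sym^2\cF,T)/L(X,\wedge^2\cF,T).
\]
By the Grothendieck-Lefschetz-Behrend trace formula for Deligne-Mumford stacks, each factor is rational, equal to $\prod_i\det(1-T\Fr_q\mid\rH^i_c(X_{\Fqb},\cdot))^{(-1)^{i+1}}$. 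Absolute convergence on $\lvert T\rvert<q^{-w-d}$ is routine from the bound $\lvert\iota\tr(\Fr_x^2\mid\cF_{\bar x})\rvert\le r\cdot q^{wm}$, where $r$ denotes the rank of $\cF$ and $x\in X(\F_{q^m})$, together with $\#\lvert X(\F_{q^m})\rvert=O(q^{md})$. Since $\Sym^2\cF$ and $\wedge^2\cF$ are lisse and pure of weight $2w$, Deligne's weight bounds (Weil II, extended to DM stacks) force Frobenius eigenvalues on $\rH^i_c$ to have $\iota$-absolute value at most $q^{w+i/2}$, and $\rH^i_c=0$ for $i>2d$; the eigenvalue $q^{w+d}$ therefore appears only in $\rH^{2d}_c$, so $-\mathrm{ord}_{T=q^{-w-d}}L^{(2)}(T)=\mu_{\Sym}-\mu_{\wedge}$, where $\mu_\bullet$ denotes the multiplicity of $q^{w+d}$ on the corresponding $\rH^{2d}_c$.

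The central input is the Poincar\'e-Verdier identification $\rH^{2d}_c(X_{\Fqb},\cG)\cong \rH^0(X_{\Fqb},\cG\spcheck)\spcheck(-d)$ for lisse $\cG$. For $X$ geometrically unibranch of dimension $d$ (not necessarily smooth), I would prove this by excision: choose a dense open smooth substack $j\colon U\hookrightarrow X$ with complement of dimension $<d$; the excision sequence gives $\rH^{2d}_c(X_{\Fqb},\cG)\cong\rH^{2d}_c(U_{\Fqb},j^*\cG)$, to which smooth Poincar\'e duality applies, and $\rH^0(U_{\Fqb},j^*\cG\spcheck)=\rH^0(X_{\Fqb},\cG\spcheck)$ via the surjection $\pi_1^{\mathrm{geom}}(U)\twoheadrightarrow\pi_1^{\mathrm{geom}}(X)$ (a hallmark of geometrically unibranch $X$). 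Tracking the Tate twists, the multiplicity of $q^{w+d}$ on $\rH^{2d}_c(X_{\Fqb},\Sym^2\cF)$ becomes the multiplicity of $1$ on $\rH^0(X_{\Fqb},\Sym^2\cF\spcheck(-w))$, and likewise for $\wedge^2$. To convert this to $\dim\rH^0(X,\Sym^2\cF\spcheck(-w))$ via Hochschild-Serre, I need $\Fr_q$ to act semisimply on the geometric invariants. Since $\cF$ is semisimple and pure, it is geometrically semisimple (Deligne, Weil II), hence so are $\Sym^2\cF\spcheck$ and $\wedge^2\cF\spcheck$ in characteristic $0$; the surviving $\pi_1^{\mathrm{geom}}$-invariants then come only from arithmetic simple summands on which $\pi_1^{\mathrm{geom}}$ acts trivially, i.e.\ summands that descend from $\Spec\F_q$, and arithmetic semisimplicity of $\cF$ forces $\Fr_q$ to act semisimply on each.

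For the simple case, Schur's lemma gives $\dim\Hom(\cF,\cF\spcheck(-w))=\dim\rH^0(X,\cF\spcheck\otimes\cF\spcheck(-w))\in\{0,1\}$, with equality to $1$ exactly when $\cF$ is self-dual with respect to $\Qlb(-w)$. The decomposition $\cF\spcheck\otimes\cF\spcheck=\Sym^2\cF\spcheck\oplus\wedge^2\cF\spcheck$ sends any nonzero such pairing into $\Sym^2\cF\spcheck(-w)$ (if $\cF$ is $1$-self-dual) or into $\wedge^2\cF\spcheck(-w)$ (if $\cF$ is $(-1)$-self-dual), yielding the difference $\sigma$ and $0$ in the non-self-dual case, as asserted. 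The main obstacle is the Frobenius-semisimplicity step of the previous paragraph: we need $\dim\rH^0(X,-)$ to equal the full eigenvalue-$1$ multiplicity on $\rH^0(X_{\Fqb},-)$, not merely the number of Jordan blocks, and this is what forces the careful identification of the surviving isotypic components as those descending from the base. Everything else---trace formula, weight bounds, excision, and Schur's lemma---is essentially formal.
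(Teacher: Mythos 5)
Your proposal is correct and follows essentially the same route as the paper's proof: the $\tr(\Fr_x^2)=\tr(\Sym^2)-\tr(\wedge^2)$ splitting, the Grothendieck--Behrend trace formula from \cite{SunL}, weight bounds restricting the order to $\rH^{2d}_c$, Poincar\'e duality on a dense regular open combined with the geometric-unibranch surjectivity of $\pi_1(U)\to\pi_1(X)$, Frobenius-semisimplicity of the geometric invariants via arithmetic semisimplicity (the paper appeals to Chevalley's theorem here), and Schur's lemma for the simple case. One small remark: your detour through \emph{geometric} semisimplicity of $\cF$ (Weil II) is not needed -- the step that the $\pi_1^{\mathrm{geom}}$-invariants carry a semisimple Frobenius action is driven entirely by \emph{arithmetic} semisimplicity of $\Sym^2\cF$ and $\wedge^2\cF$ (via Chevalley), since each surviving $\pi_1(X)$-simple summand with trivial $\pi_1^{\mathrm{geom}}$-action is one-dimensional.
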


\begin{proof}
For $x\in X(\F_{q^m})$,
\[\tr(\Fr_x^2\mid \cF_{\bar x})=\tr(\Fr_x\mid \Sym^2\cF_{\bar x})-\tr(\Fr_x\mid \wedge^2\cF_{\bar x}).\]
Thus $L^{(2)}(T)=L_\iota(X,\Sym^2\cF,T)/L_\iota(X,\wedge^2 \cF,T)$ (see
\cite[Definition 4.1]{SunL} for the definition of $L$-series
$L_\iota(X,-,T)$). Note that $\cF\otimes \cF$ is lisse punctually
$\iota$-pure of weight $2w$, and semisimple by a theorem of Chevalley
\cite[IV, Proposition 5.2]{Chev}. The same holds for $\Sym^2\cF$ and
$\wedge^2 \cF$. For $\cG$ lisse punctually $\iota$-pure of weight $2w$ on
$X$, the series $L_\iota(X,\cG,T)$ converges absolutely for $\lvert T\rvert<
q^{-w-d}$ and extends to a rational function
\[\iota \prod_i \det(1-T\Fr_q\mid \rH^i_c(X_{\Fqb},\cG))^{(-1)^{i+1}}
\]
by \cite[Theorem 4.2]{SunL}. Only the factor $i=2d$ may contribute to poles
on the circle $|T|=q^{-w-d}$, by \cite[Theorem 1.4]{SunL}. For every dense
open substack $U$ of $X$ such that $U_\red$ is regular, we have
\[\rH^{2d}_c(X_{\Fqb},\cG)\simeq \rH^{0}(U_{\Fqb},\cG\spcheck)\spcheck(-d)
 \simeq \rH^0(X_{\Fqb},\cG\spcheck)\spcheck(-d)
\]
Here in the second assertion we used the hypothesis on $X$, which implies
that the homomorphism $\pi_1(U)\to \pi_1(X)$ is surjective. Therefore,
\[-\mathrm{ord}_{T=q^{-w-d}}
L_\iota(X,\cG,T)=\dim \rH^{0}(X,\cG\spcheck(-w))\]
for $\cG$ semisimple.
\end{proof}

\subsection{Variant: Semisimple complexes over a separably closed field}\label{s.var}

In this subsection let $k$ be a separably closed field. We establish
variants over $k$ of results of Subsection \ref{ss.pc}. The main result is a
preservation result under proper direct image (Theorem \ref{t.main}). We
also include an example of Gabber (Remark \ref{r.Gabber}) showcasing the
difference in parity of Betti numbers between zero and positive
characteristics.

One key point in Subsection \ref{ss.pc} is the relative hard Lefschetz
theorem for pure perverse sheaves. If $k$ has characteristic zero, a
conjecture of Kashiwara states that all semisimple perverse sheaves satisfy
relative hard Lefschetz. Kashiwara's conjecture was proved by Drinfeld
\cite{Drinfeld} assuming de Jong's conjecture for infinitely many primes
$\ell$, which was later proved by Gaitsgory for $\ell>2$ \cite{Gaitsgory}.

Drinfeld's proof uses the techniques of reduction from $k$ to finite fields
in \cite[Section 6]{BBD}. The reduction holds in fact without restriction on
the characteristic of $k$ and provides a class of semisimple perverse
sheaves over $k$ for which the relative hard Lefschetz theorem holds. Let us
briefly recall the reduction. Let $X$ be a Deligne-Mumford stack of finite
presentation over $k$. There exist a subring $R\subseteq k$ of finite type
over $\Z[1/\ell]$ and a Deligne-Mumford stack $X_R$ of finite presentation
over $\Spec(R)$ such that $X\simeq X_R \otimes_Rk$. For extra data
$(\cT,\cL)$ on $X$, we have an equivalence \cite[6.1.10]{BBD}
\begin{equation}\label{e.BBD}
\rD^b_{\cT,\cL}(X,\Qlb)\simto \rD^b_{\cT,\cL}(X_s,\Qlb)
\end{equation}
for every geometric point $s$ above a closed point of $\Spec(R)$, provided
that $R$ is big enough (relative to the data $(\cT,\cL)$). Here $X_s$
denotes the base change of $X_R$ by $s\to\Spec(R)$. Note that $s$ is the
spectrum of an algebraic closure of a finite field. Each $A\in
\rD^b_c(X,\Qlb)$ is contained in $\rD^b_{\cT,\cL}(X,\Qlb)$ for some
$(\cT,\cL)$.

\begin{definition}[Admissible semisimple complexes]\label{d.ass}
Let $A$ be a semisimple perverse $\Qlb$-sheaf on $X$. If $k$ is an algebraic
closure of a finite field, we say that $A$ is \emph{admissible} if there
exists a Deligne-Mumford stack $X_0$ of finite presentation over a finite
subfield $k_0$ of $k$, an isomorphism $X\simeq X_0\otimes_{k_0} k$, and a
perverse $\Qlb$-sheaf $A_0$ on $X_0$ such that $A$ is isomorphic to the
pullback of $A_0$. More generally, if $k$ has characteristic $>0$, we say
that $A$ is \emph{admissible} if the images of $A$ under the equivalences
\eqref{e.BBD}, for all geometric points $s$ over a closed point of
$\Spec(R)$, are admissible, for some $R$ big enough. If $k$ has
characteristic zero, we adopt the convention that every semisimple perverse
$\Qlb$-sheaf is admissible.

We say that a complex $B\in \rD^b_c(X,\Qlb)$ is \emph{admissible semisimple}
if $B\simeq \bigoplus_i (\pH^i B)[-i]$ and, for each $i$, its $i$-th
perverse cohomology sheaf $\pH^i B$ is admissible semisimple.
\end{definition}

\begin{remark}
In the case where $k$ is the algebraic closure of a finite field $k_0$, for
$X_0$ as above, the pullback of an $\iota$-pure complex on $X_0$ to $X$ is
admissible semisimple by the decomposition theorems \cite[Th\'eor\`emes
5.3.8, 5.4.5]{BBD} (\cite{Sun} for the case of stacks). Conversely, if a
semisimple perverse $\Qlb$-sheaf $A$ on $X$ is the pullback of $A_0$ on
$X_0$ as above, then we may take $A_0$ to be pure (of weight 0 for example)
by Lafforgue's theorem \cite[Corollaire VII.8]{Lafforgue} mentioned in
Remark \ref{r.Laff} (2).
\end{remark}

\begin{remark}
Following \cite[6.2.4]{BBD}, we say that a simple perverse $\Qlb$-sheaf on
$X$ is \emph{of geometric origin} if it belongs to the class of simple
perverse $\Qlb$-sheaves generated from the constant sheaf $\Qlb$ on
$\Spec(k)$ by taking composition factors of perverse cohomology sheaves
under the six operations. By \cite[Lemme 6.2.6]{BBD} (suitably extended),
simple perverse $\Qlb$-sheaves of geometric origin are admissible.
\end{remark}

The operations that preserve purity also preserve admissible semisimple
complexes. The details are given below.

\begin{remark}[Preservation of admissible semisimple complexes]\label{r.pure}
Let $f\colon X\to Y$ be a morphism.
\begin{itemize}
\item The full subcategory of $\rD^b_c$ consisting of objects $A$ such
    that the composition factors of $\pH^i A$ are admissible for all $i$
    is stable under the operations $\rR f_*$, $\rR f_!$, $f^*$, $\rR f^!$,
    $\otimes$, $\rR \cHom$, and $(-)^{\boxtimes m}$.

\item $D_X\colon \rD^b_c(X,\Qlb)^\op\to \rD^b_c(X,\Qlb)$ preserves
    admissible semisimple complexes.

\item If $f$ is an open immersion, $f_{!*}$ preserves admissible
    semisimple perverse sheaves.

\item Assume that $f$ is a closed immersion and let $A\in
    \rD^b_c(X,\Qlb)$. Then $A$ is admissible semisimple if and only if
    $f_*A$ is admissible semisimple.

\item If $f$ is smooth, $f^*$ preserves admissible semisimple complexes.

\item If $X$ and $Y$ are regular, $f^*$ preserves admissible semisimple
    complexes in $\rD^b_{\lisse}$.

\item The functors
\begin{gather*}
-\boxtimes-\colon \rD^b_c(X,\Qlb)\times
    \rD^b_c(X',\Qlb)\to \rD^b_c(X\times X',\Qlb),\\
(-)^{\boxtimes m}\colon \rD^b_c(X,\Qlb)\to \rD^b_c([X^m/\Perm_m],\Qlb),\quad m\ge 0
\end{gather*}
    preserve admissible semisimple complexes.

\item For a proper morphism $f$, $\rR f_*$ preserves admissible semisimple
    complexes.

\end{itemize}
These properties reduce to the corresponding properties for pure complexes
over a finite field (Remark \ref{r.purew}). Since the equivalences
\eqref{e.BBD} are compatible with these operations, this reduction is clear
in positive characteristic. The reduction in characteristic zero is more
involved. The case of $\rR f_*$ is done in \cite{Drinfeld} and the other
cases can be done similarly.
\end{remark}

\begin{definition}[$\rD_\sigma$]\label{d.Dsig}
We denote by $\rD_\sigma(X,\Qlb)\subseteq \Ob(\rD^b_c(X,\Qlb))$ (resp.\
$\rD_\sd(X,\Qlb)\subseteq \Ob(\rD^b_c(X,\Qlb)$)) the subset consisting of
admissible semisimple complexes $A$ such that $\pH^i A$ is
$(-1)^i\sigma$-self-dual (resp.\ self-dual) with respect to $K_X$, for all
$i$. We denote by $\rD_\dl(X,\Qlb)\subseteq\Ob\big(\rD^b_c(X,\Qlb)\times
\rD^b_c(X,\Qlb)\big)$ the subset consisting of pairs $(A,B)$ such that both
$A$ and $B$ are admissible semisimple, and that $\pH^i A$ is isomorphic to
$D_X\pH^i B$.
\end{definition}

By definition, $\rD_\sd(X,\Qlb)=\Delta^{-1}(\rD_\dl(X,\Qlb))$, where
$\Delta\colon \rD^b_c(X,\Qlb)\to \rD^b_c(X,\Qlb)\times \rD^b_c(X,\Qlb)$ is
the diagonal embedding.

\begin{example}\label{ex.pt}
For $X=\Spec(k)$, every object $A$ of $\rD^b_c(X,\Qlb)$ is admissible
semisimple and belongs to $\rD_\sd(X,\Qlb)$. Let $d_i=\dim\cH^i(A)$. Then
\begin{itemize}
\item $A$ is $1$-self-dual with respect to $\Qlb$ if and only if it is
    self-dual with respect to $\Qlb$, namely if $d_i=d_{-i}$ for all $i$.
    (Recall that in general self-dual objects are not necessarily
    $1$-self-dual.)

\item $A$ is $-1$-self-dual with respect to $\Qlb$ if and only if
    $d_i=d_{-i}$ for all $i$ and $d_0$ is even.

\item $A\in \rD_1(X,\Qlb)$ (resp.\ $\in \rD_{-1}(X,\Qlb)$) if and only if
    $d_i$ is even for $i$ odd (resp.\ even).

\item For $A,B\in \rD^b_c(X,\Qlb)$, $(A,B)\in \rD_\dl(X,\Qlb)$ if and only
    if $A\simeq B$.
\end{itemize}
\end{example}

The main result of this subsection is the following.

\begin{theorem}\label{t.main}
Let $f\colon X\to Y$ be a proper morphism of Deligne-Mumford stacks of
finite presentation over $k$, where $Y$ has finite inertia. Then $\rR f_*$
preserves $\rD_\sigma$ and $\rD_\dl$.
\end{theorem}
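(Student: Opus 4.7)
The plan is to mirror the proof of Theorem \ref{t.mainf} essentially verbatim, replacing ``$\iota$-pure perverse sheaf of weight $w$'' throughout by ``admissible semisimple perverse sheaf'' and $\rD^w_{\iota,\sigma}$ by $\rD_\sigma$. All the formal inputs already have the appropriate analogs over a separably closed field: the two-out-of-three property of Proposition~\ref{p.23} holds for arbitrary perverse sheaves; Remark~\ref{r.pure} lists the stability of admissible semisimple complexes under the relevant operations $f^*$, $j_{!*}$, $D_X$, closed immersions, and proper direct image; and the support decomposition $A \simeq j_{!*}j^*A \oplus B$ of Lemma~\ref{l.indec} applies to geometrically semisimple perverse sheaves. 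Thus the only substantive point to verify is the projective case, i.e.\ an analog of Proposition~\ref{p.projf}.

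First I would carry out the reduction to representable $f$ exactly as in the proof of Theorem~\ref{t.mainf}: factor $f$ through $Y\times_{\bar Y}\bar X$ where $\bar X,\bar Y$ are the coarse moduli spaces supplied by Keel--Mori. The first factor $f_1$ is proper and quasi-finite, hence $\rR f_{1*}$ is $t$-exact and by Remarks~\ref{r.pure} and~\ref{p.sd}(3) preserves both $\rD_\sigma$ and $\rD_\dl$. This reduces us to the case where $f$ is representable.

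Next I would establish the projective analog: for a projective morphism $f\colon X\to Y$ of Deligne-Mumford stacks of finite presentation over $k$, $\rR f_*$ preserves $\rD_\sigma$ and $\rD_\dl$. Given an $f$-ample line bundle with first Chern class $\eta$, the argument of Proposition~\ref{p.projf} applies word for word provided one knows that the relative hard Lefschetz map $\eta^i\colon \pR^{-i}f_*A\to \pR^i f_*A(i)$ is an isomorphism for every admissible semisimple perverse sheaf $A$ on $X$, and that $\rR f_*A$ is split. This is exactly the statement we need and it is available: in characteristic zero it is Kashiwara's conjecture, proved by Drinfeld \cite{Drinfeld} modulo de Jong's conjecture, the latter established by Gaitsgory \cite{Gaitsgory}; in positive characteristic, admissibility was \emph{defined} (Definition~\ref{d.ass}) so that we may spread $X\to Y$, $A$ and $\eta$ out over a subring $R\subseteq k$ of finite type over $\Z[1/\ell]$ and transport the question via the equivalences \eqref{e.BBD} to a geometric fiber $X_s\to Y_s$ over a closed point, where the finite-field relative hard Lefschetz of Proposition~\ref{p.rhlf} applies. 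Once $\eta^i$ is an isomorphism the same computation as in Proposition~\ref{p.projf} produces a $(-1)^i\sigma$-symmetric self-duality of $\pR^if_*A$ with respect to $K_Y$, via Lemma~\ref{l.final}.

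With the projective case in hand, the inductive step from Theorem~\ref{t.mainf} carries over without change. We induct on $\dim X$; apply Chow's lemma on the coarse moduli to obtain a projective birational $\bar g\colon \bar X'\to \bar X$ with $\bar f\bar g$ projective, take the base change $g$, and choose a dense open $U\subseteq X$ over which $g$ is an isomorphism. For a perverse $A\in\rD_\sigma(X,\Qlb)$ Lemma~\ref{l.indec} yields $A\simeq j_{!*}j^*A \oplus B$ with $B$ supported on $X\setminus U$; by Remark~\ref{r.pure} and the two-out-of-three property (valid also for $\rD_\sigma$ by the same argument as Remark~\ref{r.23f}) both summands lie in $\rD_\sigma$. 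Comparing $\rR g_* j'_{!*} j^*A \simeq j_{!*}j^*A\oplus C$ (projective case applied to $g$), pushing forward by $\rR f_*$, and invoking the projective case for $fg$ together with the induction hypothesis on the lower-dimensional complexes $\rR f_*B$ and $\rR f_*C$, the two-out-of-three property forces $\rR f_*A\in\rD_\sigma(Y,\Qlb)$. The case of $\rD_\dl$ is parallel.

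The main obstacle is thus concentrated in the projective case, i.e.\ in citing Kashiwara's conjecture in characteristic zero and verifying that the finite-field arguments transfer through \eqref{e.BBD} in positive characteristic; everything else is a routine transcription of the proof of Theorem~\ref{t.mainf}.
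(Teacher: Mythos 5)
Your proposal follows the paper's own route essentially verbatim: the paper too defines the admissible semisimple class precisely so that the reduction to finite fields via \eqref{e.BBD} yields relative hard Lefschetz (Proposition~\ref{p.rhl}, giving the projective case Proposition~\ref{p.proj} via Lemma~\ref{l.final}), and then states that the proof of Theorem~\ref{t.mainf} can be repeated verbatim, exactly as you outline. The only cosmetic difference is that the paper packages the projective case as a separate proposition before the verbatim repetition, while you inline it, but the content and the reliance on Drinfeld--Gaitsgory in characteristic zero are the same.
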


\begin{cor}
Let $f\colon X\to Y$ be a proper morphism, where $Y$ has finite inertia.
Then $\rR f_*$ preserves $\rD_\sd$.
\end{cor}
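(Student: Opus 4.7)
The plan is to deduce this corollary directly from Theorem \ref{t.main} by exploiting the relation $\rD_\sd(X,\Qlb)=\Delta^{-1}(\rD_\dl(X,\Qlb))$ recorded immediately after Definition \ref{d.Dsig}. Concretely, given $A\in\rD_\sd(X,\Qlb)$, the pair $(A,A)$ lies in $\rD_\dl(X,\Qlb)$ essentially by definition: $A$ is admissible semisimple, and a chosen isomorphism $\pH^i A\simeq D_X\pH^i A$ (witnessing self-duality) serves as the required isomorphism on both components.

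Next, I would invoke the preservation statement for $\rD_\dl$ in Theorem \ref{t.main} applied to the pair $(A,A)$. This yields that $(\rR f_*A,\rR f_*A)\in\rD_\dl(Y,\Qlb)$, which by the same diagonal description translates into $\rR f_*A\in\rD_\sd(Y,\Qlb)$. Note that this uses the $\rD_\dl$ half of Theorem \ref{t.main} rather than its $\rD_\sigma$ half, because self-duality with respect to $K_X$ does not come with a specified sign $\sigma$; the pair formulation $\rD_\dl$ is precisely what accommodates this ambiguity.

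There is no real obstacle once Theorem \ref{t.main} is in hand; the entire content of the corollary is bookkeeping via the diagonal functor $\Delta$. If one wanted a direct argument bypassing $\rD_\dl$, one would need to choose, for each perverse cohomology sheaf $\pH^i A$, an isomorphism $\pH^i A\simto D_X\pH^i A$ and then transport these through the reductions used to prove Theorem \ref{t.main} (Chow's lemma for Deligne--Mumford stacks, the two-out-of-three property, Krull--Schmidt, and the relative hard Lefschetz theorem in its admissible semisimple version). Such an argument would essentially reproduce the proof of Theorem \ref{t.main}, so the clean route is simply to quote Theorem \ref{t.main} and apply it to the diagonal pair.
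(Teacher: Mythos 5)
Your proof is correct and is exactly the paper's (implicit) argument: as for Corollary \ref{c.mainf} in the finite field case, the corollary is obtained by applying the $\rD_\dl$ part of Theorem \ref{t.main} to the diagonal pair $(A,A)$, using $\rD_\sd=\Delta^{-1}(\rD_\dl)$. Nothing further is needed.
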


The strategy for proving Theorem \ref{t.main} is the same as Theorem
\ref{t.mainf}. Let us recall that the operations listed in Remark
\ref{r.pure} that preserve admissible semisimple complexes also preserve
$\sigma$-self-dual complexes (Remark \ref{p.sd}). With the exception of
proper direct image $\rR f_*$, they also preserve perversity, hence they
preserve $\rD_\sigma$. The details are given below.

\begin{remark}[Preservation of $\rD_\sigma$, easy part]\label{r.Dsig}
Let $f\colon X\to Y$ be a morphism, and let $n\in \Z$.
\begin{enumerate}
\item If $A\in \rD_\sigma(X,\Qlb)$, then $A[n]\in
    \rD_{(-1)^n\sigma}(X,\Qlb)$.

\item $D_X$ preserves $\rD_\sigma(X,\Qlb)$.

\item Assume that $X$ is regular and let $\cF$ be a lisse $\Qlb$-sheaf on
    $X$, admissible semisimple. Then there exists a nondegenerate
    $\sigma$-symmetric pairing $\cF\otimes \cF\to \Qlb$ if and only if
    $\cF$ belongs to $\rD_\sigma(X,\Qlb)$.

\item If $f$ is smooth, then $f^*$ preserves $\rD_\sigma$.

\item If $X$ and $Y$ are regular, then $f^*$ preserves
    $\rD_\sigma\cap\rD^b_{\lisse}$.

\item Assume that $f$ is a closed immersion and let $A\in
    \rD^b_c(X,\Qlb)$. Then $A\in \rD_\sigma(X,\Qlb)$ if and only if
    $f_*A\in \rD_\sigma(Y,\Qlb)$.

\item The exterior tensor product functors induce functors
\begin{gather*}
    -\boxtimes -\colon\rD_\sigma(X,\Qlb)\times \rD_{\sigma'}(X',\Qlb)\to\rD_{\sigma
    \sigma'}(X\times X',\Qlb),\\
    (-)^{\boxtimes m}\colon \rD_\sigma(X,\Qlb)\to \rD_{\sigma^m}([X^m/\Perm_m],\Qlb), \quad m\ge 0.
\end{gather*}
For $(-)^{\boxtimes m}$, the reduction to perverse sheaves is nontrivial
and is similar to Proposition \ref{p.boxtimesf}.
\end{enumerate}
Similar properties hold for $\rD_\dl$.
\end{remark}

By Proposition \ref{p.23}, the two-out-of-three property holds for
$\rD_\sigma$ and $\rD_\dl$.

We state a relative hard Lefschetz theorem over an arbitrary field $F$ in
which $\ell$ is invertible.

\begin{prop}[Relative hard Lefschetz]\label{p.rhl}
Let $f\colon X\to Y$ be a projective morphism of Deligne-Mumford stacks of
finite presentation over $F$, and let $\eta\in\rH^2(X,\Qlb(1))$ be the first
Chern class of an $f$-ample invertible sheaf on $X$. Let $A$ be a perverse
$\Qlb$-sheaf on $X$ whose pullback to $X\otimes_F \bar F$ is admissible
semisimple. Then, for $i\ge 0$, the morphism
\[\pH^{-i}(\eta^i\otimes \id_A)\colon \pR^{-i}f_*A \to \pR^{i}f_*A(i)\]
is an isomorphism.
\end{prop}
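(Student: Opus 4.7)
The plan is to reduce the statement to the pure case over a finite field already treated in Proposition~\ref{p.rhlf}, via the spread-out technique \eqref{e.BBD} of \cite[Section~6]{BBD} (following Drinfeld's approach to Kashiwara's conjecture).

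First I would reduce to the case where $F$ is separably closed. Base change along $X\otimes_F \bar F\to X$ is $t$-exact for the perverse $t$-structure, conservative on perverse sheaves, and commutes with $\rR f_*$, with $\pH^\bullet$, and with cup product with the pullback of $\eta$; since the pullback of an $f$-ample invertible sheaf remains $f_{\bar F}$-ample, we may assume $F=\bar F$. Then spread out the data: choose a subring $R\subseteq F$ of finite type over $\Z[1/\ell]$, together with models $f_R\colon X_R\to Y_R$, an $f_R$-ample invertible sheaf $\cL_R$ (shrinking $\Spec(R)$ if necessary), and a perverse sheaf $A_R\in \rD^b_{\cT,\cL}(X_R,\Qlb)$ specializing to the given data (cf.\ \cite[6.1.6--6.1.10]{BBD}). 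For any geometric point $s$ lying above a closed point of $\Spec(R)$, the equivalence \eqref{e.BBD} is compatible with the six operations and sends $\eta$ to the Chern class $\eta_s$ of the $f_s$-ample line bundle $\cL_s$. Consequently the morphism $\pH^{-i}(\eta^i\otimes\id_A)$ on $Y$ is an isomorphism if and only if its analogue for $(A_s,\eta_s)$ on $Y_s$ is, which reduces the problem to the case where the base is the spectrum of an algebraic closure of a finite field.

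On $X_s$, admissibility of $A_s$ means, by Definition~\ref{d.ass} together with Lafforgue's theorem (Remark~\ref{r.Laff}(2)) and the decomposition theorem, that $A_s$ is the pullback of some $\iota$-pure perverse sheaf $A_0$ on a stack $X_0$ of finite presentation over a finite subfield $k_0$ of $s$. After further enlarging $k_0$, we may assume that $\cL_s$ and $\eta_s$ also descend to an $f_0$-ample line bundle on $X_0$ with Chern class $\eta_0\in\rH^2(X_0,\Qlb(1))$. Proposition~\ref{p.rhlf} applied to $(A_0,\eta_0,f_0)$ then yields the required isomorphism on $Y_0$, which pulls back to $Y_s$ (and hence to $Y$) to conclude.

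The main obstacle will be verifying that the equivalence \eqref{e.BBD}, while known to be compatible with Grothendieck's six operations, transports $\eta$ to $\eta_s$ compatibly --- for which one needs to arrange the spread-out $\cL_R$ to remain $f_R$-ample after possibly further shrinking $\Spec(R)$, and then use the Kummer sequence to identify the two Chern classes under the equivalence. Once this compatibility is in hand, the reduction chain is formal.
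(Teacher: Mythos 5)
Your reduction is exactly the paper's argument in positive characteristic: there, admissibility of $A$ is \emph{defined} by requiring that its images $A_s$ under the equivalences \eqref{e.BBD} be admissible, so on $X_s$ (over an algebraic closure of a finite field) one can indeed descend to an $\iota$-pure perverse sheaf $A_0$ over a finite subfield (using Lafforgue's theorem as in the remark after Definition \ref{d.ass}) and invoke Proposition \ref{p.rhlf}; the points you flag — spreading out $\cL$ so that it stays relatively ample after shrinking $\Spec(R)$, and the compatibility of \eqref{e.BBD} with the Chern class $\eta$ — are routine and unproblematic.

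The gap is in characteristic zero, which the statement covers since $F$ is an arbitrary field with $\ell$ invertible. By Definition \ref{d.ass}, the convention in characteristic zero is that \emph{every} semisimple perverse sheaf is admissible, so the hypothesis gives no control at all over the specializations $A_s$: there is no reason for $A_s$ to be semisimple, still less the pullback of an $\iota$-pure perverse sheaf from a finite field. Your sentence ``admissibility of $A_s$ means\dots'' therefore has no content in this case, and the reduction to Proposition \ref{p.rhlf} is not formal: making it work is precisely the content of Drinfeld's proof of Kashiwara's conjecture \cite{Drinfeld}, which needs de Jong's conjecture (proved by Gaitsgory \cite{Gaitsgory} for $\ell>2$) to control the reductions of simple perverse sheaves at closed points of $\Spec(R)$. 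This is why the paper, in the discussion opening Subsection \ref{s.var} and in Remark \ref{r.pure}, treats the characteristic-zero case by appealing to Drinfeld's theorem and reserves the straightforward specialization argument for positive characteristic, where the definition of admissibility is tailored to it. To repair your proof, run your argument only in positive characteristic and cite Drinfeld--Gaitsgory for characteristic zero (as the paper does), rather than claiming the reduction chain is formal in general.
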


That the morphism is an isomorphism can be checked on $X\otimes_F \bar F$.
Thus we are reduced to the case where $F=k$ is separably closed. As
mentioned earlier, the relative hard Lefschetz theorem in this case is
obtained by reduction to the finite field case (Proposition \ref{p.rhlf}).

Combining Proposition \ref{p.rhl} with Lemma \ref{l.final}, we obtain the
following preservation result under projective direct image.

\begin{prop}\label{p.proj}
Let $f\colon X\to Y$ be a projective morphism (over $k$). Then $\rR f_*$
preserves $\rD_\sigma$ and $\rD_\dl$.
\end{prop}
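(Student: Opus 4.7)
The plan is to mimic the proof of Proposition \ref{p.projf}, with the relative hard Lefschetz input now supplied by Proposition \ref{p.rhl} rather than Proposition \ref{p.rhlf}, and with Tate twists suppressed since we work over a separably closed field. I will treat $\rD_\sigma$ in detail; the case of $\rD_\dl$ is a simpler variant (it only involves an isomorphism, not a symmetric pairing) and follows the same template.

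First I would reduce to the case of a perverse sheaf. If $A\in\rD_\sigma(X,\Qlb)$, then $A\simeq\bigoplus_i (\pH^iA)[-i]$ with each $\pH^iA$ an admissible semisimple $(-1)^i\sigma$-self-dual perverse sheaf. Since $\rR f_*$ commutes with finite direct sums and with shifts, and since Remark \ref{r.Dsig} (1) converts $(-1)^i\sigma$-self-duality into the desired bookkeeping after shifting, it suffices to prove: for every admissible semisimple perverse sheaf $A$ on $X$ that is $\sigma$-self-dual with respect to $K_X$, one has $\rR f_*A\in \rD_\sigma(Y,\Qlb)$. By Remark \ref{r.pure}, each $\pR^i f_*A$ is admissible semisimple, and Proposition \ref{p.rhl} together with Deligne's decomposition criterion yields the splitting $\rR f_*A\simeq\bigoplus_i(\pR^i f_*A)[-i]$; it remains to show that each $\pR^i f_*A$ is $(-1)^i\sigma$-self-dual with respect to $K_Y$.

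Given a $\sigma$-symmetric isomorphism $A\simto D_X A$ corresponding to the perfect pairing $A\otimes A\to K_X$, for $i\ge0$ I would form the composite pairing
\[
\rR f_*A[-i]\otimes \rR f_*A[-i]\to \rR f_*(A\otimes A)[-2i]\to \rR f_*K_X[-2i]\xrightarrow{\eta^i}\rR f_*K_X\to K_Y,
\]
using the Lefschetz class $\eta\in\rH^2(X,\Qlb(1))$ (where we suppress the now-trivial Tate twist). Passing to the induced morphism on $\pR^{-i}f_*A$, it factors as
\[
\pR^{-i}f_*A\simto \pR^{-i}f_*D_XA\xrightarrow[\eta^i]{\sim}\pR^i f_*D_XA\simto D_Y\pR^{-i}f_*A,
\]
where the middle isomorphism is the hard Lefschetz isomorphism from Proposition \ref{p.rhl} and the last is proper Verdier duality. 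By Lemma \ref{l.final}, this pairing is $(-1)^i\sigma$-symmetric and perfect, giving the required self-duality of $\pR^{-i}f_*A$. The same Lefschetz isomorphism shows that $\pR^i f_*A$ is also $(-1)^i\sigma$-self-dual, completing all degrees.

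The substantive work has already been absorbed into the preceding machinery: Remark \ref{r.pure} supplies preservation of admissible semisimple complexes, Proposition \ref{p.rhl} supplies relative hard Lefschetz beyond the pure setting, and Lemma \ref{l.final} computes the symmetry sign. Once these are in hand the proposition is essentially a routine transcription of Proposition \ref{p.projf}; the only real obstacle was extending relative hard Lefschetz from $\iota$-pure perverse sheaves over $\F_q$ to admissible semisimple perverse sheaves over a separably closed field, and that step is handled by the reduction \eqref{e.BBD} to the finite-field case, as noted just before the proposition.
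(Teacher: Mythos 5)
Your proposal is correct and is essentially the paper's own argument: the paper proves this proposition precisely by combining the relative hard Lefschetz theorem of Proposition \ref{p.rhl} (with Deligne's decomposition theorem for the splitting) and the sign computation of Lemma \ref{l.final}, transcribing the proof of Proposition \ref{p.projf} with Tate twists suppressed. Your reduction to a single self-dual admissible semisimple perverse sheaf and the Lefschetz pairing you write down match the paper's treatment.
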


The proof of Theorem \ref{t.mainf} can now be repeated verbatim to prove
Theorem \ref{t.main}.

Over an arbitrary field $F$ in which $\ell$ is invertible, we may exploit
the relative hard Lefschetz theorem to get analogues for split complexes
that are geometrically admissible semisimple.

\begin{prop}\label{p.karb}
Let $f\colon X\to Y$  be a proper morphism of separated Deligne-Mumford
stacks of finite type over $F$. Assume that $X$ is regular. Let $\cF$ be a
lisse $\Qlb$-sheaf on $X$, whose pullback to $X_{\bar F}$ is admissible
semisimple. Then we have $\rR f_* \cF \simeq \bigoplus_i (\pR^i f_*
\cF)[-i]$.
\end{prop}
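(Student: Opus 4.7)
The plan is to reduce to showing that $\rR f_* A$ is split, where on each irreducible component of $X$ (of dimension $d$) we set $A := \cF[d]$. Since $X$ is regular and $\cF$ is lisse, $A$ is a perverse sheaf on $X$ whose pullback to $X_{\bar F}$ is admissible semisimple (by hypothesis together with Remark \ref{r.pure}). The splitting of $\rR f_*\cF$ is equivalent to that of $\rR f_*A$, so it suffices to establish the latter.

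If $f$ is projective, choose an $f$-ample invertible sheaf on $X$ with Chern class $\eta\in\rH^2(X,\Qlb(1))$. Proposition \ref{p.rhl} furnishes the relative hard Lefschetz isomorphisms $\eta^i\colon\pR^{-i}f_*A\simto \pR^{i}f_*A(i)$ for all $i\ge 0$. Deligne's classical argument \cite{Deligne}, which manufactures a canonical splitting from relative hard Lefschetz, then yields the desired decomposition of $\rR f_*A$ over $F$.

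For general proper $f$, we argue by induction on $\dim X$, proving the stronger statement that $\rR f_*B$ is split for every perverse sheaf $B$ on $X$ whose pullback to $X_{\bar F}$ is admissible semisimple. The strategy follows closely that of the proof of Theorem \ref{t.mainf}. Passing to the coarse moduli spaces $\bar X$, $\bar Y$, factor $f$ through $f_1\colon X\to Y\times_{\bar Y}\bar X$; since $f_1$ is proper quasi-finite, $\rR f_{1*}$ is $t$-exact, which reduces us to the case where $f$ is representable. Apply Chow's lemma to obtain $g\colon X'\to X$ projective birational with $fg$ projective; pick a dense open $j\colon U\hookrightarrow X$ over which $g$ restricts to an isomorphism, and set $A':=j'_{!*}j^*A$ on $X'$, where $j'\colon U\hookrightarrow X'$ is the corresponding open immersion. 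Since $j'_{!*}$ preserves admissible semisimple complexes (Remark \ref{r.pure}), the pullback of $A'$ to $X'_{\bar F}$ is admissible semisimple. By the projective case applied to $g$, together with the support decomposition of Lemma \ref{l.indec}, we obtain $\rR g_*A'\simeq A\oplus C$ with $C$ split and supported on $X\setminus U$. Applying $\rR f_*$ gives $\rR(fg)_*A'\simeq \rR f_*A\oplus \rR f_*C$; the left-hand side is split by the projective case applied to $fg$, while $\rR f_*C$ is split by the induction hypothesis (its support has dimension strictly less than $\dim X$). A two-out-of-three argument then forces $\rR f_*A$ to be split.

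The main subtlety is the two-out-of-three property for the splitting condition: if a split complex $K$ decomposes as $K'\oplus K''$ with $K'$ split, one must conclude that $K''$ is split as well. This follows by matching isomorphism classes of the perverse cohomology sheaves via the Krull-Schmidt theorem (already invoked in Proposition \ref{p.23}) to produce the perverse sheaf $L:=\bigoplus_i(\pH^i K'')[-i]$ as a complement of $K'$ inside the given splitting of $K$, and observing that $L\simeq K''$.
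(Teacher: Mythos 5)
Your strategy is correct in outline, but it is genuinely different from the paper's proof, which avoids the induction altogether. The paper chooses $g\colon X''\to X$ proper, surjective and generically finite with $X''$ regular and quasi-projective over $F$ (a finite surjection from a scheme, then normalization, then a de Jong alteration), so that $fg$ is projective; Proposition \ref{p.rhl} together with Deligne's decomposition theorem splits $\rR (fg)_*g^*\cF$ over $F$; and the hypotheses ``$X$ regular, $\cF$ lisse'' are exploited via $g^*\cF\simeq g^*\cF\otimes \rR g^!\Qlb\simeq \rR g^!\cF$, so that the composite $\cF\to \rR g_*g^*\cF\to\cF$ of adjunction maps is generically (hence, $\cF$ being lisse on a regular stack, globally) multiplication by $\deg g$, exhibiting $\rR f_*\cF$ as a direct summand of the split complex $\rR(fg)_*g^*\cF$. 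Your route instead reruns the Chow's-lemma induction from Theorem \ref{t.mainf} over the general field $F$; this does work (Proposition \ref{p.rhl} is stated over $F$, Lemma \ref{l.indec} and Remark \ref{r.pure} supply the support decomposition and the stability of geometric admissible semisimplicity), and it even proves a stronger statement, namely splitness of $\rR f_*B$ for any perverse $B$ with geometrically admissible semisimple pullback, at the cost of the coarse-space, Chow and induction machinery. Note that both proofs ultimately rest on the fact that a direct summand of a split complex is split, which the paper leaves implicit.

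Two points in your write-up need repair. First, your justification of the ``two-out-of-three for splitness'' is inadequate: Krull--Schmidt (Atiyah) applies to the category of perverse sheaves, and from $K'\oplus K''\simeq K'\oplus L$ in $\rD^b_c(X,\Qlb)$ you cannot simply ``observe'' $K''\simeq L$ -- cancellation in the derived category would need a Krull--Schmidt property there, which is not available over a general $F$ (Hom groups need not be finite-dimensional over $\Qlb$). The correct argument shows directly that any direct summand of $K=\bigoplus_i P_i[-i]$ is split: since $\Hom(P_i[-i],P_j[-j])=\Ext^{i-j}(P_i,P_j)$ vanishes for $i<j$, the endomorphism ring of $K$ is triangular, its strictly triangular part is a nilpotent two-sided ideal, so every idempotent of $\End(K)$ is conjugate to a diagonal one, whose image is a direct sum of shifts of direct summands of the $P_i$ in $\Perv(X,\Qlb)$. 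Second, the identity $\rR g_*A'\simeq A\oplus C$ is only correct because at the top level $A=\cF[d]$ with $\cF$ lisse on regular $X$, so that $A\simeq j_{!*}j^*A$; what is true in general is $\rR g_*A'\simeq j_{!*}j^*A\oplus C$ (apply Lemma \ref{l.indec} to the perverse cohomologies of $\rR g_*A'$, which are geometrically semisimple by Remark \ref{r.pure}), and in the inductive ``stronger statement'' with an arbitrary perverse sheaf $B$ you must first split off $B\simeq j_{!*}j^*B\oplus B''$ with $B''$ supported on $X\setminus U$ and treat $B''$ by the induction hypothesis, exactly as in the proof of Theorem \ref{t.mainf}. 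With these two corrections your argument is complete.
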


\begin{proof}
By \cite[Th\'eor\`eme 16.6]{LMB}, there exists a finite surjective morphism
$g_1\colon X'\to X$ where $X'$ is a scheme. Up to replacing $X'$ by its
normalization, we may assume that $X'$ is normal. By de Jong's alterations
\cite[Theorem 4.1]{dJ}, there exists a proper surjective morphism $g_2\colon
X''\to X'$, generically finite, such that $X''$ is regular and
quasi-projective over $F$. Let $g=g_1 g_2\colon X''\to X$. By the relative
hard Lefschetz theorem (Proposition \ref{p.rhl}) and Deligne's decomposition
theorem \cite{Deligne}, we have
\[\rR (fg)_* g^*\cF\simeq \bigoplus_i (\pR^i(fg)_* g^*\cF)[-i].
\]
Note that $g^*\cF\simeq g^*\cF\otimes \rR g^! \Qlb \simeq \rR g^!\cF$.
Consider the composite
\[\alpha\colon \cF\to \rR g_*g^*\cF\simeq \rR g_! \rR g^!\cF \to \cF\]
of the adjunction morphisms. Since $\alpha$ is generically multiplication by
the degree of $g$, $\alpha$ is an isomorphism. It follows that $\cF$ is a
direct summand of $\rR g_* g^* \cF$, so that $\rR f_* \cF$ is a direct
summand of $\rR (fg)_* g^*\cF$.
\end{proof}

\begin{remark}[Gabber]\label{r.Gabber}
Let $X$ be a proper smooth algebraic space over $k$ and let $\cF$ be a lisse
$\Qlb$-sheaf on $X$ with finite monodromy, $-1$-self-dual with respect to
$\Qlb$. Then $\cF$ is admissible semisimple (since each simple factor is of
geometric origin), and $\cF$ belongs to $\rD_{-1}$. By Theorem \ref{t.main},
$b_n(\cF)\coloneqq\dim \rH^n(X,\cF)$ is even for $n$ even.

If $k$ has characteristic $0$, then $b_n(\cF)$ is even for all $n$. To see
this, we may assume $k=\C$, $X$ connected, and $\cF$ simple. Let $G$ be the
monodromy group of $\cF$ and let $f\colon Y\to X$ be the corresponding
Galois \'etale cover. Then
\[\rH^n(X,\cF)\simeq \rH^n(Y,f^*\cF)^G\simeq (\rH^n(Y,\Qlb)\otimes_{\Qlb} V)^G, \]
where $V$ is the representation of $G$ corresponding to $\cF$. Thus
$b_n(\cF)$ is the multiplicity of $V\spcheck$ in the representation
$\rH^n(Y,\Qlb)$ of $G$. Since the complex representation $\rH^n(Y(\C),\C)$
of $G$ has a real structure $\rH^n(Y(\C),\R)$, it admits a $G$-invariant
nondegenerate symmetric bilinear form. In other words, it is $1$-self-dual.
The same holds for $\rH^n(Y,\Qlb)$. Therefore, the multiplicity of
$V\spcheck$, which is $-1$-self-dual, is necessarily even (cf.\
\cite[Section 13.2, Theorem 31]{SerreRL}, \cite[Proposition II.6.6 (i),
(ii), (iii)]{BD}).

By contrast, if $k$ has characteristic $2$ or $3$, then $b_n(\cF)$ may be
odd for $n$ odd, as shown by the following counterexample. Let $E$ be a
supersingular elliptic curve over $k$ and let $G$ be its automorphism group.
Let $X'\to X$ be a finite \'etale cover of connected projective smooth
curves over $k$ of Galois group $G$, which exists by \cite[Theorem 7.4]{PS},
as explained in \cite[Section 3]{Partsch}. Let $f\colon Y=(X'\times E)/G\to
X$ be the projection, where $G$ acts diagonally on $X'\times E$. Then
$\cF=\rR^1 f_*\Qlb$ is a $-1$-self-dual simple lisse sheaf of rank $2$ on
$X$, of monodromy $G$. Note that $f^*\cF$ is a $-1$-self-dual simple lisse
sheaf on $Y$ of monodromy $G$, since $\pi_1(Y)$ maps onto $\pi_1(X)$. We
claim that $b_1(f^*\cF)$ and $b_1(\cF)$ are not of the same parity. Indeed,
consider the Leray spectral sequence for $(f, f^*\cF)$:
\[E_2^{pq}=\rH^p(X,R^q f_* f^*\cF)\Rightarrow \rH^{p+q}(Y,f^*\cF).\]
By the projection formula,
\[f_* f^*\cF\simeq f_*\Qlb \otimes \cF\simeq \cF,\quad \rR^1 f_*f^*\cF\simeq \rR^1f_*\Qlb\otimes \cF=\cF\otimes \cF,\]
so we have an exact sequence
\[0\to \rH^1(X,\cF)\to \rH^1(Y,f^*\cF)\to \rH^0(X,\cF\otimes \cF)\to E_2^{20}=0.\]
Since $\dim\rH^0(X,\cF\otimes \cF)=1$, we get $b_1(f^*\cF)=b_1(\cF)+1$.
(That the Leray spectral sequence degenerates at $E_2$ also follows from a
general theorem of Deligne \cite{DeligneL}.) By the
Grothendieck-Ogg-Shafarevich formula, we have
$b_1(\cF)=(2g-2)\text{rk}(\cF)=4g-4$ is even, where $g$ is the genus of $X$.
It follows that $b_1(f^*\cF)=4g-3$ is odd.
\end{remark}

\section{Symmetry in Grothendieck groups}\label{s.G}

In Section \ref{s.pc}, we studied the behavior of $\sigma$-self-dual pure
perverse sheaves under operations that preserve purity. Mixed Hodge theory
suggests that one may expect results for more general operations in the
mixed case. This section confirms such expectations in a weak sense, by
working in Grothendieck groups. We work over a finite field $k=\F_q$. In
Subsection \ref{ss.G1}, we review operations on Grothendieck groups. In
Subsection \ref{ss.G2}, we define certain subgroups of the Grothendieck
groups and state the main result of this section (Theorem \ref{t.sixop}),
which says that these subgroups are preserved by Grothendieck's six
operations, and contains the finite field case of Theorem \ref{t.i3}. The
proof is a bit involved and is given in Subsection \ref{ss.G3}.

\subsection{Operations on Grothendieck groups}\label{ss.G1}

In this subsection, we review Grothendieck groups and operations on them.
The six operations are easily defined. The action of the middle extension
functor $f_{!*}$ on Grothendieck groups is more subtle, and we justify our
definition with the help of purity.

\begin{construction}[Six operations on Grothendieck groups]\label{c.lambda}
Let $X$ be a Deligne-Mumford stack of finite presentation over a field. We
let $\rK(X,\Qlb)$ denote the Grothendieck group of $\rD^b_c(X,\Qlb)$, which
is a free Abelian group generated by the isomorphism classes of simple
perverse $\Qlb$-sheaves. For an object $A$ of $\rD^b_c(X,\Qlb)$, we let
$[A]$ denote its class in $\rK(X,\Qlb)$. The usual operations on derived
categories induce maps between Grothendieck groups. More precisely, for a
morphism $f\colon X\to Y$ of Deligne-Mumford stacks of finite presentation
over a field, we have $\Z$-(bi)linear maps
\begin{gather*}
-\otimes-,\ \cHom(-,-)\colon \rK(X,\Qlb)\times \rK(X,\Qlb)\to \rK(X,\Qlb),\qquad D_X\colon \rK(X,\Qlb)\to \rK(X,\Qlb),\\
-\boxtimes- \colon \rK(X,\Qlb)\times \rK(Y,\Qlb)\to \rK(X\times Y,\Qlb),\\
f^*,f^!\colon\rK(Y,\Qlb)\to \rK(X,\Qlb), \qquad f_*,f_!\colon \rK(X,\Qlb)\to \rK(Y,\Qlb).
\end{gather*}
Tensor product ($-\otimes -$) endows $\rK(X,\Qlb)$ with a ring structure.
The map $f^*$ is a ring homomorphism.

The Grothendieck ring is equipped with the structure of a $\lambda$-ring as
follows. Readers not interested in this structure may skip this part as it
is not used in the proof of the theorems in the introduction. For $m\ge 0$,
we have a map
\[(-)^{\boxtimes m}\colon \rK(X,\Qlb)\to \rK([X^m/\Perm_m],\Qlb),\]
which preserves multiplication and satisfies $(n[A])^{\boxtimes m}= n^m
[A]^{\boxtimes m}$ (with the convention $0^0=1$) for $n\ge 0$ and
$(-[A])^{\boxtimes m}=(-1)^m [\cS]\otimes [A]^{\boxtimes m}$, where $\cS$ is
the lisse sheaf of rank $1$ on $[X^m/\Perm_m]$ given by the sign character
$\Perm_m\to \Qlb^\times$. The maps $\lambda^m\colon \rK(X,\Qlb)\to
\rK(X,\Qlb)$ given by $\lambda^m(x)=(-1)^m p_*\Delta^* (-x)^{\boxtimes m}$,
where $\Delta\colon X\times B\Perm_m\to [X^m/\Perm_m]$ is the diagonal
morphism and $p\colon X\times B\Perm_m\to X$ is the projection, endow
$\rK(X,\Qlb)$ with the structure of a special $\lambda$-ring. The map $f^*$
is a $\lambda$-ring homomorphism. We refer the reader to
\cite[Section~4]{Grothendieck} and \cite{AT} for the definitions of special
$\lambda$-ring and $\lambda$-ring homomorphism.
\end{construction}

\begin{remark}\label{r.mid}
For a separated quasi-finite morphism $f\colon X\to Y$, the functor
$f_{!*}\colon \Perv(X,\Qlb)\to \Perv(Y,\Qlb)$ is \emph{not exact} in
general. There exists a unique homomorphism $f_{!*}\colon \rK(X,\Qlb)\to
\rK(Y,\Qlb)$ such that $f_{!*}[A]=[f_{!*} A]$ for $A$ perverse
\emph{semisimple}. As we shall see in Lemma \ref{l.interexact}, over a
finite field this identity also holds for $A$ pure perverse.
\end{remark}

We note the following consequence of Lemma \ref{l.cover} (applied to
semisimple perverse sheaves).

\begin{lemma}\label{l.coverK}
Let $(X_\alpha)_{\alpha\in I}$ be a finite Zariski open covering of $X$, and
let $A\in \rK(X,\Qlb)$. Then
    \[\sum_{\substack{J\subseteq I\\\text{$\#J$ \emph{even}}}} j_{J!*}j_J^* A
    =
    \sum_{\substack{J\subseteq I\\\text{$\#J$ \emph{odd}}}} j_{J!*}j_J^* A,
    \]
where $j_J\colon \bigcap_{\alpha\in J}X_\alpha\to X$ is the open immersion.
\end{lemma}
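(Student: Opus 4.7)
The plan is to reduce the identity in the Grothendieck ring to the perverse-sheaf isomorphism of Lemma \ref{l.cover} via linearity, in the spirit of the parenthetical remark preceding the lemma.

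First I would observe that both sides of the proposed identity are $\Z$-linear functions of $A \in \rK(X, \Qlb)$: the pullback $j_J^*$ is a ring, hence $\Z$-linear, homomorphism on Grothendieck rings by Construction \ref{c.lambda}, and the middle extension $j_{J!*}$ on Grothendieck groups is defined to be $\Z$-linear (Remark \ref{r.mid}). Since $\rK(X, \Qlb)$ is freely generated as an abelian group by isomorphism classes of simple perverse $\Qlb$-sheaves, it is enough to verify the identity when $A = [B]$ for a simple perverse sheaf $B$ on $X$.

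Next I would check that for such $B$ the element $j_{J!*}[j_J^* B]$ really equals the class of the perverse sheaf $j_{J!*} j_J^* B$. Writing $B \simeq f_{!*}(\cF[d])$ with $f \colon V \hookrightarrow X$ the inclusion of the support of $B$ (a smooth irreducible substack of dimension $d$) and $\cF$ a simple lisse sheaf on $V$, base change along the open immersion $j_J$ yields $j_J^* B \simeq f'_{!*}(\cF'[d])$, where $\cF'$ is the restriction of $\cF$ to $V \cap \bigcap_{\alpha \in J} X_\alpha$. Since $V$ is smooth and irreducible, $\pi_1$ of this open substack surjects onto $\pi_1(V)$, so $\cF'$ remains simple when it is nonzero; hence $j_J^* B$ is either zero or simple, in particular semisimple. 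By the defining property of $j_{J!*}$ on Grothendieck groups recorded in Remark \ref{r.mid}, this gives $j_{J!*}[j_J^* B] = [j_{J!*} j_J^* B]$ for every $J \subseteq I$.

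Finally I would invoke Lemma \ref{l.cover}. By Lafforgue's theorem (Remark \ref{r.Laff}(2)), every simple perverse $\Qlb$-sheaf is $\iota$-pure, hence geometrically semisimple by the decomposition theorem \cite[Th\'eor\`eme 5.3.8]{BBD}. So the hypotheses of Lemma \ref{l.cover} apply to $B$, producing an isomorphism of perverse sheaves
\[
\bigoplus_{\substack{J \subseteq I \\ \text{$\#J$ even}}} j_{J!*}\, j_J^* B \;\simeq\; \bigoplus_{\substack{J \subseteq I \\ \text{$\#J$ odd}}} j_{J!*}\, j_J^* B,
\]
and passing to classes in $\rK(X, \Qlb)$ yields the desired equality for $A = [B]$; linearity extends it to arbitrary $A$. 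No serious obstacle is anticipated: the content of the present lemma is simply the observation that what Lemma \ref{l.cover} asserts at the level of perverse sheaves descends, via the generation of $\rK(X, \Qlb)$ by simple perverse sheaves, to an identity in the Grothendieck ring.
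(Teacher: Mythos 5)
Your proposal is correct and follows essentially the same route as the paper, which simply records this lemma as a consequence of Lemma \ref{l.cover} applied to (geometrically) semisimple perverse sheaves, using that $\rK(X,\Qlb)$ is generated by simple perverse sheaves and that $j_{J!*}$ on Grothendieck groups is defined via Remark \ref{r.mid}. The details you supply (linearity, the identification $j_{J!*}[j_J^*B]=[j_{J!*}j_J^*B]$ because $j_J^*B$ is zero or simple, and geometric semisimplicity of $B$) are exactly what the paper leaves implicit.
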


In the rest of this section we work over a finite field $k=\F_q$. We first
recall the following injectivity, which will be used in the proof of
Corollary \ref{c.R}.

\begin{lemma}\label{l.inj}
The homomorphism $\rK(X,\Qlb)\to \Map(\coprod_{m\ge 1} \lvert
X(\F_{q^m})\rvert,\Qlb)$ sending $A$ to $x\mapsto \tr(\Fr_x\mid A_{\bar x})$
is injective.
\end{lemma}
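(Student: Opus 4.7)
\textbf{The plan} is to induct on $\dim X$. In the base case $\dim X=0$, at each $\F_{q^m}$-point $x$ the data $r\mapsto\tr(\Fr_x^r\mid A_{\bar x})$ for $r\ge1$ determines the multiset of eigenvalues of $\Fr_x$ on $A_{\bar x}$, hence the class of $A_{\bar x}$ in the Grothendieck group of continuous finite-dimensional $\Gal(\Fqb/\F_q)$-representations, so injectivity is immediate. For the inductive step, I would take $\alpha\in\rK(X,\Qlb)$ with zero trace function and write $\alpha=\sum_i n_i[A_i]$ with $A_i$ pairwise non-isomorphic simple perverse sheaves and $n_i\in\Z$. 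Assuming for contradiction $\alpha\ne0$, set $d=\max\{\dim\mathrm{supp}(A_i):n_i\ne0\}$, fix an irreducible component $Y$ of dimension $d$ of $\bigcup_{n_i\ne0}\mathrm{supp}(A_i)$, and let $T=\{i:n_i\ne0,\ \mathrm{supp}(A_i)=Y\}$, which is nonempty.

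\textbf{Reduction to lisse data.} I would then localize to a dense open substack $j\colon U\hookrightarrow X$ chosen so that $U$ avoids $\mathrm{supp}(A_i)$ for every $i\notin T$ with $n_i\ne0$, and so that $Y\cap U$ is a smooth closed substack of $U$ contained in a dense regular open of $Y$ over which each $A_i$ ($i\in T$) restricts to a shifted simple lisse sheaf. Under these conditions, $A_i|_U\cong\iota_*(\cF_i[d])$ for $i\in T$, where $\iota\colon Y\cap U\hookrightarrow U$ is the closed immersion and the $\cF_i$ are pairwise non-isomorphic simple lisse $\Qlb$-sheaves on $Y\cap U$. Since the trace function of $\alpha|_U$ vanishes and $\iota_*\cF_i$ has stalks supported on $Y\cap U$, evaluation at $x\in\lvert(Y\cap U)(\F_{q^m})\rvert$ yields
\[
(-1)^d\sum_{i\in T}n_i\,\tr(\Fr_x\mid(\cF_i)_{\bar x})=0\qquad\text{for all such }x\text{ and all }m\ge1.
\]

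\textbf{Chebotarev finish.} The hard part, and the main obstacle, is now to deduce $n_i=0$ for $i\in T$. This is the statement that distinct simple lisse $\Qlb$-sheaves on a smooth Deligne-Mumford stack over $\F_q$ have linearly independent Frobenius trace functions. Each simple $\cF_i$ is supported on a single connected component of $Y\cap U$, so one may argue component by component and reduce to Chebotarev density applied to the arithmetic fundamental group $\pi_1$ of a connected DM stack of finite presentation over $\F_q$: the Frobenii at closed points are dense, so continuous finite-dimensional $\Qlb$-representations are determined by Frobenius traces up to semisimplification, and the characters of distinct irreducible such representations are linearly independent. This contradicts $n_i\ne0$ and completes the induction. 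Chebotarev density in the DM-stack setting is standard, reducing to the scheme case via an \'etale atlas.
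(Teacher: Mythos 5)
Your overall strategy—write $\alpha=\sum n_i[A_i]$ with $A_i$ simple perverse, pass to a dense open where the top‐dimensional pieces become shifted lisse sheaves pushed forward from a regular closed substack, and finish by Chebotarev—is exactly the reduction the paper delegates to \cite[Th\'eor\`eme 1.1.2]{LaumonTF}. The reduction to lisse data is fine.

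The genuine gap is the last step. Your claim that ``Chebotarev density in the DM-stack setting is standard, reducing to the scheme case via an \'etale atlas'' does not hold: the pullback $g^*\colon\rK(X,\Qlb)\to\rK(U,\Qlb)$ along an \'etale atlas $g\colon U\to X$ is \emph{not} injective, and an atlas does not see all the $\F_{q^m}$-points of $X$. Concretely, take $X=[\Spec\F_q/(\Z/2)]$ with trivial action and $U=\Spec\F_q$. Then $\alpha=[\text{sign}]-[\text{triv}]$ pulls back to $0$ on $U$, so no amount of scheme-level Chebotarev on $U$ detects $\alpha$. The lemma is nonetheless true here, but only because $|X(\F_{q^m})|$ contains the class of the nontrivial $(\Z/2)$-torsor ($\Spec\F_{q^{2m}}$), which does not lift to $U(\F_{q^m})$; there the trace of $\alpha$ is $-2\neq0$. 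In fundamental-group language, $\pi_1(U)\hookrightarrow\pi_1(X)$ is a proper open subgroup, and restricting representations to it loses information; Chebotarev on $U$ only equidistributes Frobenii in $\pi_1(U)$, not in $\pi_1(X)$. So one cannot ``reduce to the scheme case'' this way. The paper instead proves the stacky Chebotarev statement (Lemma~\ref{l.Ch}) directly, using the $L$-series of $\cF_\chi$ on $X$ with the weights $1/\#\Aut(x)$ and the trace formula for Deligne--Mumford stacks from \cite{SunL}; this is the step you would need to carry out rather than cite as standard.
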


As in \cite[Th\'eor\`eme 1.1.2]{LaumonTF}, this injectivity follows from
Chebotarev's density theorem \cite[Theorem~7]{Serre}, which extends to the
case of Deligne-Mumford stacks as follows.

\begin{lemma}\label{l.Ch}
Let $Y\to X$ be a Galois \'etale cover of Galois group $G$ of irreducible
Deligne-Mumford stacks of dimension $d$ of finite presentation over $\F_q$.
Let $R\subseteq G$ be a subset stable under conjugation. Then
\[\lim_{T\to (q^{-d})^-} \sum_{m\ge 1}\sum_{x} \frac{1}{\# \Aut(x)}\frac{T^m}{m}/\log\frac{1}{T-q^{-d}}=\# R/\# G,
\]
where $x$ runs through isomorphism classes of $X(\F_{q^m})$ such that the
image $F_x$ of $\Fr_x$ in $G$ (well-defined up to conjugation) lies in $R$.
\end{lemma}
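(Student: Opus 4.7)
The plan is to adapt the classical proof of Chebotarev density via Artin $L$-functions to the Deligne--Mumford setting using the stacky $L$-series formalism of \cite{SunL}. Since $R \subseteq G$ is conjugation-invariant, I decompose its indicator as $\one_R = \sum_\chi c_\chi\, \chi$ over irreducible characters of $G$; by Schur orthogonality $c_{\one} = \langle \one_R, \one\rangle_G = \#R/\#G$. To each irreducible $\chi\colon G\to \mathrm{GL}(V_\chi)$ I attach the lisse $\Qlb$-sheaf $\cF_\chi$ on $X$ arising from the surjection $\pi_1(X)\twoheadrightarrow G$ defined by the cover $Y\to X$ (surjective because $Y$ is irreducible and hence connected); the sheaf $\cF_\chi$ is punctually $\iota$-pure of weight $0$ and satisfies $\tr(\Fr_x \mid (\cF_\chi)_{\bar x}) = \chi(F_x)$ for every $x\in X(\F_{q^m})$. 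Summing over $m$ and $x$, the double series in the lemma equals $\sum_\chi c_\chi \log L_\iota(X, \cF_\chi, T)$ in the sense of \cite[Definition 4.1]{SunL}.

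The next step, mirroring the proof of Proposition \ref{p.square}, is to determine the pole order of each $L_\iota(X, \cF_\chi, T)$ at $T = q^{-d}$. By \cite[Theorem 4.2]{SunL} the $L$-series is the alternating product of characteristic polynomials on $\rH^i_c(X_{\Fqb}, \cF_\chi)$, and \cite[Theorem 1.4]{SunL} implies that only $i = 2d$ can contribute poles on the circle $|T| = q^{-d}$. Restricting to a dense open $U\subseteq X$ with $U_\red$ regular and invoking Poincaré duality then gives
\[
-\mathrm{ord}_{T = q^{-d}} L_\iota(X, \cF_\chi, T) \;=\; \dim \rH^0(X, \cF_\chi\spcheck).
\]
Under the identification $\rH^0(X_{\Fqb}, \cF_\chi\spcheck) = (V_\chi\spcheck)^H$, where $H\subseteq G$ is the image of $\pi_1(X_{\Fqb})$ (a normal subgroup since $\pi_1(X_{\Fqb})$ is normal in $\pi_1(X)$), the Frobenius $\Fr_q$ acts on this invariant space through the cyclic quotient $G/H$, so taking $\Fr_q$-invariants produces $(V_\chi\spcheck)^G$. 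By Schur's lemma this space is $1$-dimensional when $\chi = \one$ and vanishes otherwise.

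To conclude, I would combine these pole orders with the asymptotic $\log L_\iota(X, \cF_\chi, T) \sim n_\chi\, \log\tfrac{1}{q^{-d} - T}$ as $T\to (q^{-d})^-$, where $n_\chi = -\mathrm{ord}_{T=q^{-d}} L_\iota$ (immediate from factoring $L_\iota$ into linear pieces $(1-\alpha T)^{\pm 1}$ with $|\alpha|\le q^d$ and isolating the factors with $\alpha = q^d$); dividing by $\log\tfrac{1}{q^{-d} - T}$ and passing to the limit then yields $\sum_\chi c_\chi n_\chi = c_{\one} = \#R/\#G$. The main obstacle I foresee is the Poincaré duality step: the proof of Proposition \ref{p.square} uses geometric unibranchness of $X$ to guarantee the surjection $\pi_1(U)\twoheadrightarrow \pi_1(X)$ and hence the identification $\rH^0(U_{\Fqb}, -)\simeq \rH^0(X_{\Fqb}, -)$, whereas here only irreducibility is assumed; one likely has to either strengthen the hypothesis to geometrically unibranch (which is sufficient for the intended applications to Lemma \ref{l.inj}) or pass through the normalization of $X$ and check that the cover, its $L$-function, and the pole order transport faithfully across that finite birational modification.
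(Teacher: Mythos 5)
Your proposal follows the same route as the paper's proof: convert the sum into (a combination of) logarithms of the stacky $L$-series $L_\iota(X,\cF_\chi,T)$ of \cite{SunL}, use \cite[Theorems 4.2 and 1.4]{SunL} together with duality on a dense open $U$ with $U_\red$ regular to see that the pole at $T=q^{-d}$ detects the multiplicity of the trivial character, and finish by orthogonality; decomposing $\one_R$ into irreducible characters rather than extending linearly to class functions, as the paper does, is only a cosmetic difference.

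The obstacle you flag at the end, however, is not a real one, and neither geometric unibranchness nor passage to the normalization is needed; the problem is only that you routed the computation through $\rH^0(X_{\Fqb},\cF_\chi\spcheck)$ instead of stopping at $U$. The pole order you obtain from duality is the multiplicity of the eigenvalue $q^d$ of $\Fr_q$ on $\rH^{2d}_c(X_{\Fqb},\cF_\chi)\simeq \rH^0(U_{\Fqb},\cF_\chi\spcheck)\spcheck(-d)$; since the whole arithmetic monodromy of $\cF_\chi$ factors through the finite group $G$, Frobenius acts semisimply there, and this multiplicity is $\dim \rH^0(U,\cF_\chi\spcheck)=\dim (V_\chi\spcheck)^{G_U}$, where $G_U\subseteq G$ is the image of the full (arithmetic) fundamental group $\pi_1(U)$. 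Now the irreducibility of $Y$ enters: $Y\times_X U$ is a nonempty open substack of the irreducible stack $Y$, hence irreducible and in particular connected, so the restricted cover $Y\times_X U\to U$ is still a connected Galois cover with group $G$ and $G_U=G$. Thus the pole order equals $\dim(V_\chi\spcheck)^G$, the multiplicity of the identity character in $\chi$, and no comparison of $\pi_1(U)$ with $\pi_1(X)$ (geometric or arithmetic) is ever required. This is exactly how the paper's proof concludes; strengthening the hypotheses of Lemma \ref{l.Ch} or inserting a normalization step would only weaken or complicate the statement used in Lemma \ref{l.inj}.
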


\begin{proof}
For a character $\chi\colon G\to \Qlb$ of a $\Qlb$-representation of $G$,
consider the $L$-series
\[L(X,\iota\chi,T)=L_\iota(X,\cF_\chi,T)=
\exp\left(\sum_{m\ge 1}\quad\sum_{x\in \lvert X(\F_{q^m})\rvert}\frac{\iota\chi(F_x)}{\# \Aut(x)}
\frac{T^m}{m}\right)
\]
associated to the corresponding lisse $\Qlb$-sheaf $\cF_\chi$ on $X$
\cite[Definition 4.1]{SunL}. The series $L(X,\iota\chi,T)$ converges
absolutely for $\lvert T\rvert< q^{-d}$ and extends to a rational function
\[\iota \prod_i \det(1-T\Fr_q\mid \rH^i_c(X_{\Fqb},\cF_\chi))^{(-1)^{i+1}}
\]
by \cite[Theorem 4.2]{SunL}. As $\rH^{2d}_c(X_{\Fqb},\cF_\chi)\simeq
\rH^{0}(U_{\Fqb},\cF_\chi\spcheck)\spcheck(-d)$ for a dense open substack
$U$ of $X$ such that $U_\red$ is regular, $-\mathrm{ord}_{T=q^{-d}}
L(X,\iota\chi,T)=\dim \rH^{0}(U,\cF_\chi\spcheck)$ is the multiplicity of
the identity character in $\chi$, so that
\[\lim_{T\to (q^{-d})^-}\sum_{m\ge 1}\quad\sum_{x\in \lvert X(\F_{q^m})\rvert}\frac{\iota\chi(F_x)}{\# \Aut(x)}\frac{T^m}{m}/\log\frac{1}{T-q^{-d}}=\sum_{g\in G} \iota \chi(g) /\# G.
\]
This equality extends to an arbitrary class function $\chi\colon G\to \Qlb$.
It then suffices to take $\chi$ to be the characteristic function of $R$.
\end{proof}

Next we discuss purity.

\begin{notation}
For $w\in \R$, we let $\rK^w_\iota(X,\Qlb)\subseteq \rK(X,\Qlb)$ denote the
subgroup generated by perverse sheaves $\iota$-pure of weight $w$ on $X$. We
put $\rK^\Z_\iota(X,\Qlb)\coloneqq \bigoplus_{w\in \Z} \rK_\iota^w(X,\Qlb)$.
\end{notation}

The group $\rK^w_\iota(X,\Qlb)$ is a free Abelian group generated by the
isomorphism classes of simple perverse sheaves $\iota$-pure of weight $w$ on
$X$. We have $\bigoplus_{w\in \R}\rK^w_\iota(X,\Qlb)\subseteq \rK(X,\Qlb)$
and the $\lambda$-subring $\rK^\Z_\iota(X,\Qlb)\subseteq \rK(X,\Qlb)$ is
stable under Grothendieck's six operations and duality. For $w\in \Z$, the
group $\rK^w(X,\Qlb)\coloneqq \bigcap_\iota \rK^w_\iota(X,\Qlb)$ is a free
Abelian group generated by the isomorphism classes of perverse sheaves pure
of weight $w$ on $X$.

\begin{remark}\label{r.Lafforgue}
In fact, we have $\bigoplus_{w\in \R}\rK^w_\iota(X,\Qlb)= \rK(X,\Qlb)$, as
every $\Qlb$-sheaf on $X$ is $\iota$-mixed by Lafforgue's theorem
\cite[Corollaire VII.8]{Lafforgue} mentioned in Remark \ref{r.Laff} (2).
\end{remark}

For a subset $I\subseteq \R$, we let $\Perv^I_\iota(X,\Qlb)\subseteq
\Perv(X,\Qlb)$ denote the full subcategory of perverse sheaves $\iota$-mixed
of weights contained in $I$. Lemmas \ref{l.interexact} and \ref{l.nine}
below, which justify the definition of the map $f_{!*}$ in Remark
\ref{r.mid}, are taken from \cite[Lemme 2.9, Corollaire 2.10]{M2}.

\begin{lemma}\label{l.interexact}
Let $f\colon X\to Y$ be a separated quasi-finite morphism. For $w\in \R$,
the functor
\[
f_{!*}\colon
\Perv^{\{w,w+1\}}_\iota(X,\Qlb)\to \Perv^{\{w,w+1\}}_\iota(Y,\Qlb)
\]
is exact. In particular, $f_{!*}[A]=[f_{!*}A]$ for $A$ in
$\Perv^{\{w,w+1\}}_\iota(X,\Qlb)$.
\end{lemma}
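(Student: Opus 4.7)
\emph{The plan} is to follow Morel's strategy, combining BBD's weight bounds with a snake-lemma diagram chase. The essential input from \cite[Stabilit\'es 5.1.14]{BBD} is that for $A \in \Perv^{\{w,w+1\}}_\iota(X,\Qlb)$ and $f$ separated quasi-finite, $\pH^{-1} f_! A$ and $\pH^0 f_! A$ have weights $\le w$ and $\le w+1$ respectively, while $\pH^0 \rR f_* A$ and $\pH^1 \rR f_* A$ have weights $\ge w$ and $\ge w+1$ respectively.

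The main step is a weight lemma: for $A \in \Perv^{\{w,w+1\}}_\iota$, the natural morphism $\alpha_A \colon \pH^0 f_! A \to \pH^0 \rR f_* A$, whose image is $f_{!*} A$ by definition, has $\Ker \alpha_A$ of weights $\le w$ and $\Coker \alpha_A$ of weights $\ge w+1$. For $A$ pure of weight $w_0 \in \{w, w+1\}$, this follows from the pure case in \cite[\S 5.3]{BBD}, which gives $\Ker \alpha_A$ of weights $<w_0$ and $\Coker \alpha_A$ of weights $>w_0$. The general case reduces to the pure case via the weight filtration short exact sequence $0 \to W_w A \to A \to \gr^W_{w+1} A \to 0$ by an analogous snake-lemma argument.

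Given this, for a short exact sequence $0 \to A' \to A \to A'' \to 0$ in $\Perv^{\{w,w+1\}}_\iota(X,\Qlb)$, I would form the commutative diagram with exact rows
\[
\xymatrix@C=0.35cm{
\pH^{-1} f_! A'' \ar[r] & \pH^0 f_! A' \ar[r]^{a'} \ar[d]^{\alpha_{A'}} & \pH^0 f_! A \ar[r]^{a} \ar[d]^{\alpha_A} & \pH^0 f_! A'' \ar[r] \ar[d]^{\alpha_{A''}} & 0 \\
0 \ar[r] & \pH^0 \rR f_* A' \ar[r]^{b'} & \pH^0 \rR f_* A \ar[r] & \pH^0 \rR f_* A'' \ar[r] & \pH^1 \rR f_* A'.
}
\]
By the snake lemma, the connecting morphism $\Ker \alpha_{A''} \to \Coker \alpha_{A'}$ is a morphism from a perverse sheaf of weights $\le w$ to one of weights $\ge w+1$, and therefore vanishes, since its image is simultaneously a quotient of weights $\le w$ and a subobject of weights $\ge w+1$, hence zero. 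Consequently, $\Ker \alpha_A \to \Ker \alpha_{A''}$ is surjective.

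A diagram chase then yields exactness of $0 \to f_{!*} A' \to f_{!*} A \to f_{!*} A'' \to 0$: injectivity at the left uses only the injectivity of $b'$; surjectivity at the right uses only the surjectivity of $a$; for middle exactness, given $y = \alpha_A(x) \in \Ker(f_{!*} A \to f_{!*} A'')$, the surjectivity of $\Ker \alpha_A \to \Ker \alpha_{A''}$ allows us to modify $x$ within its coset modulo $\Ker \alpha_A$ so that $a(x) = 0$, placing $x$ in $\Img a'$ and hence $y$ in the image of $f_{!*} A'$. The identity $f_{!*}[A] = [f_{!*} A]$ in Grothendieck groups then follows by induction on length. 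The principal obstacle is the weight lemma, whose d\'evissage requires careful bookkeeping of weight filtrations through the long exact sequences of $\pH^i f_!$ and $\pH^i \rR f_*$.
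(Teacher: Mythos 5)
Your proof is correct, but it follows a genuinely different route from the one the paper (following Morel) uses. The paper first reduces to $f$ an open immersion with closed complement $i\colon Z\to Y$, then inducts on $\dim Z$: after shrinking $Z$ to be smooth with $\cH^n i^*\rR f_*A_j$ lisse, it identifies $f_{!*}A_j$ with $\prescript{P}{}\tau^{\le -d-1}\rR f_*A_j$ for a $t$-structure $P$ glued from the degenerate $t$-structure on $X$ and the ordinary one on $Z$, shows the relevant boundary map vanishes for weight reasons, and invokes the nine-diagram lemma (Lemma \ref{l.nine}) to conclude exactness. Your argument instead works directly with the middle perversity: you bound the weights of $\Ker\alpha_A$ and $\Coker\alpha_A$ (below $w$ and above $w+1$ respectively), observe that the snake-lemma connecting morphism $\Ker\alpha_{A''}\to\Coker\alpha_{A'}$ therefore vanishes, and finish with a diagram chase. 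This avoids the glued $t$-structure, the nine-diagram lemma, the induction on $\dim Z$, and the ``shrink $Z$'' reduction, which is a real simplification. Two small remarks. First, your ``weight lemma'' does not require the d\'evissage through the weight filtration you propose: for any $A$ of weights in $\{w,w+1\}$ one gets it directly from the triangle $f_!A\to f_{!*}A\to i_*i^*f_{!*}A$, since $f_{!*}A$ is a quotient of $\pH^0f_!A$ (hence of weight $\le w+1$) so $i_*\pH^{-1}i^*f_{!*}A=\Ker\alpha_A$ has weight $\le w$, and dually for $\Coker\alpha_A$; this avoids invoking the existence of weight filtrations, which matters for the horizontal analogue in Subsection \ref{ss.hK}. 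Second, the bounds $\Ker\alpha_A$ of weight $<w_0$ and $\Coker\alpha_A$ of weight $>w_0$ for $A$ pure of weight $w_0$ are a corollary of \cite[5.3.2]{BBD} rather than a stated result there, so the citation should be tightened; they also extend from open immersions to general separated quasi-finite $f$ via Zariski's main theorem ($f=g\circ j$ with $j$ open and $g$ finite, and $g_*$ $t$-exact weight-preserving), a reduction worth making explicit since you apply them for general $f$.
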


\begin{proof}
As the assertion is local for the \'etale topology on $Y$ and trivial for
$f$ proper quasi-finite, we may assume that $f$ is an open immersion. Let
$i\colon Z\to Y$ be the closed immersion complementary to $f$. We proceed by
induction on the dimension $d$ of $Z$. Let $0\to A_1\to A_2\to A_3 \to 0$ be
a short exact sequence in $\Perv^{\{w,w+1\}}_\iota(X,\Qlb)$. As in Gabber's
proof of his theorem on independence on $\ell$ for middle extensions
\cite[Theorem~3]{Gabber}, up to shrinking $Z$, we may assume that $Z$ is
smooth equidimensional and that $\cH^n i^* \rR f_* A_j$ is  lisse for every
$j$ and every $n$. It follows that the distinguished triangle
\[i_*\rR i^! f_{!*}A_j \to f_{!*}A_j\to \rR f_* A_j\to\]
induces isomorphisms $f_{!*}A_j\simto \prescript{P}{}\tau^{\le -d-1} \rR f_*
A_j$ and $\prescript{P}{}\tau^{\ge -d}\rR f_* A_j\simto i_* \rR i^!
f_{!*}A_j[1]$ for every $j$. Here $P$ denotes the $t$-structure obtained by
gluing $(\rD^b_c(X,\Qlb),0)$ and the canonical $t$-structure on
$\rD^b_c(Z,\Qlb)$. Thus $\PR^{-d-1} f_* A_j\simeq i_* \cH^{-d-1}i^*
f_{!*}A_j$ has punctual $\iota$-weights $\le w-d$, while $\PR^{-d} f_*
A_j\simeq i_*\cH^{-d+1}\rR i^! f_{!*}A_j$ has punctual $\iota$-weights $\ge
w-d+1$. Therefore, the morphism $\PR^{-d-1} f_* A_3\to \PR^{-d} f_* A_1$ is
zero. Applying Lemma \ref{l.nine} below, we get a distinguished triangle
\[
\prescript{P}{}\tau^{\le -d-1} \rR f_* A_1\to \prescript{P}{}\tau^{\le
-d-1} \rR f_* A_2\to \prescript{P}{}\tau^{\le -d-1} \rR f_* A_3\to.\]
Taking perverse cohomology sheaves, we get the exactness of the sequence
\[0\to f_{!*}A_1\to f_{!*}A_2\to f_{!*} A_3\to 0.\]
\end{proof}

\begin{lemma}\label{l.nine}
Let $P$ be a $t$-structure on a triangulated category $\cD$ and let
$A\xrightarrow{a} B\xrightarrow{b} C\xrightarrow{c} A[1]$ be a distinguished
triangle such that $\PH^0c\colon \PH^0C\to \PH^1A$ is zero. Then there
exists a unique nine-diagram of the form
\begin{equation}\label{e.nine}
\xymatrix@C=3em{\Ptau^{\le 0} A \ar[r]^{\Ptau^{\le 0} a}\ar[d] & \Ptau^{\le 0} B \ar[r]^{\Ptau^{\le
0}b}\ar[d] & \Ptau^{\le 0} C \ar[r]^{c_0}\ar[d]^u\ar@{}[rd]|{(*)} & (\Ptau^{\le 0} A)[1]\ar[d]\\
A\ar[r]^a\ar[d] & B\ar[r]^b\ar[d] & C\ar[r]^c\ar[d]\ar@{}[rd]|{(**)} & A[1]\ar[d]^v\\
\Ptau^{\ge 1} A \ar[r]^{\Ptau^{\ge 1} a}\ar[d] & \Ptau^{\ge 1} B \ar[r]^{\Ptau^{\ge 1} b}\ar[d]
& \Ptau^{\ge 1} C \ar[r]^{c_1}\ar[d] & (\Ptau^{\ge 1} A)[1]\ar[d]\\
(\Ptau^{\le 0} A)[1] \ar[r]^{(\Ptau^{\le 0} a)[1]} & (\Ptau^{\le 0} B)[1] \ar[r]^{(\Ptau^{\le
0}b)[1]} & (\Ptau^{\le 0} C)[1] \ar[r]^{c_0[1]} & (\Ptau^{\le 0} A)[2],
}
\end{equation}
where the columns are the canonical distinguished triangles.
\end{lemma}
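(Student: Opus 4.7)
The plan is to construct the diagram in two stages: first produce the connecting morphisms $c_0$ and $c_1$, then complete them to distinguished triangles and recognize the resulting middle objects as $\Ptau^{\le 0} B$ and $\Ptau^{\ge 1} B$. Throughout, the essential tool is the orthogonality $\Hom(\Ptau^{\le 0}, \Ptau^{\ge 1}) = 0$ and the identification of $\Hom$ between objects of $\Ptau^{\le 0}$ and $\Ptau^{\ge 0}$ with $\Hom$ between their degree-zero perverse cohomology sheaves.

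For the first step, the composite $\Ptau^{\le 0} C \to C \xrightarrow{c} A[1] \to (\Ptau^{\ge 1} A)[1]$ goes from an object of $\Ptau^{\le 0}$ to one of $\Ptau^{\ge 0}$, hence is determined by its effect on $\PH^0$, which is precisely $\PH^0 c\colon \PH^0 C \to \PH^1 A$ and vanishes by hypothesis. Applying $\Hom(\Ptau^{\le 0}C,-)$ to the shifted canonical triangle $(\Ptau^{\le 0}A)[1] \to A[1] \to (\Ptau^{\ge 1}A)[1] \to$, the vanishing of this composite yields a lift $c_0\colon \Ptau^{\le 0} C \to (\Ptau^{\le 0} A)[1]$ of the natural map $\Ptau^{\le 0} C \to C \to A[1]$; the lift is unique by $\Hom(\Ptau^{\le 0} C, \Ptau^{\ge 1} A) = 0$, and by construction square $(*)$ commutes. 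Symmetrically, the factorization of $C \to A[1] \to (\Ptau^{\ge 1} A)[1]$ through $\Ptau^{\ge 1} C$ defines $c_1$ and makes $(**)$ commute.

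For the second step, complete $c_0$ to a distinguished triangle $\Ptau^{\le 0} A \to B_0 \to \Ptau^{\le 0} C \xrightarrow{c_0} (\Ptau^{\le 0} A)[1]$; since $\Ptau^{\le 0}$ is closed under extensions, $B_0 \in \Ptau^{\le 0}$. The two outer squares already commuting, TR3 provides a morphism of triangles $B_0 \to B$, which factors canonically as $B_0 \to \Ptau^{\le 0} B \to B$. To show the factorization is an isomorphism, compare the long exact sequences of perverse cohomology of the triangles $\Ptau^{\le 0} A \to B_0 \to \Ptau^{\le 0} C$ and $A \to B \to C$: the outer vertical maps induce isomorphisms on $\PH^i$ for $i \le 0$ and both ends vanish in positive degrees, so the five-lemma gives $\PH^i B_0 \simto \PH^i B$ for all $i \le 0$, forcing $B_0 \simto \Ptau^{\le 0} B$ by non-degeneracy of the perverse $t$-structure on $\rD^b_c$. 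Taking cones of the middle column then produces a canonical triangle for $\Ptau^{\ge 1} B$, and a parallel argument starting from $c_1$ identifies the bottom row; the fourth row is the shift of the top one, so commutativity of the last row is automatic.

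Uniqueness of the nine-diagram comes from the uniqueness of $c_0$ and $c_1$ in Step~1 together with the rigidity $\Hom(B_0, \Ptau^{\ge 1} B) = 0$, which ensures that any morphism $B_0 \to B$ lifting the outer data induces the same map $B_0 \to \Ptau^{\le 0} B$ up to unique isomorphism. I expect the main technical obstacle to be the identification $B_0 \simeq \Ptau^{\le 0} B$ of the middle column's top entry; an alternative route via a two-step octahedral argument (first on $\Ptau^{\le 0} A \to A \to B$, then on the resulting cone and $B \to C$) would also succeed, but the cohomological five-lemma comparison above appears to be the most economical.
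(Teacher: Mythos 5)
Your construction of $c_0$ and $c_1$, and the resulting uniqueness statement, are exactly the paper's first step (the paper packages the same Hom computation, $\Hom(\Ptau^{\le 0}C,(\Ptau^{\ge 1}A)[1])\simeq\Hom(\PH^0C,\PH^1A)$ and $\Hom(\Ptau^{\le 0}C,\Ptau^{\ge 1}A)=0$, via [BBD, Prop.\ 1.1.9]). The genuine gap is in your existence step, where you identify the cone $B_0$ of $c_0[-1]$ with $\Ptau^{\le 0}B$ by a five-lemma on the $\PH^i$ plus ``non-degeneracy of the perverse $t$-structure on $\rD^b_c$''. The lemma is stated for an \emph{arbitrary} $t$-structure $P$ on an arbitrary triangulated category, and a morphism inducing isomorphisms on all $\PH^i$ need only be an isomorphism when $P$ is non-degenerate. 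This is not a hypothetical loss of generality: in the one place the lemma is used (the proof of Lemma \ref{l.interexact}), $P$ is the $t$-structure glued from the degenerate $t$-structure $(\rD^b_c(X,\Qlb),0)$ on the open part and the canonical one on the closed complement; for that $P$ every nonzero object killed by $i^*$ (e.g.\ $f_!L$) has $\PH^i=0$ for all $i$, so the family $\{\PH^i\}$ is not conservative and your final inference fails precisely in the situation the lemma is needed for. The paper's existence argument avoids conservativity altogether: it extends the commutative square $(\Ptau^{\le 0}A\to\Ptau^{\le 0}B,\ A\to B)$ to a nine-diagram by [BBD, Prop.\ 1.1.11], notes that the third-column cones satisfy $C_0\in\PcD^{\le 0}$ and $C_1\in\PcD^{\ge 0}$ by extension-closedness, and then uses $\PH^0c=0$ together with the epi $\PH^0C\to\PH^0C_1$ and the mono $\PH^0C_1\to\PH^1A$ to conclude $\PH^0C_1=0$, hence $C_1\in\PcD^{\ge 1}$, which identifies $C_0\to C\to C_1$ with the truncation triangle of $C$ for any $t$-structure. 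Your closing remark about an octahedral variant is essentially this route and is the way to repair the proof.

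A secondary point: even granting $B_0\simeq\Ptau^{\le 0}B$, you have not verified all the constraints of a nine-diagram, in particular the anticommutativity of the corner square $\Ptau^{\ge 1}C\to(\Ptau^{\le 0}C)[1]\to(\Ptau^{\le 0}A)[2]$ versus $\Ptau^{\ge 1}C\xrightarrow{c_1}(\Ptau^{\ge 1}A)[1]\to(\Ptau^{\le 0}A)[2]$; this is not forced by orthogonality, since $\Hom(\Ptau^{\ge 1}C,(\Ptau^{\le 0}A)[2])$ need not vanish. By producing the whole diagram with the nine-lemma first and only then identifying its entries (again via the Hom-vanishing of [BBD, Prop.\ 1.1.9]), the paper obtains these (anti)commutativities for free.
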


By a \emph{nine-diagram} in a triangulated category (cf.\ \cite[Proposition
1.1.11]{BBD}), we mean a diagram
\[
\xymatrix@C=2em{ A \ar[r] \ar[d] & B \ar[r] \ar[d] & C \ar[r] \ar[d] & A[1] \ar@{-->}[d] \\
A' \ar[r] \ar[d] & B' \ar[r] \ar[d] & C' \ar[r] \ar[d] & A'[1] \ar@{-->}[d] \\
A'' \ar[r] \ar[d] & B'' \ar[r] \ar[d]
& C'' \ar[r] \ar[d] & A''[1] \ar@{-->}[d] \\
A[1] \ar@{-->}[r] & B[1] \ar@{-->}[r] & C[1] \ar@{-->}[r] & A[2] \ar@{}[ul]|{-},
}
\]
in which the square marked with ``$-$'' is anticommutative and all other
squares are commutative, the dashed arrows are induced from the solid ones
by translation, and the rows and columns in solid arrows are distinguished
triangles.

\begin{proof}
First note that $vcu$ is the image of $\PH^0c$ under the isomorphism
\[\Hom(\PH^0C, \PH^1A)\simto \Hom(\Ptau^{\le 0} C, (\Ptau^{\ge 1} A)[1]).\]
Hence $vcu=0$. Moreover, $\Hom(\Ptau^{\le 0} C, \Ptau^{\ge 1} A)=0$. Thus by
\cite[Proposition 1.1.9]{BBD}, there exist a unique $c_0$ making $(*)$
commutative and a unique $c_1$ making $(**)$ commutative. This proves the
uniqueness of \eqref{e.nine}. It remains to show that \eqref{e.nine} thus
constructed is a nine-diagram. To do this, we extend the upper left square
of \eqref{e.nine} into a nine-diagram
\begin{equation}\label{e.nine2}
\xymatrix@C=3em{\Ptau^{\le 0} A \ar[r]^{\Ptau^{\le 0} a}\ar[d] & \Ptau^{\le 0} B \ar[r]\ar[d] & C_0 \ar[r]\ar[d] & (\Ptau^{\le 0} A)[1]\ar[d]\\
A\ar[r]^a\ar[d] & B\ar[r]^b\ar[d] & C\ar[r]^c\ar[d]\ar@{}[rd]|{(***)} & A[1]\ar[d]\\
\Ptau^{\ge 1} A \ar[r]^{\Ptau^{\ge 1} a}\ar[d] & \Ptau^{\ge 1} B \ar[r]\ar[d]
& C_1 \ar[r]\ar[d] & (\Ptau^{\ge 1} A)[1]\ar[d]\\
(\Ptau^{\le 0} A)[1] \ar[r]^{(\Ptau^{\le 0} a)[1]} & (\Ptau^{\le 0} B)[1] \ar[r] & C_0[1] \ar[r] & (\Ptau^{\le 0} A)[2].
}
\end{equation}
By the first and third rows of \eqref{e.nine2}, $C_0\in \PcD^{\le 0}$ and
$C_1\in \PcD^{\ge 0}$. Taking $\PH^0$ of $(***)$, we obtain a commutative
diagram
\[\xymatrix{\PH^0 C\ar[r]^0\ar[d]_e & \PH^1 A\ar@{=}[d]\\
\PH^0 C_1 \ar[r]^d & \PH^1 A,}
\]
where $e$ is an epimorphism and $d$ is a monomorphism. Thus $\PH^0 C_1=0$,
so that $C_1\in \PcD^{\ge 1}$. Further applying \cite[Proposition
1.1.9]{BBD}, we may identify \eqref{e.nine2} with \eqref{e.nine}.
\end{proof}

\subsection{Statement and consequences of main result}\label{ss.G2}

In this subsection, we define a subgroup $\rK_{\iota,\sigma}$ of the
Grothendieck group and state its preservation by Grothendieck's six
operations (Theorem \ref{t.sixop}), which contains the finite field case of
Theorem \ref{t.i3}. We then give a number of consequences and discuss the
relationship with independence of $\ell$ and Laumon's theorem on Euler
characteristics.

\begin{definition}[$\rK_{\iota,\sigma}$]\label{d.K}
We define $\rK_{\iota,\sigma}^w(X,\Qlb)\subseteq \rK_\iota^w(X,\Qlb)$
(resp.\ $\rK_{\iota,\sd}^w(X,\Qlb)\subseteq \rK_\iota^w(X,\Qlb)$), for
$w\in\Z$, to be the subgroup generated by $[B]$, for $B$ perverse,
$\iota$-pure of weight $w$, and $(-1)^w\sigma$-self-dual (resp.\ self-dual)
with respect to $K_X(-w)$. We put
\[
\rK_{\iota,\sigma}(X,\Qlb)=\bigoplus_{w\in \Z} \rK_{\iota,\sigma}^w(X,\Qlb)\quad
\text{(resp.}\ \rK_{\iota,\sd}(X,\Qlb)=\bigoplus_{w\in \Z}
\rK_{\iota,\sd}^w(X,\Qlb)).
\]
We define the \emph{twisted dualizing map}
\[
\Dbar_{\iota,X}\colon \rK^\Z_\iota(X,\Qlb)\to \rK^\Z_\iota(X,\Qlb)
\]
to be the direct sum of the group automorphisms $\Dbar_{\iota,X}^w\colon
\rK^w_\iota(X,\Qlb)\to \rK^w_\iota(X,\Qlb)$ sending $[A]$ to $[(D_X
A)(-w)]$. We let $\rK_{\iota,\dl}^w(X,\Qlb)\subseteq\rK_{\iota}^w(X,\Qlb)^2$
denote the graph of $\Dbar_{\iota,X}^w$. We put
\[\rK_{\iota,\dl}(X,\Qlb)=\bigoplus_{w\in \Z}\rK^w_{\iota,\dl}(X,\Qlb).\]
\end{definition}

We note that $\Dbar_{\iota, X}\Dbar_{\iota, X}=\id$, and
$\rK_{\iota,\dl}(X,\Qlb)\subseteq \rK^\Z_{\iota}(X,\Qlb)^2$ is the graph of
$\Dbar_{\iota,X}$.

\begin{example}\label{e.K}
For $X=\Spec(\F_q)$, an element $A\in \rK(X,\Qlb)$ is determined by the
determinant
\[P(A,T)\coloneqq \det (1-T\Fr_q\mid A_{\Fqb})\in \Qlb(T).\]
Assume $A\in \rK^w_\iota(X,\Qlb)$, $w\in \Z$. For $\lambda\in \Qlb$
satisfying $\lvert \iota(\lambda)\rvert =q^{w/2}$, we let $m_\lambda$ and
$m'_\lambda$ denote the order at $T=1/\lambda$ of $P(A,T)$ and
$P(\Dbar_{\iota,X}A,T)$, respectively. We then have
$m_{\lambda}=m'_{q^w/\lambda};$ in other words,
\begin{equation}\label{e.Kdet}
\iota P(A,T)=\bar\iota P(\Dbar_{\iota,X}A,T).
\end{equation}
We have $\rK^w_{\iota,(-1)^{w+1}}(X,\Qlb)\subseteq
\rK^w_{\iota,(-1)^w}(X,\Qlb)=\rK^w_{\iota,\sd}(X,\Qlb)$. The following
conditions are equivalent:
\begin{enumerate}
\item $A\in \rK^w_{\iota,(-1)^w}(X,\Qlb)=\rK^w_{\iota,\sd}(X,\Qlb)$;
\item $m_\lambda=m_{q^w/\lambda}$ for all $\lambda$;
\item $\iota
    P(A,T)\in \R(T)$.
\end{enumerate}
Furthermore, the following conditions are equivalent:
\begin{enumerate}
\item $A\in\rK^w_{\iota,(-1)^{w+1}}(X,\Qlb)$;
\item $m_\lambda=m_{q^w/\lambda}$ for all $\lambda$ and $m_{q^{w/2}}$,
    $m_{-q^{w/2}}$ are even;
\item $\iota P(A,T)\in \R(T)$, the rank $b=\sum_\lambda m_\lambda \in \Z$
    of $A$ is even, and $\det(\Fr_q \mid A_{\Fqb})=q^{wb/2}$.
\end{enumerate}
\end{example}

\begin{remark}\label{r.K0}
Let $w\in\Z$.
\begin{enumerate}
\item By definition, $\rK_{\iota,\sigma}(X,\Qlb)\subseteq \rK(X,\Qlb)$
    (resp.\ $\rK_{\iota,\sd}(X,\Qlb)\subseteq \rK(X,\Qlb)$) is generated
    by the image of $\rD^w_{\iota,\sigma}(X,\Qlb)$ (resp.\
    $\rD^w_{\iota,\sd}(X,\Qlb)$), as defined in Definition \ref{d.Diota}.
    Moreover, $\rK_{\iota,\dl}(X,\Qlb)\subseteq \rK(X,\Qlb)^2$ is
    generated by the image of $\rD^w_{\iota,\dl}(X,\Qlb)$.

\item By Remark \ref{r.ss}, in the definition of $\rK^w_{\iota,\sigma}$,
    one may restrict to semisimple perverse sheaves. This also holds for
    $\rK^w_{\iota,\sd}$. Thus $\rK^w_{\iota,\sigma}(X,\Qlb)$ (resp.\
    $\rK^w_{\iota,\sd}(X,\Qlb)$) is generated by $[A]+[(D_X A)(-w)]$ for
    $A$ simple perverse $\iota$-pure of weight $w$, and $[B]$ for $B$
    simple perverse $\iota$-pure of weight $w$ and
    $(-1)^{w}\sigma$-self-dual (resp.\ self-dual) with respect to
    $K_X(-w)$.

\item By Proposition \ref{p.schur}, we have
    $\rK^w_{\iota,\sd}(X,\Qlb)=\rK^w_{\iota,1}(X,\Qlb)+
    \rK^w_{\iota,-1}(X,\Qlb)$.

\item $\rK_{\iota,\dl}(X,\Qlb)$ is generated by $([B],[(D_X A)(-w)])$ for
    $B$ simple perverse $\iota$-pure of weight $w$. Thus
    $\rK_{\iota,\sd}(X,\Qlb)=\Delta^{-1} (\rK_{\iota,\dl}(X,\Qlb))$, where
    $\Delta\colon \rK^\Z_\iota(X,\Qlb)\to \rK^\Z_\iota(X,\Qlb)^2$ is the
    diagonal embedding. In other words, for $A\in \rK^\Z_\iota(X,\Qlb)$,
    $A$ belongs to $\rK_{\iota,\sd}(X,\Qlb)$ if and only if
    $A=\Dbar_{\iota,X}A$.

\item For $A\in \rK_\iota^\Z(X,\Qlb)$ and $n\in \Z$, we have
    $\Dbar_{\iota,X} (A(n))=(\Dbar_{\iota,X} A)(n)$. For $A\in
    \rK_{\iota,\sigma}^w(X,\Qlb)$, we have $A(n)\in
    \rK_{\iota,\sigma}^{w-2n}(X,\Qlb)$.

\item Let $A$ be a perverse sheaf on $X$, $\iota$-pure of weight $w$. By
    Corollary \ref{c.even}, $[A]\in \rK_{\iota,\sigma}^w(X,\Qlb)$ if and
    only if the semisimplification of $A$ is $(-1)^w\sigma$-self-dual with
    respect to $K_X(-w)$. Similar results hold for $\rK_{\iota,\sd}$ and
    $\rK_{\iota,\dl}$.
\end{enumerate}
\end{remark}

\begin{remark}
Although we do not need it in the sequel, let us give two more descriptions
of $\rK_{\iota,\sigma}^w$. In our definition of $\rK_{\iota,\sigma}^w$, we
consider self-dual perverse sheaves $B$ only and do not take the bilinear
form $B\simeq D_X B (-w)$ as part of the data. Alternatively, we can also
include the form and consider the Grothendieck group $\GS$ of symmetric
spaces in $\Perv_\iota^{\{w\}}$ (equipped with the duality $D_X(-w)$ and the
evaluation map modified by a factor of $(-1)^{w}\sigma$). The
Grothendieck-Witt group $\GW$ is a quotient of $\GS$, equipped with a
homomorphism $\GW\to \rK_{\iota}^w$. We refer the reader to \cite[page
280]{QSS}, \cite[Section 2.2]{Schlichting} for the definition of the
Grothendieck-Witt group of an Abelian category with duality (generalizing
Quillen's definition \cite[(5.1)]{Quillen} for representations). In our
situation, the canonical maps
\[\GS\to \GW\to \rK_{\iota,\sigma}^w\]
are isomorphisms. In fact, by definition, $\rK_{\iota,\sigma}^w$ is the
image of $\GW$. Moreover, since we work over the algebraically closed field
$\Qlb$, symmetric spaces with isomorphic underlying objects are isometric
\cite[3.4 (3)]{QSS}.
\end{remark}

We now consider preservation of $\rK_{\iota,\sigma}$ and $\rK_{\iota,\dl}$
by cohomological operations. The preservation of $\rK_{\iota,\dl}$ is
equivalent to the commutation with the twisted dualizing map
$\Dbar_{\iota}$. The main result of this section is the following
generalization of Theorem \ref{t.i3}.

\begin{theorem}\label{t.sixop}
Let $f\colon X\to Y$ be a morphism between Deligne-Mumford stacks of finite
inertia and finite presentation over $\F_q$. Then Grothendieck's six
operations induce maps
\begin{gather*}
-\otimes-,\ \cHom(-,-)\colon \rK_{\iota,\sigma}(X,\Qlb)\times \rK_{\iota,\sigma'}(X,\Qlb)\to \rK_{\iota,\sigma\sigma'}(X,\Qlb),\\
f^*,f^!\colon
\rK_{\iota,\sigma}(Y,\Qlb)\to \rK_{\iota,\sigma}(X,\Qlb), \qquad f_*,f_!\colon \rK_{\iota,\sigma}(X,\Qlb)\to \rK_{\iota,\sigma}(Y,\Qlb).
\end{gather*}
Moreover, Grothendieck's six operations on $\rK^\Z_\iota$ commute with the
twisted dualizing map $\Dbar_\iota$.
\end{theorem}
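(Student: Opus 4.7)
The plan is first to cut down the list of operations that must be handled directly. Using the involutivity $D_X \circ D_X = \id$ and the identities $\rR\cHom(A,B) = D_X(A \otimes D_X B)$, $f^! = D_X \circ f^* \circ D_Y$, and $\rR j_* = D_X \circ j_! \circ D_U$ for an open immersion $j$, together with a factorization of any $f$ as $\bar f \circ j$ with $j$ open and $\bar f$ proper (so $\rR f_! = \rR \bar f_* \circ j_!$ and $\rR f_* = \rR \bar f_* \circ \rR j_*$), the task reduces to establishing preservation of $\rK_{\iota,\sigma}$ and commutation with $\Dbar_\iota$ for the following short list: duality $D_X$, Tate twists, tensor product $\otimes$, proper direct image $\rR f_*$, pullback $f^*$, and open extension-by-zero $j_!$.

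\textbf{The operations handled by earlier sections.} Duality and Tate twists are immediate from Remark \ref{p.sd}(1)--(2) and Remark \ref{r.K0}(5); one checks directly on generators that they commute with $\Dbar_\iota$. Proper direct image is Theorem \ref{t.mainf}: the preservation of $\rD^w_{\iota,\sigma}$ and $\rD^w_{\iota,\dl}$ under $\rR f_*$ passes to Grothendieck groups via Remark \ref{r.K0}(1). Tensor product reduces via $A \otimes B = \Delta^*(A \boxtimes B)$ to the K\"unneth symmetry of Remark \ref{p.sd}(6) together with the pullback case. For pullback $f^*$, one uses Noetherian induction and stratification to reduce to smooth morphisms, where Remark \ref{p.sd}(7) combined with the shift conventions built into Definition \ref{d.Diota} gives the preservation with the correct signs; closed and open immersion pullbacks appearing in the stratification argument are handled using Remark \ref{r.Diota}(7)--(8).

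\textbf{The crucial case: $j_!$.} This is the heart of the argument. Given a generator $[B] \in \rK_{\iota,\sigma}^w(U,\Qlb)$ coming from a perverse sheaf $B$ pure of weight $w$ that is $(-1)^w\sigma$-self-dual with respect to $K_U(-w)$, one writes
\[
[j_! B] = [j_{!*} B] - [C]
\]
in $\rK(X,\Qlb)$, where $C = \Ker(\pH^0 j_! B \to j_{!*} B)$ is a perverse sheaf supported on $X \setminus U$. The term $[j_{!*} B]$ lies in $\rK_{\iota,\sigma}^w(X,\Qlb)$ by Remark \ref{r.Diota}(8), so it suffices to show $[C] \in \rK_{\iota,\sigma}^w(X,\Qlb)$. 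Exploiting the Zariski-local nature (Proposition \ref{p.cover}, Lemma \ref{l.coverK}) and applying de Jong's \emph{equivariant} alterations, we reduce to the situation where the complement of $U$ in $X$ is a strict normal crossing divisor and $B$ is the middle extension of a lisse $\Qlb$-sheaf. In this geometric setting the boundary correction $[C]$ can be expressed in terms of iterated restrictions to the normal-crossing strata, where the original $\sigma$-symmetric pairing induces explicit symmetric structures whose parity can be tracked. This is precisely the statement of Proposition \ref{p.NCD} advertised in the introduction.

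\textbf{Main obstacle.} The genuine difficulty lies in the $j_!$ case; all other operations either follow directly from the results of Sections \ref{s.gp}--\ref{s.pc} or reduce formally to this one. Within the $j_!$ step, the two interlocking challenges are (a) recovering symmetry information on $X$ from an equivariant alteration when the deck transformations themselves carry no a priori parity data --- so one needs the refined equivariant version of de Jong's theorem --- and (b) the combinatorial bookkeeping on the normal-crossing boundary, matching the parity of each stratum contribution to the sign $(-1)^w$ that occurs in Definition \ref{d.K}.
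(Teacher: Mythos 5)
Your outline matches the paper's strategy (formal reductions via duality and $\Delta^*(-\boxtimes-)$, proper pushforward from Theorem \ref{t.mainf}, and the open-immersion $j_!$ case reduced by equivariant alterations to the normal-crossing statement of Proposition \ref{p.NCD}), but as written there are two genuine gaps. First, your treatment of $f^*$ does not work: for a closed immersion $i$, the functor $i^*$ destroys purity, and Remark \ref{r.Diota}(7)--(8) say nothing about closed-immersion \emph{pullback} (they concern $i_*$ and $j_{!*}$), so there is no "direct" preservation to invoke. The correct argument, and the one the paper uses, is the Grothendieck-group identity $[B]=[j_!j^*B]+[i_*i^*B]$ together with the detection property of $i_*$ (Remark \ref{r.K}(5)); this forces the logical order to be the reverse of yours: the open-immersion $j_!$ case must be established \emph{before} pullback, whereas your plan treats $f^*$ independently of $j_!$.

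Second, in the crucial $j_!$ step the reduction you assert is precisely the hard content and is left unproved. The identity $[j_!B]=[j_{!*}B]-[C]$ with $C=\Ker(\pH^0 j_!B\to j_{!*}B)$ is false in general ($j_!B$ has nonzero $\pH^{<0}$ unless $j$ is affine, and the sign is off); one should work with the virtual boundary class $[j_{!*}B]-[j_!B]$. More seriously, "applying de Jong's equivariant alterations, we reduce to the NCD situation" hides the mechanism by which symmetry descends from the alteration, which is exactly the loss-of-parity problem you flag as obstacle (a): the paper resolves it by working with quotient stacks $[V/G]$, noting that a Galois alteration $[v/\alpha]\colon[V'/G']\to[V/G]$ is proper and generically of stacky degree one, so that $[v/\alpha]^*A\in\rK_{\iota,\sigma}$ (pullback of a lisse class between regular stacks), $[v/\alpha]_*[v/\alpha]^*A\in\rK_{\iota,\sigma}$ by Theorem \ref{t.mainf}, and $A-[v/\alpha]_*[v/\alpha]^*A$ is supported in dimension $<d$, closing the argument by induction on dimension and the additive two-out-of-three in $\rK$. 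Without this push--pull/induction step (and without arranging tameness of $\cF$ along the boundary and the Zariski-local reduction needed to meet the hypotheses of Proposition \ref{p.NCD}), the reduction to Proposition \ref{p.NCD} is only a statement of intent, not a proof.
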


The proof will be given in the next section. We now make a list of  pure
cases in which the preservation has already been established. Most items of
the list below follow from Remark \ref{r.Diota}.

\begin{remark}[Preservation of $\rK_{\iota,\sigma}$, pure cases]\label{r.K}
Let $f\colon X\to Y$ be a morphism, $w,w'\in\Z$.
\begin{enumerate}
\item $D_X$ carries $\rK^w_{\iota,\sigma}(X,\Qlb)$ to
    $\rK^{-w}_{\iota,\sigma}(X,\Qlb)$.

\item If $f$ is smooth, then $f^*$ preserves $\rK_{\iota,\sigma}^w$.

\item If $f$ is an open immersion, then $f_{!*}$ preserves
    $\rK^w_{\iota,\sigma}$.

\item The functor $-\boxtimes-$ carries
    $\rK_{\iota,\sigma}^w(X,\Qlb)\times \rK_{\iota,\sigma'}^{w'}(X',\Qlb)$
    to $\rK_{\iota,\sigma \sigma'}^{w+w'}(X\times X',\Qlb)$, and the
    functor $(-)^{\boxtimes m}$, $m\ge 0$, carries
    $\rK_{\iota,\sigma}^w(X,\Qlb)$ to
    $\rK_{\iota,\sigma^m}^{mw}([X^m/\Perm_m],\Qlb)$. For the latter we
    used Proposition \ref{p.boxtimesf}.

\item Assume that $f$ is a closed immersion and let $A\in \rK(X,\Qlb)$.
    Then $A\in \rK_{\iota,\sigma}^w(X,\Qlb)$ if and only if
    $f_*A\in\rK^w_{\iota,\sigma}(Y,\Qlb)$.

\item Assume that $f$ is proper.  If $f$ is projective or $Y$ has finite
    inertia, then $f_*$ preserves $\rK^w_{\iota,\sigma}$, by Proposition
    \ref{p.projf} and Theorem \ref{t.mainf}.
\end{enumerate}
Similar properties hold for $\rK_{\iota,\dl}$.
\end{remark}

The following Zariski local nature of $\rK_{\iota,\sigma}^w$ will be used in
the proof of Theorem \ref{t.sixop}. It follows from the Zariski local nature
of $\sigma$-self-dual perverse sheaves (Proposition \ref{p.23}). It also
follows from Remark \ref{r.K} (3) and Lemma \ref{l.coverK}.

\begin{remark}[Zariski local nature]\label{r.ZarK}
Let $(X_\alpha)_{\alpha\in I}$ be a Zariski open covering of $X$ and let
$A\in \rK(X,\Qlb)$. Then $A\in \rK_{\iota,\sigma}^w(X,\Qlb)$ if and only if
$A\res{X_\alpha}\in \rK_{\iota,\sigma}^w(X_\alpha,\Qlb)$ for every $\alpha$.
The same holds for $\rK_{\iota,\dl}$.
\end{remark}

We now turn to consequences of Theorem \ref{t.sixop}. The ring part of the
following two corollaries follow from the two assertions of Theorem
\ref{t.sixop} applied to $a_X^*$ (recall $a_X\colon X\to \Spec(\F_q)$) and
$-\otimes-$. For the $\lambda$-ring part, we apply Remark \ref{r.K} (4) to
the map $(-)^{\boxtimes m}$ and Theorem \ref{t.sixop} to the maps $\Delta^*$
and $p_*$ in the definition of $\lambda^m$ in Construction \ref{c.lambda}.

\begin{cor}\label{c.lsubring}
Assume that $X$ has finite inertia. Then $\rK_{\iota,1}(X,\Qlb)$ is a
$\lambda$-subring of $\rK(X,\Qlb)$. In particular, $\rK_{\iota,1}(X,\Qlb)$
contains the class $[\Qlb]$ of the constant sheaf $\Qlb$ on $X$.
\end{cor}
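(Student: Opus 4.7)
The plan is to verify three things about $\rK_{\iota,1}(X,\Qlb)\subseteq\rK(X,\Qlb)$: it contains the unit $[\Qlb]$, is closed under multiplication, and is closed under each $\lambda$-operation $\lambda^m$; closure under addition and negation is automatic since it is a subgroup.

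For the unit and multiplicative closure, I would observe that on $\Spec(\F_q)$ the sheaf $\Qlb$ is perverse, pure of weight $0$, and the multiplication $\Qlb\otimes\Qlb\to\Qlb=K_{\Spec(\F_q)}$ is a $1$-symmetric perfect pairing, so $[\Qlb_{\Spec(\F_q)}]\in\rK^0_{\iota,1}(\Spec(\F_q),\Qlb)$. Since both $X$ and $\Spec(\F_q)$ have finite inertia, applying Theorem \ref{t.sixop} to $a_X^*$ gives $[\Qlb_X]\in\rK_{\iota,1}(X,\Qlb)$. Taking $\sigma=\sigma'=1$ in the tensor product clause of Theorem \ref{t.sixop} yields closure under multiplication, so $\rK_{\iota,1}(X,\Qlb)$ is a subring.

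For closure under $\lambda^m$, I would start from the formula $\lambda^m(x)=(-1)^m p_*\Delta^*(-x)^{\boxtimes m}$ of Construction \ref{c.lambda}, where $\Delta\colon X\times B\Perm_m\to[X^m/\Perm_m]$ and $p\colon X\times B\Perm_m\to X$. Both intermediate stacks have finite inertia since $X$ does and $\Perm_m$ is finite, so Theorem \ref{t.sixop} handles $\Delta^*$ and $p_*$. Multiplication by $(-1)^m$ also preserves the subgroup. The task therefore reduces to showing that $(-x)^{\boxtimes m}$ lies in $\rK_{\iota,1}([X^m/\Perm_m],\Qlb)$ whenever $x$ does.

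The main obstacle is the non-additivity of $(-)^{\boxtimes m}$, since Remark \ref{r.K} (4) directly covers only homogeneous elements, sending $\rK^w_{\iota,1}$ into $\rK^{mw}_{\iota,1^m}=\rK^{mw}_{\iota,1}$. To deal with a general $x=\sum_w x_w$ with $x_w\in\rK^w_{\iota,1}$ (a finite sum), I would iterate the Cartan formula $\lambda^m(y+z)=\sum_{i+j=m}\lambda^i(y)\lambda^j(z)$ valid in the special $\lambda$-ring $\rK(X,\Qlb)$, reducing the problem to showing $\lambda^{m_w}(x_w)\in\rK_{\iota,1}$ for each homogeneous $x_w$ and each $m_w\ge 0$. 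This homogeneous case follows from the displayed formula applied to $x_w$: Remark \ref{r.K} (4) places $(-x_w)^{\boxtimes m_w}$ in $\rK^{m_w w}_{\iota,1}$, and then Theorem \ref{t.sixop} handles $\Delta^*$ and $p_*$. The products appearing in the Cartan expansion remain in $\rK_{\iota,1}$ by the multiplicative closure already established.
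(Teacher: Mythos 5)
Your proof is correct and takes essentially the same route as the paper, which proves the ring part by applying Theorem~\ref{t.sixop} to $a_X^*$ and $-\otimes-$, and the $\lambda$-ring part by applying Remark~\ref{r.K}~(4) to $(-)^{\boxtimes m}$ and Theorem~\ref{t.sixop} to $\Delta^*$ and $p_*$. You make explicit the reduction (via the Cartan formula) from a general element of $\rK_{\iota,1}=\bigoplus_w\rK^w_{\iota,1}$ to homogeneous elements needed to invoke the weight-graded statement of Remark~\ref{r.K}~(4); the paper leaves that step implicit.
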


\begin{cor}\label{c.lringhom}
Assume that $X$ has finite inertia. Then $\Dbar_{\iota,X}\colon
\rK^\Z_\iota(X,\Qlb)\to \rK^\Z_\iota(X,\Qlb)$ is a $\lambda$-ring
homomorphism. In particular, $\Dbar_{\iota,X}[\Qlb]=[\Qlb]$.
\end{cor}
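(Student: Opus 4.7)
The plan is to verify the three conditions for $\Dbar_{\iota,X}$ to be a $\lambda$-ring homomorphism: additivity, preservation of the ring structure (multiplicativity together with the unit), and commutation with the operations $\lambda^m$ for every $m \geq 0$. Additivity will be immediate from Definition~\ref{d.K}, which constructs $\Dbar_{\iota,X}$ as a direct sum over $w \in \Z$ of group automorphisms of $\rK^w_\iota(X,\Qlb)$. Multiplicativity $\Dbar_{\iota,X}(x \otimes y) = \Dbar_{\iota,X}(x) \otimes \Dbar_{\iota,X}(y)$ will follow directly from the commutation of $-\otimes-$ with $\Dbar_\iota$ asserted in Theorem~\ref{t.sixop}.

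For the unit, I plan to combine Corollary~\ref{c.lsubring}, which gives $[\Qlb] \in \rK_{\iota,1}(X,\Qlb)$, with the inclusion $\rK_{\iota,1} \subseteq \rK_{\iota,\sd}$ from Remark~\ref{r.K0}(3) and the characterization of $\rK_{\iota,\sd}$ as the fixed-point set of $\Dbar_{\iota,X}$ from Remark~\ref{r.K0}(4), to conclude $\Dbar_{\iota,X}[\Qlb] = [\Qlb]$; this simultaneously settles the ``in particular'' clause.

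For commutation with $\lambda^m$, I will expand using the formula $\lambda^m(x) = (-1)^m p_* \Delta^* (-x)^{\boxtimes m}$ from Construction~\ref{c.lambda}, where $\Delta \colon X \times B\Perm_m \to [X^m/\Perm_m]$ is the diagonal and $p \colon X \times B\Perm_m \to X$ is the projection. Since $X$ has finite inertia and $\Perm_m$ is finite, the stacks $X \times B\Perm_m$ and $[X^m/\Perm_m]$ have finite inertia as well, so Theorem~\ref{t.sixop} applies and yields commutation of $\Dbar_\iota$ with $p_*$ and $\Delta^*$. Commutation of $\Dbar_\iota$ with $(-)^{\boxtimes m}$ is given by the $\rK_{\iota,\dl}$-variant of Remark~\ref{r.K}(4). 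Chaining these commutations, and using additivity to pass through the sign, will produce $\Dbar_{\iota,X}(\lambda^m(x)) = \lambda^m(\Dbar_{\iota,X}(x))$.

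The hardest step conceptually is the last one, because the map $(-)^{\boxtimes m}$ is not additive, so its commutation with $\Dbar_\iota$ has to be imported from the refined preservation statement for $\rK_{\iota,\dl}$ in Remark~\ref{r.K}(4) rather than reduced to the six-operation case; granting these inputs, the remainder of the argument is a formal diagram chase.
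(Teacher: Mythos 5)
Your proof is correct and takes essentially the same approach as the paper: additivity is built into the definition, multiplicativity and the $\lambda$-compatibility both reduce to the commutation of $\Dbar_\iota$ with $-\otimes-$, $\Delta^*$, $p_*$ (via Theorem~\ref{t.sixop}) and with $(-)^{\boxtimes m}$ (via the $\rK_{\iota,\dl}$-variant of Remark~\ref{r.K}(4)). The only minor cosmetic difference is that the paper handles $\Dbar_{\iota,X}[\Qlb]=[\Qlb]$ directly by applying Theorem~\ref{t.sixop} to $a_X^*$ and the trivial computation on $\Spec(\F_q)$, whereas you route through Corollary~\ref{c.lsubring} and the fixed-point characterization in Remark~\ref{r.K0}(3),(4) — an equivalent and equally valid path.
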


Another consequence of Theorem \ref{t.sixop} is the following pointwise
characterization of $\rK_{\iota,\dl}$ and $\rK_{\iota,\sd}$. We let
$\rK_{\iota,\bar \iota}(X,\Qlb)\subseteq \rK(X,\Qlb)^2$ (resp.\
$\rK_{\iota,\R}(X,\Qlb)\subseteq \rK(X,\Qlb)$) denote the subgroup
consisting of elements $(A,A')$ (resp.\ $A$) such that for every morphism
$x\colon\Spec(\F_{q^m})\to X$ and every geometric point $\bar x$ above $x$,
we have
\[\iota\tr(\Fr_x,A_{\bar x})=\bar \iota \tr(\Fr_x,A'_{\bar x})\quad \text{(resp.}\ \iota\tr(\Fr_x,A_{\bar x})\in\R\text{)}.
\]
The notation $\rK_{\iota,\bar \iota}$ and $\rK_{\iota,\R}$ will only be used
in Corollary \ref{c.R} and Remark \ref{r.Gabberind} below.

\begin{cor}\label{c.R}
Assume that $X$ has finite inertia. Let $A\in \rK^\Z_\iota(X,\Qlb)$. Then
for every $m\ge 1$, every morphism $x\colon \Spec(\F_{q^m})\to X$ and every
geometric point $\bar x$ above $x$, we have
\begin{equation}\label{e.R}
\iota\tr(\Fr_x,A_{\bar x}) = \bar\iota \tr(\Fr_x,(\Dbar_{\iota,X}A)_{\bar x}).
\end{equation}
Moreover, $\rK_{\iota,\dl}(X,\Qlb)=\rK_{\iota,\bar\iota}(X,\Qlb)\cap
\rK_{\iota}^\Z(X,\Qlb)^2$. In particular,
$\rK_{\iota,\sd}(X,\Qlb)=\rK_{\iota,\R}\cap \rK_{\iota}^\Z(X,\Qlb)$.
\end{cor}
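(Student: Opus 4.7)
The plan is to deduce \eqref{e.R} by pulling back to the point $x$ and invoking the commutation of $x^*$ with the twisted dualizing map provided by Theorem \ref{t.sixop}. Concretely, fix a morphism $x\colon \Spec(\F_{q^m})\to X$ and a geometric point $\bar x$ above $x$. Both source and target have finite inertia, so Theorem \ref{t.sixop} applies to $x$ and gives
\[x^*\Dbar_{\iota,X}A \;=\; \Dbar_{\iota,\Spec(\F_{q^m})}(x^*A)\]
in $\rK^\Z_\iota(\Spec(\F_{q^m}),\Qlb)$. Example \ref{e.K}, applied over the base field $\F_{q^m}$, yields the identity
\[\iota P(x^*A,T) \;=\; \bar\iota P(\Dbar_{\iota,\Spec(\F_{q^m})}(x^*A),T) \;=\; \bar\iota P(x^*\Dbar_{\iota,X}A,T)\]
of rational functions in $T$, where $P(B,T)=\det(1-T\Fr_x\mid B_{\bar x})$. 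Reading off the coefficient of $T$ in the power-series expansion of $-\log$ of each side, and using $(x^*A)_{\bar x}=A_{\bar x}$, produces exactly \eqref{e.R}.

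Granted \eqref{e.R}, the inclusion $\rK_{\iota,\dl}(X,\Qlb)\subseteq\rK_{\iota,\bar\iota}(X,\Qlb)\cap\rK^\Z_\iota(X,\Qlb)^2$ is immediate, since each generator $(A,\Dbar_{\iota,X}A)$ of the left-hand side satisfies the required trace identity by \eqref{e.R}. For the reverse inclusion, let $(A,A')$ belong to the right-hand side. Combining the hypothesis $\iota\tr(\Fr_x\mid A_{\bar x})=\bar\iota\tr(\Fr_x\mid A'_{\bar x})$ with \eqref{e.R} applied to $A$ yields
\[\tr(\Fr_x\mid A'_{\bar x}) \;=\; \tr(\Fr_x\mid (\Dbar_{\iota,X}A)_{\bar x})\]
for every $m$, every $x\colon\Spec(\F_{q^m})\to X$, and every geometric point $\bar x$ above $x$. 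Lemma \ref{l.inj} (itself a consequence of Chebotarev density via Lemma \ref{l.Ch}) then forces $A'=\Dbar_{\iota,X}A$, so $(A,A')\in\rK_{\iota,\dl}(X,\Qlb)$.

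The final assertion is then formal: by Remark \ref{r.K0}(4), $\rK_{\iota,\sd}(X,\Qlb)=\Delta^{-1}\rK_{\iota,\dl}(X,\Qlb)$, and $(A,A)$ belongs to $\rK_{\iota,\bar\iota}(X,\Qlb)$ exactly when $\iota\tr(\Fr_x\mid A_{\bar x})$ is fixed by complex conjugation, i.e.\ lies in $\R$---the defining condition of $\rK_{\iota,\R}(X,\Qlb)$. The only substantive input in the whole argument is the commutation of $x^*$ with $\Dbar_\iota$, for which we rely on the full strength of Theorem \ref{t.sixop}; once this is in hand, the remainder is bookkeeping with power series of traces together with the injectivity of Lemma \ref{l.inj}.
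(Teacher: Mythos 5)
Your proof is correct and follows exactly the route the paper takes: apply the second assertion of Theorem~\ref{t.sixop} to $x^*$, feed the result into the identity~\eqref{e.Kdet} of Example~\ref{e.K} over $\Spec(\F_{q^m})$ to obtain~\eqref{e.R}, deduce the forward inclusion, use Lemma~\ref{l.inj} for the reverse one, and pass to the diagonal via Remark~\ref{r.K0}(4). You also spell out the coefficient-of-$T$ extraction and the diagonal reduction, which the paper leaves implicit, but the argument is identical in substance.
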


\begin{proof}
By the second assertion of Theorem \ref{t.sixop} applied to $x^*$, we have
$\Dbar_{\iota,\Spec(\F_{q^m})} x^* A=x^*\Dbar_{\iota,X} A$. Thus
\eqref{e.Kdet} in Example \ref{e.K} implies \eqref{e.R}. It follows that
$\rK_{\iota,\dl}(X,\Qlb)\subseteq \rK_{\iota,\bar\iota}(X,\Qlb)\cap
\rK_{\iota}^\Z(X,\Qlb)^2$. The inclusion in the other direction follows from
the injectivity of the homomorphism $\rK(X,\Qlb)\to \Map(\coprod_{m\ge 1}
\lvert X(\F_{q^m})\rvert,\Qlb)$ (Lemma \ref{l.inj}). The last assertion of
Corollary \ref{c.R} follows from the second one.
\end{proof}

\begin{remark}\label{r.Gabberind}
Corollary \ref{c.R} also follows from \cite[Lemma 1.8.1 1), 4)]{Katz}, which
in turn follows from Gabber's theorem on independence of $\ell$ for middle
extensions \cite[Theorem~3]{Gabber}. By Gabber's theorem on independence of
$\ell$ for Grothendieck's six operations \cite[Theorem~2]{Gabber} (see
\cite[3.2]{Zind} for a different proof and \cite[Proposition 5.8]{Zind} for
the case of stacks), $\rK_{\iota,\bar\iota}$ and $\rK_{\iota,\R}$ in
Corollary \ref{c.R} are stable under the six operations. Thus the second
assertion of Theorem \ref{t.sixop} follows from Gabber's theorems on
independence of $\ell$. We will not use Gabber's theorems on independence of
$\ell$ in our proof of Theorem \ref{t.sixop}.
\end{remark}

\begin{remark}
The pointwise characterization of $\rK_{\iota,\sd}$ in Corollary \ref{c.R}
does not extend to $\rK_{\iota,\sigma}$. For instance, if $X$ is regular and
geometrically connected and if $f\colon E\to X$ is a family of elliptic
curves with nonconstant $j$-invariant, then $\cF=\rR^1 f_*\Qlb$ is a
geometrically simple lisse $\Qlb$-sheaf on $X$ by \cite[Lemme
3.5.5]{WeilII}, so that $[\cF]\in \rK^1_{\iota,1}(X,\Qlb)\backslash
\rK^1_{\iota,-1}(X,\Qlb)$, but for every closed point $x$ of $X$, we have
$[\cF_x]\in \rK^1_{\iota,1}(x,\Qlb)\subseteq \rK^1_{\iota,-1}(x,\Qlb)$.
\end{remark}

\begin{remark}\label{r.Laumon}
Let $f\colon X\to Y$ be as in Theorem \ref{t.sixop} and let
$\rI(Y,\Qlb)\subseteq \rK(Y,\Qlb)$ be the ideal generated by
$[\Qlb(1)]-[\Qlb]$. A theorem of Laumon \cite{Laumon} (\cite[Theorem
3.2]{IZ} for the case of Deligne-Mumford stacks) states that $f_*\equiv f_!$
modulo $\rI(Y,\Qlb)$. This is equivalent to the congruence $D_Y f_* \equiv
f_* D_X$ (and to $D_Y f_! \equiv f_! D_X$) modulo $\rI(Y,\Qlb)$. Thus the
second assertion of Theorem \ref{t.sixop} can be seen as a refinement of
Laumon's theorem.
\end{remark}

In the case of $\rK_{\iota,\sigma}$, we have the following result on
independence of $(\ell,\iota)$. Let $\ell'\neq q$ be a prime number and let
$\iota'\colon \overline{\Q}_{\ell'}\to \cC$ be an embedding.

\begin{cor}\label{c.indep}
Assume that $X$ has finite inertia. Let $A\in \rK^\Z_\iota(X,\Qlb)$, $A'\in
\rK^\Z_{\iota'}(X,\overline{\Q}_{\ell'})$. Assume that $A$ and $A'$ are
compatible in the sense that for every morphism $x\colon\Spec(\F_{q^m})\to
X$ and every geometric point $\bar x$ above $x$, we have
\[\iota\tr(\Fr_x,A_{\bar x})=\iota' \tr(\Fr_x,A'_{\bar x}).
\]
Then $A$ belongs to $\rK_{\iota,\sigma}(X,\Qlb)$ if and only if $A'$ belongs
to $\rK_{\iota',\sigma}(X,\overline{\Q}_{\ell'})$.
\end{cor}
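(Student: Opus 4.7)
\emph{Proof plan.} By symmetry it suffices to prove $A \in \rK_{\iota,\sigma}(X,\Qlb)$ implies $A' \in \rK_{\iota',\sigma}(X,\overline{\Q}_{\ell'})$. The strategy is to show that membership in $\rK_{\iota,\sigma}$ is determined by the trace function $x \mapsto \iota\tr(\Fr_x, A_{\bar x})$, combining Corollary \ref{c.R} for self-duality with Proposition \ref{p.square} (a Frobenius--Schur style indicator) for the $\sigma$-type.

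First, I reduce to a common weight $w$: weights of the constituents of $A \in \rK^\Z_\iota$ are detected by the growth of $\lvert\iota\tr(\Fr_{x}, A_{\bar x})\rvert$ as the degree of $x$ varies (equivalently by factoring the rational $L$-series of $A$ by weight), so the compatibility passes to each weight-$w$ component. Then, by Corollary \ref{c.R}, self-duality is equivalent to realness of the trace function, and compatibility transfers this condition to $A'$. Since $\rK_{\iota,\sigma}^w \subseteq \rK_{\iota,\sd}^w$, I may assume $A$ and $A'$ both lie in $\rK_{\iota,\sd}^w$ for a common weight $w$.

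The main step proceeds by induction on the maximal dimension $d$ of the support of a simple constituent of $A$, which equals that of $A'$ by compatibility. Let $V_1,\dots,V_r$ be the $d$-dimensional supports appearing in $A$ or $A'$, and choose a dense open $U \subseteq V_1 \sqcup \cdots \sqcup V_r$ that is regular, geometrically unibranch, connected on each component, and disjoint from all strictly smaller supports of constituents of $A$ or $A'$. Then $A\res{U} = \sum_i n_i [\cF_i[d]]$ and $A'\res{U} = \sum_j n'_j [\cF'_j[d]]$ for simple lisse sheaves $\cF_i$, $\cF'_j$ of weight $w-d$ on the components of $U$. Chebotarev density (as used in the proof of Lemma \ref{l.Ch}) gives $\C$-linear independence of the trace functions of distinct simple lisse sheaves on each component of $U$; compatibility then yields a bijection $\cF_i \leftrightarrow \cF'_{\pi(i)}$ with $n_i = n'_{\pi(i)}$ and matching trace functions under $\iota, \iota'$. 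By Remarks \ref{p.sd}(5,7) and \ref{r.Diota}(3), the $\sigma$-type of $j_{!*}(\cF_i[d])$ with respect to $K_X(-w)$ equals $(-1)^d$ times the $\sigma$-type of $\cF_i$ with respect to $\Qlb(-(w-d))$, and Proposition \ref{p.square} computes this latter as $-\mathrm{ord}_{T=q^{-w}} L^{(2)}(\cF_i,T)$, a quantity built from $\iota\tr(\Fr_x^2, (\cF_i)_{\bar x})$. Since $\Fr_x^2$ for $x \in \lvert U(\F_{q^m})\rvert$ coincides with the Frobenius at the canonical $\F_{q^{2m}}$-point above $x$, matching of $\Fr$-traces upgrades to matching of $\Fr^2$-traces, so the $\sigma$-types agree. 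By Corollary \ref{c.even} and Remark \ref{r.K0}(2), this shows that the $V$-supported part of $A$ lies in $\rK^w_{\iota,\sigma}(X,\Qlb)$ iff the corresponding part of $A'$ lies in $\rK^w_{\iota',\sigma}(X,\overline{\Q}_{\ell'})$; subtracting these off via the support decomposition (Lemma \ref{l.indec}) reduces the dimension of the remaining support, and the induction continues. The principal technical obstacle is the Chebotarev-based matching of simple lisse constituents, namely the $\C$-linear independence of their trace functions.
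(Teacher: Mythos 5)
Your proposed route differs from the paper's in two ways, one of which is a genuine gap.

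For the reduction to lisse sheaves, the paper stratifies $X$, writes $A=\sum_{\alpha} j_{\alpha!}j_\alpha^*A$, and uses Theorem~\ref{t.sixop} (for $j_{\alpha!}$ and $j_\alpha^*$) to reduce to a regular stratum. You instead run an induction on the dimension of the support using Lemma~\ref{l.indec} and the $\sigma$-type computations of Remarks~\ref{p.sd} and~\ref{r.Diota}. That part is a fair alternative and probably workable, though your claim that compatibility passes to individual weight-graded pieces via ``growth of $\lvert\iota\tr\rvert$'' should really be made precise (e.g.\ by applying Corollary~\ref{c.R} together with the direct-sum decomposition $\rK^\Z_\iota=\bigoplus_w\rK^w_\iota$ and an $L$-series argument at each point).

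The genuine gap is in the ``main step.'' You assert that Chebotarev density --- $\C$-linear independence of trace functions of non-isomorphic simple lisse sheaves over a \emph{single} $\ell$ --- together with compatibility of the virtual sums $\sum_i n_i\,\iota\tr(\Fr_x,\cF_i)=\sum_j n'_j\,\iota'\tr(\Fr_x,\cF'_j)$ produces a bijection $\cF_i\leftrightarrow\cF'_{\pi(i)}$ with $n_i=n'_{\pi(i)}$ and matching trace functions. This does not follow. Linear independence on each side separately says nothing about the \emph{union} $\{\iota\tr\cF_i\}\cup\{\iota'\tr\cF'_j\}$: without further input, one cannot exclude that some $\iota\tr\cF_{i_0}$ equals a nontrivial $\Z$-linear combination $\sum_j b_j\,\iota'\tr\cF'_j$ rather than a single $\iota'\tr\cF'_{j_0}$. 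To get the required dichotomy ``equal or linearly independent,'' one needs to know that $\iota\tr\cF_{i_0}$ is realized by a simple lisse $\overline{\Q}_{\ell'}$-sheaf, i.e.\ the existence of companions. This is precisely what the paper invokes: Drinfeld's theorem on Deligne's conjecture \cite{Dr} and its extension to stacks \cite{Zhcomp}. With companions in hand, one writes $A'=\sum_\cF n_\cF[\cF']$, and Chebotarev/Lemma~\ref{l.inj} (linear independence within a single $\ell'$) then forces each $\cF'$ to be an honest simple constituent of $A'$. Your plan omits the companion theorem entirely, and the Frobenius--Schur indicator argument via Proposition~\ref{p.square} (which is correct, and is the same final step as in the paper) cannot even get started without the bijection between simple constituents. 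In short: the ``principal technical obstacle'' you identify is not Chebotarev but the companion theorem, which is a much deeper input.
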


\begin{proof}
Let $(X_\alpha)_{\alpha\in I}$ be a stratification of $X$. Then
$A=\sum_{\alpha\in I} j_{\alpha!}j_\alpha^* A$ and $A'=\sum_{\alpha\in I}
j_{\alpha!}j_\alpha^* A'$, where $j_\alpha\colon X_\alpha\to X$ is the
immersion. Thus, by Theorem \ref{t.sixop}, up to replacing $X$ by a stratum,
we may assume that $X$ is regular and $A$ belongs to the subgroup generated
by lisse $\Qlb$-sheaves, that is, $A=\sum_\cF n_\cF [\cF]$, where $\cF$ runs
over isomorphism classes of simple lisse $\Qlb$-sheaves. For each $\cF$
appearing in the decomposition, let $\cF'$ be the companion of $\cF$
\cite{Dr} (\cite{Zhcomp} for the case of stacks), namely the simple lisse
Weil $\overline{\Q}_{\ell'}$-sheaf such that
\[\iota\tr(\Fr_x,\cF_{\bar x})=\iota' \tr(\Fr_x,\cF'_{\bar x})
\]
for all $x$ and $\bar x$ as above. Since $A'=\sum_\cF n_\cF [\cF']$, each
$\cF'$ is an honest $\overline{\Q}_{\ell'}$-sheaf. By Corollary \ref{c.R},
we have $(\Dbar\cF)'=\Dbar(\cF')$. Therefore, we may assume that $A=[\cF]$
and $A'=[\cF']$. In this case, the assertion follows from the symmetry
criterion in terms of squares of Frobenius (Proposition \ref{p.square}).
\end{proof}

\subsection{Proof of main result}\label{ss.G3}

The situation of Theorem \ref{t.sixop} is quite different from that of
Gabber's theorem on independence of $\ell$ \cite[Theorem~2]{Gabber}. In
Gabber's theorem, the preservation by $-\otimes -$ and $f^*$ is trivial and
the preservation by $f_!$ follows from the Grothendieck trace formula. The
key point of Gabber's theorem is thus the preservation by $D_X$. The
preservation by middle extensions \cite[Theorem~3]{Gabber} follows from the
preservation by the six operations. In Theorem \ref{t.sixop}, the stability
under each of the six operations is nontrivial, but the preservation by
$D_X$ and middle extensions is easy. To prove Theorem \ref{t.sixop}, we will
first deduce that $f_!$ preserves $\rK_{\iota,\sigma}$ and $\rK_{\iota,\dl}$
in an important special case from the preservation by middle extensions.

\begin{prop}\label{p.NCD}
Let $X$ be a regular Deligne-Mumford stack of finite presentation over
$\F_q$ and let $D=\sum_{\alpha\in I}D_\alpha$ be a strict normal crossing
divisor, with $D_\alpha$ regular. Assume that there exists a finite \'etale
morphism $f\colon Y\to X$ such that $f^{-1}(D_\alpha)$ is defined globally
by $t_\alpha\in \Gamma(Y,\mathcal{O}_Y)$ for all $\alpha\in I$. Let $\cF$ be
a lisse $\Qlb$-sheaf on $U=X-D$, tamely ramified along $D$. Assume that
$[\cF]\in \rK^\Z_{\iota}(U,\Qlb)$. Then
$\Dbar_{\iota,X}[j_!\cF]=j_!\Dbar_{\iota,U}[\cF]$ and, if $[\cF]\in
\rK_{\iota,\sigma}(U,\Qlb)$, $[j_!\cF]$ belongs to
$\rK_{\iota,\sigma}(X,\Qlb)$, where $j\colon U\to X$ is the open immersion.
\end{prop}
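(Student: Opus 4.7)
\emph{Plan.} The plan is to use the finite \'etale cover $f$ to construct a proper morphism $\pi\colon \tilde X\to X$ of regular Deligne--Mumford stacks along which $\cF$ extends to a lisse sheaf, to verify both claims on $\tilde X$ (where they become transparent), and to push the conclusion down via $\pi_*$. By linearity and Remark~\ref{r.Laff}(2) we may assume $\cF$ is a simple lisse $\Qlb$-sheaf, punctually $\iota$-pure of some weight.

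Choose $n\ge 1$ invertible in $\F_q$ such that the tame inertia of $f^*\cF$ along each branch of $f^{-1}(D)$ factors through a cyclic quotient of order dividing $n$. Since the $f^{-1}(D_\alpha)$ are globally principal, the Kummer cover
\[ h\colon Y_n=\Spec_Y\cO_Y[s_\alpha:\alpha\in I]/(s_\alpha^n-t_\alpha)\;\longrightarrow\; Y \]
is a finite flat morphism of regular Deligne--Mumford stacks, \'etale over $f^{-1}(U)$, and $h^*f^*\cF$ acquires trivial local monodromy and hence extends uniquely to a lisse sheaf on $Y_n$. Replacing $f$ by its Galois closure with group $H$ and forming the stack quotient $\tilde X=[Y_n/(H\ltimes\mu_n^I)]$, we obtain a regular DM stack $\tilde X$ and a proper morphism $\pi\colon\tilde X\to X$ that restricts to an isomorphism over $U$; the extension on $Y_n$ descends to a lisse sheaf $\tilde\cF$ on $\tilde X$ extending $\pi^*\cF|_{\pi^{-1}(U)}$. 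By proper base change, $\pi_*\tilde\jmath_!(\pi^*\cF)=j_!\cF$, where $\tilde\jmath\colon\pi^{-1}(U)\to\tilde X$.

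On $\tilde X$ the standard triangle $\tilde\jmath_!\tilde\jmath^*\tilde\cF\to\tilde\cF\to\tilde\imath_*\tilde\imath^*\tilde\cF$ gives the Grothendieck-group identity
\[ [\tilde\jmath_!\pi^*\cF]=[\tilde\cF]-[\tilde\imath_*\tilde\imath^*\tilde\cF]. \]
The first class is that of a lisse sheaf on a regular stack, so by Remark~\ref{r.Diota}(3) (and Remark~\ref{r.K0}(6) translating it into the Grothendieck group) it lies in $\rK_{\iota,\sigma}(\tilde X,\Qlb)$ and behaves compatibly with $\Dbar_\iota$. For the second, $\pi^{-1}(D)=\bigcup_\alpha \tilde D_\alpha$ remains an SNC divisor with regular components, and Mayer--Vietoris (cf.\ Lemma~\ref{l.coverK}) yields
\[ [\tilde\imath_*\tilde\imath^*\tilde\cF]=\sum_{\emptyset\ne J\subseteq I}(-1)^{|J|+1}[k_{J*}k_J^*\tilde\cF], \]
with $k_J\colon\tilde D_J=\bigcap_{\alpha\in J}\tilde D_\alpha\to\tilde X$ the closed immersion. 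Each $k_J^*\tilde\cF$ is a lisse sheaf on the regular stack $\tilde D_J$, so its class lies in $\rK_{\iota,\sigma}(\tilde D_J,\Qlb)$ (the codimension-$|J|$ shift absorbing the sign), and closed pushforward preserves $\rK_{\iota,\sigma}$ and commutes with $\Dbar_\iota$ by Remark~\ref{r.K}(5). Both assertions of the proposition thus hold on $\tilde X$.

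It remains to apply $\pi_*$: for proper $\pi$, $\pi_*$ commutes with Verdier duality and hence with $\Dbar_\iota$, and it preserves $\rK_{\iota,\sigma}$ by Remark~\ref{r.K}(6), provided either that $\pi$ is projective or that $X$ has finite inertia. The main obstacle is precisely this stack-theoretic descent step: constructing $\tilde X=[Y_n/(H\ltimes\mu_n^I)]$ as a regular DM stack with a sufficiently nice proper morphism to $X$, verifying that $\tilde\cF$ really descends, and confirming applicability of the pushforward preservation results---together with the parallel bookkeeping of $\iota$-weights and $\sigma$-signs when one restricts the lisse sheaf $\tilde\cF$ to each codimension-$|J|$ stratum $\tilde D_J$.
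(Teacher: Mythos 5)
Your construction fails at its very first step: a finite Kummer cover cannot trivialize tame monodromy in general. Tameness only means that the local monodromy of $\cF$ along each branch of $D$ factors through the pro-cyclic tame quotient; its image is quasi-unipotent but typically \emph{infinite} --- for instance, for $\cF=\rR^1 g_*\Qlb$ of an elliptic fibration with nonconstant $j$-invariant the local monodromy at a boundary point is unipotent of infinite order. So there is no integer $n$ for which the inertia ``factors through a cyclic quotient of order dividing $n$'', the pullback $h^*f^*\cF$ does not extend to a lisse sheaf on $Y_n$, the sheaf $\tilde\cF$ on your root stack $\tilde X$ does not exist, and the Mayer--Vietoris decomposition and pushforward steps built on it collapse. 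The case your argument does cover (finite local monodromy) is exactly the case in which no interesting weight phenomena occur; the whole content of Proposition \ref{p.NCD} lies in the unipotent part. (A secondary point: your last step invokes Remark \ref{r.K}(6), which requires $\pi$ projective or the target to have finite inertia, neither of which is assumed here; but this is minor compared with the main gap.)

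The paper's proof keeps the unipotent monodromy and controls it by weight-monodromy theory rather than removing it by a cover. Using the hypothesis on $f$ and the $t_\alpha$ via the construction of Weil II, 1.7.9, it computes $j_J^*\rR j_*\cF$ on each open stratum $D_J^*$ from the invariants and coinvariants of the logarithm $N$ of the local monodromy; these carry the monodromy filtration, the primitive parts $P_i(V,N)$ are $\iota$-pure of weights $w+i$ (Weil II, Corollaire 1.8.7), and --- this is the crucial sign computation, Proposition \ref{p.Pi} of the appendix --- $P_i(V,N)$ is $(-1)^{w+i}\sigma$-self-dual. That is why the genuinely mixed boundary contributions still land in $\rK_{\iota,\sigma}$. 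This is then combined with an induction in the style of Gabber's independence-of-$\ell$ argument for middle extensions, expressing $[j_!\cF]$ through $[j_{!*}\cF]$ and the classes $[j_J^*j_{!*}\cF]$ on strata, and finally an induction on $\#I$. None of this machinery appears in your proposal, and it cannot be bypassed: without the monodromy filtration and the orthogonal/symplectic alternation of its primitive parts, there is no way to account for the weights and symmetry types of the boundary terms of $j_!\cF$.
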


[Added after publication: the assumption on the existence of $Y$ is
superfluous. The proof has been corrected accordingly.]

\begin{proof}
We will prove the case of $\rK_{\iota,\sigma}$. The case of
$\rK_{\iota,\dl}$ is similar.

We apply the construction of \cite[1.7.9, 1.7.10]{WeilII} to our setting as
follows. For $J\subseteq I$, let $U_J=X-\bigcup_{\beta \in I-J} D_\beta$ and
let $D_J^*=\bigcap_{\beta\in J}D_\beta\cap U_J$. Let $p_\beta\colon
T_\beta\to D_{\{\beta\}}^*$ be the normal bundle of $D_{\{\beta\}}^*$ in
$U_\beta=U_{\{\beta\}}$. If $D_\beta$ is defined by $t_\beta=0$, then for
each locally constant constructible sheaf of sets $\cG$ on $U$, tamely
ramified along $D_{\{\beta\}}^*$, there exists an integer $n$ invertible in
$\F_q$ such that $\cG$ extends to $\cG'$ on the cover
$U_\beta[t_\beta^{1/n}]$ of $U_\beta$, and we let $\cG[D_{\{\beta\}}^*]$
denote the restriction of $\cG'$ to $D_{\{\beta\}}^*$, which is locally
constant constructible and equipped with an action of $\mu_n$. In general we
have a locally constant constructible sheaf $\cG[D_{\{\beta\}}^*]'$ on
$T_\beta$, obtained by applying the above construction to the deformation to
the normal bundle and the pullback of $\cG$. Extending this construction to
$\Qlb$-sheaves by taking limits, we obtain a lisse $\Qlb$-sheaf
$\cF[D^*_{\{\beta\}}]'$ on $T_\beta$ endowed with an action of
$G_{\{\beta\}}=\hat\Z_L(1)$. Here $L$ denotes the set of primes invertible
in $\F_q$.

Let us first show that for all $J\subseteq I$, $[j_J^*Rj_*\cF]\in
\rK_{\iota,\sigma}(D_J^*,\Qlb)$, where $j_J\colon D_J^*\to X$ is the
immersion. We proceed by induction on $\# J$. The assertion is trivial for
$J$ empty, as $j_\emptyset=j$. For $J$ nonempty, choose $\beta\in J$.
Consider the diagram with Cartesian square
\[\xymatrix{&& D_J^*\ar[d]^{j'_{\beta,J'}}\\
& D_{\{\beta\}}^*\ar[d]_{i'_\beta}\ar[r]^{j'_{\beta}}\ar[rd]^{j_{\{\beta\}}} & D_\beta\ar[d]^{i_\beta}\\
U\ar[r]^{j^\beta} & U_\beta \ar[r]_{j_\beta} & X,}
\]
where $U_\beta=X-\bigcup_{\alpha\in I-\{\beta\}} D_\alpha$,
$D_{\{\beta\}}=D_\beta\cap U_\beta$, and $J'=J-\{\beta\}$. By \cite[Lemme
3.7]{Zint} (or by direct computation using \cite[1.7.9]{WeilII}), the base
change morphism
\[i_\beta^*\rR j_* \cF\to \rR j'_{\beta*} i_\beta'^* \rR j^\beta_*\cF\]
is an isomorphism, so that
\[j_J^*\rR j_*\cF\simeq (j'_{\beta,J'})^*i_\beta^*\rR j_* \cF\simeq (j'_{\beta,J'})^*Rj'_{\beta*} i'^*_\beta\rR j^\beta_* \cF.\]
Since $\#J'<\# J$, by induction hypothesis applied to $j'_{\beta}$ and $J'$,
it suffices to show that the class of $i_\beta'^* \rR j^\beta_*\cF \simeq
j_{\{\beta\}}^* \rR j_* \cF$ is in
$\rK_{\iota,\sigma}(D_{\{\beta\}}^*,\Qlb)$. For this, we may assume $\cF$
$\iota$-pure of weight $w\in \Z$. Let $H_\beta<=\hat \Z_L(1)=G_{\{\beta\}}$
be an open subgroup whose action on $V=\cF[D^*_{\{\beta\}}]'$ is unipotent.
Let $N\colon V(1)\to V$ be the logarithm of this action and let $M$ be the
monodromy filtration on $V$. We have $j_{\{\beta\}}^* \rR j_*\cF\in
\rD^{[0,1]}$,
\begin{gather*}
p_{\beta}^*j_{\{\beta\}}^* j_* \cF\simeq (\Ker(N)(-1))^{G_{\{\beta\}}/H_\beta},
\qquad  p_{\beta}^*j_{\{\beta\}}^* \rR^1 j_*\cF\simeq (\Coker(N))^{G_{\{\beta\}}/H_\beta}(-1),\\
\gr^M_i(\Ker(N)(-1))\simeq \begin{cases}P_i(V,N)& i\le 0,\\ 0& i> 0,\end{cases}\qquad
\gr^M_i(\Coker(N))\simeq \begin{cases}P_{-i}(V,N)(-i)& i\ge 0,\\ 0& i< 0.\end{cases}
\end{gather*}
By \cite[Corollaire 1.8.7, Remarque 1.8.8]{WeilII}, $P_i(V,N)$ is pure of
weight $w+i$ for $i\le 0$. Moreover, $P_i(V,N)$ is
$(-1)^{w+i}\sigma$-self-dual by Proposition \ref{p.Pi}. It follows that
$[j_{\{\beta\}}^* j_* \cF], [j_{\{\beta\}}^* \rR^1 j_*\cF]\in
\rK_{\iota,\sigma}$.

Next we show that, if $\cF$ is $\iota$-pure of weight $w\in \Z$, then for
all $n\ge 0$ and $J\subseteq I$, $[j_J^*\rR j_{n*}j^n_{!*} \cF]\in
\rK_{\iota,\sigma}(D_J^*,\Qlb)$. Here $U\xrightarrow{j^n} U_n
\xrightarrow{j_n} X$ are immersions, $U_n=X-\bigcup_{K\subseteq I,\ \#K\ge
n} D_K^*$, $j^n_{!*}\cF\coloneqq (j^n_{!*}(\cF[d]))[-d]$, $d=\dim(X)$ (a
function on $\pi_0(X)$). The proof is similar to Gabber's proof of
independence of $\ell$ for middle extensions \cite[Theorem 3]{Gabber}. We
proceed by induction on $n$. For $n=0$, $U_0=U$ and the assertion is shown
in the preceding paragraph. For $n\ge 1$, consider the immersions
$U_n-U_{n-1}\xrightarrow{i_n} U_n \xleftarrow{j_{n-1}^n} U_{n-1}$ and the
distinguished triangle
\[i_{n*}\rR i_n^! j^n_{!*} \cF \to j^n_{!*} \cF \to \rR (j_{n-1}^n )_* j^{n-1}_{!*} \cF\to.\]
The second and third arrows of the triangle induce isomorphisms $j^n_{!*}
\cF \simto \tau^{\le n-1}\rR (j_{n-1}^n )_* j^{n-1}_{!*} \cF$ and $\tau^{\ge
n}\rR (j_{n-1}^n )_* j^{n-1}_{!*} \cF\simto i_{n*}\rR i_n^! j^n_{!*}
\cF[1]$. By induction hypothesis, the left hand side of
\[[i_n^*\rR (j_{n-1}^n )_*
j^{n-1}_{!*} \cF]=[i_n^*j^n_{!*} \cF]-[\rR i_n^!j^n_{!*}\cF]
\]
belongs to $\rK_{\iota,\sigma}$. Moreover, the first term of the right hand
side belongs to $\bigoplus_{w'\le w+n-1} \rK^{w'}_\iota$ and the second term
belongs to $\bigoplus_{w'\ge w+n+1}\rK^{w'}_\iota$. It follows that both
terms belong to $\rK_{\iota,\sigma}$. Thus, by the preceding paragraph,
$[j_J^* R(j_n i_n)_*\rR i_n^! j^n_{!*} \cF]\in \rK_{\iota,\sigma}$.
Moreover, by induction hypothesis, $[j_J^* \rR (j_{n-1})_* j^{n-1}_{!*}
\cF]\in \rK_{\iota,\sigma}$. Therefore, $[j_J^* \rR j_{n*}j^n_{!*}
\cF]=[j_J^* R(j_n i_n)_*\rR i_n^! j^n_{!*} \cF]+[j_J^* \rR (j_{n-1})_*
j^{n-1}_{!*} \cF]\in \rK_{\iota,\sigma}$.

Taking $n=1+\# I$ so that $U_n=X$ in the preceding paragraph, we get that
for $\cF$ $\iota$-pure and $J\subseteq I$, $[j_J^*j_{!*}\cF]\in
\rK_{\iota,\sigma}(D_J^*,\Qlb)$. Here $j_{!*}\cF\coloneqq
(j_{!*}(\cF[d]))[-d]$.

Finally, we show the proposition by induction on $\# I$. The assertion is
trivial for $I$ empty. For $I$ nonempty, we may assume $\cF$ $\iota$-pure.
We have
\[[j_{!*}\cF]=[j_!\cF]+\sum_{\emptyset \neq J\subseteq I} [j_{J!}j_J^*j_{!*}\cF].\]
By the preceding paragraph and induction hypothesis, for $\emptyset \neq
J\subseteq I$, we have $[j_{J!}j_J^*j_{!*}\cF] \in
\rK_{\iota,\sigma}(X,\Qlb)$. Moreover, $[j_{!*}\cF]\in
\rK_{\iota,\sigma}(X,\Qlb)$. It follows that $[j_!\cF]\in
\rK_{\iota,\sigma}(X,\Qlb)$.
\end{proof}

\begin{lemma}\label{l.Chow}
Let $X$ be a Noetherian Deligne-Mumford stack of separated diagonal. Then
there exist a finite group $G$ and a $G$-equivariant dominant open immersion
$V\to W$ of schemes, such that the induced morphism $[V/G]\to [W/G]$ fits
into a commutative diagram
\[\xymatrix{[V/G]\ar[r]\ar[rd]_j & [W/G]\ar[d]^f\\
&X,}
\]
where $j$ is an open immersion and $f$ is quasi-finite, proper, and
surjective.
\end{lemma}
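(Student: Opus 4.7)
The plan is to construct the group $G$ and the schemes $V, W$ in three stages: obtain a finite surjective cover of $X$ by a scheme via \cite[Th\'eor\`eme 16.6]{LMB}, refine it to a Galois \'etale cover of group $G$ over a dense open, and finally extend globally by normalization inside a common function field.

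First, I would reduce to the case where $X$ is integral by treating the irreducible components separately and combining their component groups into a single finite group $G$ (for instance, a product with each factor acting trivially on the other components). Then by \cite[Th\'eor\`eme 16.6]{LMB} there is a finite surjective morphism $p\colon Y\to X$ from an integral normal scheme $Y$. Since $X$ is Deligne-Mumford, there is a dense open substack $U\subseteq X$ over which $p|_U\colon Y|_U\to U$ is finite \'etale. I would then take the Galois closure of $p|_U$, obtaining a finite \'etale Galois cover $q\colon V\to U$ of group $G$; since $V\to Y|_U$ is finite \'etale and $Y|_U$ is a scheme, $V$ is a scheme, and $U\simeq [V/G]$.

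For the global extension, I would let $W$ be the normalization of $X$ in the function field $K(V)$. This produces a finite surjective morphism $W\to X$ carrying a canonical $G$-action over $X$ (from the Galois action of $G$ on $K(V)$ fixing $K(X)=K(U)$), together with a $G$-equivariant dense open immersion $V\hookrightarrow W$. With this in place, $f\colon [W/G]\to X$ is proper and quasi-finite because $W\to X$ is finite, and surjective because $W\to X$ is surjective. The morphism $j\colon [V/G]\to X$ factors as $[V/G]\simeq U\hookrightarrow X$ and is therefore an open immersion, and the diagram commutes by construction.

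The hard part will be verifying the representability of $W$ by a scheme and pinning down its $G$-action. The Galois action on $K(V)$ fixes $K(U)$ but not the intermediate subring $\mathcal{O}_Y$; only the subgroup $H=\Gal(K(V)/K(Y))\le G$ stabilizes $\mathcal{O}_Y$, so the $G$-action on $W$ is not inherited from a $G$-action on the auxiliary scheme $Y$. One must instead construct $W$ directly at the level of the stack $X$, exploiting the fact that $X$ is generically a scheme over $U$ (because $G$ acts freely on $V$) in order to ensure that the normalization of $X$ in $K(V)$ carries no residual stack structure and is representable by a scheme.
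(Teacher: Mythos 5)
Your approach is genuinely different from the paper's. The paper invokes \cite[16.6.3]{LMB} to obtain directly a finite group $G$, a scheme $V$ with a $G$-action, a dense open immersion $j\colon [V/G]\to X$, and a Cartesian square identifying $V$ with $Z\times_X [V/G]$ for a scheme $Z$ finite surjective over $X$; it then takes $W$ to be the schematic closure of $V$ inside the fiber power $(Z/X)^G$ (a scheme because the diagonal of $X$ is representable by schemes), with $G$ acting by permutation of factors. This is a purely ``topological'' construction that works without any reducedness, irreducibility, or normality assumptions on $X$.

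The gap you flag at the end --- representability of $W$ by a scheme --- is real, and your proposal does not resolve it; it only expresses the hope that it can be done ``directly at the level of the stack.'' However, the gap has a clean fix that you come close to but do not articulate: by transitivity of integral closure, the normalization of $X$ in $K(V)$ coincides with the normalization of $\bar Y$ in $K(V)$, where $\bar Y$ denotes the normalization of $X$ in $K(Y)$. Since $\bar Y$ is nothing but the normalization of the \emph{scheme} $Y$ (the integral closure of $\cO_Y$ in $K(Y)$ being the same as that of $\cO_X$), it is a scheme, and the normalization of a normal Noetherian scheme in a finite extension of its function field is again a scheme. This settles representability and also gives the $G$-action for free, from the Galois action on $K(V)$. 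Without this observation the proof is incomplete. Beyond this, several other steps would need attention before the argument is watertight: the reduction to the integral case is glossed over (irreducible components may intersect, $X$ may be non-reduced, and the lemma has no such hypotheses); you need to know a priori that $p$ is generically \'etale, which is a nontrivial input (and in positive characteristic a general finite surjective scheme cover need not be generically separable --- one must use the specific cover produced by LMB); and you need $U$ to lie in the normal locus of $X$ so that $V$ really is $W|_U$. The paper's schematic-closure construction sidesteps all of these issues at once, which is what makes it preferable here.
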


\begin{proof}
By \cite[16.6.3]{LMB}, there exists a finite group $G$ acting on a scheme
$V$ fitting into a Cartesian square
\[\xymatrix{V\ar[r]\ar[d] & Z\ar[d]^g\\ [V/G]\ar[r]^j & X,}\]
where $Z$ is a scheme, $g$ is finite surjective, and $j$ is a dense open
immersion. It then suffices to take $W$ to be the schematic closure of $V$
in $(Z/X)^G$ (fiber product over $X$ of copies of $Z$ indexed by $G$)
endowed with the action of $G$ by permutation of factors.
\end{proof}

\begin{proof}[Proof of Theorem \ref{t.sixop}.]
We will prove the preservation of $\rK_{\iota,\sigma}$. The commutation with
$\Dbar_{\iota}$ is similar.

(1) Let us first show the case of $f_!$ for an open immersion $f$. Since $Y$
has finite inertia, there exists a Zariski open covering $(Y_\alpha)$ of $Y$
with $Y_\alpha$ separated. By the Zariski local nature of
$\rK_{\iota,\sigma}$ (Remark \ref{r.ZarK}), we may assume $Y$ separated. We
may assume $f$ dominant.

We proceed by induction on $d=\dim X$. For $d<0$ (i.e.\ $X=\emptyset)$, the
assertion is trivial. For $d\ge 0$, let $A\in \rK_{\iota,\sigma}(X,\Qlb)$.
Note that if $A'\in \rK(X,\Qlb)$ is such that the support of $A-A'$ has
dimension $< d$, then, by induction hypothesis, to show $f_!A\in
\rK_{\iota,\sigma}(Y,\Qlb)$, it suffices to show $A'\in
\rK_{\iota,\sigma}(X,\Qlb)$ and $f_!A'\in \rK_{\iota,\sigma}(Y,\Qlb)$. This
applies in particular to $A'=j_!j^*A$, where $j\colon U\to X$ is a dominant
open immersion. In this case $f_!A'=(fj)_!j^*A$. This allows us to shrink
$X$.

Applying Lemma \ref{l.Chow} to $Y$, we obtain a finite group $G$, a
$G$-equivariant dominant open immersion of schemes $V\to W$, and a
commutative diagram
\[\xymatrix{[V/G]\ar[r]\ar[rd] & [W/G]\ar[d]^p\\
&Y,}
\]
where $p$ is proper quasi-finite surjective, and the oblique arrow is an
open immersion. Let $j\colon U\to X$, where $U=X\cap [V/G]=[V'/G]$. By the
remark above, it suffices to show that $j_!$ and $(fj)_!$ preserve
$\rK_{\iota,\sigma}$. In the case of $j_!$, up to replacing $Y$ by $X$ and
$p$ by its restriction to $X$, we are reduced to the case of $(fj)_!$. Since
$(fj)_!=f'_!p_*$, where $f'\colon U\to [W/G]$, we are reduced to the case of
$f'_!$. Thus, changing notation, we are reduced to the case of $f_!$, where
$f\colon X=[V/G]\to [W/G]=Y$ is given by a $G$-equivariant open immersion of
schemes $V\to W$.

The reduction of this case to the case where $V$ is the complement of a
$G$-strict normal crossing divisor of $W$ is similar to parts of
\cite[Section~3]{Zind}. We may assume $V$ reduced. Shrinking $V$, we may
assume $V$ normal and $A=[\cF]$, where $\cF=\cF_\cO\otimes_\cO \Qlb$, where
$\cF_\cO$ is a lisse $\cO$-sheaf, $\cO$ is the ring of integers of a finite
extension of $\mathbb{Q}_\ell$. Applying \cite[Lemme 3.5]{Zind}, we obtain a
$G$-stable dense open subscheme $U$ of $V$ and an equivariant morphism
$(u,\alpha)\colon (U',G')\to (U,G)$, where $\alpha$ is surjective and $u$ is
a Galois \'etale cover of group $\Ker(\alpha)$ trivializing $\cF_\cO/\fm
\cF_\cO$, where $\fm$ is the maximal ideal of $\cO$. By Nagata
compactification, this can be completed into a commutative diagram
\[\xymatrix{(U',G')\ar[d]_{(u,\alpha)}\ar[r]^{(f',\id)} &
(W',G')\ar[d]^{(w,\alpha)}\\
(U,G)\ar[r]& (W,G),
}
\]
where $w$ is proper and $f'$ is an open immersion. Since $[u/\alpha]$ is an
isomorphism and Remark \ref{r.K} (6) applies to $[w/\alpha]_*$, shrinking
$X$ and changing notation, we are reduced to the case where $\cF_\cO/\fm
\cF_\cO$ is constant on every connected component. We may assume $W$
reduced. Let $k'$ be a finite extension of $k$ such that the irreducible
components of $W\otimes_k k'$ are geometrically irreducible. Up to replacing
$W$ by $W\otimes_k k'$ and $G$ by $G\times \Gal(k'/k)$, we may assume that
the irreducible components of $W$ are geometrically irreducible and there
exists a $G$-equivariant morphism $W\to \Spec(k')$. Shrinking $V$, we may
assume $V$ regular. Moreover, we may assume that $G$ acts transitively on
$\pi_0(V)$. Let $V_0$ be an irreducible component of $V$ and let $G_0$ be
the decomposition group. Then $f$ can be decomposed as $X\simeq [V_0/G_0]\to
[W/G_0]\xrightarrow{g} [W/G]=Y$, where $g$ is finite. Changing notation, we
may assume $V$ irreducible. Up to replacing $W$ by the closure of $V$, we
may assume $W$ irreducible, thus geometrically irreducible.

Applying Gabber's refinement \cite[Lemme 3.8]{Zind} (see also
\cite[4.4]{Vidal}) of de Jong's alterations \cite{dJ2}, we obtain a diagram
with Cartesian square
\[\xymatrix{(U',G')\ar[r] & (V',G')\ar[d]_{(v,\alpha)}\ar[r] & (W',G')\ar[d]^{(w,\alpha)}\\
&(V,G)\ar[r] & (W,G),}
\]
where $(w,\alpha)$ is a Galois alteration, $W'$ is regular, quasi-projective
over $k$, and $U'$ is the complement of a $G'$-strict normal crossing
divisor of $W'$. As $\cF$ is lisse and $[V/G]$, $[V'/G']$ are regular,
$A'=[v/\alpha]^*A$ belongs to $\rK_{\iota,\sigma}([V'/G'],\Qlb)$, so that
$[v/\alpha]_* A$ belongs to $\rK_{\iota,\sigma}([V/G],\Qlb)$. Moreover, the
support of $A- [v/\alpha]_*A'$ has dimension $<d$. Thus it suffices to show
that $f_![v/\alpha]_*A' =[w/\alpha]_*f'_!A'$ belongs to
$\rK_{\iota,\sigma}(Y,\Qlb)$, where $f'\colon [W'/G']\to [V'/G']$. Let
$j'\colon [U'/G']\to [V'/G']$. It suffices to show that $j'_!j'^*A'$ and
$(f'j')_!j'^*A'$ belong to $\rK_{\iota,\sigma}$. Changing notation, we are
reduced to showing $f_! [\cF]\in \rK_{\iota,\sigma}([W/G],\Qlb)$ for
$f\colon [V/G]\to [W/G]$, where $V$ is the complement of a $G$-strict normal
crossing divisor $D$ of a regular quasi-projective scheme $W$ over $k$ and
$\cF$ is a lisse sheaf on $[V/G]$ tame along $D$ such that $[\cF]\in
\rK_{\iota,\sigma}([V/G],\Qlb)$.

Note that $W$ admits a Zariski open covering by $G$-stable affine schemes.
Thus, by the Zariski local nature of $\rK_{\iota,\sigma}$ (Remark
\ref{r.ZarK}), we may assume $W$ affine. In this case, the assertion is a
special case of Proposition \ref{p.NCD}. This finishes the proof of the case
of $f_!$ for $f$ an open immersion.

(2) Next we establish the general case of $f_!$. Let $(X_\alpha)_{\alpha\in
I}$ be a Zariski open covering of $X$ with $X_\alpha$ separated. For
$J\subseteq I$, let $j_J\colon \bigcap_{\beta\in J}X_\beta \to X$ be the
open immersion. Let $A\in \rK_{\iota,\sigma}(X,\Qlb)$. Then
$A=\sum_{\emptyset \neq J\subseteq I} (-1)^{1+\# J}j_{J!}j_J^*A$ by Lemma
\ref{l.coverK}. Thus we may assume $X$ separated. Applying Nagata
compactification \cite{CLO} to the morphism $\bar X\to \bar Y$ of coarse
spaces, we obtain a diagram with Cartesian squares
\[\xymatrix{X\ar[r]^{f_1} & \bar X\times_{\bar Y} Y\ar[r]^{f_2}\ar[d] & \bar Z\times_{\bar Y} Y \ar[r]^{f_3}\ar[d]
& Y\ar[d]\\
& \bar X\ar[r]^{g_2} & \bar Z\ar[r]^{g_3} & \bar Y, }
\]
where $f_1$ is proper and quasi-finite, $g_2$ is an open immersion, and
$g_3$ is proper. Thus $f_!=f_{3*}f_{2!}f_{1*}$ preserves
$\rK_{\iota,\sigma}$.

The case of $f_*=D_Y f_! D_X$ follows immediately.

(3) Next we establish the case of $f^*$. The argument is similar to the
deduction of the congruence $f^*\equiv f^!$ modulo $\rI(X,\Qlb)$
\cite[Corollary 9.5]{Zheng} from Laumon's theorem mentioned in Remark
\ref{r.Laumon}. Let $B\in \rK_{\iota,\sigma}(Y,\Qlb)$. If $f$ is a closed
immersion, then $B=j_!j^* B+f_*f^* B$, where $j$ is the complementary open
immersion. It follows that $f_*f^*B\in \rK_{\iota,\sigma}(Y,\Qlb)$, so that
$f^*B\in \rK_{\iota,\sigma}$. In the general case, let
$(Y_\alpha)_{\alpha\in I}$ be a  stratification of $Y$ such that each
$Y_\alpha$ is the quotient stack of an affine scheme by a finite group
action. For each $\alpha$, form the Cartesian square
\[\xymatrix{X_\alpha\ar[d]_{f_\alpha} \ar[r]^{j'_\alpha} & X\ar[d]^f\\
Y_\alpha\ar[r]^{j_\alpha} & Y.}
\]
Then $f^*B=\sum_{\alpha\in I}f^*j_{\alpha!}j_\alpha^* B=\sum_{\alpha\in I}
j'_{\alpha!} f_\alpha^*j_\alpha^* B$. Thus we may assume $Y=[Y'/H]$, where
$Y'$ is an affine scheme endowed with an action of a finite group $H$.
Similarly, we may assume $X=[X'/G]$, where $X'$ is an affine scheme endowed
with an action of a finite group $G$. Up to changing $X'$ and $G$, we may
further assume that $f=[f'/\gamma]$, for $(f,\gamma)\colon (X',G)\to
(Y',H)$, by \cite[Proposition 5.1]{Zind}. In this case $f'$ can be
decomposed into $G$-equivariant morphisms $X'\xrightarrow{i}
Z'\xrightarrow{p} Y'$ where $i$ is a closed immersion and $p$ is an affine
space. Thus $f^*\simeq [i/\id]^*[p/\gamma]^*$ preserves
$\rK_{\iota,\sigma}$.

The assertions for the other operations follow immediately: $f^!=D_X f^*
D_Y$, $-\otimes-=\Delta_X^*(-\boxtimes-)$, $\cHom(-,-)=D(-\otimes D-)$.
\end{proof}

\section{Variant: Horizontal complexes}\label{s.6}
In this section, let $k$ be a field finitely generated over its prime field.
This includes notably the case of a number field. Many results in previous
sections over finite fields can be generalized to Annette Huber's horizontal
complexes \cite{Huber}, as extended by Sophie Morel \cite{Morel}, over $k$.
In Subsection \ref{ss.h}, after briefly reviewing horizontal complexes, we
discuss symmetry and decomposition of pure horizontal complexes, and prove
analogues of results of Subsection \ref{ss.pc}. In Subsection \ref{ss.hK},
we discuss symmetry in Grothendieck groups of horizontal complexes, and give
analogues of results of Section \ref{s.G}. This section stems from a
suggestion of Takeshi Saito.

\subsection{Symmetry and decomposition of pure horizontal complexes}\label{ss.h}
Let $X$ be a Deligne-Mumford stack of finite presentation over $k$. Huber
\cite{Huber} (see also Morel \cite{Morel}) defines a triangulated category
$\rD^b_h(X,\Qlb)$ of horizontal complexes. For a finite extension $\Lambda$
of $\Z_\ell$, $\rD^b_h(X,\Lambda)$ is the $2$-colimit of the categories
$\rD^b_c(X_R,\Lambda)$, indexed by triples $(R,X_R,u)$, where $R\subseteq k$
is a subring of finite type over $\Z[1/\ell]$ such that $k=\Frac(R)$, $X_R$
is a Deligne-Mumford stack of finite presentation over $\Spec(R)$, and
$u\colon X\to X_R \otimes_R k$ is an isomorphism. We may restrict to $R$
regular and $X_R$ flat over $\Spec(R)$. We have Grothendieck's six
operations on $\rD^b_h(X,\Lambda)$.

The triangulated category $\rD^b_h(X,\Qlb)$ is equipped with a canonical
$t$-structure and a perverse $t$-structure. We let $\Sh_h(X,\Qlb)$ and
$\Perv_h(X,\Qlb)$ denote the respective hearts. The pullback functors via
$X\to X_R$ induce a conservative functor $\eta^*\colon \rD^b_h(X,\Qlb)\to
\rD^b_c(X,\Qlb)$ $t$-exact for the canonical $t$-structures and the perverse
$t$-structures, and compatible with the six operations. Moreover, $\eta^*$
induces \emph{fully faithful} exact functors $\Sh_h(X,\Qlb)\to \Sh(X,\Qlb)$
and $\Perv_h(X,\Qlb)\to \Perv(X,\Qlb)$ \cite[Propositions 2.3, 2.5]{Morel}.
Every object of $\Perv_h(X,\Qlb)$ has finite length.

\begin{remark}
The functor $\eta^*\colon \Perv_h(X,\Qlb)\to \Perv(X,\Qlb)$ preserves
indecomposable objects. By the description of simple objects, the functor
also preserves simple objects. Thus, via the functor, $\Perv_h(X,\Qlb)$ can
be identified with a full subcategory of $\Perv(X,\Qlb)$ stable under
subquotients. The subcategory is \emph{not} stable under extensions in
general.
\end{remark}

By restricting to closed points of $\Spec(R[1/\ell])$, we get a theory of
weights for horizontal complexes. Weight filtration does not always exist,
but this will not be a problem for us. The analogue of Remark \ref{r.purew}
for the preservation of pure complexes holds. Moreover, the analogues of
\cite[Th\'eor\`emes 5.3.8, 5.4.5]{BBD} hold for the decomposition of the
pullbacks of pure horizontal complexes to $\bar k$. In other words, the
functor
\[\bar\eta^*\colon \rD^b_h(X,\Qlb)\xrightarrow{\eta^*} \rD^b_c(X,\Qlb)\to \rD^b_c(X_{\bar k},\Qlb)\]
obtained by composing $\eta^*$ with the pullback functor carries pure
complexes to admissible semisimple complexes (Definition \ref{d.ass}).
Indeed, both theorems follow from \cite[Proposition 5.1.15 (iii)]{BBD},
which has the following analogue, despite the fact that the analogue of
\cite[Proposition 5.1.15 (ii)]{BBD} does not hold in general.

\begin{prop}
Let $K,L\in \rD^b_h(X,\Qlb)$, with $K$ mixed of weights $\le w$ and $L$
mixed of weights $\ge w$. Then $\Ext^i(\bar\eta^*K,\bar\eta^*L)^{\Gal(\bar
k/k)}=0$ for $i>0$. In particular, the map $\Ext^i(\eta^*K,\eta^*L)\to
\Ext^i(\bar\eta^* K, \bar\eta^* L)$ is zero.
\end{prop}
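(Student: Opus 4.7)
The plan is to spread out the horizontal data to a model over a ring of finite type over $\Z[1/\ell]$ and reduce the statement to the finite-field case of BBD 5.1.15 (iii) applied at a closed point of the base.

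First I dispatch the ``in particular'' clause. The restriction map
\[
\Ext^i(\eta^*K,\eta^*L) \to \Ext^i(\bar\eta^*K,\bar\eta^*L)
\]
factors through the $\Gal(\bar k/k)$-invariants of the target, since $\bar\eta\to\eta$ is Galois-equivariant and pullback is functorial. Hence vanishing of these invariants forces the map to be zero, so only the first statement needs proof.

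For the main assertion, I would choose a regular ring $R\subseteq k$ of finite type over $\Z[1/\ell]$ with $\Frac R=k$, a flat model $X_R\to\Spec R$, and representatives $K_R,L_R\in\rD^b_c(X_R,\Qlb)$ of $K$ and $L$. After shrinking $\Spec R$, the definition of horizontal weights ensures that at every closed point $s$ of $\Spec R$ the restrictions $K_s,L_s$ on $X_s=X_R\otimes_R\kappa(s)$ are mixed of weights $\le w$ and $\ge w$ respectively. Setting $g\colon X_R\to\Spec R$ and $H\coloneqq\rR\cHom(K_R,L_R)$, Deligne's generic base change theorem lets me shrink $\Spec R$ further so that each $\cG^i\coloneqq \rR^i g_*H$ is lisse on $\Spec R$ and its formation commutes with every base change; in particular, for every geometric point $\bar s$ of $\Spec R$,
\[
\cG^i_{\bar s}\simto \Ext^i_{X_{\bar s}}(K_{\bar s},L_{\bar s}).
\]

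Applied to the geometric generic point, this identifies $\Ext^i(\bar\eta^*K,\bar\eta^*L)$ with $\cG^i_{\bar\eta}$ as a $\pi_1(\Spec R,\bar\eta)$-module; the $\Gal(\bar k/k)$-action on the Ext factors through the canonical surjection $\Gal(\bar k/k)\twoheadrightarrow\pi_1(\Spec R,\bar\eta)$ (this is precisely the horizontal nature of $K$ and $L$), so
\[
\Ext^i(\bar\eta^*K,\bar\eta^*L)^{\Gal(\bar k/k)} = (\cG^i_{\bar\eta})^{\pi_1(\Spec R,\bar\eta)}.
\]
I then pick any closed point $s$ of $\Spec R$ with finite residue field $\F_q$ and a geometric point $\bar s$ over it; lisseness of $\cG^i$ says the inertia at $s$ is trivial, so the $\pi_1(\Spec R,\bar\eta)$-invariants inject into the $\Fr_s$-invariants of $\cG^i_{\bar s}\simeq \Ext^i(K_{\bar s},L_{\bar s})$. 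By BBD 5.1.15 (iii) applied over $\F_q$ to $K_s$ and $L_s$, these $\Fr_s$-invariants vanish for $i>0$, and the proof is complete.

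The principal obstacle will be a bookkeeping one: the identification of the $\Gal(\bar k/k)$-action on geometric generic cohomology with the $\pi_1(\Spec R,\bar\eta)$-action on the stalk of $\cG^i$, together with the simultaneous shrinking of $\Spec R$ needed so that lisseness and base-change compatibility hold for all $\cG^i$ at once and so that weight bounds on $K_s$ and $L_s$ are preserved at every closed point. Both are standard within the Huber--Morel formalism but should be spelled out explicitly, and they are the only nonformal inputs beyond the finite-field case BBD 5.1.15 (iii).
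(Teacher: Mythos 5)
Your proof is correct and follows essentially the same argument as the paper: push $\rR\cHom(K,L)$ down, use the weight bound to control the cohomology, and kill invariants by evaluating Frobenius at a closed point of the arithmetic model. The paper simply packages this by forming the horizontal complex $\cE=\rR a_{X*}\rR\cHom(K,L)$ on $\Spec(k)$, noting it has weight $\ge 0$, and citing $\Gamma(\Spec(k),\cH^i\cE)=0$ for $i>0$; unwinding that last vanishing (lisse spread-out over $\Spec R$, $\pi_1$-invariants injecting into $\Fr_s$-invariants, weight $>0$ forcing these to vanish) recovers exactly your explicit reduction to BBD 5.1.15\,(iii) over $\F_q$.
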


\begin{proof}
The second assertion follows from the first one, as the map factors through
$E^i\coloneqq \Ext^i(\bar\eta^*K,\bar\eta^*L)^{\Gal(\bar k/k)}$. For the
first assertion, consider the horizontal complex $\cE=\rR a_{X*}
\rR\cHom(K,L)$ on $\Spec(k)$, which has weight $\ge 0$. Thus $E^i\simeq
\Gamma(\Spec(k),\cH^i\cE)=0$ for $i>0$.
\end{proof}

Since pure horizontal perverse sheaves are geometrically semisimple, Lemma
\ref{l.indec} on the support decomposition applies (cf.\ \cite[Corollaire
5.3.11]{BBD}).

General preservation properties of $\sigma$-self-dual complexes listed in
Remark \ref{p.sd} still hold for $\rD^b_h$. The two-out-of-three property
(Proposition \ref{p.23}) holds for $\sigma$-self-dual horizontal perverse
sheaves. The trichotomy for indecomposable horizontal perverse sheaves
(Proposition \ref{p.schur}) also holds.

We say that a horizontal complex of $\Qlb$-sheaves $A$ is \emph{split} if it
is a direct sum of shifts of horizontal perverse sheaves, or, in other
words, $A\simeq \bigoplus_i(\pH^i A)[-i]$. Definition \ref{d.Diota} can be
repeated as follows.

\begin{definition}[$\rD^w_{h,\sigma}$]
Let $w\in \Z$. We denote by $\rD^w_{h,\sigma}(X,\Qlb)\subseteq\Ob
(\rD^b_h(X,\Qlb))$ (resp.\
$\rD^w_{h,\sd}(X,\Qlb)\subseteq\Ob(\rD^b_c(X,\Qlb))$) the subset consisting
of split pure horizontal complexes $A$ of weight $w$ such that $\pH^i A$ is
$(-1)^{w+i}\sigma$-self-dual (resp.\ self-dual) with respect to $K_X(-w-i)$
for all $i$. We denote by $\rD^w_{h,\dl}(X,\Qlb)\subseteq\Ob\big(
\rD^b_h(X,\Qlb)\times\rD^b_h(X,\Qlb)\big)$ the subset consisting of pairs
$(A,B)$ of split pure horizontal complexes of weight $w$ such that $\pH^i A$
is isomorphic to $(D_X\pH^iB)(-w-i)$ for all $i$.
\end{definition}

The analogue of Remark \ref{r.Diota} holds for the preservation of
$\rD^w_{h,\sigma}$ and $\rD^w_{h,\dl}$. The two-out-of-three property,
analogue of Remark \ref{r.23f}, also holds for $\rD^w_{h,\sigma}$ and
$\rD^w_{h,\dl}$. We have the following analogue of Proposition
\ref{p.boxtimesf}, which holds with the same proof as before, and a similar
result for $\rD^w_{h,\dl}$.

\begin{prop}
Let $m\ge 0$. Let $A$ be a mixed horizontal complex on $X$ such that for all
$n\in \Z$, $\pH^n A$ admits a weight filtration $W$, and that $\gr^W_w\pH^n
A$ is $(-1)^w\sigma$-self-dual with respect to $K_X(-w)$, for all $w\in \Z$.
Then for all $n\in \Z$, $\pH^n(A^{\boxtimes m})$ admits a weight filtration
$W$, and $\gr^W_w\pH^n(A^{\boxtimes m})$ is $(-1)^w\sigma^m$-self-dual with
respect to $K_{[X^m/\Perm_m]}(-mw)$ for all $w\in \Z$. Moreover, the functor
$(-)^{\boxtimes m}$ carries $\rD^w_{h,\sigma}(X,\Qlb)$ to
$\rD^{mw}_{h,\sigma^m}([X^m/\Perm_m],\Qlb)$.
\end{prop}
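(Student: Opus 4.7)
The plan is to mimic the proof of Proposition \ref{p.boxtimesf} step by step in the horizontal setting. Write $A^n \coloneqq \pH^n A$. Using the Künneth formula for perverse cohomology in $\rD^b_h$, obtain
\[
\pH^n(A^{\boxtimes m}) \simeq \bigoplus_{n_1+\cdots+n_m=n} A^{n_1}\boxtimes\cdots\boxtimes A^{n_m},
\]
with the $\Perm_m$-action twisted by the Koszul sign $\prod_{i<j,\ \tau(i)>\tau(j)}(-1)^{n_i n_j}$ multiplying the canonical permutation isomorphism. Since exterior tensor product of pure horizontal complexes is pure of the summed weights (the horizontal analogue of Remark \ref{r.purew}(6)), one obtains a natural candidate filtration
\[
W_w\pH^n(A^{\boxtimes m}) \coloneqq \sum_{\substack{n_1+\cdots+n_m=n\\ w_1+\cdots+w_m=w}} W_{w_1}A^{n_1}\boxtimes\cdots\boxtimes W_{w_m}A^{n_m}
\]
whose associated graded
\[
\gr^W_w\pH^n(A^{\boxtimes m}) \simeq \bigoplus_{\substack{n_1+\cdots+n_m=n\\ w_1+\cdots+w_m=w}} \gr^W_{w_1}A^{n_1}\boxtimes\cdots\boxtimes \gr^W_{w_m}A^{n_m}
\]
is pure of weight $w$. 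By uniqueness, $W$ is indeed the weight filtration on $\pH^n(A^{\boxtimes m})$, establishing its existence.

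Next, for each tuple $(n_i,w_i)$ use the given $(-1)^{w_i}\sigma$-symmetric isomorphisms $\gr^W_{w_i}A^{n_i}\simto (D_X \gr^W_{w_i}A^{n_i})(-w_i)$ and take their exterior tensor product. Since $-\boxtimes-$ is a symmetric monoidal functor, Construction \ref{c.image} yields a symmetric natural isomorphism
\[
D_X(-)\boxtimes\cdots\boxtimes D_X(-) \simto D_{X^m}\bigl((-)\boxtimes\cdots\boxtimes(-)\bigr),
\]
and the sign of the resulting pairing on the tensor product is $\prod_i (-1)^{w_i}\sigma = (-1)^w \sigma^m$. Summing over tuples, we obtain a $(-1)^w\sigma^m$-symmetric isomorphism
\[
\gr^W_w\pH^n(A^{\boxtimes m}) \simto \bigl(D_{X^m}\gr^W_w\pH^n(A^{\boxtimes m})\bigr)(-w).
\]
The standard check (using the Koszul signs above) shows this isomorphism is $\Perm_m$-equivariant, hence descends to $[X^m/\Perm_m]$ as a $(-1)^w\sigma^m$-symmetric perfect pairing valued in $K_{[X^m/\Perm_m]}(-w)$.

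For the second assertion, if $A\in \rD^w_{h,\sigma}(X,\Qlb)$ then $A$ is split pure of weight $w$, so $\pH^i A$ is pure of weight $w+i$ and coincides with $\gr^W_{w+i}\pH^i A$. The hypothesis then reads that each $\pH^i A$ is $(-1)^{w+i}\sigma$-self-dual with respect to $K_X(-w-i)$. Applying $(-)^{\boxtimes m}$ and using Künneth again, $\pH^n(A^{\boxtimes m})$ is pure of weight $w+n$ and by the graded computation above is $(-1)^{w+n}\sigma^m$-self-dual with respect to $K_{[X^m/\Perm_m]}(-mw-n)$. Finally, that $A^{\boxtimes m}$ is split follows from the horizontal analogue of the Beilinson–Bernstein–Deligne–Gabber decomposition theorem for pure complexes invoked after $\bar\eta^*$, together with the fact that purity and self-duality of perverse cohomology sheaves are detected after $\bar\eta^*$; hence $A^{\boxtimes m}\in \rD^{mw}_{h,\sigma^m}([X^m/\Perm_m],\Qlb)$.

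The main point requiring care is the first paragraph, namely the construction of the weight filtration on $\pH^n(A^{\boxtimes m})$: in the horizontal setting, weight filtrations need not exist in general, so one must exhibit $W$ explicitly and verify purity of the graded pieces via the hypothesis, rather than invoke abstract existence. Once this is done, the symmetry portion is the same formal computation as in Proposition \ref{p.boxtimesf}, and the $\Perm_m$-equivariance of the pairing follows because the Koszul signs appear symmetrically on both sides of the duality isomorphism.
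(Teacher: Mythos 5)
Your proof follows the paper's approach, which is appropriate: the paper itself states that this proposition holds with the same proof as Proposition \ref{p.boxtimesf}. Your first paragraph, including the observation that the weight filtration must be \emph{exhibited} (since it need not exist a priori in the horizontal setting) rather than invoked abstractly, is correct and is essentially what the paper does.

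Two points in your final paragraph, however, need repair. First, a weight/sign miscomputation: since $\pH^{n_i}A$ is pure of weight $w+n_i$ and the weights add under $\boxtimes$, $\pH^n(A^{\boxtimes m})$ is pure of weight $mw+n$, not $w+n$, and is $(-1)^{mw+n}\sigma^m$-self-dual (you wrote $(-1)^{w+n}\sigma^m$); the twist $K(-mw-n)$ you gave is correct, and indeed this is what is needed for membership in $\rD^{mw}_{h,\sigma^m}$. Second, the appeal to the Beilinson--Bernstein--Deligne--Gabber decomposition theorem to conclude that $A^{\boxtimes m}$ is split is incorrect as stated: the horizontal analogue of BBD only yields splitting after $\bar\eta^*$, i.e.\ over $\bar k$, and splitting over $k$ is \emph{not} detected by $\bar\eta^*$. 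The correct (and simpler) argument is direct: since $A\in\rD^w_{h,\sigma}$ is already split, writing $A\simeq\bigoplus_i(\pH^iA)[-i]$ and distributing $\boxtimes$ over direct sums expresses $A^{\boxtimes m}$ as a $\Perm_m$-equivariant direct sum of shifts of exterior tensor products of perverse sheaves, which are perverse; hence $A^{\boxtimes m}$ is split on $[X^m/\Perm_m]$ without any appeal to a decomposition theorem.
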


We have the following analogues of Theorems \ref{t.mainf} and Corollary
\ref{c.decomp}, which hold with the same proofs as before.

\begin{theorem}\label{t.mainh}
Let $f\colon X\to Y$ be a proper morphism of Deligne-Mumford stacks of
finite presentation over $k$. Assume that $Y$ has finite inertia. Then $\rR
f_*$ preserves $\rD^w_{h,\sigma}$ and $\rD^w_{h,\dl}$.
\end{theorem}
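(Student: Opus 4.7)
The plan is to transpose the proof of Theorem \ref{t.mainf} verbatim into the horizontal setting, since all the ingredients used there have horizontal analogues already collected in Subsection \ref{ss.h}: the support decomposition of Lemma \ref{l.indec} for pure horizontal perverse sheaves (which are geometrically semisimple), the two-out-of-three property for $\rD^w_{h,\sigma}$ and $\rD^w_{h,\dl}$, and the horizontal analogues of Remark \ref{r.Diota} for the operations $(-)[n]$, $D_X$, smooth $f^*$, closed $f_*$, and $f_{!*}$ along open immersions.

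First I would reduce to the representable case exactly as in the proof of Theorem \ref{t.mainf}. The Keel-Mori theorem applied to $Y$ yields a factorization $X\xrightarrow{f_1} Y\times_{\bar Y}\bar X\to Y$, where $f_1$ is proper and quasi-finite. Since $\rR f_{1*}$ is $t$-exact for the perverse $t$-structures and preserves pure horizontal complexes and their self-duality (horizontal analogue of Remark \ref{p.sd} (3)), the theorem for $f_1$ is immediate, and it remains to treat the second factor, which is representable.

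The heart of the argument is the projective case, which rests on a horizontal relative hard Lefschetz theorem: for $f\colon X\to Y$ projective and $A\in\Perv_h(X,\Qlb)$ pure, the morphism
\[\pR^{-i}f_*(\eta^i\otimes\id_A)\colon\pR^{-i}f_*A\to\pR^i f_*A(i)\]
is an isomorphism. This is a morphism in $\rD^b_h$, and its being an isomorphism may be tested after the conservative functor $\bar\eta^*$, where it reduces to Proposition \ref{p.rhl} applied to $\bar\eta^*A$, which is admissible semisimple by the discussion preceding Theorem \ref{t.mainh}. A standard argument (Deligne's decomposition, which is formal once relative hard Lefschetz is available) then provides a splitting $\rR f_*A\simeq\bigoplus_i(\pR^i f_*A)[-i]$ \emph{in} $\rD^b_h(Y,\Qlb)$, and the Lefschetz-twisted pairing constructed in the proof of Proposition \ref{p.projf} yields $(-1)^{w+i}\sigma$-symmetric isomorphisms on $\pR^{-i}f_*A$ by Lemma \ref{l.final}. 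This establishes the projective case for $\rD^w_{h,\sigma}$, and the parallel argument handles $\rD^w_{h,\dl}$.

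With the projective case in hand, the general case proceeds by induction on $\dim X$ along the lines of the proof of Theorem \ref{t.mainf}. Applying Chow's lemma for algebraic spaces to the morphism $\bar f\colon\bar X\to\bar Y$ of coarse moduli spaces gives a projective birational morphism $\bar g\colon\bar X'\to\bar X$ with $\bar f\bar g$ projective; base change to $X$ yields $g\colon X'\to X$, and a dense open $U\subseteq X$ on which $g$ is an isomorphism. For $A\in\rD^w_{h,\sigma}(X,\Qlb)$ (reduced to perverse $A$), Lemma \ref{l.indec} gives $A\simeq j_{!*}j^*A\oplus B$ with $B$ supported off $U$, and the horizontal analogues of Remark \ref{r.Diota} (4), (8) place $j_{!*}j^*A$ in $\rD^w_{h,\sigma}$; the two-out-of-three property then does the same for $B$. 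Applying the projective case to $g$ and $fg$, and then $\rR f_*$ to the resulting identity $A\oplus C\simeq\rR g_*j'_{!*}j^*A\oplus B$ (with $C$ supported off $U$), induction and two-out-of-three yield $\rR f_*A\in\rD^w_{h,\sigma}(Y,\Qlb)$. The $\rD^w_{h,\dl}$ case is identical. The main obstacle is the horizontal relative hard Lefschetz step, but once one accepts that admissibility of $\bar\eta^*A$ together with conservativity of $\bar\eta^*$ on the horizontal Lefschetz morphism suffices, the remainder of the proof is a formal transcription.
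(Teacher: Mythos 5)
Your proposal is correct and follows essentially the same route as the paper: the paper proves Theorem \ref{t.mainh} simply by asserting that the proof of Theorem \ref{t.mainf} carries over verbatim, using exactly the horizontal ingredients you list (geometric semisimplicity of pure horizontal perverse sheaves and Lemma \ref{l.indec}, two-out-of-three, the analogues of Remark \ref{r.Diota}, Keel--Mori and Chow's lemma, and the relative hard Lefschetz of Proposition \ref{p.rhl}, which is already stated over an arbitrary field for perverse sheaves whose geometric pullback is admissible semisimple, combined with Deligne's decomposition and Lemma \ref{l.final}). Your spelling out of the conservativity of $\eta^*$ (or $\bar\eta^*$) to transport the Lefschetz isomorphism into $\rD^b_h$ is precisely the implicit content of the paper's ``same proofs as before.''
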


\begin{cor}
Assume that $Y$ has finite inertia. Then $Rf_*$ preserves split pure
complexes of weight $w$.
\end{cor}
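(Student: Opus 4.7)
The plan is to deduce this corollary from Theorem \ref{t.mainh} exactly as Corollary \ref{c.decomp} was extracted from Theorem \ref{t.mainf} in the finite field setting: by projecting onto the first coordinate of a pair lying in $\rD^w_{h,\dl}$.

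Given a split pure horizontal complex $A$ of weight $w$ on $X$, the first step is to exhibit a companion $B$ such that $(A,B) \in \rD^w_{h,\dl}(X,\Qlb)$. The natural choice is the ``twisted dual''
\[
B \coloneqq \bigoplus_{i} (D_X \pH^i A)(-w-i)[-i].
\]
Since $A$ is split pure of weight $w$, each $\pH^i A$ is a pure horizontal perverse sheaf of weight $w+i$, so $(D_X\pH^i A)(-w-i)$ is again pure of weight $w+i$, making $B$ split pure of weight $w$. Moreover, the biduality isomorphism $D_X D_X \simeq \id$ gives $\pH^i A \simeq (D_X\pH^i B)(-w-i)$, so the pair $(A,B)$ does lie in $\rD^w_{h,\dl}(X,\Qlb)$.

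Applying Theorem \ref{t.mainh} to the morphism $f$ then yields $(\rR f_*A,\, \rR f_*B) \in \rD^w_{h,\dl}(Y,\Qlb)$. By the very definition of $\rD^w_{h,\dl}(Y,\Qlb)$, both entries are split pure horizontal complexes of weight $w$ on $Y$; discarding the second coordinate gives the stated conclusion.

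There is essentially no obstacle once Theorem \ref{t.mainh} is granted; the entire content of the corollary is the observation that ``$\rR f_*A$ is split pure of weight $w$'' is precisely the first-coordinate shadow of a preservation statement in $\rD^w_{h,\dl}$. The only small point that must be checked carefully is the compatibility of twists, which is forced by the requirement that the perverse cohomology sheaves of both $A$ and $B$ live in the correct weight.
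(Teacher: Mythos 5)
Your argument is correct and is exactly the intended one: in the finite field case the paper states that Corollary \ref{c.decomp} is "obtained by considering... the first factor" of the $\rD^w_{\iota,\dl}$-preservation, and your construction of the companion $B=\bigoplus_i(D_X\pH^iA)(-w-i)[-i]$ followed by applying Theorem \ref{t.mainh} and projecting to the first coordinate is precisely this "first factor" argument transported to the horizontal setting. The weight bookkeeping you carry out is also correct, so the proof matches the paper's.
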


The analogue of Corollary \ref{c.mainf} also holds for $\rD^b_{h,\sd}$.

Theorem \ref{t.i2} is a special case of Theorem \ref{t.mainh}. Applying it
to $a_{X}$, we obtain Theorem \ref{t.i1}. Indeed, as we remarked in the
introduction, in Theorem \ref{t.i1} we may assume that $k$ is finitely
generated over its prime field. The horizontal perverse sheaf $\IC_X$, pure
of weight $d$, is $(-1)^d$-self-dual with respect to $K_X(-d)$ by Example
\ref{e.IC}, so $\IC_X[-d]\in\rD_{h,1}^0$, hence $\rR
a_{X*}\IC_X[-d]\in\rD_{h,1}^0$ by Theorem \ref{t.i2}, which proves Theorem
\ref{t.i1}.

\subsection{Symmetry in Grothendieck groups of horizontal complexes}\label{ss.hK}
Let $X$ be a Deligne-Mumford stack of finite presentation over $k$. We let
$\rK_h(X,\Qlb)$ denote the Grothendieck group of $\rD^b_h(X,\Qlb)$, which is
a free Abelian group generated by the isomorphism classes of simple
horizontal perverse $\Qlb$-sheaves. The functor $\eta^*$ induces an
injection $\rK_h(X,\Qlb)\to \rK(X,\Qlb)$, which identifies $\rK_h(X,\Qlb)$
with a $\lambda$-subring of $\rK(X,\Qlb)$. The operations on Grothendieck
groups in Construction \ref{c.lambda} and Remark \ref{r.mid} induce
operations on $\rK_h$.

For $w\in \Z$, we let $\rK^w_h(X,\Qlb)\subseteq \rK_h(X,\Qlb)$ denote the
subgroup generated by pure horizontal perverse sheaves of weight $w$ on $X$.
Let $\rK^\Z_h(X,\Qlb)=\bigoplus_{w\in \Z}\rK^w_h(X,\Qlb)\subseteq
\rK_h(X,\Qlb)$. The analogue of Lemma \ref{l.interexact} holds, which
further justifies the definition of the map $f_{!*}$ in Remark \ref{r.mid}.

We repeat Definition \ref{d.K} as follows.

\begin{definition}[$\rK_{h,\sigma}$]
We define $\rK_{h,\sigma}^w(X,\Qlb)\subseteq \rK_h^w(X,\Qlb)$ (resp.\
$\rK_{h,\sd}^w(X,\Qlb)\subseteq \rK_h^w(X,\Qlb)$), for $w\in\Z$, to be the
subgroup generated by $[B]$, for $B$ perverse, $\iota$-pure of weight $w$,
and $(-1)^w\sigma$-self-dual (resp.\ self-dual) with respect to $K_X(-w)$.
We put
\[
\rK_{h,\sigma}(X,\Qlb)=\bigoplus_{w\in \Z} \rK_{h,\sigma}^w(X,\Qlb)\quad
\text{(resp.}\ \rK_{\iota,\sd}(X,\Qlb)=\bigoplus_{w\in \Z}
\rK_{h,\sd}^w(X,\Qlb)).
\]
We define the \emph{twisted dualizing map}
\[
\Dbar_{h,X}\colon \rK^\Z_h(X,\Qlb)\to \rK^\Z_h(X,\Qlb)
\]
to be the direct sum of the group automorphisms $\Dbar_{h,X}^w\colon
\rK^w_h(X,\Qlb)\to \rK^w_h(X,\Qlb)$ sending $[A]$ to $[(D_X A)(-w)]$. We let
$\rK_{h,\dl}^w(X,\Qlb)\subseteq\rK_{h}^w(X,\Qlb)^2$ denote the graph of
$\Dbar_{h,X}^w$. We put
\[\rK_{h,\dl}(X,\Qlb)=\bigoplus_{n\in Z}\rK^w_{h,\dl}(X,\Qlb).\]
\end{definition}

The subgroup $\rK_\orth(X,\Qlb)$ in the introduction is $\rK_{h,1}(X,\Qlb)$.
If $k$ is a finite field, $\rK_{h,\sigma}(X,\Qlb)$ is the intersection
$\bigcap_\iota\rK_{\iota,\sigma}(X,\Qlb)$ of the subgroups
$\rK_{\iota,\sigma}$ of Definition \ref{d.K}, where $\iota$ runs over
embeddings $\Qlb \hookrightarrow \C$.

The analogue of Remark \ref{r.K0} holds for the definition of
$\rK_{h,\sigma}$, $\rK_{h,\sd}$, and $\rK_{h,\dl}$. The analogues of Remarks
\ref{r.K} and \ref{r.ZarK} hold for the preservation and Zariski local
nature of $\rK_{h,\sigma}$ and $\rK_{h,\dl}$ with the same proofs.

The following analogue of Theorem \ref{t.sixop} holds with the same proof.
In particular, the analogue of Proposition \ref{p.NCD} holds.

\begin{theorem}\label{t.Kh}
Let $X$ and $Y$ be Deligne-Mumford stacks of finite inertia and  finite
presentation over $k$ and let $f\colon X\to Y$ be a morphism. Then
Grothendieck's six operations induce maps
\begin{gather*}
-\otimes-,\ \cHom(-,-)\colon \rK_{h,\sigma}(X,\Qlb)\times \rK_{h,\sigma'}(X,\Qlb)\to \rK_{h,\sigma\sigma'}(X,\Qlb),\\
f^*,f^!\colon
\rK_{h,\sigma}(Y,\Qlb)\to \rK_{h,\sigma}(X,\Qlb), \qquad f_*,f_!\colon \rK_{h,\sigma}(X,\Qlb)\to \rK_{h,\sigma}(Y,\Qlb).
\end{gather*}
Moreover, Grothendieck's six operations on $\rK^\Z_h$ commute with the
twisted dualizing map $\Dbar_h$.
\end{theorem}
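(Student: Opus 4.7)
The plan is to repeat the proof of Theorem \ref{t.sixop} essentially verbatim, with $\rK_{\iota,\sigma}$ replaced by $\rK_{h,\sigma}$ throughout. Since weights and purity for horizontal complexes are defined by restriction to closed points of a spreading-out base $\Spec(R[1/\ell])$, where the residue fields are finite, virtually every ingredient of the finite-field proof admits a horizontal analogue, either recorded in Subsections \ref{ss.h} and \ref{ss.hK} or proved by the same arguments.

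First I would establish the horizontal analogue of Proposition \ref{p.NCD}: if $D = \sum_{\alpha\in I} D_\alpha$ is a strict normal crossing divisor on a regular quasi-projective $X$ with equivariant splittings over a finite \'etale cover $Y \to X$, and $\cF$ is a tame lisse horizontal $\Qlb$-sheaf on $U = X-D$ with $[\cF]\in\rK_{h,\sigma}(U,\Qlb)$, then $[j_!\cF]\in\rK_{h,\sigma}(X,\Qlb)$ and $\Dbar_{h,X}[j_!\cF] = j_!\Dbar_{h,U}[\cF]$. Next I would deduce preservation of $\rK_{h,\sigma}$ under $f_!$ for an open immersion by induction on $\dim X$, using the Zariski local nature of $\rK_{h,\sigma}$ (analogue of Remark \ref{r.ZarK}, via the horizontal version of Lemma \ref{l.coverK}), Lemma \ref{l.Chow}, Nagata compactification, and Gabber's equivariant refinement of de Jong's alterations to reduce to the normal-crossings situation just handled. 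I would then extend to general $f_!$ by Nagata compactification, writing $f_! = f_{3*} f_{2!} f_{1*}$ with $f_1$ proper quasi-finite, $f_2$ an open immersion, and $f_3$ proper, and invoking Theorem \ref{t.mainh} for the proper pieces. The remaining operations follow formally: $f_* = D_Y f_! D_X$; then $f^*$ via stratification plus the identity $B = j_!j^*B + f_*f^*B$ for a closed immersion $f$ with complementary open immersion $j$, reducing the general case (via \cite[Proposition 5.1]{Zind}) to a composition of a closed immersion and an affine-space projection; and finally $f^! = D_X f^* D_Y$, $-\otimes- = \Delta^*(-\boxtimes-)$, and $\cHom(-,-) = D(-\otimes D-)$. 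The commutation with $\Dbar_h$ is proved in parallel.

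The main obstacle is the horizontal analogue of Proposition \ref{p.NCD}, and specifically the local analysis of $j_J^*\rR j_*\cF$ near a stratum of the divisor. One must verify that in the horizontal setting the local monodromy filtration $M$ on $V = (f^*\cF)[f^{-1}D^*_{\{\beta\}}]$ exists and that the graded pieces of $\Ker(N)$ and $\Coker(N)$ are pure of the expected weights (the analogue of \cite[Corollaire 1.8.7, Remarque 1.8.8]{WeilII}); this is exactly where the definition of purity for horizontal complexes via closed points of $\Spec R$ is needed, because the monodromy weight statement is checked pointwise on the base. Granting this local purity, the self-duality of primitive parts from Proposition \ref{p.Pi} is categorical and applies unchanged, and Gabber's inductive argument on the stratification depth via the truncation triangles for $\rR j_{n*}j^n_{!*}\cF$ goes through, since it depends only on the perverse $t$-structure and the weight bounds supplied by purity, both of which transfer to $\rD^b_h$.
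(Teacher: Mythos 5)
Your proposal is correct and follows essentially the same route as the paper, which proves Theorem \ref{t.Kh} by repeating the proof of Theorem \ref{t.sixop} verbatim in the horizontal setting, including the horizontal analogue of Proposition \ref{p.NCD} (with purity and the local monodromy-weight input checked via restriction to closed points of the spreading-out base, exactly as you indicate).
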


The analogues of Corollaries \ref{c.lsubring} and \ref{c.lringhom} hold. The
relationship with Laumon's theorem (Remark \ref{r.Laumon}) also holds.

Theorem \ref{t.i3} is a special case of Theorem \ref{t.Kh}.

\appendix

\section{Appendix. Symmetry and duality in categories}\label{s.app}

In the appendix, we collect some general symmetry properties in categories
with additional structures. Tensor product equips the derived category of
$\ell$-adic sheaves with a symmetric structure. We discuss symmetry of
pairings in symmetric categories in Subsection \ref{s.smc}. The category of
perverse sheaves is not stable under tensor product, but is equipped with a
duality functor. We study symmetry in categories with duality in Subsections
\ref{ss.dual} and \ref{ss.fun}. We discuss the relation of the two points of
view in Subsection \ref{ss.rel}. We then study effects of translation on
symmetry in Subsection \ref{s.str}, which is applied in the main text to the
Lefschetz pairing. In Subsection \ref{s.nil}, we study symmetry of primitive
parts under a nilpotent operator, which is applied in the main text to the
monodromy operator. The results of the appendix are formal but are used in
the main text. The presentation here is influenced by \cite{QSS},
\cite[Section 12]{Riou}, and \cite{Schlichting}. Recall $\sigma,\sigma'\in
\{\pm 1\}$.

\subsection{Symmetric categories}\label{s.smc}

In this subsection, we discuss symmetry of pairings in symmetric categories.

\begin{definition}[Symmetric category]\label{d.scat}
A \emph{symmetric category} is a category $\cC$ endowed with a bifunctor
$-\otimes -\colon \cC\times \cC\to \cC$ and a natural isomorphism (called
the \emph{symmetry constraint}) $c_{AB}\colon A\otimes B\to B\otimes A$, for
objects $A$ and $B$ of $\cC$, satisfying $c_{AB}^{-1}=c_{BA}$. We say that
the symmetric category $\cC$ is \emph{closed} if for every object $A$ of
$\cD$, the functor $-\otimes A\colon \cC\to \cC$ admits a right adjoint,
which we denote by $\cHom(A,-)$.
\end{definition}

In our applications, we mostly encounter symmetric \emph{monoidal}
categories (see, for example, \cite[Section VII.7]{MacLane} for the
definition), but the associativity and unital constraints are mostly
irrelevant to the results of this article.

To deal with signs, we need the following additive variant of Definition
\ref{d.scat}.

\begin{definition}
A \emph{symmetric additive category} is a symmetric category $(\cD,\otimes)$
such that $\cD$ is an additive category and $-\otimes- \colon \cD\times
\cD\to \cD$ is an additive bifunctor (namely, a bifunctor additive in each
variable). A \emph{closed symmetric additive category} is a closed symmetric
category $(\cD,\otimes)$ such that $\cD$ is an additive category.
\end{definition}

A closed symmetric additive category is necessarily a symmetric additive
category and the internal Hom functor $\cHom(-,-)\colon \cD^{\op}\times
\cD\to \cD$ is an additive bifunctor.

\begin{definition}\label{d.sympair}
Let $(\cC,\otimes, c)$ be a symmetric category. Assume that $\cC$ is an
additive category if $\sigma=-1$. Let $A$, $B$, $K$ be objects of $\cC$.
\begin{enumerate}
\item We define the \emph{transpose} of a pairing $g\colon B\otimes A \to
    K$ to be the composite
    \[g^T\colon A\otimes B \xrightarrow{c} B\otimes A\xrightarrow{g} K.
    \]
We call $\sigma g^T$ the \emph{$\sigma$-transpose} of $g$.

\item We say that a pairing $f\colon A\otimes A \to K$ is
    \emph{$\sigma$-symmetric} if $f=\sigma f^T$.
\end{enumerate}
\end{definition}

We have $(g^T)^T=g$. We will often say ``symmetric'' instead of
``$1$-symmetric''.

Note that for a pair of pairings $f\colon A\otimes B\to K$ and $g\colon
B\otimes A\to K$ in a symmetric additive category, $(2f,2g)$ is a sum of a
pair of $1$-transposes and a pair of $-1$-transposes:
\[(2f,2g)=(f+g^T,g+f^T)+(f-g^T,g-f^T).\]

\begin{remark}
Let $(\cC,\otimes,c)$ be a symmetric category such that $\cC$ is an additive
category. Then $(\cC,\otimes,-c)$ is another symmetric category. The
$-1$-transpose in $(\cC,\otimes,c)$ of a pairing $g\colon A\otimes B\to K$
is the transpose in $(\cC,\otimes,-c)$ of $g$.
\end{remark}

Next we consider effects of functors on symmetry.

\begin{definition}\label{d.sf}
Let $\cC$ and $\cD$ be symmetric categories. A \emph{right-lax symmetric
functor} (resp.\ \emph{symmetric functor}) from $\cC$ to $\cD$ is a functor
$G\colon \cC\to \cD$ endowed with a natural transformation (resp.\ natural
isomorphism) of functors $\cC\times \cC\to \cD$ given by morphisms
$G(A)\otimes G(B)\to G(A\otimes B)$ in $\cD$ for objects $A,B$ of $\cC$,
such that the following diagram commutes
\[\xymatrix{G(A)\otimes G(B)\ar[r]\ar[d]_{c_{GA,GB}} & G(A\otimes B)\ar[d]^{G(c_{A,B})}\\
G(B)\otimes G(A)\ar[r]& G(B\otimes A).}
\]
\end{definition}

Between symmetric monoidal categories, one has the notions of symmetric
monoidal functors and lax symmetric monoidal functors, which are compatible
with the associativity constraints and unital constraints. In our
applications we will need to consider symmetric functors between symmetric
monoidal categories that are \emph{not} symmetric monoidal functors. For
example, if $f$ is an open immersion, then $f_!$ is a symmetric functor
compatible with the associativity constraint, but not compatible with the
unital constraints except in trivial cases. Again we emphasize that the
compatibility with the associativity and unital constraints is irrelevant to
the results in this article.

\begin{example}
Let $\cC$ and $\cD$ be symmetric categories. Let $F\colon \cC\to \cD$ be a
functor admitting a right adjoint $G\colon \cD\to \cC$. Then every symmetric
structure on $F$ induces a right-lax symmetric structure on $G$, given by
the morphism $G(A)\otimes G(B)\to G (A\otimes B)$ adjoint to
\[F(G(A)\otimes G(B))\simto F(G(A))\otimes F(G(B))\to A\otimes B.\]
This construction extends to \emph{left-lax} symmetric structures on $F$ and
provides a bijection between left-lax symmetric structures on $F$ and
right-lax symmetric structures on $G$. Since we do not need this extension,
we omit the details.
\end{example}

\begin{example}
Let $\cC$ be a symmetric \emph{monoidal} category. Then $\cC\times \cC$ is a
symmetric monoidal category and the functor $-\otimes -\colon \cC\times
\cC\to \cC$ is a symmetric monoidal functor and, in particular, a symmetric
functor. The symmetric structure of the functor is given by the isomorphisms
$(A\otimes A')\otimes (B\otimes B')\simto (A\otimes B)\otimes (A'\otimes
B')$ for objects $A,A',B,B'$ of $\cC$.
\end{example}

\begin{construction}\label{c.rl}
Let $\cC$ and $\cD$ be symmetric categories and let $G\colon \cC\to \cD$ be
a right-lax symmetric functor. Let $A,B,K$ be objects of $\cC$. A pairing
$A\otimes B\to K$ induces a pairing $G(A)\otimes G(B)\to G(A\otimes B)\to
G(K)$.
\end{construction}

The following lemma follows immediately from the definitions.

\begin{lemma}
Let $\cC$ and $\cD$ be symmetric categories and let $G\colon \cC\to \cD$ be
a right-lax symmetric functor. Let $A,B,K$ be objects of $\cC$. Let
$A\otimes B\to K$ and $B\otimes A\to K$ be transposes of each other. Then
the induced pairings $GA\otimes GB\to GK$ and $GB\otimes GA\to GK$ are
transposes of each other.
\end{lemma}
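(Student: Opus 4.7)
The plan is a direct diagram chase using the definition of a right-lax symmetric functor. Let $f\colon A\otimes B\to K$ and $g\colon B\otimes A\to K$ be the given transposes of each other, meaning $f=g\circ c_{A,B}$. Denote by $\mu_{A,B}\colon G(A)\otimes G(B)\to G(A\otimes B)$ and $\mu_{B,A}\colon G(B)\otimes G(A)\to G(B\otimes A)$ the structural morphisms of the right-lax symmetric structure on $G$. Per Construction \ref{c.rl}, the induced pairings are $\tilde f\coloneqq Gf\circ\mu_{A,B}\colon GA\otimes GB\to GK$ and $\tilde g\coloneqq Gg\circ\mu_{B,A}\colon GB\otimes GA\to GK$.

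The goal is to verify $\tilde f=\tilde g\circ c_{GA,GB}$. Starting from the right-hand side and unwinding,
\[
\tilde g\circ c_{GA,GB}= Gg\circ\mu_{B,A}\circ c_{GA,GB}.
\]
The compatibility diagram in Definition \ref{d.sf} gives $\mu_{B,A}\circ c_{GA,GB}=G(c_{A,B})\circ\mu_{A,B}$. Substituting and then using functoriality of $G$,
\[
\tilde g\circ c_{GA,GB}=Gg\circ G(c_{A,B})\circ\mu_{A,B}=G(g\circ c_{A,B})\circ\mu_{A,B}=Gf\circ\mu_{A,B}=\tilde f,
\]
where the penultimate equality uses the hypothesis $f=g\circ c_{A,B}$.

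There is no real obstacle here; the whole content of the lemma is the single application of the coherence square from Definition \ref{d.sf}. The only thing to double-check is that the convention $c_{AB}^{-1}=c_{BA}$ (from Definition \ref{d.scat}) is consistent with the notion of transpose in Definition \ref{d.sympair}, so that ``$f$ and $g$ are transposes of each other'' unambiguously means $f=g\circ c_{A,B}$ (equivalently $g=f\circ c_{B,A}$); this involutivity is exactly what makes the argument symmetric in the roles of $f$ and $g$.
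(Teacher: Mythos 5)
Your proof is correct and is exactly the argument the paper has in mind: the paper simply states that the lemma "follows immediately from the definitions," and your chase — unwinding Construction \ref{c.rl}, applying the coherence square of Definition \ref{d.sf} to get $\mu_{B,A}\circ c_{GA,GB}=G(c_{A,B})\circ\mu_{A,B}$, and then using functoriality together with $f=g\circ c_{A,B}$ — is precisely that omitted verification.
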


\subsection{Categories with duality}\label{ss.dual}
In this subsection, we study symmetry in categories with duality.

\begin{definition}[Duality]\label{d.d}
Let $\cC$ be a category. A \emph{duality} on $\cC$ is a functor $D\colon
\cD^{\op}\to \cD$ endowed with a natural transformation $\ev\colon
\id_\cC\to DD$ such that the composite $D\xrightarrow{\ev D} DDD
\xrightarrow{D \ev} D$ is equal to $\id_D$. The duality $(D,\ev)$ is said to
be \emph{strong} if $\ev$ is a natural isomorphism.
\end{definition}

We are mostly interested in strong dualities in the main text. However, for
the proofs of many results on strong dualities, it is necessary to consider
general dualities (for example, the duality $D_{\rR f_* K_X}$ in the proof
of Remark \ref{p.sd} (3) is not strong in general). Our terminology here is
consistent with \cite{Schlichting}*{Definition 3.1}. Some authors refer to a
strong duality simply as ``duality'' \cite{QSS}.

The underlying functor of a strong duality is an equivalence of categories.
If $\cC$ is an additive category, we say that a duality on $\cC$ is
\emph{additive} if the underlying functor is additive. By an \emph{additive
category with duality}, we mean an additive category equipped with an
additive duality.

A basic example of duality is provided by the internal Hom functor in a
closed symmetric category. We will discuss this in detail in Subsection
\ref{ss.rel}. By analogy with this case, we sometimes refer to morphisms
$B\to DA$ in a category with duality as \emph{forms}. We have the following
notion of symmetry for forms.

\begin{definition}[Symmetry of forms]\label{d.symD}
Let $(\cC,D,\ev)$ be a category with duality for $\sigma=1$ (resp.\ additive
category with duality for $\sigma=-1$) and let $A,B$ be objects of $\cD$.
\begin{enumerate}
\item We define the \emph{transpose} of a morphism $g\colon B\to D A$ to
    be the composite
    \[\xymatrix{A\xrightarrow{\ev} D D A
    \xrightarrow{D g}  D B.}
    \]
    We call $\sigma g^T$ the \emph{$\sigma$-transpose} of $g$.

\item We say that a morphism $f\colon A\to D A$ is
    \emph{$\sigma$-symmetric} if $f=\sigma f^T$.
\end{enumerate}
\end{definition}

Again we will often say ``symmetric'' instead of ``$1$-symmetric''. The
terminology above is justified by the following lemma.

\begin{lemma}
We have $(g^T)^T=g$. Moreover, the map $\Hom_\cC(B,DA)\to \Hom_\cC(A,DB)$
carrying $g$ to $g^T$ is a bijection.
\end{lemma}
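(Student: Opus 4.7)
The plan is to prove that the transpose operation is an involution; the bijectivity claim then follows immediately since an involution on a set is automatically a bijection. So the entire content is the identity $(g^T)^T = g$.

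First I would unfold the definition: given $g\colon B\to DA$, we have $g^T = Dg\circ \ev_A\colon A\to DB$, and therefore
\[
(g^T)^T = D(g^T)\circ \ev_B = D(Dg\circ \ev_A)\circ \ev_B = DDg\circ D\ev_A\circ \ev_B\colon B\to DA.
\]
Now I would apply the naturality of $\ev\colon \id_\cC\to DD$ at the morphism $g\colon B\to DA$, which gives the commutative square $DDg\circ \ev_B = \ev_{DA}\circ g$. Substituting, one obtains
\[
(g^T)^T = D\ev_A\circ \ev_{DA}\circ g.
\]
Finally, the triangle identity in the definition of a duality (the assumption that $D\ev\circ \ev D$ is the identity natural transformation on $D$, interpreted as equality, which is the standard convention for a category with duality) yields $D\ev_A\circ \ev_{DA} = \id_{DA}$, whence $(g^T)^T = g$.

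The only possible sticking point is the phrasing ``is isomorphic to $\id_D$'' in Definition \ref{d.d}: if one reads this literally as ``there exists some isomorphism,'' then $D\ev\circ \ev D$ is only a natural automorphism $\eta$ of $D$, and the computation above gives $(g^T)^T = \eta_{DA}\circ g$ rather than $g$ on the nose. Since the lemma asserts strict equality, the intended reading must be that $D\ev\circ \ev D = \id_D$; this is the usual snake/triangle identity and matches Schlichting's definition cited at the start of the subsection. Under this reading the argument above is complete. For the $\sigma=-1$ case nothing changes: the lemma is about the plain transpose $T$, and additivity of $D$ plays no role in the calculation above. Since $T$ is its own inverse, the map $\Hom_\cC(B,DA)\to \Hom_\cC(A,DB)$, $g\mapsto g^T$, is a bijection.
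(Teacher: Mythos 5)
Your argument is correct and is essentially the paper's own proof: the paper verifies the same computation $(g^T)^T = D\ev_A\circ DDg\circ \ev_B = D\ev_A\circ \ev_{DA}\circ g = g$ (via naturality of $\ev$ and the triangle condition) in the form of a commutative diagram, and likewise deduces bijectivity from the involution property. Your remark about reading ``isomorphic to $\id_D$'' as the equality $D\ev\circ\ev D=\id_D$ matches what the paper's diagram implicitly assumes.
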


\begin{proof}
The first assertion follows from the commutativity of the following diagram
\[\xymatrix{& DA\\
&DDDA\ar[u]_{D\ev} & DDB\ar[ul]_{D(g^T)}\ar[l]^{DDg}\\
DA\ar[ur]_{\ev D}\ar[ruu]^{\id} & B.\ar[l]^{g}\ar[ur]_{\ev}}
\]
For the second assertion, note that the map carrying $h\colon A\to DB$ to
$h^T$ is the inverse of the map $g\mapsto g^T$, by the first assertion.
\end{proof}

\begin{remark}
Let $(\cC,D,\ev)$ be an additive category with duality. Then $(\cC,D,-\ev)$
is another additive category with duality. The $-1$-transpose in
$(\cC,D,\ev)$ of a morphism $g\colon B\to DA$ is the transpose in
$(\cC,D,-\ev)$ of $g$. This allows us in the sequel to omit the
$-1$-symmetric case in many results without loss of generality.
\end{remark}

We will be especially interested in objects $A$ that admit isomorphisms
$A\simto DA$.

\begin{definition}\label{d.sd}
Let $(\cC,D,\ev)$ be a category with duality and let $A$ be an object of
$\cC$.
\begin{enumerate}
\item We say that $A$ is \emph{self-dual} if there exists an isomorphism
    $A\simto DA$.

\item Assume that $(\cC,D,\ev)$ is an additive category with duality if
    $\sigma=-1$. We say that $A$ is \emph{$\sigma$-self-dual} if there
    exists a $\sigma$-symmetric isomorphism $A\simto DA$.
\end{enumerate}
\end{definition}

We warn the reader that being $1$-self-dual is more restrictive than being
self-dual. If $A$ is $1$-self-dual or $-1$-self-dual, then $\ev\colon A\to
DDA$ is an isomorphism.

\begin{remark}\label{r.selfd}
Let $(\cC,D,\ev)$ be an additive category with duality.
\begin{enumerate}
\item The classes of self-dual objects and $\sigma$-self-dual objects of
    $\cD$ are stable under finite products.

\item If $A$ is an object of $\cD$ such that $\ev\colon A\to DDA$ is an
    isomorphism, then $A\oplus DA$ is $1$-self-dual and $-1$-self-dual. In
    fact, the isomorphism $A\oplus DA \xrightarrow{\sigma\ev\oplus \id}
    DDA\oplus DA\simeq D(A\oplus DA)$ is $\sigma$-symmetric.

\item There are self-dual objects that are neither $1$-self-dual nor
    $-1$-self-dual (Corollary \ref{c.even}).
\end{enumerate}
\end{remark}

We close this subsection with a couple of lemmas on $\sigma$-self-dual
objects. They are used in Subsection \ref{ss.gp2} but not in the rest of
this appendix.

A $\sigma$-symmetric isomorphism $f\colon A\simto DA$ induces an involution
on $\End(A)$ carrying $g\in \End(A)$ to $f^{-1} (Dg)f$. If $\cC$ is a
$k$-linear category and $D$ is a $k$-linear functor, then the involution is
$k$-linear.

\begin{lemma}\label{l.KS}
Let $(\cC,D,\ev)$ be an additive category with duality. Let $A$ be an object
of $\cD$ such that $R=\End(A)$ is a local ring and that $2$ is invertible in
$R$.
\begin{enumerate}
\item If $A$ is self-dual with respect to $D$, then $A$ is $1$-self-dual
    or $-1$-self-dual with respect to $D$.

\item If $A$ is both $1$-self-dual and $-1$-self-dual with respect to $D$,
    then every symmetric (resp.\ $-1$-symmetric) isomorphism $f\colon A\to
    DA$ induces a nontrivial involution on the residue division ring of
    $R$.
\end{enumerate}
\end{lemma}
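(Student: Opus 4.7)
My approach is to exploit the fact that $R = \End(A)$ being local forces at least one of the two ``symmetrizations'' of a given isomorphism to remain invertible, and then to analyze the involution on $R$ coming from duality.

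For part (1), I would start from any isomorphism $f \colon A \simto DA$ and form the two candidates $f_\sigma \coloneqq f + \sigma f^T$ for $\sigma = \pm 1$. Using the identity $(f^T)^T = f$ established in the lemma following Definition \ref{d.symD}, each $f_\sigma$ is $\sigma$-symmetric by construction, so the only issue is invertibility of at least one of them. Setting $\phi = f^{-1} f^T \in R$, I would rewrite $f^{-1} f_\sigma = 1 + \sigma\phi$ and observe that $(1+\phi) + (1-\phi) = 2$ is a unit of $R$. Since the non-units of a local ring form a (two-sided) ideal, the two elements $1 \pm \phi$ cannot both lie in $\mathfrak m$, so at least one is a unit, yielding the desired $\sigma$-symmetric isomorphism $f_\sigma$.

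For part (2), given a $\sigma$-symmetric isomorphism $f$, I would define $\# \colon R \to R$ by $g^\# = f^{-1}(Dg)f$. It is immediate from functoriality of $D$ that $\#$ is an antihomomorphism, and a direct calculation of $g^{\#\#}$, substituting $(Df)\ev_A = \sigma f$ (which uses $\sigma$-symmetry) and then applying the naturality identity $(D^2g)\ev_A = \ev_A g$, will give $g^{\#\#} = g$; here the $\sigma$-self-duality guarantees that $\ev_A$ is invertible, which is what allows one to solve for $Df$ in terms of $f$ and $\ev_A^{-1}$. The involution $\#$ preserves units, hence preserves $\mathfrak m$, and so descends to an involution on the residue field $k = R/\mathfrak m$. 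To show this descended involution is nontrivial, I would invoke the hypothesis that $A$ is also $(-\sigma)$-self-dual: picking a $(-\sigma)$-symmetric isomorphism $f' \colon A \simto DA$, setting $h = f^{-1} f' \in R^\times$, and expanding $(f')^T = -\sigma f'$ in the form $(Dh)f = -fh$ via the $\sigma$-symmetry of $f$, I would obtain $h^\# = -h$. If the induced involution on $k$ were trivial, this would force $2h \in \mathfrak m$, contradicting that $2$ and $h$ are both units.

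The main subtlety lies in verifying that $\#$ is genuinely an involution rather than merely a $\phi$-twisted one: this is where the hypothesis that $f$ is $\sigma$-symmetric (and not merely an isomorphism) enters together with the invertibility of $\ev_A$. Once these formal verifications are in place, both parts reduce to elementary arithmetic in the local ring $R$ using that $2 \in R^\times$.
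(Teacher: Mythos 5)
Your proof is correct and follows essentially the same route as the paper (which itself follows \cite[Proposition 2.5]{QSS}): for (1) the decomposition of $2f$ into its symmetric and $-1$-symmetric parts together with the locality of $R$ and invertibility of $2$, and for (2) the relation $h^{\#}=-h$ for $h=f^{-1}f'$ under the involution $g\mapsto f^{-1}(Dg)f$. The only difference is cosmetic: you transport everything to $R$ via $f^{-1}$ and spell out the verification that $\#$ is an involution (using $(Df)\ev_A=\sigma f$ and the invertibility of $\ev_A$), which the paper merely states in the remark preceding the lemma.
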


This is essentially \cite{QSS}*{Proposition 2.5}. We recall the proof in our
notation. It will be apparent from the proof that the additional assumption
in \cite{QSS} that $D$ is a strong duality is not used.

\begin{proof}
(1) Since $A\simeq DA$, we have $\End(A)\simeq \Hom(A,DA)$. The image
$M\subseteq \Hom(A,DA)$ of the maximal ideal of $R=\End(A)$ is the
complement of the set of isomorphisms. For any $f\in \Hom(A,DA)$, we have
$2f=(f+f^T)+(f-f^T)$, where $f+f^T$ is symmetric and $f-f^T$ is
$-1$-symmetric. If $f$ is an isomorphism, then $2f$ is an isomorphism, so
that either $f+f^T$ or $f-f^T$ is an isomorphism.

(2) Let $g\colon A\to DA$ be a $-1$-symmetric (resp.\ symmetric)
isomorphism. Then $h=f^{-1}g$ is a unit of $R$ whose image under the
involution induced by $f$ is $-h$. Thus the involution is nontrivial on the
residue field of $R$.
\end{proof}

\begin{remark}\leavevmode\label{r.KS}
\begin{enumerate}
\item If $\cC$ is an Abelian category and $A$ is an indecomposable object
    of finite length, then $\End(A)$ is a local ring \cite[Lemma
    7]{Atiyah}.

\item Let $k$ be a separably closed field of characteristic $\neq 2$.
    Assume that $\cC$ is a $k$-linear category, $D$ is a $k$-linear
    functor, and $R=\End(A)$ is a finite $k$-algebra. Then any $k$-linear
    involution on $R$ is trivial on the residue field. It follows then
    from Lemma \ref{l.KS} that exactly one of the following holds: $A$ is
    $1$-self-dual; $A$ is $-1$-self-dual; $A$ is not self-dual.
\end{enumerate}
\end{remark}

\begin{lemma}\label{l.ss}
Let $(D,\ev)$ be a \emph{strong} duality on an Abelian category $\cC$. Let
$A$ be a $\sigma$-self-dual object of finite length. Then the
semisimplification $A^\rss$ of $A$ is $\sigma$-self-dual.
\end{lemma}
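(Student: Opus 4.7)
The plan is to induct on the length of $A$. Fix a $\sigma$-symmetric isomorphism $f\colon A \simto DA$. For any subobject $i\colon B \hookrightarrow A$, define the orthogonal $B^{\perp} := \ker(A \xrightarrow{f} DA \xrightarrow{Di} DB)$. Since $D$ is a strong duality (hence an exact equivalence) and $f$ is an isomorphism, one has the length identity $\mathrm{length}(B^{\perp}) = \mathrm{length}(A) - \mathrm{length}(B)$; the $\sigma$-symmetry of $f$ yields $B \subseteq B^{\perp\perp}$, hence $B^{\perp\perp} = B$ by the length identity. The base case of the induction, $A$ zero or simple, is trivial.

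For the inductive step, pick a simple subobject $S \subseteq A$, so that $S \cap S^{\perp}$ is either $0$ or $S$. If $S \cap S^{\perp} = 0$, the length identity forces $A = S \oplus S^{\perp}$. Writing $f$ as a $2 \times 2$ block matrix with respect to this decomposition, the definition of $S^{\perp}$ annihilates the block $S^{\perp} \to DS$, and the $\sigma$-symmetry of $f$ then forces the block $S \to D(S^{\perp})$ to vanish as well. Hence $f$ is block-diagonal, yielding $\sigma$-symmetric isomorphisms on $S$ and on $S^{\perp}$; by induction $(S^{\perp})^{\rss}$ is $\sigma$-self-dual, and therefore so is $A^{\rss} = S \oplus (S^{\perp})^{\rss}$.

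If instead $S \subseteq S^{\perp}$, let $i'\colon S^{\perp} \hookrightarrow A$. The composite $h := Di' \circ f \circ i'\colon S^{\perp} \to D(S^{\perp})$ kills $S \subseteq S^{\perp}$ on the source (by the $\sigma$-symmetry of $f$: the pairing of $S$ against $S^{\perp}$ is the $\sigma$-transpose of the pairing of $S^{\perp}$ against $S$, which vanishes by definition of $S^{\perp}$), and its image lies in $D(S^{\perp}/S) \subseteq D(S^{\perp})$ (directly by the definition of $S^{\perp}$). The resulting descended map $\bar f\colon S^{\perp}/S \to D(S^{\perp}/S)$ is $\sigma$-symmetric, and is an isomorphism by length-counting combined with $S^{\perp\perp} = S$. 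Moreover $f$ induces an isomorphism $A/S^{\perp} \simto DS$. Semisimplification is additive on composition factors, so $A^{\rss} \simeq S \oplus (S^{\perp}/S)^{\rss} \oplus DS$; the middle summand is $\sigma$-self-dual by induction, and $S \oplus DS$ is $\sigma$-self-dual by Remark \ref{r.selfd}(2) (the evaluation $\ev_S$ being an isomorphism since $D$ is strong). Hence $A^{\rss}$ is $\sigma$-self-dual.

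The main obstacle is to verify, using strong duality of $D$, that $B \mapsto B^{\perp}$ is an involution on subobjects and that the form descends cleanly to the isotropic subquotient $S^{\perp}/S$ while retaining $\sigma$-symmetry; once these are in hand, the rest reduces to block-matrix bookkeeping.
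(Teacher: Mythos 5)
Your proof is correct, and it takes a genuinely different route from the paper's. The paper argues in one shot: it picks a \emph{maximal} totally isotropic sub-object $N$, invokes \cite[Lemma 5.2]{QSS} for the descended $\sigma$-symmetric form on $N^\perp/N$ and \cite[Theorem 6.12]{QSS} for the semisimplicity of $N^\perp/N$, and then writes $A^\rss\simeq N^\rss\oplus(A/N^\perp)^\rss\oplus N^\perp/N$, with the first two summands forming a hyperbolic piece $N^\rss\oplus D(N^\rss)$ handled by Remark \ref{r.selfd} (2). You instead induct on length, peeling off a single simple sub-object $S$ and splitting into the nondegenerate case ($S\cap S^\perp=0$, block-diagonal decomposition $A=S\oplus S^\perp$) and the isotropic case ($S\subseteq S^\perp$, descent of the form to $S^\perp/S$ plus the hyperbolic summand $S\oplus DS$). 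The shared ingredients are the orthogonal construction with $A/B^\perp\simeq DB$, the descent of the form to isotropic subquotients, and the hyperbolic remark; what differs is that your induction entirely replaces the appeal to the QSS structure theory (in particular you never need to know that the form on the quotient by a maximal isotropic sub-object is semisimple), at the cost of proving the double-orthogonal identity $B^{\perp\perp}=B$ and the exactness/length bookkeeping by hand. Your version is thus more self-contained and elementary; the paper's is shorter modulo the external references. Two steps you assert rather than verify — that the diagonal blocks $Dj\circ f\circ j$ and the descended map $\bar f$ are again $\sigma$-symmetric — are routine naturality-of-$\ev$ computations (the latter is exactly the content of \cite[Lemma 5.2]{QSS}), so there is no gap, but it would be worth recording them once.
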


Note that by assumption $D$ is an equivalence of categories, hence an exact
functor.

\begin{proof}
We fix a $\sigma$-symmetric isomorphism $f\colon A\to DA$. For any
sub-object $N$ of $A$, we let $N^\perp$ denote the kernel of the morphism
$A\xrightarrow[\sim]{f} DA\to DN$. Then we have $A/N^\perp\simeq DN$, so
that $N^\rss\oplus (A/N^\perp)^\rss$ is $\sigma$-self-dual by Remark
\ref{r.selfd}. If $N$ is totally isotropic, namely $N\subseteq N^\perp$,
then $f$ induces a $\sigma$-symmetric isomorphism $N^\perp/N\simto
D(N^\perp/N)$ (cf.\ \cite[Lemma 5.2]{QSS}). Now let $N$ be a maximal totally
isotropic sub-object of $A$. By \cite[Theorem 6.12]{QSS}, $N^\perp/N$ is
semisimple. Therefore, $A^\rss\simeq N^\rss\oplus (A/N^\perp)^\rss\oplus
N^\perp/N$ is $\sigma$-self-dual.
\end{proof}

\subsection{Duality and functors}\label{ss.fun}
In this subsection, we study symmetry of functors between categories with
duality.

Given categories with duality $(\cC,D_\cC,\ev)$ and $(\cD,D_\cD,\ev)$, and
functors $F,G\colon \cC\to \cD$, we sometimes refer to natural
transformations $GD_\cC\to D_\cD F$ as \emph{form transformations}. If $\cC$
is the category with one object $*$ and one morphism $\id$, and if we
identify functors $\{*\}\to \cD$ with objects of $\cD$, then a form
transformation is simply a form in $\cD$. Form transformations are composed
as follows.

\begin{construction}\label{c.comp} Let
$(\cB,D_\cB,\ev)$, $(\cC,D_\cC,\ev)$, and $(\cD,D_\cD,\ev)$ be categories
with duality. Let $F,G\colon \cC\to \cD$ and $F',G'\colon \cB\to \cC$ be
functors. Let $\alpha\colon FD_\cC\to D_\cD G$ and $\alpha'\colon F'D_\cB\to
D_\cC G'$ be natural transformations. We define the \textit{composite} of
$\alpha$ and $\alpha'$ to be
\[\alpha\alpha'\colon FF'D_\cB\xrightarrow{F\alpha'} FD_\cC G'\xrightarrow{\alpha
G'}D_\cD GG'.\]
\end{construction}

As the name suggests, form transformations act on forms. This can be seen as
the case $\cB=\{*\}$ of the preceding construction, as follows.

\begin{construction}\label{c.F}
Let $(\cC,D_\cC,\ev)$ and $(\cD,D_\cD,\ev)$ be categories with duality. Let
$F,G\colon \cC\to \cD$ be functors and let $\alpha\colon FD_\cC \to D_\cD G$
be a natural transformation. Let $A, B$ be objects of $\cC$ and let $f\colon
A\to D_\cC B$ be a morphism in $\cC$. The action of $\alpha$ on $f$ is the
composite
\[\alpha f\colon FA \xrightarrow{Ff} FD_\cC B \xrightarrow{\alpha_B} D_\cD GB.\]
\end{construction}

We have the following notion of symmetry for form transformations.

\begin{definition}[Symmetry of form transformations]\label{d.symF}
Let $(\cC,D_\cC,\ev)$ and $(\cD,D_\cD,\ev)$ be categories with duality. Let
$F,G\colon \cC\to \cD$ be functors. Assume that $(\cD,D_\cD,\ev)$ is an
additive category with duality if $\sigma=-1$.
\begin{enumerate}
\item We define the \emph{transpose} of a natural transformation
    $\beta\colon GD_\cC\to D_\cD F$ to be the composite
\[\beta^T\colon FD_\cC\xrightarrow{\ev FD_\cC}  \\
    D_\cD D_\cD FD_\cC\xrightarrow{D_\cD\beta D_\cC}  D_\cD GD_\cC D_\cC\xrightarrow{D_\cD G\ev} D_\cD G.
\]
We call $\sigma\beta^T$ the $\sigma$-transpose of $\beta$.

\item We say that a natural transformation $\alpha\colon FD_\cC \to D_\cD
    F$ is \emph{$\sigma$-symmetric} if $\alpha=\sigma\alpha^T$.
\end{enumerate}
\end{definition}

Again we will often say ``symmetric'' instead of ``$1$-symmetric''. The
terminology above is justified by the following easy lemma.

\begin{lemma}
We have $(\beta^T)^T=\beta$. Moreover, the map $\Nat(GD_\cC,D_\cD
F)\to\Nat(FD_\cC,D_\cD G)$ carrying $\beta$ to $\beta^T$ is a bijection.
\end{lemma}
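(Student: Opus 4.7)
The second assertion is automatic once the first is known: $\beta\mapsto\beta^T$ is an involution exchanging $\Nat(GD_\cC,D_\cD F)$ with $\Nat(FD_\cC,D_\cD G)$, hence each direction is a bijection with the other as its inverse.

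For the first assertion, my plan is a pointwise diagram chase at each $A\in\cC$. Expanding using Definition \ref{d.symF}, writing out $\beta^T_{D_\cC A}$, and applying $D_\cD$ contravariantly to that three-factor composite yields
\[
(\beta^T)^T_A = D_\cD F\,\ev_A\circ D_\cD \ev_{FD_\cC^2 A}\circ D_\cD^2\beta_{D_\cC^2 A}\circ D_\cD^2 G\,\ev_{D_\cC A}\circ \ev_{GD_\cC A}.
\]
I would simplify this in five steps. First, apply the naturality of $\ev$ (in $\cD$) with respect to $G\,\ev_{D_\cC A}$, which brings the rightmost two factors to $\ev_{GD_\cC^3 A}\circ G\,\ev_{D_\cC A}$. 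Second, apply the naturality of $\ev$ again, now with respect to $\beta_{D_\cC^2 A}$, replacing $D_\cD^2\beta_{D_\cC^2 A}\circ \ev_{GD_\cC^3 A}$ by $\ev_{D_\cD FD_\cC^2 A}\circ\beta_{D_\cC^2 A}$. Third, invoke the zig-zag identity in $\cD$ at the object $FD_\cC^2 A$ to collapse $D_\cD\ev_{FD_\cC^2 A}\circ\ev_{D_\cD FD_\cC^2 A}=\id$. Fourth, apply naturality of $\beta$ against the morphism $\ev_A\colon A\to D_\cC^2 A$ in $\cC$, which converts $D_\cD F\,\ev_A\circ\beta_{D_\cC^2 A}$ into $\beta_A\circ GD_\cC\,\ev_A$. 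Finally, use the zig-zag identity in $\cC$, namely $D_\cC\,\ev_A\circ \ev_{D_\cC A}=\id_{D_\cC A}$, to reduce the remaining $G$-image to the identity. What survives is precisely $\beta_A$.

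The main obstacle is purely bookkeeping. The two contravariant functors $D_\cC$ and $D_\cD$ must be tracked carefully when distributing $D_\cD$ over the three-factor composite defining $\beta^T_{D_\cC A}$, and the many whiskered copies of $\ev$ live in a variety of places (evaluated at $GD_\cC A$, $FD_\cC^2 A$, $D_\cD FD_\cC^2 A$, etc.), so subscripts must be kept straight. Each of the five simplification steps is otherwise an immediate consequence of the duality data and of naturality. The proof is the evident generalization of the forms-level statement $(g^T)^T=g$ proved earlier in this subsection, to which it reduces formally by taking $\cC$ to be the trivial one-object category.
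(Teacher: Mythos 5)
The paper leaves this lemma unproved, remarking only that it is ``easy''; your proposal supplies a complete argument and it is correct. Your five-step unwinding of $(\beta^T)^T_A$ — naturality of $\ev$ in $\cD$ applied first against $G\,\ev_{D_\cC A}$ and then against $\beta_{D_\cC^2 A}$, the triangle identity in $\cD$ at $FD_\cC^2 A$, naturality of $\beta$ against $\ev_A\colon A\to D_\cC^2 A$, and finally the triangle identity in $\cC$ pushed through $G$ — is exactly the pointwise form of the diagram the paper drew to prove the form-level identity $(g^T)^T=g$ just after Definition~\ref{d.symD}, and I have checked that every intermediate morphism has the domain and codomain you claim and that the cancellations go through. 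Bijectivity then follows as you say, since you have shown the map $\beta\mapsto\beta^T$ and the map $\alpha\mapsto\alpha^T$ in the other direction are mutually inverse.

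One caution worth recording: you (necessarily) use the \emph{strict} triangle identity $D\ev\circ\ev D=\id_D$ in both $\cC$ and $\cD$. Definition~\ref{d.d} literally asks only that this composite be ``isomorphic to'' $\id_D$, which is a weaker condition; but the paper's own proof of $(g^T)^T=g$ already invokes the strict equality, and the references cited there impose it, so the wording of the definition is evidently an imprecision rather than a genuine weakening, and your usage is consistent with the paper's. You could also have obtained the result more abstractly by equipping the functor category $\Fun(\cC,\cD)$ with the duality $G\mapsto D_\cD G D_\cC$ sketched in the remark immediately after the lemma and then applying the form-level case there; this would trade your chase for the comparable task of verifying the triangle identity for that induced duality, so there is no real saving. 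Your direct verification is self-contained and fine. Finally, your closing remark is accurate as a statement that the form-level lemma is the case $\cC=\{*\}$ of the present one; just be clear that this observation explains the relation between the two statements, and is not itself a reduction of the proof.
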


The transpose $\alpha=\beta^T$ is uniquely characterized by the
commutativity of the diagram
\begin{equation}\label{e.formtrans}
\xymatrix{F\ar[r]^-{F\ev}\ar[d]_{\ev F} & FD_\cC D_\cC\ar[d]^{\alpha D_\cC}\\
D_\cD D_\cD F\ar[r]^-{D_\cD\beta} &D_\cD GD_\cC.}
\end{equation}
If $\cC=\{*\}$, Definition \ref{d.symF} reduces to Definition \ref{d.symD}.

\begin{remark}
A more direct analogue of Definition \ref{d.symD} (1) for functors is as
follows. Let $\cC$ be a category and let $(\cD,D_\cD,\ev)$ be a category
with duality. Let $G\colon \cC\to \cD$ and $H\colon \cC\to \cD^{\op}$ be
functors. Then the map $\Nat(G,D_\cD H)\to \Nat(H,D_\cD G)$ carrying
$\gamma\colon G\to D_\cD H$ to $\gamma^*\colon H\xrightarrow{\ev H} D_\cD
D_\cD H \xrightarrow{D_\cD\gamma} D_\cD G$ is a bijection. Indeed, $\gamma$
is a collection $(\gamma_A\colon GA\to D_\cD HA)_A$ of forms in $\cD$ and
$\gamma^*$ is characterized by $(\gamma^*)_A=(\gamma_A)^T$ for all objects
$A$ of $\cC$, so that $(\gamma^*)^*=\gamma$. As one of the referee points
out, this operation does not lead to a notion of symmetry, since $G$ and $H$
do not have the same variance.
\end{remark}

\begin{remark}
In the situation of Definition \ref{d.symF}, we have a bijection
\begin{equation}\label{e.symF}
\Nat(G D_\cC,D_\cD F)\simto \Nat(F,D_\cD G D_\cC)
\end{equation}
carrying $\beta$ to $\beta^*$. Note that $\beta^*$ is the composite
$F\xrightarrow{F\ev}FD_\cC D_\cC\xrightarrow{\beta^T D_\cC} D_\cD G D_\cC$,
and $\beta^T$ is the composite $FD_\cC\xrightarrow{\beta^*D_\cC} D_\cD G
D_\cC D_\cC \xrightarrow{D_\cD G\ev} D_\cD G$.

If equip the functor category $\Fun(\cC,\cD)$ with the duality carrying $G$
to $D_\cD G D_\cC$ and evaluation transformation given by $G\xrightarrow{\ev
G\ev} D_\cD D_\cD G D_\cC D_\cC$, then natural transformations $F\to D_\cD G
D_\cC$ are forms in this category with duality and Definition \ref{d.symF}
of transposes of forms applies. Definition \ref{d.symF} is compatible with
Definition \ref{d.symD} via the bijection \eqref{e.symF} in the sense that
we have $(\beta^*)^T=(\beta^T)^*$.
\end{remark}

Composition of form transformations is compatible with transposition.

\begin{lemma}\label{l.comp}
Let $(\cB,D_\cB,\ev)$, $(\cC,D_\cC,\ev)$, $(\cD,D_\cD,\ev)$ be categories
with duality. Let $F,G\colon \cC\to \cD$ and $F',G'\colon \cB\to \cC$ be
functors. Let $\alpha\colon FD_\cC\to D_\cD G$ and $\alpha'\colon F'D_\cB\to
D_\cC G'$ be natural transformations and let $\alpha\alpha'\colon
FF'D_\cB\to D_\cD GG'$ be the composite. Then
$(\alpha\alpha')^T=\alpha^T\alpha'^T$.
\end{lemma}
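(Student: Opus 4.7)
The plan is to use the defining characterization of the transpose given by the commutative square \eqref{e.formtrans}: namely, $(\alpha\alpha')^T$ is the unique natural transformation $\delta\colon GG'D_\cB\to D_\cD FF'$ making the square
\[\xymatrix{FF'\ar[r]^-{FF'\ev}\ar[d]_-{\ev FF'} & FF'D_\cB D_\cB\ar[d]^-{(\alpha\alpha')D_\cB}\\ D_\cD D_\cD FF'\ar[r]^-{D_\cD\delta} & D_\cD GG'D_\cB}\]
commute. It therefore suffices to verify this identity with $\delta=\alpha^T\alpha'^T$.

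Expanding using Construction~\ref{c.comp} and the contravariance of $D_\cD$, one has $(\alpha\alpha')D_\cB=(\alpha G'D_\cB)\circ(F\alpha'D_\cB)$ and $D_\cD(\alpha^T\alpha'^T)=D_\cD(G\alpha'^T)\circ D_\cD(\alpha^T F')$. I plan to show that both the clockwise and counterclockwise composites in the square above equal the common path
\[FF'\xrightarrow{F\ev F'} FD_\cC D_\cC F'\xrightarrow{\alpha D_\cC F'} D_\cD GD_\cC F'\xrightarrow{D_\cD G\alpha'^T} D_\cD GG'D_\cB.\]

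For the counterclockwise path (through $D_\cD D_\cD FF'$), the equality with the common path reduces to the defining square \eqref{e.formtrans} for $\alpha^T$, whiskered on the right by the functor $F'$, which matches the first two arrows and leaves the final $D_\cD G\alpha'^T$ unchanged. For the clockwise path (through $FF'D_\cB D_\cB$), I first apply the defining square \eqref{e.formtrans} for $\alpha'^T$, whiskered on the left by $F$, to rewrite $F\alpha'D_\cB\circ FF'\ev$ as $FD_\cC\alpha'^T\circ F\ev F'$; then I apply the naturality of $\alpha\colon FD_\cC\to D_\cD G$ at the morphism $\alpha'^T\colon G'D_\cB\to D_\cC F'$ (viewed objectwise in $\cC$) to rewrite $\alpha G'D_\cB\circ FD_\cC\alpha'^T$ as $D_\cD G\alpha'^T\circ\alpha D_\cC F'$.

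The only genuine obstacle is bookkeeping: one must keep careful track of the variances of $D_\cC$ and $D_\cD$ when whiskering the defining squares, and check that the composites remain well-typed at the intermediate functor $D_\cD GD_\cC F'$. Once this intermediate is identified, the verification is a routine diagram chase combining the two defining squares with a single instance of naturality of $\alpha$.
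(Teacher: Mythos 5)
Your proposal is correct and is essentially the paper's own proof: the paper verifies the identity by pasting together exactly the same three cells — the defining square \eqref{e.formtrans} for $\alpha'^T$ whiskered by $F$, the defining square for $\alpha^T$ whiskered by $F'$, and a naturality square for $\alpha$ at the components of $\alpha'^T$ — through the same intermediate functor $D_\cD G D_\cC F'$. You have merely written the diagram chase equationally rather than as a single pasted hexagon, which is a difference of presentation, not of substance.
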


\begin{proof}
In the diagram
\[\xymatrix{FF'\ar[r]^-\ev\ar[rd]^\ev\ar[d]_\ev & FF'D_\cB D_\cB\ar[rd]^{\alpha'} \\
D_\cD D_\cD FF'\ar[rd]^{\alpha^T} & FD_\cC D_\cC F'\ar[r]^{\alpha'^T}\ar[d]^{\alpha}& FD_\cC G'D_\cB\ar[d]^{\alpha}\\
&D_\cD GD_\cC F'\ar[r]^{\alpha'^T} & D_\cD GG'D_\cB,}
\]
all inner cells commute. It follows that the outer hexagon commutes.
\end{proof}

Taking $\cB=\{*\}$, we obtain the following compatibility of transposition
with action of form transformations.

\begin{lemma}\label{l.F}
Let $(\cC,D_\cC,\ev)$ and $(\cD,D_\cD,\ev)$ be categories with duality. Let
$F,G\colon \cC\to \cD$ be functors equipped with a natural transformation
$\alpha\colon FD_\cC \to D_\cD G$. Let $f\colon A\to D_\cC B$ be a morphism
in $\cC$. Then $(\alpha f)^T=\alpha^T f^T$.
\end{lemma}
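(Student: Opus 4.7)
The plan is to obtain Lemma \ref{l.F} as a direct specialization of Lemma \ref{l.comp}, since the ``action'' in Construction \ref{c.F} is precisely the case of the composition in Construction \ref{c.comp} where one of the three categories with duality is trivial.

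First I would regard the one-object, one-morphism category $\cB=\{*\}$ as a category with duality by taking $D_\cB=\id$ and $\ev=\id$, which is clearly an (additive) category with duality. Then I would package the data of the morphism $f\colon A\to D_\cC B$ as a form transformation between constant functors $F',G'\colon \{*\}\to \cC$ given by $F'(*)=A$, $G'(*)=B$, with $\alpha'\colon F'D_\cB\to D_\cC G'$ being the natural transformation whose component at $*$ is $f$. Unwinding Definition \ref{d.symF} in this trivial setting, one checks that the component of $\alpha'^{T}$ at $*$ is exactly $f^{T}$ in the sense of Definition \ref{d.symD}.

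Next I would unwind the composite $\alpha\alpha'\colon FF'D_\cB\to D_\cD GG'$ of Construction \ref{c.comp} at the object $*$: it is by definition the morphism
\[
FA\xrightarrow{Ff} FD_\cC B\xrightarrow{\alpha_B} D_\cD GB,
\]
which matches $\alpha f$ from Construction \ref{c.F}. Similarly, the composite $\alpha^{T}\alpha'^{T}\colon GG'D_\cB\to D_\cD FF'$ at $*$ equals
\[
GB\xrightarrow{Gf^{T}} GD_\cC A\xrightarrow{\alpha^{T}_A} D_\cD FA,
\]
which is $\alpha^{T}f^{T}$. Finally, the component of $(\alpha\alpha')^{T}$ at $*$ collapses (since $D_\cB=\id$ and $\ev=\id$ there) to $GB\xrightarrow{\ev} D_\cD D_\cD GB\xrightarrow{D_\cD(\alpha f)} D_\cD FA$, which is $(\alpha f)^{T}$ in the sense of Definition \ref{d.symD}.

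Applying Lemma \ref{l.comp} yields $(\alpha\alpha')^{T}=\alpha^{T}\alpha'^{T}$, and evaluating at $*$ gives $(\alpha f)^{T}=\alpha^{T}f^{T}$, as desired. There is no real obstacle here; the only thing to watch is that the bookkeeping of the trivial category with duality matches the definitions of transpose and of ``action on forms'' from Constructions \ref{c.F} and \ref{c.comp} on the nose, which is the verification carried out above.
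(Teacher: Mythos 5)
Your proof is correct and is exactly the paper's argument: the paper obtains Lemma \ref{l.F} by specializing Lemma \ref{l.comp} to $\cB=\{*\}$, and your write-up just spells out the bookkeeping (the trivial duality on $\{*\}$, the identification of $\alpha'$ with $f$, of $\alpha'^T$ with $f^T$, and of the composites with the actions of Construction \ref{c.F}), all of which checks out.
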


The following consequence of Lemma \ref{l.F} is used many times in Section
\ref{s.gp}.

\begin{lemma}\label{l.selfdual}
Let $(\cC,D_\cC,\ev)$ and $(\cD,D_\cD,\ev)$ be categories with duality. Let
$F\colon \cC\to \cD$ be a functor endowed with a symmetric natural
\emph{isomorphism} $\alpha\colon FD_\cC\simto D_\cD F$.
\begin{enumerate}
\item $F$ carries $1$-self-dual objects of $\cC$ to $1$-self-dual objects
    of $\cD$.

\item If $F$ is fully faithful, then the converse holds: any object $A$ of
    $\cC$ such that $FA$ is $1$-self-dual is $1$-self-dual.
\end{enumerate}
\end{lemma}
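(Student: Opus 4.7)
\medskip

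The plan is to deduce both parts from Lemma \ref{l.F} applied to the form transformation $\alpha$. The key observation is: for any morphism $f\colon A \to D_\cC A$ in $\cC$, Lemma \ref{l.F} gives $(\alpha f)^T = \alpha^T f^T$, and since $\alpha$ is symmetric by hypothesis ($\alpha^T = \alpha$), this simplifies to
\[
(\alpha f)^T = \alpha f^T,
\]
where $\alpha f\colon FA \xrightarrow{Ff} FD_\cC A \xrightarrow{\alpha_A} D_\cD FA$ is the form in $\cD$ obtained by applying $F$ to $f$ and postcomposing with $\alpha_A$.

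For part (1), given a symmetric isomorphism $f\colon A \simto D_\cC A$, the composite $\alpha f$ is an isomorphism $FA \simto D_\cD FA$ (both $Ff$ and $\alpha_A$ are isomorphisms), and by the identity above, $(\alpha f)^T = \alpha f^T = \alpha f$, so $\alpha f$ is symmetric. This exhibits $FA$ as $1$-self-dual.

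For part (2), suppose $F$ is fully faithful and $g\colon FA \simto D_\cD FA$ is a symmetric isomorphism. The composite $\alpha_A^{-1} g\colon FA \to FD_\cC A$ is an isomorphism, and since $F$ is fully faithful it is of the form $Ff$ for a unique morphism $f\colon A \to D_\cC A$; moreover $f$ is itself an isomorphism, because fully faithful functors reflect isomorphisms. Then $g = \alpha f$, and the symmetry of $g$ reads
\[
\alpha f = g = g^T = (\alpha f)^T = \alpha f^T,
\]
using the key identity again. Cancelling the isomorphism $\alpha_A$ yields $Ff = Ff^T$, and faithfulness of $F$ gives $f = f^T$, so $f$ is symmetric and $A$ is $1$-self-dual. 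There is no real obstacle here: the entire argument is a formal consequence of Lemma \ref{l.F} together with the standard fact that fully faithful functors reflect isomorphisms.
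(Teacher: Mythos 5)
Your proof is correct and follows essentially the same route as the paper's: both rely on Lemma \ref{l.F} (via the identity $(\alpha f)^T = \alpha^T f^T = \alpha f^T$) for part (1), and both use full faithfulness to produce the unique $f$ with $\alpha f = g$ and then conclude $f = f^T$ for part (2); your ``cancel $\alpha_A$ then apply faithfulness'' is just an unpacking of the uniqueness the paper invokes directly.
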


\begin{proof}
For (1), let $f\colon A\simto D_\cC A$ be a symmetric isomorphism. By Lemma
\ref{l.F}, $\alpha f$ is symmetric. The assertion follows from the fact that
$\alpha f$ is an isomorphism. For (2), let $g\colon FA\simto D_\cD FA$ be a
symmetric isomorphism. Since $F$ is fully faithful, there exists a unique
morphism $f\colon A\to D_\cC A$ such that $\alpha f=g$. Note that $f$ is an
isomorphism. Since $\alpha f^T=g$, we have $f^T=f$.
\end{proof}

The following lemma is used in Subsection \ref{ss.gp1} to show the symmetry
of the middle extension functor.

\begin{lemma}\label{l.inter}
Let $(\cC,D_\cC,\ev)$ and $(\cD,D_\cD,\ev)$ be categories with duality.
Assume that $\cD$ is an Abelian category and $D_\cD$ carries epimorphisms in
$\cD$ to monomorphisms. Let $E,G\colon \cC\to \cD$ be functors endowed with
natural transformations $\alpha\colon E\to G$ and $\beta \colon GD_\cC \to
D_\cD E$ such that the composite $ED_\cC\xrightarrow{\alpha D_\cC}
GD_\cC\xrightarrow{\beta} D_\cD E$ is symmetric and such that the image
functor $F\colon \cC\to \cD$ of $\alpha$ fits into a commutative diagram
\[\xymatrix{FD_\cC \ar[r]^\gamma\ar[d] & D_\cD F\ar[d]\\
GD_\cC \ar[r]^\beta & D_\cD E.}
\]
Then the natural transformation $\gamma\colon FD_\cC\to D_\cD F$ is
symmetric.
\end{lemma}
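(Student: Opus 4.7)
The plan is to combine the epi--mono factorization of $\alpha$ with a naturality property of the transpose operation on form transformations.

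First, since $\cD$ is Abelian, factor $\alpha$ in the functor category as $E\xrightarrow{p}F\xrightarrow{i}G$, with $p$ a pointwise epimorphism and $i$ a pointwise monomorphism. The commutative square of the hypothesis identifies its two vertical arrows as $iD_\cC$ (on the left) and $D_\cD p$ (on the right). Precomposing both paths with $pD_\cC\colon ED_\cC\to FD_\cC$ yields
\[
D_\cD p\circ\gamma\circ pD_\cC \;=\; \beta\circ iD_\cC\circ pD_\cC \;=\; \beta\circ\alpha D_\cC,
\]
and I denote this common value by $\delta\colon ED_\cC\to D_\cD E$. By hypothesis, $\delta^T=\delta$.

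Second, I would record the following general fact: for a natural transformation $\phi\colon F_1\to F_2$ of functors $\cC\to \cD$ and a form transformation $\gamma'\colon F_2D_\cC\to D_\cD F_2$, the pulled-back form transformation
\[
\phi^\ast\gamma' \coloneqq D_\cD\phi\circ\gamma'\circ\phi D_\cC\colon F_1 D_\cC\to D_\cD F_1
\]
satisfies $(\phi^\ast\gamma')^T = \phi^\ast(\gamma'{}^T)$. This is a direct check from Definition \ref{d.symF}: one chases both sides through the explicit formula $\mu^T = D_\cD F_1\ev\circ D_\cD\mu D_\cC\circ \ev F_1 D_\cC$, using the contravariance of $D_\cD$ together with the naturality squares of $\ev\colon\id\to D_\cD D_\cD$ applied to $\phi D_\cC$ and of $F\ev\colon F\to FD_\cC D_\cC$ applied to $\phi$. (Alternatively, the same identity can be extracted from Lemma \ref{l.comp} after reinterpreting $\phi$ suitably.)

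Applied with $\phi=p$ and $\gamma'=\gamma$, this yields
\[
D_\cD p\circ\gamma^T\circ pD_\cC \;=\; (D_\cD p\circ\gamma\circ pD_\cC)^T \;=\; \delta^T \;=\; \delta \;=\; D_\cD p\circ\gamma\circ pD_\cC.
\]
Finally, $pD_\cC$ is an epimorphism (pointwise, since $p$ is) and $D_\cD p$ is a monomorphism (since $D_\cD$ carries epimorphisms to monomorphisms by hypothesis). Cancelling them on the right and on the left, respectively, forces $\gamma^T=\gamma$, as desired.

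The principal technical step is the naturality identity $(\phi^\ast\gamma')^T=\phi^\ast(\gamma'{}^T)$; it is formal but requires careful bookkeeping of the two instances of $\ev$ and the contravariance of $D_\cD$. The concluding epi--mono cancellation is where the Abelian hypothesis on $\cD$ and the assumption that $D_\cD$ reverses epis to monos become essential.
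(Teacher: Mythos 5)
Your proof is correct and follows essentially the same route as the paper's: both exploit the epi--mono factorization $E\xrightarrow{p}F\xrightarrow{i}G$ of $\alpha$, transfer the symmetry of $\beta\circ\alpha D_\cC$ to $\gamma$, and conclude by cancelling $pD_\cC$ (epi) and $D_\cD p$ (mono, using that $D_\cD$ sends epimorphisms to monomorphisms). The only difference is organizational: you isolate the compatibility $(\phi^\ast\gamma')^T=\phi^\ast(\gamma'^T)$ as a separate naturality lemma, whereas the paper verifies exactly those naturality squares as the inner cells of a single diagram.
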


\begin{proof}
In fact, in the diagram
\[\xymatrix{E\ar@{->>}[rd]\ar[r]^{\ev}\ar[ddd]_{\ev} & ED_\cC D_\cC\ar[rr]^{\alpha}\ar[rd] && GD_\cC D_\cC\ar[ddd]^\beta\\
& F\ar[d]_{\ev}\ar[r]_{\ev} & FD_\cC D_\cC \ar[ru]\ar[d]^\gamma\\
& D_\cD D_\cD F\ar[r]^\gamma & D_\cD F D_\cC\ar@{^{(}->}[rd]\\
D_\cD D_\cD E\ar[r]^\beta\ar[ru] & D_\cD G D_\cC \ar[rr]^\alpha\ar[ru] && D_\cD E D_\cC,}
\]
the outer square commutes by the symmetry of $\beta\alpha$ and all inner
cells except the inner square commute. It follows that the inner square
commutes.
\end{proof}

We conclude this subsection with another example of form transformation,
which will be used to handle the sign of the Lefschetz pairing (see Lemma
\ref{l.final}). We refer to \cite[Remark 10.1.10 (ii)]{KS} for the
convention on distinguished triangles in the opposite category of a
triangulated category.

\begin{lemma}\label{l.trunc}
Let $\cD$ be a triangulated category equipped with a $t$-structure $P$. Let
$(D,\ev)\colon \cD^{\op}\to \cD$ be a duality on the underlying category of
$\cD$. Assume that $D$ underlies a \emph{right $t$-exact} triangulated
functor. We consider $\tau=\Ptau^{\ge a}$ and $\tau'=\Ptau^{\le -a}$ as
functors $\cD\to \cD$. Then the form transformations $\tau D \to D \tau'$
and $\tau' D \to D \tau$ induced by the diagrams
\begin{gather}
\tau D \to \tau D \tau' \xleftarrow{\sim} D \tau',\label{e.tau0}\\
\tau' D \xleftarrow{\sim} \tau'D\tau \to D\tau\label{e.tau}
\end{gather}
are transposes of each other.
\end{lemma}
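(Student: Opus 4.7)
The plan is to verify the defining relation of the transpose from Definition \ref{d.symF}: writing $\alpha\colon \tau D\to D\tau'$ and $\beta\colon \tau' D\to D\tau$ for the form transformations induced by the two displayed diagrams, I must show $(\alpha D)\circ(\tau\ev) = (D\beta)\circ(\ev\tau)$ as natural transformations $\tau\to D\tau' D$. Throughout, let $p\colon \id\to\tau$ and $i\colon \tau'\to\id$ denote the structural natural transformations attached to the truncation functors.

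First I would check that the two displayed isomorphisms make sense. By the right $t$-exactness hypothesis on $D$ (interpreted so that $D$ sends $\PcD^{\le n}$ into $\PcD^{\ge -n}$), $D\tau'$ takes values in $\PcD^{\ge a}$, so $p_{D\tau'}\colon D\tau'\simto\tau D\tau'$ is an isomorphism; and the cofiber of $Dp\colon D\tau\to D$ is $D\Ptau^{<a}$, which lies in $\PcD^{\ge 1-a}$ and is therefore annihilated by $\tau'=\Ptau^{\le -a}$, yielding $\tau'Dp\colon\tau'D\tau\simto\tau'D$. Unwinding the diagrams, $\alpha$ is characterized by $p_{D\tau'}\circ\alpha=\tau Di$ and $\beta$ by $\beta\circ\tau'Dp=iD\tau$.

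The crucial observation is that, since $D\tau' DX\in\PcD^{\ge a}$ for every $X$, precomposition with $p_X\colon X\to\tau X$ induces a bijection $\Hom(\tau X,D\tau' DX)\simto\Hom(X,D\tau' DX)$. It therefore suffices to check the desired identity after precomposing with $p$. For the left-hand side, naturality of $p$ at $\ev_X$ gives $\tau\ev_X\circ p_X=p_{DDX}\circ\ev_X$; combining naturality of $p$ at $Di_{DX}$ with the defining relation of $\alpha$ and cancelling the isomorphism $p_{D\tau' DX}$ yields $\alpha_{DX}\circ p_{DDX}=Di_{DX}$. Therefore $(\alpha D\circ\tau\ev)_X\circ p_X=Di_{DX}\circ\ev_X$.

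For the right-hand side, naturality of $\ev$ at $p_X$ gives $\ev_{\tau X}\circ p_X=DDp_X\circ\ev_X$, so the task reduces to identifying $Dp_X\circ\beta_X$ with $i_{DX}$. Using $\beta_X=i_{D\tau X}\circ(\tau'Dp_X)^{-1}$ and naturality of $i$ at $Dp_X\colon D\tau X\to DX$, one computes $Dp_X\circ\beta_X=i_{DX}\circ\tau'Dp_X\circ(\tau'Dp_X)^{-1}=i_{DX}$, hence $(D\beta\circ\ev\tau)_X\circ p_X=Di_{DX}\circ\ev_X$ as well, and the two sides agree. The one place requiring care is verifying the isos in the diagrams under the stated hypothesis on $D$; after that the proof is a pure naturality chase which, notably, does not need to invoke the triangle identity $D\ev\circ\ev D=\id_D$.
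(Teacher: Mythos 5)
Your proof is correct, and it is organized differently from the paper's. The paper proves the commutativity of the square $(\alpha D)\circ(\tau\ev)=(D\beta)\circ(\ev\tau)$ directly, by assembling one large diagram of natural transformations from the defining diagrams of $\alpha$ and $\beta$ and chasing naturality; the two isomorphisms in the displayed diagrams are justified beforehand using Lemma~\ref{l.nine}. You instead exploit the adjunction property of $\Ptau^{\ge a}$: since $D\tau' DX\in\PcD^{\ge a}$, precomposition with $p_X\colon X\to\tau X$ is a bijection on $\Hom(-,D\tau'DX)$, so it suffices to match the two sides after precomposing with $p$. Each side then collapses by a short naturality computation to $D i_{DX}\circ\ev_X$, using only $\alpha_{DX}\circ p_{DDX}=Di_{DX}$ (from the characterization $p_{D\tau'}\circ\alpha=\tau Di$) and $Dp_X\circ\beta_X=i_{DX}$ (from $\beta\circ\tau'Dp=iD\tau$). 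Your justification of the isomorphism in \eqref{e.tau} via vanishing of $\Hom(\PcD^{\le -a},\PcD^{\ge 1-a})$ is also a more direct route than the paper's appeal to the nine-diagram lemma. The two proofs have the same underlying content (pure naturality, with neither using the triangle identity $D\ev\circ\ev D=\id_D$), but your reduction-to-the-unit trick replaces the global diagram chase with a pointwise check and is somewhat easier to verify line by line. One small caveat worth being explicit about in a write-up: the statement "cofiber in $\PcD^{\ge 1-a}$ implies $\tau'$ of the map is an iso" should be unwound via the universal property of $\Ptau^{\le -a}$ (as you implicitly do) rather than asserted as immediate, and one should also confirm that the iso thus produced is indeed $\tau'Dp$; this follows from naturality of the counit $i$, exactly as you later use it.
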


The second arrow in \eqref{e.tau0} is an isomorphism by the assumption that
$D$ carries $\Ptau^{\le -a}$ to $\Ptau^{\ge -a}$. To see that the first
arrow in \eqref{e.tau} is an isomorphism, consider, for any object $A$ of
$\cD$, the distinguished triangle
\[D\Ptau^{\ge a} A \xrightarrow{f} DA \to D \Ptau^{\le a-1} A\to.\]
By assumption, $D \Ptau^{\le a-1} A$ is in $\PcD^{\ge 1-a}$. Thus, by Lemma
\ref{l.nine}, $\Ptau^{\le -a} f$ is an isomorphism.

\begin{proof}
The commutativity of \eqref{e.formtrans} follows from the commutativity of
the diagram
\[\xymatrix{\tau\ar[r]\ar[d] & \tau DD \ar[r] & \tau D\tau' D\ar[d] & D\tau' D\ar[l]_\sim\ar[d]\ar@{=}[rdd]\\
DD\tau \ar[d] && DD\tau D\tau' D\ar[d] & DDD\tau' D\ar[dd]\ar[rd]\ar[l]_\sim\\
D\tau'D \tau D \ar[r] & D\tau' D\tau DD\ar[r] & D\tau' D\tau D \tau' D && D\tau' D\ar[dd]^\simeq \\
D\tau' D\ar[u]^\simeq \ar@{=}[rrd] \ar[r] & D\tau' DDD\ar[rd]\ar[u]_\simeq\ar[rr] && D\tau'DD\tau' D\ar[rd]\ar[lu]_\simeq \\
&& D\tau'D\ar[rr]^\sim && D\tau'\tau' D.}\]
\end{proof}

\begin{remark}\label{r.trunc}
For any truncation functor $\tau=\Ptau^{[a,b]}$ with dual truncation functor
$\tau'=\Ptau^{[-b,-a]}$, combining the two form transformations in the
lemma, we obtain a form transformation $\gamma_\tau\colon \tau D \to D
\tau'$ whose transpose is $\gamma_\tau'$. The form transformation
$\gamma_\tau$ is an isomorphism if $D$ is \emph{$t$-exact}.
\end{remark}

\subsection{Duality in closed symmetric categories}\label{ss.rel}
In this subsection, we study dualities given by internal Hom functors in
closed symmetric categories. Let $(\cC,\otimes, c)$ be a closed symmetric
category (Definition \ref{d.scat}).

\begin{construction}
Let $K$ be an object of $\cC$. We let $D_K$ denote the functor
$\cHom(-,K)\colon \cD^{op}\to \cD$. For an object $A$, the composite
\[A\otimes D_K A\xrightarrow{c} D_K A\otimes A \xrightarrow{\adj} K,\]
where $\adj$ denotes the adjunction morphism, corresponds by adjunction to a
morphism $A\to D_K D_K A$. This defines a natural transformation $\ev\colon
\id_\cD\to D_K D_K$, which makes $D_K$ a duality on $\cD$. The latter
follows by adjunction from the commutativity of the diagram
\[\xymatrix{D_K A\otimes D_K D_K A\ar[r]^c & D_K D_K A\otimes D_K A\ar[rdd]^{\adj} \\
D_K A\otimes A\ar[u]^{\id\otimes \ev}\ar[r]^c \ar[rrd]_{\adj} & A\otimes D_K A\ar[u]^{\ev\otimes \id}\\
&& K.}
\]
\end{construction}

We defined transposes of pairings in symmetric categories (Definition
\ref{d.symD}) and in categories with duality (Definition \ref{d.sympair}).
The two definitions are compatible via the above construction, by the
following lemma.

\begin{lemma}\label{lemma2.1.12}
Let $A,B,K$ be objects of $\cC$. We put $D=D_K$. Then the following diagram
commutes
\[\xymatrix{\Hom(B\otimes A, K)\ar[d]_\simeq\ar[rr]^{-\circ c} && \Hom(A\otimes B, K)\ar[d]^\simeq\\
\Hom(B,DA)\ar[r]^D & \Hom(DDA,DB)\ar[r]^-{-\circ \ev(A)}& \Hom(A,DB).}
\]
\end{lemma}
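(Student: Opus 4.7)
The plan is to unfold both composites as morphisms in $\Hom(A, DB)$ (or equivalently, their adjoints in $\Hom(A\otimes B, K)$) and show they agree by manipulating the defining property of the adjunction counit together with naturality of $c$.

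Write $e_K^X\colon DX\otimes X \to K$ for the adjunction counit (the unique morphism adjoint to $\id_{DX}$). For any $g\colon B\otimes A\to K$ with adjoint $\tilde g\colon B\to DA$, one has the defining identity
\[g = e_K^A\circ(\tilde g\otimes \id_A).\]
Take $g\colon B\otimes A\to K$ with adjoint $\tilde g\colon B\to DA$. The top-then-right composite sends $g$ to $g\circ c_{AB}\colon A\otimes B\to K$, whose adjoint $A\to DB$ is characterized by the property that applying $e_K^B\circ(-\otimes \id_B)$ recovers it. The left-then-bottom composite produces $D\tilde g\circ \ev(A)\colon A\to DB$. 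Thus it suffices to show that the adjoint of $D\tilde g\circ\ev(A)$, namely
\[e_K^B\circ\big((D\tilde g\circ \ev(A))\otimes \id_B\big)\colon A\otimes B\to K,\]
equals $g\circ c_{AB}$.

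Using the interchange law, rewrite
\[e_K^B\circ(D\tilde g\otimes \id_B)\circ(\ev(A)\otimes \id_B)
= e_K^B\circ(D\tilde g\otimes \id_B)\circ(\ev(A)\otimes\id_B).\]
The key input is the naturality of the counit: for any morphism $\phi\colon Y\to X$,
\[e_K^X\circ(\id_{DX}\otimes \phi) \;=\; e_K^Y\circ(D\phi\otimes \id_Y),\]
which is precisely the adjoint characterization of $D\phi$. Applying this with $\phi=\tilde g\colon B\to DA$ gives
\[e_K^B\circ(D\tilde g\otimes \id_B) \;=\; e_K^{DA}\circ(\id_{DDA}\otimes \tilde g).\]
Substituting and reassembling yields $e_K^{DA}\circ(\ev(A)\otimes \tilde g)$. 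Next use naturality of $c$ (so $(\tilde g\otimes\id_A)\circ c_{AB} = c_{A,DA}\circ(\id_A\otimes \tilde g)$), together with the defining identity of $\ev(A)$ from the construction preceding the lemma, namely $e_K^A\circ c_{A,DA} = e_K^{DA}\circ(\ev(A)\otimes \id_{DA})$. Combining gives
\[g\circ c_{AB} = e_K^A\circ(\tilde g\otimes\id_A)\circ c_{AB}
= e_K^A\circ c_{A,DA}\circ(\id_A\otimes \tilde g)
= e_K^{DA}\circ(\ev(A)\otimes \tilde g),\]
which matches the expression obtained from the left-bottom route, completing the verification.

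The main (mild) obstacle is simply keeping track of which tensor factor carries which morphism; the only nontrivial ingredients are the defining identity of the counit, naturality of $c$, and the explicit description of $\ev(A)$ given in the construction of the duality $D_K$. No additional hypotheses on $\cC$ are needed.
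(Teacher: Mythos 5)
Your proof is correct and takes essentially the same route as the paper: both arguments unwind the two composites in terms of the counit, then identify them using the naturality of $c$, the adjoint characterization of $D\tilde g$ (naturality of the counit), and the defining identity $e_K^{DA}\circ(\ev(A)\otimes\id)=e_K^A\circ c_{A,DA}$ for $\ev(A)$. The paper compresses these steps into a single commutative diagram, whereas you write the equalities out; the only blemish is the one "rewrite" line that repeats the same expression on both sides and does nothing.
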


\begin{proof}
Let $f\in \Hom( B,DA)$. The two images of $f$ in $\Hom(A\otimes B,K)$ are
the two composite morphisms in the commutative diagram
\[\xymatrix{A\otimes B\ar[r]^c\ar[d]_{\id \otimes f} & B\otimes A\ar[d]^{f\otimes
\id}\\
A\otimes DA\ar[r]^c & DA\otimes A\ar[r]^-{\adj} & K.}
\]
\end{proof}

Following Definition \ref{d.sd}, we say $A$ is \emph{self-dual} with respect
to $K$ if $A\simeq D_K A$. We say $A$ is \emph{$\sigma$-self-dual} with
respect to $K$ if there exists a $\sigma$-symmetric isomorphism $A \simto
D_K A$, or, in other words, if there exists a $\sigma$-symmetric pairing
$A\otimes A\to K$ that is \emph{perfect} in the sense that it induces an
isomorphism $A\simto D_KA$.

\begin{definition}
A \emph{dualizing object} of $\cC$ is an object $K$ of $\cC$ such that the
evaluation transformation $\ev\colon \id_\cC\to D_K D_K$ is a natural
isomorphism, or, in other words, that $(D_K,\ev)$ is a strong duality.
\end{definition}

\begin{remark}
Let $\cB$ be a closed symmetric \emph{monoidal} category and let $K$ be an
object of $\cB$. The associativity constraint induces an isomorphism
$\cHom(A,D_K B)\simeq D_K(A\otimes B)$ for objects $A,B$ of $\cD$. In
particular, if $K$ is a dualizing object, then $\cHom(A,B)\simeq
\cHom(A,D_KD_K B)\simeq D_K(A\otimes D_K B)$.
\end{remark}

We close this subsection with the construction of two symmetric form
transformations.

\begin{construction}
For a morphism $f\colon K\to L$ of $\cC$, the natural transformation
$D_f\colon \id_\cC D_K\to D_L \id_\cC$ is symmetric.  This follows from the
commutativity of the diagram
\[\xymatrix{A\otimes D_K A\ar[r]^c\ar[d]_{\id\otimes D_f} & D_K A\otimes A\ar[r]^-\adj\ar[d]_{D_f\otimes \id} & K\ar[d]^f\\
A\otimes D_L A\ar[r]^c & D_L A\otimes A \ar[r]^-\adj & L.}
\]
The action of $D_f$ on forms (Construction \ref{c.F}) carries $A\otimes B\to
K$ to the composite $A\otimes B\to K\xrightarrow{f} L$.
\end{construction}

\begin{construction}\label{c.image}
Let $\cC$ and $\cD$ be closed symmetric categories and let $G\colon \cC\to
\cD$ be a right-lax symmetric functor (Definition \ref{d.sf}). For objects
$A$, $K$ of $\cC$, consider the morphism
\begin{multline*}
G\cHom(A,K)\xrightarrow{\adj} \cHom(GA,G\cHom(A,K)\otimes GA) \to \cHom(GA,G(\cHom(A,K)\otimes A))\\
\xrightarrow{\adj} \cHom(GA,GK).
\end{multline*}
This induces a symmetric natural transformation $GD_K \to D_{GK} G$ (cf.\
\cite[Th\'eor\`eme 12.2.5]{Riou}), whose action on forms carries $A\otimes
B\to K$ to the pairing $GA\otimes GB\to GK$ of Construction \ref{c.rl}.
\end{construction}

\subsection{Symmetry and translation}\label{s.str}
The derived category of $\ell$-adic sheaves is equipped with a shift functor
$A\mapsto A[1]$ and the Tate twist functor $A\mapsto A(1)$. In this
subsection, we study the effects of such translation functors on symmetry.
Lemma \ref{l.final} is used in the main text to handle the symmetry of the
Lefschetz pairing.

Recall that a \emph{category with translation} \cite[Definition 10.1.1
(i)]{KS} is a category $\cD$ equipped with an equivalence of categories
$T\colon \cD\to \cD$.  We let $T^{-1}\colon \cD\to \cD$ denote a
quasi-inverse of $T$. For an integer $n$, we will often write $[n]$ for
$T^n$. Recall that a \emph{functor of categories of translation}
\cite[Definition 10.1.1 (ii)]{KS} $(\cD,T)\to (\cD',T')$ is a functor
$F\colon \cD\to \cD'$ endowed with a natural isomorphism $\eta\colon
FT\simto T'F$. Recall that a \emph{morphism of functors of categories with
translation} $(F,\eta)\to (G,\xi)$ is a natural transformation $\alpha\colon
F\to G$ of functors such that the following diagram commutes
    \[\xymatrix{FT\ar[r]^\eta_\sim\ar[d]_{\alpha T} & T'F\ar[d]^{T' \alpha}\\
    GT\ar[r]^\xi_\sim & T'G.}
    \]

Our first goal is to define duality on categories with translation, variant
of Definition \ref{d.d}. We endow $\cD^{\op}$ with the translation functor
$(T^{\op})^{-1}\colon \cD^{\op}\to\cD^{\op}$. We endow $F^\op\colon
\cA^\op\to \cA'^\op$ with the isomorphism $F^\op (T^\op)^{-1}\simto
(T'^\op)^{-1} F^\op$ induced by
\[
\eta^\op\colon T'^\op
F^\op \simto F^\op T^\op.
\]

\begin{definition}[Duality on a category with translation]\label{d.pdt}
Let $(\cD,T)$ be a category with translation. A \emph{duality on $(\cD,T)$}
is a functor of categories with translation $(D,\eta)\colon
(\cD^\op,(T^{\op})^{-1}) \to (\cD,T)$ endowed with a structure of duality on
the underlying functor $D\colon \cD^\op\to \cD$ such that $\ev\colon \id_\cD
\to DD^\op$ is a morphism of functors of categories with translation. This
means that the diagram
\[\xymatrix{T\ar[r]^{\ev}\ar[d]_\ev & DD^\op
T\ar[d]^{D(T^\op)^{-1}\eta^\op T}\\
TDD^{\op}&\ar[l]_{\eta D^\op}  D(T^\op)^{-1}D^\op}
\]
commutes. In other words, the isomorphisms $\eta^{-1}\colon TD\simto
D(T^\op)^{-1}$ and $T^{-1}\eta T^\op\colon T^{-1} D \simto D T^\op$ are
\emph{transposes} of each other in the sense of Definition \ref{d.symF}.
\end{definition}

The above definitions have obvious additive variants. An \emph{additive
category with translation} is defined to be a category with translation
whose underlying category is additive. For additive categories with
translation $\cD$ and $\cD'$, a \emph{functor of additive categories with
translation} $\cD\to \cD'$ is defined to be a functor of categories with
translation whose underlying functor is additive. An \emph{additive duality}
on an additive category with translation is a duality on the category with
translation such that the underlying functor is additive.

As in the case without translation, a basic example of dualities on
categories with translation is provided by closed symmetric categories with
translation (see Construction \ref{c.Homtrans} below). Our next goal is to
define symmetric categories with translation, variant of Definition
\ref{d.scat}. Note that in the example of $\ell$-adic sheaves, the shift and
twist functors differ in signs with regard to tensor products. To deal with
the two cases simultaneously, we let $\epsilon=\pm 1$. The case
$\epsilon=-1$ of the following definition corresponds to \cite[Definition
10.1.1 (v)]{KS}. For a more general notion, see \cite[D\'efinition
I.1.4.4]{Verdier}.

\begin{definition}\label{d.bif}
Let $\cD,\cD',\cD''$ be additive categories with translation. An
    \emph{$\epsilon$-bifunctor of additive categories with translation} $F\colon \cD\times \cD'\to
    \cD''$ is an additive bifunctor endowed with functorial isomorphisms $F(A[1],B)\simeq
    F(A,B)[1]$ and $F(A,B[1])\simeq F(A,B)[1]$ for objects $A$ of $\cD$ and $B$ of
    $\cD'$,
    such that the following diagram $\epsilon$-commutes
    \[\xymatrix{F(A[1],B[1])\ar[r]^\sim\ar[d]_\simeq & F(A,B[1])[1]\ar[d]^\simeq\\
    F(A[1],B)[1]\ar[r]^\sim & F(A,B)[2].}
    \]
\end{definition}

It then follows that the following diagram $\epsilon^{mn}$-commutes
    \[\xymatrix{F(X[m],Y[n])\ar[r]^\sim\ar[d]_\simeq & F(X,Y[n])[m]\ar[d]^\simeq\\
    F(X[m],Y)[n]\ar[r]^\sim & F(X,Y)[m+n].}
    \]

\begin{definition}[Symmetric category with translation]\label{d.smac}
An \emph{$\epsilon$-symmetric additive category with translation} is an
additive category with translation $\cD$ endowed with a symmetric structure
$\otimes$ and a structure of additive $\epsilon$-bifunctor of categories
with translation on $-\otimes -\colon \cD\times \cD\to \cD$, such that the
symmetry constraint, when restricted to each variable, is a morphism of
functors of categories with translation $\cD\to \cD$. We say that an
$\epsilon$-symmetric additive category with translation is \emph{closed} if
its underlying symmetric category is closed.
\end{definition}

For $\epsilon=1$, Definitions \ref{d.bif} and \ref{d.smac} make sense
without assuming that the categories in question are additive.

\begin{example}
Let $\cC$ be a symmetric \emph{monoidal} category and let $X$ be a
\emph{dualizable} object of $\cC$, that is, there exists an object $B$ of
$\cC$ such that $A\otimes B\simeq \one$. Then $-\otimes A$ endows $\cC$ with
the structure of a $1$-symmetric category with translation. This applies in
particular to the Tate twist functor on the Abelian category of perverse
$\Qlb$-sheaves.
\end{example}

\begin{example}\label{e.topos}
The derived category of any commutatively ringed topos is a closed
$-1$-symmetric additive category with translation. Similarly, the derived
category of $\Qlb$-sheaves is a closed $-1$-symmetric additive category with
translation.
\end{example}

Let $\cD$ be an $\epsilon$-symmetric additive category with translation.

\begin{lemma}\label{l.trans}
The diagram
\[\xymatrix{A[m]\otimes B[n] \ar[d]_c^\simeq \ar[r]^\sim & (A\otimes B[n])[m]\ar[r]^\sim
& (A\otimes B)[m+n]\ar[d]^{c[m+n]}_\simeq\\
 B[n]\otimes A[m] \ar[r]^\sim & (B\otimes A[m])[n]\ar[r]^\sim& (B\otimes A)[m+n]}
\]
$\epsilon^{mn}$-commutes for all objects $A$, $B$ of $\cD$ and all integers
$m$, $n$. Here $c$ denotes the symmetry constraint.
\end{lemma}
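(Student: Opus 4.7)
The plan is to reduce to small cases and then induct on $|m|+|n|$. First I would dispose of the degenerate cases $m=0$ or $n=0$: the sign $\epsilon^{mn}$ equals $1$, and the diagram expresses exactly the compatibility of the symmetry constraint $c$, restricted to a single variable, with the translation functor, which is built into Definition \ref{d.smac}. Similarly, using the quasi-inverse $T^{-1}$, one may reduce to the case $m,n\geq 0$, since all of the structural isomorphisms involved are invertible and transposing through $T^{-1}$ contributes factors of $\epsilon^{\pm 1}$ whose exponents are consistent with $\epsilon^{mn}$.

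Next I would establish the crucial base case $m=n=1$. Here the two paths of the diagram differ only by how one orders the operations ``apply $c$'' and ``pull a shift out of each slot.'' Applying naturality of $c$ and the $m=0$ and $n=0$ cases established above, both paths can be reduced to a common form, and the discrepancy between them becomes precisely the coherence square of Definition \ref{d.bif} for the bifunctor $-\otimes-$, which $\epsilon$-commutes. Thus the $(1,1)$-case $\epsilon^1$-commutes.

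For the inductive step, passing from $(m,n)$ to $(m+1,n)$, I would decompose $A[m+1]\otimes B[n]$ by first rewriting it as $(A[m]\otimes B[n])[1]$ via the one-variable translation isomorphism, then applying the induction hypothesis in the form of the diagram for $(m,n)$ shifted once (contributing a sign of $\epsilon^{mn}$), and finally applying the $(1,n)$-case—which itself follows by another induction on $n$ from the $(1,1)$-case and the zero cases—to move the remaining outer shift past $c$ (contributing $\epsilon^{n}$). Multiplying the signs yields $\epsilon^{(m+1)n}$. Symmetrically one extends in $n$, giving the general statement.

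The main obstacle is not conceptual but organizational: one must carefully bookkeep the canonical isomorphisms $(A[a])[b]\simeq A[a+b]$, the naturality squares for $c$ in each variable, and the glueing of the $\epsilon$-commutative square of Definition \ref{d.bif} with the strictly commutative one-variable compatibility of Definition \ref{d.smac}. No deeper fact is invoked; every sign ultimately traces back to the single $\epsilon$ appearing in Definition \ref{d.bif}, and the exponent $mn$ simply counts how many times a first-slot shift must cross a second-slot shift to separate them.
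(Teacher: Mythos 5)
Your proposal is sound, and it rests on exactly the same two ingredients as the paper's proof: the $\epsilon$-commutative coherence square from Definition \ref{d.bif} (for a single crossing of a first-slot shift past a second-slot shift) and the single-variable compatibility of $c$ with translation required in Definition \ref{d.smac}, glued together by naturality of $c$. The organizational difference is that the paper avoids the explicit double induction on $(m,n)$: immediately after Definition \ref{d.bif} it records, as a consequence of iterating the defining $\epsilon$-square, that the general square linking $F(X[m],Y[n])$ to $F(X,Y)[m+n]$ along the two orderings already $\epsilon^{mn}$-commutes. With this packaged, the paper's proof of Lemma \ref{l.trans} is a single diagram chase: it interpolates the diagram in the statement with an intermediate row, the upper two cells commuting by naturality of $c$ and of the structural one-variable isomorphisms, and the remaining triangle being precisely the $\epsilon^{mn}$-commutative coherence square. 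Your proof instead unfolds that iteration step by step: dispose of the $m=0$ and $n=0$ cases via Definition \ref{d.smac}, establish $(1,1)$ from Definition \ref{d.bif}, then induct in each variable, tracking the sign $\epsilon^{mn}\cdot\epsilon^{n}=\epsilon^{(m+1)n}$. The inductive argument is longer and requires more careful bookkeeping of the canonical $(A[a])[b]\simeq A[a+b]$ identifications, but it makes explicit where each factor of $\epsilon$ enters, which the paper leaves to the reader when asserting the generalized coherence square. Both routes are valid; the paper's is essentially what one obtains by collapsing your induction into a cited remark.
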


\begin{proof}
In the diagram
\[\xymatrix{A[m]\otimes B[n]\ar[r]^\sim\ar[d]_c & (A\otimes B[n])[m]\ar[r]^\sim
\ar[d]^{c[m]}
& (A\otimes B)[m+n]\ar[d]^{c[m+n]}\\
B[n]\otimes A[m]\ar[r]^\sim\ar[rd]^\sim & (B[n]\otimes A)[m]\ar[r]^{\sim} & (B\otimes A)[m+n]\\
&(B\otimes A[m])[n]\ar[ru]^\sim,}
\]
the upper squares commute by functoriality, and the lower triangle
$\epsilon^{mn}$-commutes by the definition of $\epsilon$-bifunctor of
additive categories with translation.
\end{proof}

\begin{construction}\label{c.shift}
Let $A,B,K$ be objects of $\cD$. A pairing $A\otimes B \to K$ induces a
pairing
\[(A[m])\otimes (B[n])\simeq (A\otimes B[n])[m]\simeq (A\otimes B)[m+n] \to
K[m+n].
\]
\end{construction}

Lemma \ref{l.trans} implies the following.

\begin{lemma}\label{l.shift}
Let $A,B,K$ be objects of $\cD$. Let $A\otimes B\to K$ and $B\otimes A\to K$
be pairings that are $\sigma$-transposes of each other. Then the induced
pairings $(A[m])\otimes (B[n]) \to K[m+n]$ and $(B[n])\otimes (A[m])\to
K[m+n]$ are $\epsilon^{mn} \sigma$-transposes of each other.
\end{lemma}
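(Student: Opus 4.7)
The plan is to unwind the definition of transpose for the shifted pairings and reduce the claim directly to the $\epsilon^{mn}$-commutativity of the square in Lemma \ref{l.trans}. Write $f\colon A\otimes B\to K$ and $g\colon B\otimes A\to K$ for the two original pairings, so that by hypothesis $g=\sigma f^T$, where $f^T=f\circ c_{B,A}$. By Construction \ref{c.shift}, the induced pairings factor as $f_{m,n}=f[m+n]\circ\phi$ and $g_{n,m}=g[m+n]\circ\psi$, where $\phi\colon A[m]\otimes B[n]\simto(A\otimes B)[m+n]$ and $\psi\colon B[n]\otimes A[m]\simto(B\otimes A)[m+n]$ denote the canonical shift isomorphisms (obtained by going across the top and bottom rows of the diagram in Lemma \ref{l.trans}).

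The first step is to expand the transpose $(f_{m,n})^T=f_{m,n}\circ c_{B[n],A[m]}$. Applying Lemma \ref{l.trans}, together with the identity $c_{A[m],B[n]}^{-1}=c_{B[n],A[m]}$, gives
\[
\phi\circ c_{B[n],A[m]}=\epsilon^{mn}\cdot c_{B,A}[m+n]\circ\psi.
\]
Substituting and pulling the shift outside yields
\[
(f_{m,n})^T=\epsilon^{mn}\cdot(f\circ c_{B,A})[m+n]\circ\psi=\epsilon^{mn}\cdot f^T[m+n]\circ\psi.
\]

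The final step is to invoke the hypothesis $g=\sigma f^T$, equivalently $f^T=\sigma g$ (since $\sigma^2=1$), to identify the right-hand side with $\epsilon^{mn}\sigma\cdot g[m+n]\circ\psi=\epsilon^{mn}\sigma\cdot g_{n,m}$. As $(\epsilon^{mn}\sigma)^2=1$, this is equivalent to $g_{n,m}=\epsilon^{mn}\sigma\cdot(f_{m,n})^T$, which is precisely the assertion that $f_{m,n}$ and $g_{n,m}$ are $\epsilon^{mn}\sigma$-transposes of one another. The entire argument is formal bookkeeping of signs; no step is hard, the only non-trivial ingredient being the $\epsilon^{mn}$-sign already exhibited in Lemma \ref{l.trans}.
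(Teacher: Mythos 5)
Your proof is correct and follows exactly the route the paper intends: the paper simply states that Lemma \ref{l.trans} implies Lemma \ref{l.shift}, and your argument spells out that implication, with the sign bookkeeping (the $\epsilon^{mn}$ from the shift--symmetry square, combined with $g=\sigma f^T$) carried out correctly.
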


Let $\cD$ be a \emph{closed} $\epsilon$-symmetric additive category with
translation.

\begin{construction}\label{c.Homtrans}
Consider the isomorphisms
\begin{gather*}
\alpha_n\colon \cHom(A[-n],B)\simto
\cHom(A,B)[n],\\
\beta_n\colon \cHom(A,B[n])\simto \cHom(A,B)[n]
\end{gather*}
given by
the isomorphisms
\begin{gather*}
\Hom(C,\cHom(A[-n],B))\simeq \Hom(C\otimes A[-n],B)\simeq \Hom((C\otimes A)[-n],B),\\
\Hom(C,\cHom(A,B[n]))\simeq \Hom(C\otimes A,B[n])\simeq \Hom((C\otimes A)[-n],B),\\
\Hom((C\otimes A)[-n],B) \simeq \Hom(C[-n]\otimes A, B)\qquad\\
\qquad\simeq \Hom(C[-n],\cHom(A,B))\simeq \Hom(C,\cHom(A,B)[n])
\end{gather*}
for objects $A,B,C$ of $\cD$. We have
$\alpha_m\alpha_n=\epsilon^{mn}\alpha_{m+n}$, $\beta_m\beta_n=\beta_{mn}$,
$\alpha_m\beta_n=\epsilon^{mn}\beta_n\alpha_m$. We endow $\cHom(-,-)\colon
\cD^{\op}\times \cD\to \cD$ with the structure of $\epsilon$-bifunctor of
additive categories with translation given by $\epsilon\alpha_1$ and
$\beta_1$.\footnote{The sign convention is adopted here only to fix ideas.
Our results do not depend on the convention.} Let $\tilde
\alpha_n=(\epsilon\alpha_1)^n=\epsilon^{n(n+1)/2}\alpha_n$.

In particular, $D_A\colon \cD^{\op}\to \cD$ is endowed with the structure of
functor of additive categories with translation, which, together with
$\ev\colon \id_\cD \to D_AD_A^{\op}$, defines an additive duality on the
additive category with translation (cf.\ \cite[Proposition 3.2.1]{CH}).
\end{construction}

\begin{remark}\label{r.shift}
Construction \ref{c.shift} corresponds to the construction sending $f\colon
A\to D_K B$ to $\epsilon^{n(n-1)/2}$ times the morphism
\[
A[m]\xrightarrow{f[m]} (D_K
B)[m]\xrightarrow[\sim]{\tilde\alpha_{-n}}
D_K(B[n])[m+n]\xrightarrow{\beta^{-1}_{m+n}} D_{K[m+n]}(B[n]),
\]
where $\tilde\alpha$ and $\beta$ are as in Construction \ref{c.Homtrans}. In
fact, the following diagram $\epsilon^{n(n-1)/2}$-commutes
\[\xymatrix{\Hom(A[m]\otimes B[n],K[m+n])\ar[d]_\simeq\ar[r]^\sim
& \Hom((A\otimes B[n])[m],K[m+n])\ar[d]^\simeq
\\
\Hom(A[m],D_{K[m+n]}(B[n]))\ar[d]_\simeq^{\tilde \alpha_{-n}} &\Hom((A\otimes B)[m+n],K[m+n])\ar[r]^-\sim & \Hom(A\otimes B,K)\ar[d]^\simeq\\
\Hom(A[m],D_K(B[n])[m+n])\ar[r]^-{\beta^{-1}_{m+n}}_-\sim & \Hom(A[m],D_KB[m])\ar[r]_-\sim & \Hom(A,D_KB).}
\]

Thus, Construction \ref{c.shift} corresponds to the form transformation
$\gamma_{m,n}\colon T^m D_K\simto D_{K[m+n]}T^n$, defined to be
$\epsilon^{n(n-1)/2}$ times the isomorphism
\[
T^m D_K \xrightarrow[\sim]{\tilde \alpha_{-n}}
T^{m+n} D_K T^n \xrightarrow[\sim]{\beta^{-1}_{m+n}} D_{K[m+n]} T^{n}
\]
given by Construction \ref{c.Homtrans}. By the above, the
$\epsilon^{mn}$-transpose of $\gamma_{m,n}$ is $\gamma_{n,m}$.
\end{remark}

We combine the above discussion on translation with our previous discussion
on truncation into the following, which is applied in the proof of
Proposition \ref{p.projf} to the Lefschetz pairing.

\begin{lemma}\label{l.final}
Let $\cD$ be a closed $-1$-symmetric additive category with translation.
Assume that the underlying category with translation is further equipped
with a triangulated structure and a $t$-structure $P$. Let $K$ and $L$ be
objects of $\cD$ such that $D_L$ is a right $t$-exact triangulated functor.
For any $\sigma$-symmetric pairing $A\otimes A \to K$ and any morphism
$\xi\colon K[2n]\to L$, the pairing $\PH^n A\otimes \PH^n A\to L$ induced by
\[A[n]\otimes A[n]\simto (A\otimes A)[2n]\to K[2n]\xrightarrow{\xi} L\]
is $(-1)^n\sigma$-symmetric.
\end{lemma}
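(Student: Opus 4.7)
The plan is to first transfer the symmetry statement from the given pairing on $A$ to a pairing on $A[n]$ via Lemma \ref{l.shift}, and then descend it to $\PH^n A = \PH^0(A[n])$ by applying a self-transpose truncation form transformation.

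First, I would apply Lemma \ref{l.shift} with $\epsilon = -1$ (since $\cD$ is a $-1$-symmetric additive category with translation, as in Example \ref{e.topos}) and $m = n$. Starting from the $\sigma$-symmetric pairing $A \otimes A \to K$, the induced pairing $A[n] \otimes A[n] \to K[2n]$ is $(-1)^{n^2}\sigma = (-1)^n\sigma$-symmetric (using $n^2 \equiv n \pmod 2$). Post-composing with $\xi \colon K[2n] \to L$ preserves symmetry---it is just the action of the symmetric form transformation $D_\xi \colon D_{K[2n]} \to D_L$ on forms---so I obtain a $(-1)^n\sigma$-symmetric pairing $A[n] \otimes A[n] \to L$. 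By Lemma \ref{lemma2.1.12}, this corresponds to a $(-1)^n\sigma$-symmetric morphism $f \colon A[n] \to D_L(A[n])$.

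Next, I would invoke Remark \ref{r.trunc} with $\tau = \tau' = \Ptau^{[0,0]} = \PH^0$ (the case $a = b = 0$). Since $D_L$ is assumed right $t$-exact, Lemma \ref{l.trunc} supplies form transformations $\Ptau^{\ge 0} D_L \to D_L \Ptau^{\le 0}$ and $\Ptau^{\le 0} D_L \to D_L \Ptau^{\ge 0}$ that are mutual transposes, and combining them yields a form transformation $\gamma \colon \PH^0 D_L \to D_L \PH^0$. Because $\tau = \tau'$ here, the transpose of $\gamma$ is $\gamma$ itself, so $\gamma$ is $1$-symmetric in the sense of Definition \ref{d.symF}.

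Finally, Lemma \ref{l.F} on the action of form transformations on forms implies that applying the $1$-symmetric $\gamma$ to the $(-1)^n\sigma$-symmetric $f$ yields a $(-1)^n\sigma$-symmetric morphism
\[\PH^n A = \PH^0(A[n]) \xrightarrow{\PH^0 f} \PH^0 D_L(A[n]) \xrightarrow{\gamma} D_L(\PH^n A),\]
which by Lemma \ref{lemma2.1.12} corresponds to the induced pairing $\PH^n A \otimes \PH^n A \to L$, proving it is $(-1)^n\sigma$-symmetric. The main obstacle is verifying the self-transpose property of $\gamma$, which rests on the symmetric roles of $\tau$ and $\tau'$ in Remark \ref{r.trunc} when they coincide; the remainder is a formal chaining of prior lemmas.
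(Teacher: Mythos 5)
Your proof is correct and follows essentially the same route as the paper: the paper composes the three ingredients into a single $(-1)^n$-symmetric form transformation $\PH^n D_K \to D_L\PH^n$ (via $\gamma_{n,n}$ from Remark \ref{r.shift}, the symmetric $D_\xi$, and $\gamma_\tau$ from Remark \ref{r.trunc} with $\tau=\tau'=\Ptau^{[0,0]}$), whereas you apply the same three steps one at a time to the pairing using Lemmas \ref{l.shift}, \ref{lemma2.1.12} and \ref{l.F}. This is only a difference of packaging, not of substance.
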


In fact, the form transformation $\PH^{n} D_K\to D_L \PH^{n}$ given by
\[\tau^{[0,0]}T^{n}D_K\xrightarrow{\gamma_{n,n}} \tau^{[0,0]}D_{K[2n]}T^{n}\xrightarrow{D_\xi} \tau^{[0,0]}D_{L}T^{n}
\xrightarrow{\gamma_\tau} D_L \tau^{[0,0]} T^{n}
\]
is $(-1)^n$-symmetric. Here $\gamma_{n,n}$ and $\gamma_\tau$ are as in
Remarks \ref{r.shift} and \ref{r.trunc}.

\begin{remark}
Let us mention in passing that Lurie's theory of stable $\infty$-categories
\cite[Chapter~1]{Lurie} provides a nicer framework for symmetric monoidal
structures in derived categories. If $(\cD,\otimes)$ is a closed symmetric
monoidal $\infty$-category such that the underlying $\infty$-category $\cD$
is stable, then $-\otimes-$ and $\cHom(-,-)$ are automatically exact in each
variable and the homotopy category of $\cD$ is a closed $-1$-symmetric
additive category with translation.
\end{remark}

\subsection{Duality and nilpotence}\label{s.nil}
In this subsection, we study symmetry of primitive parts under a (twisted)
nilpotent operator. We formulate the problem in the language of duality on
category with translation introduced in Definition \ref{d.pdt}. The main
result of this subsection is Proposition \ref{p.Pi}. This is applied in the
main text to the logarithm of the monodromy operator associated to a normal
crossing divisor to show that Grothendieck's six operations preserve
$\rK_\orth$ (see the proof of Proposition \ref{p.NCD}).

Let $(\cA,T)$ be an additive category with translation. In this subsection,
we denote $T^n A$ by $A(n)$ instead of $A[n]$. Our first goal is to define a
category of objects with nilpotent operators and record its relation with
duality.

\begin{construction}
Consider the additive category $\Nil(\cA,T)$ of pairs $(A,N)$ of an object
$A$ of $\cA$ and a morphism $N\colon A(1)\to A$ which is nilpotent in the
sense that there exists an integer $d\ge 0$ such that $N^d\coloneqq N\circ
N(1)\circ\dots\circ N(d-1)\colon A(d)\to A$ is the zero morphism. A morphism
$(A,N)\to (A',N')$ is a morphism $f\colon A\to A'$ of $\cA$ satisfying
$N'f(1)=fN$.

There are two ways to identify $\Nil(\cA,T)^\op$ and
$\Nil(\cA^\op,(T^\op)^{-1})$, which differ by a sign. We fix $\sigma=\pm 1$
and consider the isomorphism of categories
\[E_\cA=E_{(\cA,T),\sigma}\colon \Nil(\cA,T)^\op\to
\Nil(\cA^\op,(T^\op)^{-1})
\]
sending $(A,N\colon A(1)\to A)$ to $(A,\sigma N(-1)\colon A\to A(-1))$. The
composite
\[\Nil(\cA,T)\xrightarrow{E_\cA^\op}
\Nil(\cA^\op,(T^\op)^{-1})^\op\xrightarrow{E_{\cA^{\op}}} \Nil(\cA,T)
\]
equals the identity. The duality we put on $\Nil(\cA,T)$ will depend on the
choice of $\sigma$. In the main text we take $\sigma=-1$.

Let $F\colon (\cA,T)\to (\cA',T')$ be a functor of additive categories with
translation. Then $F$ induces an additive functor $\Nil_F\colon
\Nil(\cA,T)\to \Nil(\cA',T')$ carrying $(A,N\colon TA\to A)$ to
$(FA,T'FA\simeq FTA \xrightarrow{FN} FA)$ and $f\colon (A,N)\to (A',N')$ to
$Ff$. Let $\gamma\colon F\to F'$ be a morphism of functors of categories
with translation. Then $\gamma$ induces a natural transformation
$\Nil_\gamma\colon \Nil_F\to \Nil_{F'}$, which is a natural isomorphism if
$\gamma$ is an isomorphism.

The following diagrams commute
\[\xymatrix{\Nil(\cA,T)^\op \ar[r]^{\Nil_F^\op}\ar[d]_{E_{\cA}} &
\Nil(\cA',T')^\op\ar[d]^{E_{\cA'}}
& E_{\cA'}\Nil_{F'}^\op \ar[r]^{\Nil_\gamma^\op}\ar@{=}[d] & E_{\cA'}\Nil_F^\op\ar@{=}[d]\\
\Nil(\cA^\op,(T^\op)^{-1})\ar[r]^{\Nil_{F^\op}} & \Nil(\cA'^\op,(T'^\op)^{-1})
& \Nil_{F^\op}E_{\cA'}\ar[r]^{\Nil_{\gamma^\op}} & \Nil_{F^\op}E_{\cA}.
}
\]
\end{construction}

\begin{construction}\label{c.DNil}
Let $D\colon (\cA^\op,(T^{\op})^{-1}) \to (\cA,T)$ be an additive duality on
the additive category with translation (see the comment following Definition
\ref{d.pdt}). Consider the functor $D_{\Nil(\cA,T)}$, composite of
\[\Nil(\cA,T)^\op\xrightarrow[\sim]{E_\cA}\Nil(\cA^\op,(T^{\op})^{-1})\xrightarrow{\Nil_D} \Nil(\cA,T),\]
and the natural transformation
\[\id_{\Nil(\cA,T)}\xrightarrow{\Nil_\ev} \Nil_D \Nil_{D^\op}=\Nil_D \Nil_{D^\op} E_{\cA^\op}E_\cA^\op
=\Nil_D E_\cA \Nil_D^\op E_\cA^\op=D_{\Nil(\cA,T)}D_{\Nil(\cA,T)}^\op.
\]
These define an additive duality on the additive category $\Nil(\cA,T)$,
which is strong if $D$ is strong on $\cA$.
\end{construction}

In the rest of this section, let $(\cA,T)$ be an \emph{Abelian category with
translation}, namely an additive category with translation whose underlying
category $\cA$ is Abelian. Our next goal is to review the decomposition into
primitive parts. The following is a variant of \cite[Proposition 1.6.1,
1.6.14]{WeilII}, with essentially the same proof.

\begin{lemma}\label{l.mono}
Let $(A,N)$ be an object of $\Nil(\cA,T)$. Then there exists a unique finite
increasing filtration $M$ of $A$ satisfying $NM_j(1)\subseteq M_{j-2}$ and
such that for $k\ge 0$, $N^k$ induces an isomorphism $\gr^M_k A (k)\simto
\gr^M_{-k} A$.
\end{lemma}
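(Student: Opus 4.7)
The plan is to establish this classical monodromy filtration lemma by treating uniqueness and existence separately, using induction on the nilpotence index in both cases.

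\textbf{Uniqueness.} The two conditions on $M$ rigidly determine it. Writing $d$ for the minimal integer with $N^{d+1}=0$, the isomorphism requirement applied at indices $k\ge d+1$ forces $\gr^M_k A=0$ for $|k|\ge d+1$, so $M_dA=A$ and $M_{-d-1}A=0$. Working inward from these extremes, the isomorphism $N^k:\gr^M_k A(k)\simto\gr^M_{-k}A$ rigidly determines $M_k/M_{k-1}$ once $M_{k-1}$ and $M_{-k-1}$ are known. Concretely one obtains the explicit formula
\[
M_k A=\sum_{\substack{i,\,j\ge 0\\ i-j=k}}\Ker(N^{i+1})\cap\Img(N^j),
\]
interpreting kernels and images of iterates of $N$ as subobjects of $A$ with twists suppressed. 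Since the right-hand side depends only on $(A,N)$, uniqueness follows.

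\textbf{Existence.} I proceed by induction on the smallest $d\ge -1$ with $N^{d+1}=0$. The base cases $d\le 0$ are immediate: take $M_j=0$ for all $j$ when $A=0$, and $M_0=A$ with $M_j=0$ for $j<0$ when $N=0$. For the inductive step with $d\ge 1$, set $B=\Img(N^d:A(d)\to A)\subseteq A$ and $\bar A=A/B$ with quotient map $\pi$. Two direct computations give $N|_B=0$ (since $N\circ N^d=N^{d+1}=0$) and $\bar N^d=0$ on $\bar A$ (since $N^dA\subseteq B$), so both $B$ and $\bar A$ have strictly smaller nilpotence index. The induction hypothesis produces a monodromy filtration $\bar M$ on $\bar A$. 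I would then construct $M$ on $A$ by gluing: set $M_{-d-1}A=0$ and $M_{-d}A=B$, and define $M_jA=\pi^{-1}(\bar M_{j-1}\bar A)$ for $j>-d$ (so $M_dA=A$ at the top).

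\textbf{Main obstacle.} The technical heart is verifying the isomorphisms $N^k:\gr^M_k A(k)\simto\gr^M_{-k}A$ for all $k\ge 0$ under this gluing. The extreme case $k=d$ reduces to the tautology that $N^d$ induces an isomorphism $A/\Ker(N^d)(d)\simto B$ from the definition of $B$. For $0\le k<d$ one identifies $\gr^M_k A\cong\gr^{\bar M}_{k-1}\bar A$ and $\gr^M_{-k}A\cong\gr^{\bar M}_{-k-1}\bar A$, and factors $N^k$ through $\bar N^{k-1}$ on these graded pieces, invoking the induction hypothesis. The key compatibility here, which exploits that $N$ vanishes on $B$ so that $N$'s action on $A$ descends correctly to $\bar A$ on the interior graded pieces, together with the correct choice of index shift in the gluing, is where most of the technical work lies.
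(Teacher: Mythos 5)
Your uniqueness sketch is fine in outline (it is essentially the same inward induction that makes the paper's construction forced, and the explicit formula you quote is correct, though asserted rather than derived). The genuine problem is the existence step: the gluing $M_{-d}A=B=\Img(N^d)$, $M_jA=\pi^{-1}(\bar M_{j-1}\bar A)$ for $j>-d$ does not produce the monodromy filtration. Shifting the entire filtration of $\bar A=A/\Img(N^d)$ up by one is only correct for the part of $A$ coming from ``Jordan blocks'' of maximal length $d+1$; summands on which $N$ has smaller nilpotence index are untouched by the quotient and must not be shifted. Concrete counterexample (take $\cA$ the category of vector spaces with $T=\id$): let $A$ have basis $e_1,e_2,f$ with $Ne_2=e_1$, $Ne_1=Nf=0$, so $d=1$. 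Then $B=\langle e_1\rangle$, $\bar N=0$ on $\bar A$, so $\bar M$ is concentrated in degree $0$, and your recipe gives $M_{-1}=M_0=\langle e_1\rangle$, $M_1=A$; hence $\gr^M_1A$ is two-dimensional while $\gr^M_{-1}A$ is one-dimensional, and the required isomorphism $N\colon\gr^M_1A(1)\simto\gr^M_{-1}A$ fails. (The true filtration puts $f$ in degree $0$.) The same mismatch is visible in your verification plan: the induction hypothesis gives isomorphisms $\bar N^{\,k-1}\colon\gr^{\bar M}_{k-1}\bar A(k-1)\simto\gr^{\bar M}_{-(k-1)}\bar A$, pairing degree $k-1$ with $-(k-1)$, whereas your gluing needs degree $k-1$ paired with $-k-1$; these are off by two, so the step ``factor $N^k$ through $\bar N^{k-1}$'' cannot be made to work.

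The paper's induction avoids this by splitting off \emph{both} extreme graded pieces at once: it sets $M_{d-1}=\Ker(N^d)(-d)$ and $M_{-d}=\Img(N^d)$, notes that $N^d=0$ on the subquotient $\Ker(N^d)(-d)/\Img(N^d)$, and transports the monodromy filtration of that subquotient back with \emph{no} index shift. If you prefer to keep your own route, the cleanest repair is to abandon the gluing and instead verify directly that the explicit filtration $M_kA=\sum_{i-j=k}\Ker(N^{i+1})\cap\Img(N^j)$ you already wrote down satisfies $NM_j(1)\subseteq M_{j-2}$ and induces isomorphisms $N^k\colon\gr^M_kA(k)\simto\gr^M_{-k}A$; but that verification is genuine work, not a formality.
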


\begin{proof}
Let $d\ge 0$ be an integer such that $N^{d+1}=0$. We proceed by induction on
$d$. We have $M_d=A$ and $M_{-d-1}=0$. For $d>0$, $M_{d-1}=\Ker (N^d)(-d)$
and $M_{-d}=\Img (N^d)$. We have $N^d=0$ on $\Ker (N^d)(-d)/\Img (N^d)$ and
let $M'$ be the corresponding filtration given by induction hypothesis. For
$-d\le i\le d-1$, $M_i$ is the inverse image in $\Ker (N^d)(-d)$ of
$M'_i\subseteq \Ker (N^d)(-d)/\Img (N^d)$.
\end{proof}

The following is an immediate consequence of the construction of the
filtration $M$.

\begin{lemma}\label{l.monof}
Let $f\colon (A,N)\to (A',N')$ be a morphism of $\Nil(\cA,T)$. Then $f$ is
compatible with the corresponding filtrations. More precisely, if $M$ and
$M'$ denote the corresponding filtrations, then $f(M_j)\subseteq M'_j$.
\end{lemma}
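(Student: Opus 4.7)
My plan is to prove the statement by induction on a common nilpotency degree $d \geq 0$ for both $N$ and $N'$; that is, choose $d$ large enough that $N^{d+1} = 0$ and $N'^{d+1} = 0$ simultaneously (such a $d$ exists since $N$ and $N'$ are each nilpotent). The base case $d = 0$ is trivial, as both filtrations collapse to a single step.

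For the inductive step, I would unpack the construction of $M$ and $M'$ from Lemma \ref{l.mono}. The key relation to exploit is that $f$ intertwines $N$ and $N'$, i.e.\ $N'f(1) = fN$; iterating and using functoriality of $T$, this gives $N'^k f(k) = f N^k$ for all $k \geq 0$. First I would verify compatibility at the extremes: since $f N^d = N'^d f(d)$, the map $f$ sends $\Ker(N^d)(-d) = M_{d-1}$ into $\Ker(N'^d)(-d) = M'_{d-1}$, and sends $\Img(N^d) = M_{-d}$ into $\Img(N'^d) = M'_{-d}$. The trivial equalities $f(M_d) = f(A) \subseteq A' = M'_d$ and $f(M_{-d-1}) = 0 \subseteq M'_{-d-1}$ handle the outer steps.

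For the intermediate indices $-d \leq i \leq d-1$, by the recursive construction both $M_i$ (resp.\ $M'_i$) is the preimage in $\Ker(N^d)(-d)$ (resp.\ $\Ker(N'^d)(-d)$) of a filtration on the quotient $\Ker(N^d)(-d)/\Img(N^d)$ (resp.\ on $\Ker(N'^d)(-d)/\Img(N'^d)$) supplied by the induction hypothesis applied to an object of $\Nil(\cA,T)$ on which the operator satisfies $N^d = 0$. Since $f$ restricts to a morphism between the two subobjects $\Ker(N^d)(-d) \to \Ker(N'^d)(-d)$ and further descends to a morphism $\bar f$ between the quotients (because $f$ sends $\Img(N^d)$ into $\Img(N'^d)$), and since $\bar f$ still commutes with the induced nilpotent operators, the induction hypothesis applied to $\bar f$ gives compatibility of $\bar f$ with the quotient filtrations. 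Taking preimages under the quotient maps yields $f(M_i) \subseteq M'_i$ for all intermediate $i$ as well.

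The only subtlety I anticipate is bookkeeping with Tate twists in the quotient step—checking that the induced morphism $\bar f$ is genuinely a morphism in $\Nil(\cA,T)$ of the appropriate nilpotency degree so that the induction hypothesis applies. This is routine once one observes that the operator on $\Ker(N^d)(-d)/\Img(N^d)$ inherited from $N$ satisfies $N^{d-1} = 0$ (since two factors of $N$ are killed by passing to the kernel of $N^d$ modulo the image of $N^d$, cf.\ the proof of Lemma \ref{l.mono}), bringing us strictly below the inductive parameter.
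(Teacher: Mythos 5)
Your argument is essentially the one the paper has in mind: Lemma \ref{l.monof} is stated as an immediate consequence of the recursive construction in Lemma \ref{l.mono}, and your induction simply traces $f$ through that construction (extremes via $fN^k = N'^k f(k)$, middle via descent to the subquotient and uniqueness of the filtration, which legitimizes using a common $d$ for both objects). So the structure is correct and matches the paper. One parenthetical claim, however, is false: the operator induced on $\Ker(N^d)(-d)/\Img(N^d)$ does \emph{not} satisfy $\overline{N}^{\,d-1}=0$ in general. For instance, if $A$ is a direct sum of Jordan blocks of sizes $d+1$ and $d$, the smaller block survives untouched in the subquotient, so $\overline{N}^{\,d-1}\neq 0$ there. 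What is true (and what the proof of Lemma \ref{l.mono} uses) is that $N^d$ already vanishes on $\Ker(N^d)(-d)$, hence $\overline{N}^{\,d}=0$ on the subquotient; this still drops the induction parameter from $d$ to $d-1$ simultaneously for both $(A,N)$ and $(A',N')$, so your induction goes through once you replace the overstated bound by the correct one.
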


For $i\le 0$, let $P_i(A,N)=\Ker(N\colon \gr^M_i A(1)\to \gr^M_{i-2}
A)(-1)$. The inclusion $\Ker (N)(-1)\subseteq A$ induces an isomorphism
$\gr_i^M(\Ker(N)(-1))\simto P_i(A,N)$. We thus obtain functors
\[P_i=P_{i,\cA}\colon\Nil(\cA,T)\to \cA.\]
For all $j$, we have
\[\gr_j^M A\simeq
\bigoplus_{\substack{k\ge \lvert j\rvert\\ k\equiv j\pmod 2}}
P_{-k}(A,N)(-\tfrac{j+k}{2}).
\]

We now proceed to define form transformations on the primitive part
functors. Let $(A,N)$ be an object of $\Nil(\cA,T)$. If $M=M(A,N)$ and
$M^*=M(E_{\cA}(A,N))$, then we have the following short exact sequence in
$\cA$
\[0\to M_{-j-1} \to A \to M^*_j\to 0,\]
so that $\gr_{-j}^M A$ can be identified with $\gr_j^{M^*} A$. Moreover,
$N^{-i}\colon \gr_{-i}^M A(-i)\simto \gr_i^M A$ induces an isomorphism in
$\cA$
\[\alpha_\cA(A,N)\colon P_{i,\cA^\op}(E_\cA(A,N))(-i)\simeq \gr_{-i}^M(\Coker(N))(-i) \simto
\gr_i^M(\Ker(N)(-1))\simeq  P_{i,\cA}(A,N).
\]
This defines a natural isomorphism of functors $\alpha_\cA\colon
P_{i,\cA}^\op \simto (T^\op)^{-i}P_{i,\cA^\op}E_\cA$. By definition, we have
the following.

\begin{lemma}\label{l.Pi1}
The isomorphism $T^{-i}P_{i,\cA^\op}^\op E_\cA^\op
\xrightarrow[\sim]{\alpha_{\cA^\op}}
P_{i,\cA}E_{\cA^\op}E_{\cA}^\op=P_{i,\cA}$ is $\sigma^i\alpha_\cA^\op$.
\end{lemma}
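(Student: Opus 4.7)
The plan is to unwind both natural transformations on a generic object $(A,N)\in\Nil(\cA,T)$ and compare them explicitly. Since $E_{\cA^\op}E_\cA^\op=\id$, both $\alpha_{\cA^\op}E_\cA^\op$ and $\alpha_\cA^\op$ are natural isomorphisms of functors $T^{-i}P_{i,\cA^\op}^\op E_\cA^\op\Rightarrow P_{i,\cA}$ from $\Nil(\cA,T)$ to $\cA$, so the claim reduces to showing that, at each $(A,N)$, the first equals $\sigma^i$ times the second as morphisms $T^{-i}P_{i,\cA^\op}(E_\cA(A,N))\to P_{i,\cA}(A,N)$ in $\cA$.

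The crucial computation concerns the interplay between the twisted nilpotent $\sigma N(-1)\colon A\to A(-1)$ defining $E_\cA(A,N)$ and its $(-i)$-th iterate in $\cA^\op$. Setting $k=-i\ge 0$, I would verify that, after reversing the order of composition in $\cA^\op$ and taking the appropriate translated copies (noting that translation by $j$ in $\cA^\op$ means $T^{-j}$ in $\cA$), the morphism $(\sigma N(-1))^k_{\cA^\op}\colon (T^\op)^{-k}A\to A$ in $\cA^\op$, viewed as a morphism in $\cA$, equals $\sigma^k\cdot N^k(-k)\colon A\to A(-k)$. Each of the $k$ factors contributes one sign $\sigma$, while the shifts of a morphism do not produce additional signs; this is the sole source of the factor $\sigma^{-i}=\sigma^i$.

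The remaining steps are formal bookkeeping. I would check that $E_\cA$ interchanges the monodromy filtration $M$ of $(A,N)$ in $\cA$ with the filtration $M^*$ of $E_\cA(A,N)$ in $\cA^\op$, giving natural identifications $\gr_j^{M^*}(E_\cA(A,N))\simeq \gr_{-j}^M A$ that are sign-free (the definition of $M$ in Lemma~\ref{l.mono} is symmetric under $j\mapsto -j$ and $N\mapsto \sigma N(-1)$, so the uniqueness clause matches the two filtrations without any sign). Similarly, the canonical identifications $P_i(A,N)(-i)\simeq \gr_{-i}^M(\Coker N)(-i)$ and $\gr_i^M(\Ker N(-1))\simeq P_i(A,N)$ swap roles of $\Ker$ and $\Coker$ between $\cA$ and $\cA^\op$ but are otherwise natural and sign-free. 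Combined with the key computation above, this identifies $\alpha_{\cA^\op}(E_\cA(A,N))$ with $\sigma^i\alpha_\cA(A,N)$ on $T^{-i}P_{i,\cA^\op}(E_\cA(A,N))\to P_{i,\cA}(A,N)$, which yields the lemma.

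I expect the main obstacle to be the simultaneous bookkeeping of the twist $(-1)$ in the definition of $P_i$, the inversion of the translation when passing between $\cA$ and $(\cA^\op,(T^\op)^{-1})$, and the accumulation of the $\sigma^k$ sign. A clean way to organize this is to work throughout with the un-twisted induced operator $\bar N_j\colon\gr_j^M A\to\gr_{j-2}^M A(-1)$ and its iterates, performing all twist adjustments only at the very last step to recover the statements in terms of $P_i$.
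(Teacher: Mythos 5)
Your unwinding is correct and is precisely what the paper means by ``By definition, we have the following'': the lone source of sign is that each of the $-i$ factors of $N'=\sigma N(-1)$ appearing in $(N')^{-i}$ contributes one $\sigma$, while the identifications between $M^*$ and $M$ (and between the kernel/cokernel descriptions of primitive parts in $\cA$ and $\cA^\op$) introduce no further signs. Since the paper supplies no separate argument for this lemma, your proposal is the intended unwinding rather than an alternative route.
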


Let $(\cA,T)$ and $(\cA',T')$ be Abelian categories with translation and let
$F\colon (\cA,T)\to (\cA',T')$ be a functor of categories with translation
such that the underlying functor $\cA\to \cA'$ is \emph{exact}. Let $(A,N)$
be an object of $(\cA,T)$ and let $M=M(A,N)$, $M'=M(\Nil_F(A,N))$. The
exactness of $F$ allows us to identify $F(M_jA)$ as a sub-object of $FA$,
and under this identification we have $F(M_j A)=M'_j(FA)$. We have an
obvious natural isomorphism $\beta_F\colon P_{i,\cA'}\Nil_F \simto F
P_{i,\cA}$. The following functoriality of $\beta$ is obvious.

\begin{lemma}\label{l.Pi2}
Let $F,F'\colon (\cA,T)\to (\cA',T')$ be functors of categories with
translation such that the underlying functors are exact and let
$\gamma\colon F\to F'$ be a morphism of functors of categories with
translation. Then the following diagram commutes
\[\xymatrix{P_{i,\cA'}\Nil_F \ar[r]^{\beta_F}_\sim \ar[d]_{\Nil_\gamma} &
F P_{i,\cA}\ar[d]^\gamma\\
P_{i,\cA'}\Nil_{F'} \ar[r]^{\beta_{F'}}_\sim &
F' P_{i,\cA}.}
\]
\end{lemma}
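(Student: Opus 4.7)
The plan is to unwind the isomorphisms $\beta_F$ and $\beta_{F'}$ into composites of canonical isomorphisms and then check commutativity step by step using naturality of $\gamma$. Since the lemma is essentially a bookkeeping statement, I expect no substantive obstacle; the task is to identify $\beta_F$ and $\beta_{F'}$ concretely enough that the diagram reduces to naturality squares for $\gamma$.

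First I would recall, on an object $(A,N)$ of $\Nil(\cA,T)$, how $\beta_F$ is built. Because the underlying functor $F$ is exact and comes with a coherent isomorphism $FT \simeq T'F$, it carries the filtration $M = M(A,N)$ of Lemma~\ref{l.mono} to a filtration on $FA$ satisfying the defining conditions of $M(\Nil_F(A,N))$; by uniqueness in Lemma~\ref{l.mono} this gives a canonical identification $F(M_j A) = M'_j(FA)$, natural in $(A,N)$. Combining this with the canonical isomorphisms $F(\gr^M_j A) \simeq \gr^{M'}_j(FA)$ (from exactness) and $F(\Ker(\cdot)) \simeq \Ker(F \cdot)$ (again from exactness), together with $F(A(1)) \simeq (FA)(1)$, yields $\beta_F$ as a composite
\[
P_{i,\cA'}\Nil_F(A,N) = \Ker\bigl(FN \colon \gr^{M'}_i FA(1) \to \gr^{M'}_{i-2} FA\bigr)(-1) \xrightarrow{\sim} F P_{i,\cA}(A,N).
\]
The isomorphism $\beta_{F'}$ is built by the same recipe with $F$ replaced by $F'$.

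Next I would verify the claimed commutative square. On an object $(A,N)$, the vertical arrows are obtained from $\gamma_A \colon FA \to F'A$, which by hypothesis intertwines $FN$ and $F'N$ (that is exactly the statement that $\Nil_\gamma$ is a morphism in $\Nil(\cA',T')$); in particular $\gamma_A$ is compatible with the translation isomorphisms and, by Lemma~\ref{l.monof} applied inside $\Nil(\cA',T')$, respects the monodromy filtrations, the graded pieces, and the kernels defining the primitive parts. Each canonical isomorphism entering $\beta_F$ (translation exchange, filtration preservation, graded-pieces exchange, kernel exchange) is natural in the functor variable, so inserting $\gamma_A$ in either order gives the same morphism. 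Stacking these naturality squares shows that the outer square of the lemma commutes.

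The main (and only) point requiring care will be to confirm the equality $F(M_j A) = M'_j(FA)$ and its naturality in $(A,N)$ as well as in $F$; once this is in hand, every other square in the decomposition is an instance of ordinary naturality of the exchange isomorphisms for exact functors of categories with translation, and the result follows.
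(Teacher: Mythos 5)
Your proposal is correct, and it is in fact the only natural argument: the paper dispenses with the lemma by declaring "the following functoriality of $\beta$ is obvious," and what you write is precisely the unwinding that justifies that remark — express $\beta_F$ as the composite of the translation-exchange, kernel-exchange, and graded-piece isomorphisms coming from exactness and the identification $F(M_j A)=M'_j(FA)$, and then observe that each piece commutes with $\gamma$ by naturality (for the translation exchange, because $\gamma$ is a morphism of functors of categories with translation; for kernels and graded pieces, because both vertical maps restrict the same morphism $\gamma_A$ through the identified subobjects, a point which Lemma~\ref{l.monof} makes precise). No gap; you have simply spelled out what the authors leave to the reader.
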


By construction, the isomorphisms $\alpha$ and $\beta$ have the following
compatibility.

\begin{lemma}\label{l.Pi3}
The following diagram commutes
\[\xymatrix{(T'^\op)^{-i}F^\op P_{i,\cA^\op}E_\cA  & \ar[l]_\sim F^\op
(T^\op)^{-i}P_{i,\cA^\op}E_\cA
 & \ar[l]_-{\alpha_\cA}^-\sim F^\op P_{i,\cA}^\op \ar[d]^{\beta_F^\op}_\simeq \\
(T'^\op)^{-i}P_{i,\cA'^\op}\Nil_{F^\op} E_\cA\ar@{=}[r]\ar[u]^{\beta_{F^\op}}_\simeq
& (T'^\op)^{-i}P_{i,\cA'^\op} E_{\cA'} \Nil_F^\op
 & \ar[l]_-{\alpha_{\cA'}}^-\sim P_{\cA'}^\op\Nil_F^\op.}
\]
\end{lemma}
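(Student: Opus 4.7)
The plan is to verify the commutativity by unwinding the definitions of $\alpha$ and $\beta$ pointwise on an object $(A,N)$ of $\Nil(\cA,T)$, and invoking the exactness of $F$ to compare the two constructions of the monodromy filtration.

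First I would fix $(A,N)\in\Nil(\cA,T)$ and set $M=M(A,N)$, $M'=M(\Nil_F(A,N))$. The key input is that, because $F$ is exact, it sends each $M_j A\hookrightarrow A$ to a sub-object of $FA$, and one checks by induction on the nilpotence index (using the inductive construction in the proof of Lemma \ref{l.mono} via $\Ker(N^d)(-d)$ and $\Img(N^d)$, both of which are preserved by an exact functor commuting with $N$ up to the translation isomorphism) that $F(M_j A)=M'_j(FA)$. This is precisely the object-level content of $\beta_F$.

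Next I would spell out the two paths. Going right-then-down-then-left gives the composite $P_{i,\cA'^\op}(E_{\cA'}\Nil_F^\op(A,N))(-i)\simeq \gr^{M'}_{-i}(\Coker(FN))(-i)\xrightarrow{(FN)^{-i}}\gr^{M'}_i(\Ker(FN)(-1))\simeq P_{i,\cA'}(\Nil_F(A,N))\xrightarrow{\beta_F}FP_{i,\cA}(A,N)$, the first arrow being $\alpha_{\cA'}$ applied to $\Nil_F^\op(A,N)$. Going up-then-right instead applies $\beta_{F^\op}$ first, which under the identification $F(M_j A)=M'_j(FA)$ is compatible with the identifications of $\Coker$ and $\Ker$ with $F\Coker$ and $F\Ker$, and then applies $F(\alpha_\cA)$, which on graded pieces is $F(N^{-i})$. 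The commutativity therefore reduces to the identity $(FN)^{-i}=F(N^{-i})$ on the appropriate graded pieces, which is simply the compatibility of $F$ with the nilpotent operator built into the definition of $\Nil_F$, together with the translation isomorphism $F T\simeq T'F$.

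The main obstacle is essentially bookkeeping: one must check carefully that the identifications $P_i(\Nil_{F^\op}E_\cA(A,N))\simeq\gr^{M'}_{-i}\Coker(FN)$ used in both $\beta_{F^\op}$ and $\alpha_{\cA'}$ agree, and that the translation functor shifts $(T^\op)^{-i}$ and $(T'^\op)^{-i}$ are transported correctly through the isomorphism $F T\simeq T'F$ (this is where a sign $\sigma^i$ could a priori appear, but because the same sign convention is built into $E_\cA$ and $E_{\cA'}$ via $\sigma N(-1)$, it cancels). Once these identifications are made explicit, the square reduces to an instance of the functoriality of the graded pieces of the monodromy filtration with respect to the exact, translation-compatible functor $F$, and there is nothing further to check.
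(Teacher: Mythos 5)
Your proof is correct and matches the paper's treatment: the paper states Lemma \ref{l.Pi3} as holding "by construction," and your argument is exactly the implicit diagram chase — using exactness of $F$, the identity $F(M_jA)=M'_j(FA)$, the equality $\Nil_{F^\op}E_\cA=E_{\cA'}\Nil_F^\op$, and the compatibility $(FN)^{-i}=F(N^{-i})$ via $FT\simeq T'F$ — that makes the square commute on each object $(A,N)$.
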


The following is the main result of this subsection.

\begin{prop}\label{p.Pi}
Let $(\cA,T)$ be an Abelian category with translation and let $D\colon
(\cA^\op,(T^\op)^{-1})\to (\cA,T)$ be a duality such that the underlying
functor $\cA^\op\to\cA$ is exact. Then, for $i\le 0$, the composite
isomorphism
\[P_{i,\cA}D_{\Nil(\cA,T)}=P_{i,\cA}\Nil_D E_\cA \xrightarrow[\sim]{\beta_D}
DP_{i,\cA^\op} E_\cA\xrightarrow[\sim]{\alpha_\cA^{-1}} (D(T^\op)^i) P_{i,\cA}^\op
\]
is $\sigma^i$-symmetric.
\end{prop}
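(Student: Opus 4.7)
The plan is to verify the identity $\gamma_i^T = \sigma^i\gamma_i$, where $\gamma_i\coloneqq\alpha_\cA^{-1}\circ\beta_D$ is the composite isomorphism of the statement and the transpose is taken in the sense of Definition \ref{d.symF} with respect to the dualities $D_{\Nil(\cA,T)}$ on $\Nil(\cA,T)$ and the twisted duality $D(T^\op)^i$ on $\cA$; the latter is a genuine duality, with evaluation inherited from $\ev\colon\id_\cA\simto DD^\op$ via the translation-compatibility isomorphism of $D$ (Definition \ref{d.pdt}).

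Since $\gamma_i$ is a composite, Lemma \ref{l.comp} reduces the task to computing the two factors $(\beta_D)^T$ and $(\alpha_\cA^{-1})^T$ separately. For the first: the evaluation $\ev\colon\id_\cA\simto DD^\op$ is, by Definition \ref{d.pdt}, a morphism of functors of additive categories with translation. Applying Lemma \ref{l.Pi2} to $\ev$, together with the evident functoriality of $\beta$ in the underlying exact functor, identifies $(\beta_D)^T$ with $\beta_{D^\op}$ under the canonical identifications $E_{\cA^\op}E_\cA^\op=\id$ and $\Nil_\ev$, producing no extra sign. For the second factor, Lemma \ref{l.Pi1} states precisely that $\alpha_\cA^\op$ equals $\sigma^i\alpha_{\cA^\op}$ under those identifications, so unwinding the definition of transpose (which rewrites $(\alpha_\cA^{-1})^T$ in terms of $\alpha_\cA^\op$ sandwiched between evaluation maps) yields $(\alpha_\cA^{-1})^T=\sigma^i\alpha_{\cA^\op}^{-1}$. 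Finally, Lemma \ref{l.Pi3} applied to $F=D$ supplies the commutative compatibility between $\alpha$ and $\beta$ needed to reassemble $\sigma^i(\alpha_{\cA^\op}^{-1}\circ\beta_{D^\op})$ back into $\sigma^i\gamma_i$, which gives the desired equality.

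The main obstacle is bookkeeping. Three dualities interact simultaneously: $D$ on $\cA$, its twist $D(T^\op)^i$, and $D_{\Nil(\cA,T)}=\Nil_D E_\cA$ on $\Nil(\cA,T)$; transposition requires pre- and post-composing with their evaluations and then dualizing. The cancellation of extraneous signs hinges on the observation that the evaluation on $\Nil(\cA,T)$ is built as $\Nil_\ev$: the sign $\sigma$ that $E_\cA$ inserts into the nilpotent operator is cancelled by the sign that $E_{\cA^\op}$ inserts, so that the equality $E_{\cA^\op}E_\cA^\op=\id$ is sign-free. The only surviving sign factor is the $\sigma^{-i}=\sigma^i$ arising from the iteration $(\sigma N(-1))^{-i}=\sigma^{-i}N^{-i}(\text{shift})$ used in $\alpha_{\cA^\op}$, in contrast with the plain $N^{-i}$ used in $\alpha_\cA$; this is exactly the content of Lemma \ref{l.Pi1} and supplies the asserted factor of $\sigma^i$.
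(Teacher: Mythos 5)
Your instinct about where the sign comes from (Lemma \ref{l.Pi1}) and which compatibilities are needed (Lemma \ref{l.Pi2} applied to $\ev$, Lemma \ref{l.Pi3} applied to $F=D$) matches the actual proof, but the mechanism you propose for assembling them does not work. Lemma \ref{l.comp} is about the \emph{horizontal} composition of form transformations along a chain of categories with duality $\cB\to\cC\to\cD$ (Construction \ref{c.comp}): from $\alpha'\colon F'D_\cB\to D_\cC G'$ and $\alpha\colon FD_\cC\to D_\cD G$ one builds $FF'D_\cB\to D_\cD GG'$. The composite in Proposition \ref{p.Pi} is instead a \emph{vertical} composite of two natural transformations between functors $\Nil(\cA,T)^\op\to\cA$, and it does not factor in the shape required by Construction \ref{c.comp}: the middle term $DP_{i,\cA^\op}E_\cA$ is ``$D$ of something'', not ``a functor applied to an intermediate duality''. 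So Lemma \ref{l.comp} does not apply. Worse, the individual factors are not form transformations in the sense of Definition \ref{d.symF}: $\alpha_\cA$ is a plain natural isomorphism $P_{i,\cA}^\op\simto (T^\op)^{-i}P_{i,\cA^\op}E_\cA$ with no duality appearing, so the symbol $(\alpha_\cA^{-1})^T$ is undefined, and Lemma \ref{l.Pi1} does not compute a transpose --- it is an op-compatibility statement, namely that $\alpha_{\cA^\op}$ whiskered with $E_{\cA^\op}E_\cA^\op=\id$ equals $\sigma^i\alpha_\cA^\op$. The same objection applies to the claim $(\beta_D)^T=\beta_{D^\op}$.

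What your sketch therefore omits is the actual verification that $\gamma_i=\sigma^i\gamma_i^T$, where the transpose is formed by threading through the evaluation $\Nil_\ev$ of $D_{\Nil(\cA,T)}$ and the evaluation of the twisted duality $T^{-i}D\simeq D(T^\op)^i$ on $\cA$ (which uses the translation-compatibility of $D$ from Definition \ref{d.pdt}, e.g.\ the identification $DD^\op\simeq (D(T^\op)^i)(T^{-i}D)^\op$). This is a genuine diagram chase: one starts from $\ev$ and $\Nil_\ev$ on $P_i$, uses Lemma \ref{l.Pi2} (naturality of $\beta$ applied to the morphism $\ev\colon\id_\cA\to DD^\op$ of functors of categories with translation) for one cell, Lemma \ref{l.Pi3} with $F=D$ for the cell mixing $\beta_D^\op$ with the $\alpha$'s, and Lemma \ref{l.Pi1} for the triangle that produces the factor $\sigma^i$, while keeping track of the identifications $E_{\cA^\op}E_\cA^\op=\id$ and the shift isomorphisms. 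Your closing heuristic for the sign ($E_\cA$ replaces $N$ by $\sigma N(-1)$ and $\alpha$ is built from $N^{-i}$) is correct, but the reduction step you rest the proof on is invalid as stated, so the core of the argument is missing rather than mere bookkeeping.
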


Note that $T^{-i}D\simeq D(T^\op)^i\colon \cA^\op \to \cA$ endowed with the
natural transformation $\id_{\cA}\xrightarrow{\ev} DD^{op}\simeq
(D(T^\op)^i)(T^{-i}D)^\op$ is a duality on $\cA$. By the proposition,
$P_{i,\cA}$ carries $\sigma'$-self-dual objects of $\Nil(\cA,T)$ to
$\sigma^i\sigma'$-self-dual objects of $\cA$.

\begin{proof}
In the diagram
\[\xymatrix@C=.5cm{P_i\ar[r]^{\Nil_\ev}\ar[dd]_{\ev} &
P_{i,\cA}\Nil_D\Nil_{D^\op}\ar@{=}[r]\ar[d]^{\beta_D}_\simeq
& P_{i,\cA}\Nil_D\Nil_{D^\op} E_{\cA^\op} E_\cA^\op \ar@{=}[r]\ar[d]^{\beta_D}_\simeq
& P_{i,\cA} \Nil_D E_\cA \Nil_D^\op E_\cA^\op\ar[d]^{\beta_D}_\simeq\\
& DP_{i,\cA^\op} \Nil_{D^\op}\ar@{=}[r] \ar[ld]_{\beta_{D^\op}}^\sim
& DP_{i,\cA^\op} \Nil_{D^\op}E_{\cA^\op} E_\cA^\op \ar@{=}[r] \ar[ld]_{\beta_{D^\op}}^\sim
& DP_{i,\cA^\op} E_\cA \Nil_D^\op E_\cA^\op\ar[ddd]^{\alpha_\cA^{-1}}\\
DD^\op P_i\ar@{=}[r]\ar[dd]_\simeq\ar[rd]_{(\alpha_\cA^\op)^{-1}}^\sim
& DD^\op P_{i,\cA} E_{\cA^\op} E_\cA^\op\ar[d]^{\alpha_{\cA^\op}^{-1}}\\
&DD^\op T^{-i}P_{i,\cA^\op}^\op E_\cA^\op\ar[d]^\simeq\\
D(T^\op)^{i}D^\op T^i P_{i,\cA}\ar[r]^{(\alpha_\cA^\op)^{-1}}
& D(T^\op)^i D^\op P_{i,\cA^\op}^\op E_\cA^\op \ar[rr]^{\beta_D^\op}_\sim
&& D(T^\op)^iP_{i,\cA}^\op \Nil_D^\op E_\cA^\op,}
\]
the triangle $\sigma^i$-commutes by Lemma \ref{l.Pi1}, the upper-left inner
cell commutes by Lemma \ref{l.Pi2}, the lower-right inner cell commutes by
Lemma \ref{l.Pi3}, and the other inner cells trivially commute.
\end{proof}

\begin{bibdiv}
\begin{biblist}
\bib{Atiyah}{article}{
   author={Atiyah, M.},
   title={On the Krull-Schmidt theorem with application to sheaves},
   journal={Bull. Soc. Math. France},
   volume={84},
   date={1956},
   pages={307--317},
   issn={0037-9484},
   review={\MR{0086358 (19,172b)}},
}

\bib{AT}{article}{
   author={Atiyah, M. F.},
   author={Tall, D. O.},
   title={Group representations, $\lambda $-rings and the $J$-homomorphism},
   journal={Topology},
   volume={8},
   date={1969},
   pages={253--297},
   issn={0040-9383},
   review={\MR{0244387 (39 \#5702)}},
}

\bib{BBD}{article}{
   author={Be{\u\i}linson, A. A.},
   author={Bernstein, J.},
   author={Deligne, P.},
   title={Faisceaux pervers},
   language={French},
   conference={
      title={Analysis and topology on singular spaces, I},
      address={Luminy},
      date={1981},
   },
   book={
      series={Ast\'erisque},
      volume={100},
      publisher={Soc. Math. France},
      place={Paris},
   },
   date={1982},
   pages={5--171},
   review={\MR{751966 (86g:32015)}},
}

\bib{BD}{book}{
   author={Br{\"o}cker, Theodor},
   author={tom Dieck, Tammo},
   title={Representations of compact Lie groups},
   series={Graduate Texts in Mathematics},
   volume={98},
   note={Translated from the German manuscript;
   Corrected reprint of the 1985 translation},
   publisher={Springer-Verlag},
   place={New York},
   date={1995},
   pages={x+313},
   isbn={0-387-13678-9},
   review={\MR{1410059 (97i:22005)}},
}

\bib{CH}{article}{
   author={Calm{\`e}s, Baptiste},
   author={Hornbostel, Jens},
   title={Tensor-triangulated categories and dualities},
   journal={Theory Appl. Categ.},
   volume={22},
   date={2009},
   pages={No. 6, 136--200},
   issn={1201-561X},
   review={\MR{2520968 (2010k:18008)}},
}

\bib{Chev}{book}{
   author={Chevalley, Claude},
   title={Th\'eorie des groupes de Lie. Tome III. Th\'eor\`emes g\'en\'eraux
   sur les alg\`ebres de Lie},
   language={French},
   series={Actualit\'es Sci. Ind. no. 1226},
   publisher={Hermann \& Cie, Paris},
   date={1955},
   pages={239},
   review={\MR{0068552 (16,901a)}},
}

\bib{CLO}{article}{
   author={Conrad, Brian},
   author={Lieblich, Max},
   author={Olsson, Martin},
   title={Nagata compactification for algebraic spaces},
   journal={J. Inst. Math. Jussieu},
   volume={11},
   date={2012},
   number={4},
   pages={747--814},
   issn={1474-7480},
   review={\MR{2979821}},
   doi={10.1017/S1474748011000223},
}

\bib{dJ}{article}{
   author={de Jong, A. Johan},
   title={Smoothness, semi-stability and alterations},
   journal={Inst. Hautes \'Etudes Sci. Publ. Math.},
   number={83},
   date={1996},
   pages={51--93},
   issn={0073-8301},
   review={\MR{1423020 (98e:14011)}},
}

\bib{dJ2}{article}{
   author={de Jong, A. Johan},
   title={Families of curves and alterations},
   journal={Ann. Inst. Fourier (Grenoble)},
   volume={47},
   date={1997},
   number={2},
   pages={599--621},
   issn={0373-0956},
   review={\MR{1450427 (98f:14019)}},
}

\bib{DeligneL}{article}{
   author={Deligne, Pierre},
   title={Th\'eor\`eme de Lefschetz et crit\`eres de d\'eg\'en\'erescence de
   suites spectrales},
   language={French},
   journal={Inst. Hautes \'Etudes Sci. Publ. Math.},
   number={35},
   date={1968},
   pages={259--278},
   issn={0073-8301},
   review={\MR{0244265 (39 \#5582)}},
}

\bib{WeilII}{article}{
   author={Deligne, Pierre},
   title={La conjecture de Weil. II},
   language={French},
   journal={Inst. Hautes \'Etudes Sci. Publ. Math.},
   number={52},
   date={1980},
   pages={137--252},
   issn={0073-8301},
   review={\MR{601520 (83c:14017)}},
}

\bib{Deligne}{article}{
   author={Deligne, Pierre},
   title={D\'ecompositions dans la cat\'egorie d\'eriv\'ee},
   language={French},
   conference={
      title={Motives},
      address={Seattle, WA},
      date={1991},
   },
   book={
      series={Proc. Sympos. Pure Math.},
      volume={55},
      publisher={Amer. Math. Soc.},
      place={Providence, RI},
   },
   date={1994},
   pages={115--128},
   review={\MR{1265526 (95h:18013)}},
}

\bib{Delignegen}{article}{
   author={Deligne, Pierre},
   title={Finitude de l'extension de $\Q$ engendr\'ee par des traces de
   Frobenius, en caract\'eristique finie},
   language={French, with French and Russian summaries},
   journal={Mosc. Math. J.},
   volume={12},
   date={2012},
   number={3},
   pages={497--514, 668},
   issn={1609-3321},
   review={\MR{3024820}},
}

\bib{Drinfeld}{article}{
   author={Drinfeld, Vladimir},
   title={On a conjecture of Kashiwara},
   journal={Math. Res. Lett.},
   volume={8},
   date={2001},
   number={5-6},
   pages={713--728},
   issn={1073-2780},
   review={\MR{1879815 (2003c:14022)}},
   doi={10.4310/MRL.2001.v8.n6.a3},
}

\bib{Dr}{article}{
   author={Drinfeld, Vladimir},
   title={On a conjecture of Deligne},
   language={English, with English and Russian summaries},
   journal={Mosc. Math. J.},
   volume={12},
   date={2012},
   number={3},
   pages={515--542, 668},
   issn={1609-3321},
   review={\MR{3024821}},
}

\bib{Gabber}{article}{
   author={Fujiwara, Kazuhiro},
   title={Independence of $l$ for intersection cohomology (after Gabber)},
   conference={
      title={Algebraic geometry 2000, Azumino (Hotaka)},
   },
   book={
      series={Adv. Stud. Pure Math.},
      volume={36},
      publisher={Math. Soc. Japan},
      place={Tokyo},
   },
   date={2002},
   pages={145--151},
   review={\MR{1971515 (2004c:14038)}},
}

\bib{Gaitsgory}{article}{
   author={Gaitsgory, D.},
   title={On de Jong's conjecture},
   journal={Israel J. Math.},
   volume={157},
   date={2007},
   pages={155--191},
   issn={0021-2172},
   review={\MR{2342444 (2008j:14021)}},
   doi={10.1007/s11856-006-0006-2},
}

\bib{Grothendieck}{article}{
   author={Grothendieck, A.},
   title={La th\'eorie des classes de Chern},
   language={French},
   journal={Bull. Soc. Math. France},
   volume={86},
   date={1958},
   pages={137--154},
   issn={0037-9484},
   review={\MR{0116023 (22 \#6818)}},
}

\bib{EGA2}{article}{
   author={Grothendieck, A.},
   title={\'El\'ements de g\'eom\'etrie alg\'ebrique (r\'edig\'es avec la collaboration de J. Dieudonn\'e): II. \'Etude globale
   \'el\'ementaire de quelques classes de morphismes},
   journal={Inst. Hautes \'Etudes Sci. Publ. Math.},
   number={8},
   date={1961},
   pages={222},
   issn={0073-8301},
   review={\MR{0217084 (36 \#177b)}},
}

\bib{SGA5III}{article}{
   author={Grothendieck, A.},
   title={Formule de Lefschetz},
   book={
        title={Cohomologie $l$-adique et fonctions $L$},
        language={French},
        series={Lecture Notes in Mathematics},
        volume={589},
        note={S\'eminaire de G\'eometrie Alg\'ebrique du Bois-Marie 1965--1966
   (SGA 5)},
        publisher={Springer-Verlag},
        place={Berlin},
        date={1977},
        isbn={3-540-08248-4},
        review={\MR{0491704 (58 \#10907)}},
   },
   note={Expos\'e III, r\'edig\'e par L. Illusie},
   pages={73--137},
}

\bib{Huber}{article}{
   author={Huber, Annette},
   title={Mixed perverse sheaves for schemes over number fields},
   journal={Compositio Math.},
   volume={108},
   date={1997},
   number={1},
   pages={107--121},
   issn={0010-437X},
   review={\MR{1458759 (98k:14024)}},
   doi={10.1023/A:1000273606373},
}

\bib{Illusie}{article}{
   author={Illusie, Luc},
   title={Autour du th\'eor\`eme de monodromie locale},
   language={French},
   note={P\'eriodes $p$-adiques (Bures-sur-Yvette, 1988)},
   journal={Ast\'erisque},
   number={223},
   date={1994},
   pages={9--57},
   issn={0303-1179},
   review={\MR{1293970 (95k:14032)}},
}

\bib{IZ}{article}{
   author={Illusie, Luc},
   author={Zheng, Weizhe},
   title={Odds and ends on finite group actions and traces},
   journal={Int. Math. Res. Not. IMRN},
   volume={2013},
   date={2013},
   number={1},
   pages={1--62},
   issn={1073-7928},
   review={\MR{3041694}},
}

\bib{KS}{book}{
   author={Kashiwara, Masaki},
   author={Schapira, Pierre},
   title={Categories and sheaves},
   series={Grundlehren der Mathematischen Wissenschaften [Fundamental
   Principles of Mathematical Sciences]},
   volume={332},
   publisher={Springer-Verlag},
   place={Berlin},
   date={2006},
   pages={x+497},
   isbn={978-3-540-27949-5},
   isbn={3-540-27949-0},
   review={\MR{2182076 (2006k:18001)}},
}

\bib{Katz}{book}{
   author={Katz, Nicholas M.},
   title={Moments, monodromy, and perversity: a Diophantine perspective},
   series={Annals of Mathematics Studies},
   volume={159},
   publisher={Princeton University Press, Princeton, NJ},
   date={2005},
   pages={viii+475},
   isbn={978-0-691-12330-1},
   isbn={0-691-12330-6},
   review={\MR{2183396 (2006j:14020)}},
}

\bib{KM}{article}{
   author={Keel, Se{\'a}n},
   author={Mori, Shigefumi},
   title={Quotients by groupoids},
   journal={Ann. of Math. (2)},
   volume={145},
   date={1997},
   number={1},
   pages={193--213},
   issn={0003-486X},
   review={\MR{1432041 (97m:14014)}},
   doi={10.2307/2951828},
}

\bib{Kleiman}{article}{
   author={Kleiman, Steven L.},
   title={The standard conjectures},
   conference={
      title={Motives},
      address={Seattle, WA},
      date={1991},
   },
   book={
      series={Proc. Sympos. Pure Math.},
      volume={55},
      publisher={Amer. Math. Soc.},
      place={Providence, RI},
   },
   date={1994},
   pages={3--20},
   review={\MR{1265519 (95k:14010)}},
}

\bib{Lafforgue}{article}{
   author={Lafforgue, Laurent},
   title={Chtoucas de Drinfeld et correspondance de Langlands},
   language={French, with English and French summaries},
   journal={Invent. Math.},
   volume={147},
   date={2002},
   number={1},
   pages={1--241},
   issn={0020-9910},
   review={\MR{1875184 (2002m:11039)}},
   doi={10.1007/s002220100174},
}

\bib{Laumon}{article}{
   author={Laumon, G{\'e}rard},
   title={Comparaison de caract\'eristiques d'Euler-Poincar\'e en
   cohomologie $l$-adique},
   language={French, with English summary},
   journal={C. R. Acad. Sci. Paris S\'er. I Math.},
   volume={292},
   date={1981},
   number={3},
   pages={209--212},
   issn={0151-0509},
   review={\MR{610321 (82e:14030)}},
}

\bib{LaumonTF}{article}{
   author={Laumon, G{\'e}rard},
   title={Transformation de Fourier, constantes d'\'equations fonctionnelles
   et conjecture de Weil},
   language={French},
   journal={Inst. Hautes \'Etudes Sci. Publ. Math.},
   number={65},
   date={1987},
   pages={131--210},
   issn={0073-8301},
   review={\MR{908218 (88g:14019)}},
}

\bib{LMB}{book}{
   author={Laumon, G{\'e}rard},
   author={Moret-Bailly, Laurent},
   title={Champs alg\'ebriques},
   series={Ergebnisse der Mathematik und ihrer Grenzgebiete. 3. Folge. A
   Series of Modern Surveys in Mathematics},
   volume={39},
   publisher={Springer-Verlag},
   place={Berlin},
   date={2000},
   pages={xii+208},
   isbn={3-540-65761-4},
   review={\MR{1771927 (2001f:14006)}},
}

\bib{Lurie}{book}{
    author={Lurie, Jacob},
    title={Higher algebra},
    note={Preprint available at \url{http://www.math.harvard.edu/~lurie/papers/higheralgebra.pdf}},
}

\bib{MacLane}{book}{
   author={Mac Lane, Saunders},
   title={Categories for the working mathematician},
   series={Graduate Texts in Mathematics},
   volume={5},
   edition={2},
   publisher={Springer-Verlag},
   place={New York},
   date={1998},
   pages={xii+314},
   isbn={0-387-98403-8},
   review={\MR{1712872 (2001j:18001)}},
}

\bib{Morel}{article}{
   author={Morel, Sophie},
   title={Complexes mixtes sur un sch\'ema de type fini sur $\Q$},
   note={Preprint available at \url{http://web.math.princeton.edu/~smorel/sur_Q.pdf}},
}

\bib{Partsch}{article}{
   author={Partsch, Holger},
   title={Deformations of elliptic fiber bundles in positive characteristic},
   journal={Nagoya Math. J.},
   volume={211},
   date={2013},
   pages={79--108},
   issn={0027-7630},
   review={\MR{3079280}},
   doi={10.1215/00277630-2141608},
}

\bib{PS}{article}{
   author={Pacheco, Am{\'{\i}}lcar},
   author={Stevenson, Katherine F.},
   title={Finite quotients of the algebraic fundamental group of projective
   curves in positive characteristic},
   journal={Pacific J. Math.},
   volume={192},
   date={2000},
   number={1},
   pages={143--158},
   issn={0030-8730},
   review={\MR{1741024 (2000k:14023)}},
   doi={10.2140/pjm.2000.192.143},
}

\bib{QSS}{article}{
   author={Quebbemann, H.-G.},
   author={Scharlau, W.},
   author={Schulte, M.},
   title={Quadratic and Hermitian forms in additive and abelian categories},
   journal={J. Algebra},
   volume={59},
   date={1979},
   number={2},
   pages={264--289},
   issn={0021-8693},
   review={\MR{543249 (82d:18015)}},
   doi={10.1016/0021-8693(79)90126-1},
}

\bib{Quillen}{article}{
   author={Quillen, Daniel},
   title={The Adams conjecture},
   journal={Topology},
   volume={10},
   date={1971},
   pages={67--80},
   issn={0040-9383},
   review={\MR{0279804 (43 \#5525)}},
}

\bib{RG}{article}{
   author={Raynaud, Michel},
   author={Gruson, Laurent},
   title={Crit\`eres de platitude et de projectivit\'e. Techniques de
   <<platification>> d'un module},
   language={French},
   journal={Invent. Math.},
   volume={13},
   date={1971},
   pages={1--89},
   issn={0020-9910},
   review={\MR{0308104 (46 \#7219)}},
}

\bib{Riou}{article}{
   author={Riou, Jo{\"e}l},
   title={Expos\'e XVII. Dualit\'e},
   language={French},
   note={In Travaux de Gabber sur l'uniformisation locale et la cohomologie
   \'etale des sch\'emas quasi-excellents, S\'eminaire \`a l'\'Ecole polytechnique 2006--2008, dirig\'e par L. Illusie, Y. Laszlo, F. Orgogozo.},
   journal={Ast\'erisque},
   number={363--364},
   date={2014},
   pages={351--453},
   issn={0303-1179},
   isbn={978-2-85629-790-2},
   review={\MR{3329787}},
}

\bib{Saito}{article}{
   author={Saito, Morihiko},
   title={Mixed Hodge modules},
   journal={Publ. Res. Inst. Math. Sci.},
   volume={26},
   date={1990},
   number={2},
   pages={221--333},
   issn={0034-5318},
   review={\MR{1047415 (91m:14014)}},
   doi={10.2977/prims/1195171082},
}

\bib{Schlichting}{article}{
   author={Schlichting, Marco},
   title={Hermitian $K$-theory of exact categories},
   journal={J. K-Theory},
   volume={5},
   date={2010},
   number={1},
   pages={105--165},
   issn={1865-2433},
   review={\MR{2600285 (2011b:19007)}},
   doi={10.1017/is009010017jkt075},
}

\bib{Serre}{article}{
   author={Serre, Jean-Pierre},
   title={Zeta and $L$ functions},
   conference={
      title={Arithmetical Algebraic Geometry},
      address={Proc. Conf. Purdue Univ.},
      date={1963},
   },
   book={
      publisher={Harper \& Row},
      place={New York},
   },
   date={1965},
   pages={82--92},
   review={\MR{0194396 (33 \#2606)}},
}

\bib{SerreRL}{book}{
   author={Serre, Jean-Pierre},
   title={Repr\'esentations lin\'eaires des groupes finis},
   language={French},
   edition={5e \'edition},
   publisher={Hermann, Paris},
   date={1998},
   pages={182},
   isbn={2-7056-5630-8},
   review={\MR{543841 (80f:20001)}},
}

\bib{Suh}{article}{
   author={Suh, Junecue},
   title={Symmetry and parity in Frobenius action on cohomology},
   journal={Compos. Math.},
   volume={148},
   date={2012},
   number={1},
   pages={295--303},
   issn={0010-437X},
   review={\MR{2881317}},
   doi={10.1112/S0010437X11007056},
}

\bib{SunL}{article}{
   author={Sun, Shenghao},
   title={$L$-series of Artin stacks over finite fields},
   journal={Algebra Number Theory},
   volume={6},
   date={2012},
   number={1},
   pages={47--122},
   issn={1937-0652},
   review={\MR{2950161}},
   doi={10.2140/ant.2012.6.47},
}

\bib{Sun}{article}{
   author={Sun, Shenghao},
   title={Decomposition theorem for perverse sheaves on Artin stacks over
   finite fields},
   journal={Duke Math. J.},
   volume={161},
   date={2012},
   number={12},
   pages={2297--2310},
   issn={0012-7094},
   review={\MR{2972459}},
   doi={10.1215/00127094-1723657},
}

\bib{Verdier}{article}{
   author={Verdier, Jean-Louis},
   title={Des cat\'egories d\'eriv\'ees des cat\'egories ab\'eliennes},
   language={French, with French summary},
   note={With a preface by Luc Illusie;
   Edited and with a note by Georges Maltsiniotis},
   journal={Ast\'erisque},
   number={239},
   date={1996},
   pages={xii+253 pp. (1997)},
   issn={0303-1179},
   review={\MR{1453167 (98c:18007)}},
}

\bib{Vidal}{article}{
   author={Vidal, Isabelle},
   title={Th\'eorie de Brauer et conducteur de Swan},
   language={French, with French summary},
   journal={J. Algebraic Geom.},
   volume={13},
   date={2004},
   number={2},
   pages={349--391},
   issn={1056-3911},
   review={\MR{2047703 (2005m:14030)}},
   doi={10.1090/S1056-3911-03-00336-9},
}

\bib{M2}{article}{
   author={Zheng, Weizhe},
   title={Th\'eor\`eme de Gabber d'ind\'ependance de $l$},
   language={French},
   note={M\'emoire de Master deuxi\`eme ann\'ee},
   year={2005},
}

\bib{Zint}{article}{
   author={Zheng, Weizhe},
   title={Sur la cohomologie des faisceaux $l$-adiques entiers sur les corps
   locaux},
   language={French, with English and French summaries},
   journal={Bull. Soc. Math. France},
   volume={136},
   date={2008},
   number={3},
   pages={465--503},
   issn={0037-9484},
   review={\MR{2415350 (2009d:14015)}},
}

\bib{Zind}{article}{
   author={Zheng, Weizhe},
   title={Sur l'ind\'ependance de $l$ en cohomologie $l$-adique sur les
   corps locaux},
   language={French, with English and French summaries},
   journal={Ann. Sci. \'Ec. Norm. Sup\'er. (4)},
   volume={42},
   date={2009},
   number={2},
   pages={291--334},
   issn={0012-9593},
   review={\MR{2518080 (2010i:14032)}},
}

\bib{Zheng}{article}{
   author={Zheng, Weizhe},
   title={Six operations and Lefschetz-Verdier formula for Deligne-Mumford
   stacks},
   journal={Sci. China Math.},
   volume={58},
   date={2015},
   number={3},
   pages={565--632},
   issn={1674-7283},
   review={\MR{3319927}},
   doi={10.1007/s11425-015-4970-z},
}

\bib{Zhcomp}{article}{
   author={Zheng, Weizhe},
   title={Companions on Artin stacks},
   note={\href{http://arXiv.org/abs/1512.08929}{arXiv:1512.08929}, preprint},
}
\end{biblist}
\end{bibdiv}

\end{document}